\newcommand\numberthis{\addtocounter{equation}{1}\tag{\theequation}}
\theoremstyle{plain}
\newtheorem{theorem}{Theorem}[section]
\newtheorem{theorem*}{Theorem}
\newtheorem{lemma}[theorem]{Lemma}
\newtheorem{prop}[theorem]{Proposition}
\newtheorem{defn}{Definition}[section]
\theoremstyle{remark}
\newtheorem*{remark}{Remark}
\newcounter{remarkscounter}
\numberwithin{equation}{section}
\newcommand{\quash}[1]{}
\theoremstyle{definition}
\renewcommand{\bar}{\overline}
\numberwithin{equation}{subsection}
\def\centerarc[#1](#2)(#3:#4:#5);%
\title{Eisenstein Cohomology and Critical Values of Certain $L$-Functions: \textit{The Case} $G_2$}
\author{Farid HosseiniJafari}
\date{}
\begin{document}
\maketitle

\begin{abstract}
We establish results on the rationality of ratios of successive critical values of Langlands-Shahidi $L$-functions, as they appear in the constant terms of the Eisenstein series associated with the exceptional group of type $G_2$ over a totally imaginary number field. Furthermore, we prove the rationality of the critical values for each $L$-function in the products, such as the symmetric cube $L$-functions. Our method generalizes the Harder-Raghuram method \cite{HR} to cases where multiple $L$-functions appear in the constant term and involve an exceptional group. Finally, our results on the automorphic version of Deligne's conjecture align with its motivic counterpart, as demonstrated in the recent work of Deligne and Raghuram \cite{deligne2024motives}.
\end{abstract}

\vspace{25pt}
\section{Introduction} 
This paper aims to establish a connection between the cohomology of arithmetic groups and the special values of Langlands-Shahidi $L$-functions, extending the Harder-Raghuram method (\cite{HR, HarderB, Harder2010, H2, RagTI,Raghurampadic,RagNH}) to cases involving multiple $L$-functions appearing in constant terms of Eisenstein series. Specifically, we focus on the exceptional group $G_2$, which played a key role in developing the Langlands-Shahidi method \cite{Lang,SH89,Shahidi1991LanglandsCO,Shahidi1981OnCL}. Consequently, we prove rationality results for the ratios of successive critical values of Langlands-Shahidi $L$-functions associated with $G_2$ over a totally imaginary field.

 Harder outlined an ambitious program to establish a bilateral relation between the cohomology of arithmetic groups and the special values of Langlands' automorphic $L$-functions \cite{HarderB}, building on his pioneering work on $GL_2$ \cite{H2}. In his collaboration with Raghuram \cite{HR}, they explored one direction of this relationship by proving the rationality of the ratio of successive critical points of Rankin-Selberg $L$-functions, providing a cohomological interpretation of some aspects of the Langlands-Shahidi method through the study of the Eisenstein cohomology of the ambient group $GL_{N}$ over a totally real number field. They proposed that their method could be extended to establish rationality results for Langlands-Shahidi $L$-functions attached to any reductive group. However, it was not clear how their method could be adapted to cases involving multiple $L$-functions or to the case of exceptional groups. We complete the program for the exceptional group of type $G_2$, by studying the Eisenstein cohomology of these groups and providing a cohomological interpretation of the Langlands-Shahidi method in the case of $G_2$, where multiple $L$-functions appear in the constant term of Eisenstein series attached to $G_2$. We now proceed to state the main results of this paper.

Let $F$ be a totally imaginary number field that contains a \textit{CM}-subfield. Denote the rings of adeles of $F$ by $\mathbb{A}_{F}$, and for simplicity when $F=\mathbb{Q}$ denote it by $\mathbb{A}$. Let $E$ be a ``sufficiently'' large finite Galois extension of $\mathbb{Q}$ that contains a copy of $F$, and fix an embedding $\iota: E \to \mathbb{C}$. 

Let $Res_{F/\mathbb{Q}}(GL_2)$ be a rational group defined by the Weil restriction of scalars of a split group of type $GL_2/F$. Let $^{\iota}\!\sigma = {^{\iota}\sigma_f} \times {^{\iota}\sigma_\infty}$ be a cohomological cuspidal representation of $Res_{F/\mathbb{Q}}(GL_2/F)(\mathbb{A}) = GL_2(\mathbb{A}_F)$, formed as explained in \ref{sec:cohrep}. Denote the central character of ${^{\iota}\sigma}$ by $\omega_{{^{\iota}\sigma}}$, and the adjoint cube representation of $GL_2$ by $Ad^3({^{\iota}\sigma})$ \cite{SH89}.

Following Deligne \cite[Prop-Def 2.3]{Deligne1979ValeursDF}, define the critical points of the (completed) Langlands-Shahidi $L$-functions as the (half-)integers at which the infinite part of the $L$-function and its counterpart on the other side of the functional equation are both finite. The main theorems of this paper are as follows:
\begin{theorem*}[Theorem \ref{thm:mainbeta}]
    Assume that $^{\iota}\sigma$ is non-monomial. For any two successive critical points $m$ and $m+1$ of both $L$-functions $L(s,Ad^{3}({^{\iota}\sigma}))$ and $L(2s,\omega_{{^{\iota}\sigma}})$, we have:
    \begin{enumerate}
        \item The vanishing of $L(m+1,Ad^{3}({^{\iota}\sigma}))L(2m+1,\omega_{{^{\iota}\sigma}})$ is independent of the choice of the embedding of $E$ in $\mathbb{C}$.
        \item Suppose $L(m+1,Ad^{3}(^{\iota}\sigma)) \neq 0$; then
         \[
             |\delta_{F/\mathbb{Q}}|^{5/2} \frac{L(m,Ad^3(^{\iota}\sigma))L(2m,\omega_{^{\iota}\sigma})}{L(m+1,Ad^3(^{\iota}\sigma))L(2m+1,\omega_{^{\iota}\sigma})}\in \iota(E) \subset \bar{\mathbb{Q}},
            \]
        where $\delta_{F/\mathbb{Q}}$ is the absolute discriminant of $F$.
        \item  For any $\gamma \in Gal(\bar{\mathbb{Q}}/\mathbb{Q})$:
   \begin{multline*}
       \gamma\left(|\delta_{F/\mathbb{Q}}|^{5/2} \frac{L(m,Ad^3(^{\iota}\sigma))L(2m,\omega_{^{\iota}\sigma})}{L(m+1,Ad^3(^{\iota}\sigma))L(2m+1,\omega_{^{\iota}\sigma})}\right) = \\ \epsilon(\gamma, ^{\iota}w)\epsilon(\gamma, ^{\iota}w')|\delta_{F/\mathbb{Q}}|^{5/2} \frac{L(m,Ad^3(^{\gamma\circ\iota}\sigma))L(2m,\omega_{^{\gamma\circ\iota}\sigma})}{L(m+1,Ad^3(^{\gamma\circ\iota}\sigma))L(2m+1,\omega_{^{\gamma\circ\iota}\sigma})},
   \end{multline*}
   where $\epsilon(\gamma, ^{\iota}w)$ and $\epsilon(\gamma, ^{\iota}w')$  are signs as defined in Definition \ref{def:sign}.
    \end{enumerate}
\end{theorem*}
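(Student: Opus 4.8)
The plan is to deduce this theorem about $G_2$ from the Eisenstein cohomology of $G_2$ over the totally imaginary field $F$, following the architecture of the Harder–Raghuram method but accounting for the fact that the constant term of the relevant Eisenstein series involves a \emph{product} of two Langlands–Shahidi $L$-functions rather than a single one. The starting point is the cuspidal representation $^{\iota}\sigma$ of $GL_2(\mathbb{A}_F)$, which we induce up along the maximal parabolic of $G_2$ whose Levi is (isomorphic to) $GL_2$; the Langlands–Shahidi machinery then expresses the constant term of the associated Eisenstein series along this parabolic in terms of the ratio $\frac{L(s,Ad^3(^{\iota}\sigma))}{L(s+1,Ad^3(^{\iota}\sigma))}\cdot\frac{L(2s,\omega_{^{\iota}\sigma})}{L(2s+1,\omega_{^{\iota}\sigma})}$ (up to shifts and normalizations dictated by the adjoint action of the Levi on the unipotent radical, which for this parabolic in $G_2$ decomposes with pieces giving $Ad^3$ and the determinant/central-character factor).

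First I would set up the cohomological framework: fix a dominant integral weight $\mu$ for $G_2$ such that $^{\iota}\sigma$ (suitably twisted) is $\mu$-cohomological after restriction to the Levi, identify the two relevant Kostant representatives $^{\iota}w$ and $^{\iota}w'$ in the two-element set of minimal-length coset representatives attached to this maximal parabolic, and show that the cuspidal class in $H^{\bullet}_{\mathrm{cusp}}$ of the Levi contributes to the boundary cohomology $H^{\bullet}(\partial,\widetilde{\mathcal{M}}_{\mu})$ in two adjacent degrees. Second, I would invoke the long exact sequence relating interior (or full) cohomology of the Borel–Serre compactification to boundary cohomology, and argue — this is the rationality engine — that the image of global cohomology inside boundary cohomology is defined over $\iota(E)$; consequently the intertwining operator realizing the constant term, which on the relevant Hecke-isotypic pieces acts by the above ratio of $L$-values (times an archimedean constant and a power of the discriminant $|\delta_{F/\mathbb{Q}}|$ coming from the measure normalization / the functional equation of completed $L$-functions over $F$), must send an $E$-rational class to an $E$-rational class. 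This yields part (2). Part (1) follows because the target degree of the boundary cohomology in which the relevant class lives is forced, so vanishing of the numerator's "partner" $L(m+1,\cdots)$ controls whether the cuspidal class is in the image of global cohomology, a condition manifestly Galois-stable; part (3) is obtained by running the whole construction $\gamma$-equivariantly: $\gamma$ acts on the $E$-structure of cohomology and on $^{\iota}\sigma \mapsto {}^{\gamma\circ\iota}\sigma$, the intertwining operator is Galois-equivariant up to the explicit signs $\epsilon(\gamma,{}^{\iota}w)$, $\epsilon(\gamma,{}^{\iota}w')$ attached (via Definition \ref{def:sign}) to the action of $\gamma$ on the two Kostant representatives' contributions — these signs arise from the rationality of the archimedean intertwining integrals and the combinatorics of the relevant Weyl group elements.

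The technical heart — and the main obstacle — is the \emph{archimedean computation} at the totally imaginary places: one must pin down the rational structure on the $(\mathfrak{g},K)$-cohomology of the induced representation and show that the archimedean intertwining operator, up to the sign $\epsilon(\gamma,\cdot)$ and up to a factor that is absorbed into $|\delta_{F/\mathbb{Q}}|^{5/2}$, preserves rational structures. This is where the exponent $5/2$ must be computed exactly, as the sum of the "$\rho$-shift" contributions of the two $L$-functions weighted by the ranks of the two Jordan blocks of the unipotent radical, times the number of archimedean places, reconciled with Deligne's normalization of periods over $F$. A secondary difficulty is ensuring that the non-monomial hypothesis on $^{\iota}\sigma$ guarantees both the irreducibility/genericity needed for the Langlands–Shahidi constant term formula to take the clean product shape and the nonvanishing of the relevant cohomological period (so that the ratio is a well-defined element of $\iota(E)$ rather than $0/0$); I expect this to be handled exactly as the non-self-dual / non-monomial cases are handled in \cite{HR}, \cite{RagTI}. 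Finally, I would need to check compatibility of the two Kostant representatives with the two $L$-functions — i.e., that $^{\iota}w$ "sees" $L(s,Ad^3)$ and $^{\iota}w'$ "sees" $L(2s,\omega)$ (or some bijective matching) — so that the signs in part (3) are correctly attached; this is a finite Weyl-combinatorial verification in the root system $G_2$.
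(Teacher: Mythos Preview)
Your high-level architecture is correct and matches the paper's: induce along $P_\beta$, use the Langlands--Shahidi constant term formula, realize the intertwining operator cohomologically, and extract rationality from the $E$-structure on boundary cohomology. However, several of your more detailed claims are off in ways that would obstruct the proof.

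First, your description of the Kostant representatives is wrong. You speak of ``the two-element set of minimal-length coset representatives'' and later propose that ``$^{\iota}w$ sees $L(s,Ad^3)$ and $^{\iota}w'$ sees $L(2s,\omega)$''. In fact $W^{P_\beta}$ has six elements; the paper isolates a \emph{balanced} representative $w_{cl}$ via the combinatorial lemma (Lemma~\ref{lem:combbeta}), and $w'_{cl}=w_0 w_{cl}$ is its companion under the involution $w\mapsto w_0 w$. The pair $(w_{cl},w'_{cl})$ does not correspond to the two $L$-functions separately; rather $w_{cl}.\lambda=\mu$ indexes the \emph{source} module $^a\mathrm{Ind}(\sigma_f)$ and $w'_{cl}.\lambda=\tilde\mu-5\delta_2$ indexes the \emph{target} module $^a\mathrm{Ind}(\tilde\sigma_f(5))$ of the single intertwining operator $T_{st}$. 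Both $L$-functions appear together in the scalar by which $T_{st}$ acts. The signs $\epsilon(\gamma,{}^{\iota}w)$ and $\epsilon(\gamma,{}^{\iota}w')$ arise from the Galois action permuting the wedge-basis vectors $e^*_{\Phi_w}$ in $\bigwedge^q\mathfrak{u}_P^*$ (Definition~\ref{def:sign}), not from any bijection between Kostant representatives and $L$-factors.

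Second, you are missing two structural ingredients. The reduction from a general critical pair $(m,m+1)$ to the single point of evaluation $m=-\tfrac{5}{2}$ requires the combinatorial lemma, which shows that Tate-twisting $\mu$ traverses exactly the critical set. And the fact that $I_b^S(\sigma_f)_{w_{cl}}\oplus I_b^S(\tilde\sigma_f(5))_{w'_{cl}}$ splits off as an isotypic Hecke summand inside $H^{q_b}(\partial S^G,\mathcal{M}_\lambda)$ requires a Manin--Drinfeld-type argument (Theorem~\ref{thm:mnnb}), which in turn needs a multiplicity-one input (Proposition~\ref{9}) ruling out collisions with modules induced from $P_\alpha$ or $B$.

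Third, the exponent $5/2$ does not come from ``$\rho$-shift contributions weighted by ranks of Jordan blocks''. It is simply $\tfrac{1}{2}\dim(U_{P_0})=\tfrac{5}{2}$, arising from the volume normalization $\mathrm{vol}(U_{P_0}(F)\backslash U_{P_0}(\mathbb{A}_F))=|\delta_{F/\mathbb{Q}}|^{5/2}$ in the constant-term integral. Finally, part (1) is not proved by a cohomological image criterion as you sketch; the paper argues directly that $L(1+m,Ad^3({}^{\iota}\sigma))=L(1+m-\tfrac{\mathbf{w}}{2},Ad^3({}^{\iota}\sigma^u))$ can vanish only at the center $\tfrac{1}{2}$, and the purity weight $\mathbf{w}$ is Galois-stable by strong purity.
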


\begin{theorem*}[Theorem \ref{thm:mainalpha}]\label{thm1}
   Suppose $m$ and $m+1$ are the critical points of the $L$-functions $L(s, \;^{\iota}\sigma)$, $L(2s,\omega_{^{\iota}\sigma})$, and $L(3s, \;^{\iota}\sigma\otimes \omega_{^{\iota}\sigma})$. Then, we have the following: 
     \begin{enumerate}
         \item The vanishing of $L(m+1, \;^{\iota}\sigma)L(2m+1,\omega_{^{\iota}\sigma})L(3m+1, \;^{\iota}\sigma\otimes \omega_{^{\iota}\sigma})$ is independent of the choice of embedding from $E$ in $\mathbb{C}$. 
          \item Suppose $L(m+1,\;^{\iota}\sigma)\neq0$; then:
         \[
             |\delta_{F/\mathbb{Q}}|^{5/2} \frac{L(m, \;^{\iota}\sigma)L(2m,\omega_{^{\iota}\sigma})L(3m, \;^{\iota}\sigma\otimes \omega_{^{\iota}\sigma})}{L(m+1, \;^{\iota}\sigma)L(2m+1,\omega_{^{\iota}\sigma})L(3m+1, \;^{\iota}\sigma\otimes \omega_{^{\iota}\sigma})}\in \iota(E) \subset \bar{\mathbb{Q}}.
            \]
        \item  For any $\gamma \in Gal(\bar{\mathbb{Q}}/\mathbb{Q})$:
   \begin{multline*}
       \gamma\left(|\delta_{F/\mathbb{Q}}|^{5/2} \frac{L(m, \;^{\iota}\sigma)L(2m,\omega_{^{\iota}\sigma})L(3m, \;^{\iota}\sigma\otimes \omega_{^{\iota}\sigma})}{L(m+1, \;^{\iota}\sigma)L(2m+1,\omega_{^{\iota}\sigma})L(3m+1, \;^{\iota}\sigma\otimes \omega_{^{\iota}\sigma})}\right) = \\ \epsilon(\gamma, ^{\iota}w)\epsilon(\gamma, ^{\iota}w')|\delta_{F/\mathbb{Q}}|^{5/2} \frac{L(m, \;^{\gamma\circ\iota}\sigma)L(2m,\omega_{^{\gamma\circ\iota}\sigma})L(3m, \;^{\gamma\circ\iota}\sigma\otimes \omega_{^{\gamma\circ\iota}\sigma})}{L(m+1, \;^{\gamma\circ\iota}\sigma)L(2m+1,\omega_{^{\gamma\circ\iota}\sigma})L(3m+1, \;^{\gamma\circ\iota}\sigma\otimes \omega_{^{\gamma\circ\iota}\sigma})},
   \end{multline*}
   where $\epsilon(\gamma, ^{\iota}w)$ and $\epsilon(\gamma, ^{\iota}w')$  are signs as defined in Definition \ref{def:sign}.
     \end{enumerate}
\end{theorem*}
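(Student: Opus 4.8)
The plan is to follow the Harder-Raghuram template adapted to $G_2$. The key geometric input is the structure of the rank-one parabolic subgroups of $G_2$: there are two maximal parabolics $P_\alpha$ and $P_\beta$ (one for each simple root), with Levi quotients isomorphic to $GL_2$. One of these — the one relevant here — has the property that the adjoint action of the Levi on the unipotent radical decomposes so that the Langlands-Shahidi machinery produces the product $L(s,\,{}^{\iota}\sigma)\,L(2s,\omega_{{}^{\iota}\sigma})\,L(3s,\,{}^{\iota}\sigma\otimes\omega_{{}^{\iota}\sigma})$ in the constant term of the Eisenstein series (this is the classical Shahidi computation for $G_2$). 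So Theorem~\ref{thm:mainalpha} is the ``$\alpha$-parabolic'' companion of Theorem~\ref{thm:mainbeta} (whose constant term features $L(s,Ad^3)\,L(2s,\omega)$). First I would set up the cohomological induction: from the cuspidal cohomological representation ${}^{\iota}\sigma$ of $GL_2(\mathbb{A}_F)$ one forms the induced representation to $G_2(\mathbb{A})$ along the relevant parabolic, and identifies a specific cohomology class in the boundary cohomology $H^\bullet(\partial\mathcal{S}^{G_2}, \widetilde{\mathcal{M}}_\mu)$ supported on the stratum attached to $P_\alpha$, for an appropriate coefficient system $\mathcal{M}_\mu$ dictated by the weight of $\sigma_\infty$ and the critical point $m$.

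**The main steps.** (1) \emph{Combinatorics of Kostant representatives and weights.} Determine, for which algebraic Hecke characters/weights $\mu$ the relevant Kostant representatives $w$ and $w'$ in $W^{P_\alpha}$ contribute to the boundary cohomology in the correct degree, and check that the two successive critical points $m$, $m+1$ correspond precisely to the two Kostant representatives that ``straddle'' the rank-one Eisenstein series. This is where the signs $\epsilon(\gamma,{}^{\iota}w)$, $\epsilon(\gamma,{}^{\iota}w')$ enter — they come from the action of $\gamma$ on the arithmetically-normalized intertwining-operator constants and on the chosen generators of one-dimensional cohomology spaces. (2) \emph{Rank-one reduction and the constant-term formula.} Invoke the Langlands-Shahidi constant term computation for $G_2$ along $P_\alpha$: the intertwining operator $T_{\mathrm{st}}(s)$ acts on the induced cohomology by a scalar that is, up to archimedean and normalizing factors, the ratio of the product $L$-function at $s$ over its value at $s+1$ (via the standard $\Lambda/\Lambda$-shift appearing in Shahidi's formula, here with the three $L$-factors $L(ks,\cdot)$, $k=1,2,3$). (3) \emph{Rationality of the intertwining operator.} Show that the arithmetically-normalized standard intertwining operator $T^{\mathrm{arith}}_{\mathrm{st}}$ is $\mathrm{Aut}(\mathbb{C})$-equivariant on the relevant Hecke-module of cohomology classes — this is the analogue of the key rationality statement in \cite{HR}. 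Combined with the fact that the boundary cohomology is defined over $\iota(E)$ and that the relevant cohomology spaces are one-dimensional (so the operator acts by a scalar in $\iota(E)$), one deduces that the full ratio of products, times the archimedean/discriminant fudge factors $|\delta_{F/\mathbb{Q}}|^{5/2}$, lies in $\iota(E)$, giving part~(2); and tracking $\gamma$ through the construction yields the equivariance formula of part~(3). Part~(1) then follows because a nonzero cohomology class cannot become zero under a field automorphism — the vanishing of the ``denominator'' $L$-values is read off from non-degeneracy of the induced cohomology class, which is Galois-stable.

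**Main obstacle.** The hard part will be Step (2)–(3) in the presence of \emph{three} $L$-factors rather than one: I need to disentangle the single scalar by which the standard intertwining operator acts into the product $L(m,\cdot)L(2m,\omega)L(3m,\cdot\otimes\omega)$ over $L(m{+}1,\cdot)L(2m{+}1,\omega)L(3m{+}1,\cdot\otimes\omega)$ and track rationality \emph{of the product} only — individual factors need not be separately rational here (that finer statement, e.g. for $\mathrm{Sym}^3$, is the subject of the companion results and uses an additional input). Concretely, one must verify that the archimedean $L$-factors and $\varepsilon$-factors at the places over $\infty$, together with the Gauss-sum/discriminant contributions from the finite places, assemble into exactly the power $|\delta_{F/\mathbb{Q}}|^{5/2}$ and nothing transcendental is left over; over a totally imaginary $F$ (with a CM subfield) this relies on Galois-equivariance of the relative periods of ${}^{\iota}\sigma$ at the infinite places and a careful bookkeeping of the half-integral shifts ``$s\mapsto 2s$'' and ``$s\mapsto 3s$'', which is more delicate than the single-shift case of \cite{HR}. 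A secondary subtlety is confirming that for the $\alpha$-parabolic the relevant pieces of boundary cohomology are genuinely one-dimensional (so that ``acts by a scalar'' makes sense), which requires the non-monomiality-type hypotheses and a vanishing result for the inner cohomology of the Levi in the neighboring degrees.
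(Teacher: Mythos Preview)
Your overall architecture matches the paper's: boundary cohomology of $G_2$ along $P_\alpha$, Kostant representatives $w_{cl}$ and $w'_{cl}$, the Langlands--Shahidi constant term producing the three $L$-factors, arithmetic normalization of the local intertwining operators (Proposition~\ref{thm:nonarchint} for finite places, Proposition~\ref{archinterthma} for archimedean ones), and Galois equivariance (equation~\eqref{gammaequiv}) for part~(3). Two points need correction.

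\textbf{Part~(1) is not proved the way you suggest.} Your claim that vanishing is ``read off from non-degeneracy of the induced cohomology class, which is Galois-stable'' is not how the argument works, and it is not clear it can be made to work: the cohomology class exists whether or not the $L$-value vanishes; vanishing of the numerator scalar does not kill the class. The paper's argument is analytic. One first shows that at a half-integer critical point the factors $L(2m+1,\omega_{{}^{\iota}\sigma})$ and $L(3m+1,{}^{\iota}\sigma\otimes\omega_{{}^{\iota}\sigma})$ \emph{cannot} vanish: writing ${}^{\iota}\sigma={}^{\iota}\sigma^u\otimes|\cdot|^{-\mathbf{w}/2}$, the zeros of $L(s,{}^{\iota}\sigma^u\otimes\omega_{{}^{\iota}\sigma^u})$ lie in $0<\mathrm{Re}(s)<1$, and $3m+1-\tfrac{3\mathbf{w}}{2}$ never lands there for $m\in\tfrac12+\mathbb{Z}$; similarly for the Hecke factor. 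Hence any vanishing of the product is vanishing of $L(m+1,{}^{\iota}\sigma)$ alone, which forces $m+1-\tfrac{\mathbf{w}}{2}=\tfrac12$. Since the purity weight $\mathbf{w}$ is Galois-invariant by strong purity, this condition and the central vanishing $L(\tfrac12,{}^{\iota}\sigma^u)=0$ are independent of $\iota$.

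\textbf{You are missing the reduction to the evaluation point.} The rationality argument is carried out \emph{only} at the point of evaluation $k_\alpha=-\tfrac32$; the combinatorial lemma (Lemma~\ref{lem:combalpha}) together with Tate twists (Sections~\ref{sec:tt} and~\ref{sec:ttr}) moves the statement from an arbitrary pair $m,m+1$ in the critical set to this single point. Without this step your outline would require running the Eisenstein construction at every critical $m$, which is not what happens. Two smaller corrections: the induced modules $I_b^S(\sigma_f)_{w_{cl}}$ are $\mathsf{k}$-dimensional, not one-dimensional --- it is the \emph{archimedean} relative Lie algebra cohomology at each complex place that is one-dimensional, and the scalar you want emerges only after factoring out the $E$-rational operator $T_{\mathrm{loc},E}\otimes T_{\mathrm{norm},E}$ from $T_{\mathrm{Eis}}$; and the non-monomial hypothesis is needed only in the $P_\beta$ theorem (for $Ad^3$), not here.
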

We need to interpret the $L$-functions above as Langlands-Shahidi $L$-functions associated with an ambient group. The rank-one Eisenstein cohomology of this ambient group then provides the source of rationality for the ratios of the critical values of these $L$-functions. The Harder-Raghuram method has been successfully applied to various cases (\cite{HR,Ra21A,RagTI,BHA-Rag,KR}), with a unifying feature of these works being their focus on classical groups that involve a single $L$-function contributing to the constant term of the Eisenstein series.  In this paper, we establish the first instance of the Harder-Raghuram method in which the ambient group is an exceptional group where multiple $L$-functions appear in the constant term of the Eisenstein series. A key step in the Harder-Raghuram method is to provide a cohomological interpretation of aspects of the Langlands-Shahidi method. The case $G_2$ plays a key role in Shahidi’s early work \cite{Shahidi1978FunctionalES,Shahidi1981OnCL,SH89,ShBook} in developing the Langlands-Shahidi method. We expect our work to play a similar role, paving the way for studying the rationality of Langlands-Shahidi $L$-functions in numerous cases, where we deal with multiple $L$-functions and/or exceptional groups.

\subsection*{Outline of the paper.} 

Let $G = Res_{F/\mathbb{Q}}(G_2/F)$, and $S^G$ be a (adelic) locally symmetric space with level structure $K_f$ attached to $G$ as defined in Section \ref{sec:lss}. The open compact subgroup $K_f$ is assumed to be small enough and neat. Let $E$ be a sufficiently large Galois extension of $\mathbb{Q}$ containing a copy of $F$. Let $\mathcal{M}_{\lambda}$ be a sheaf attached to an irreducible representation of $G \times E$ as usual, then one can define the sheaf cohomology group $H^{\bullet}(S^{G},\mathcal{M}_{\lambda})$ 
which is a Hecke module. Let $\bar{S}^G = S^{G} \cup \partial S_G$ be the Borel-Serre compactification, then the cohomology of the compactified space and the space itself ``coincide'' due to the homotopy equivalency of two spaces. We have the following long exact sequence:
\begin{equation}
    \cdots \to H^{\bullet}_{c}(S^{G},\mathcal{M}_{\lambda}) \xrightarrow{\mathfrak{i}^{\bullet}} H^{\bullet}(S^{G},\mathcal{M}_{\lambda}) \xrightarrow{\mathfrak{r}^{\bullet}} H^{\bullet}(\partial S^{G},\mathcal{M}_{\lambda}) \to \cdots.
\end{equation}

Following \cite{HarderB} and \cite{SchK}, appealing to Kostant's theorem \cite{kos} one can describe the boundary cohomology in terms of algebraic (unnormalized) parabolic induction of the cohomology groups of the Levi factor $M$ of conjugacy classes of parabolic subgroups $P$ with sheaves originating from an irreducible representation of $M \times E$ with highest weight $\mu$ obtained by twisted action of Kostant representative as described in Section \ref{boundcomp} and \ref{sec:kos}. We can compatibly define the arithmetic cohomology of the Levi factors and derive the same long exact sequence as above. The image of $\mathfrak{i}^{\bullet}$, denoted by $H^{\bullet}_{!}(S^{M},\mathcal{M}_{\mu})$, is known as inner cohomology. It admits an isotypical decomposition for ``sufficiently large" $E$, \cite[Section 2.3.5]{HR}. In Section \ref{ch:SVL}, we define the strongly inner cohomology group \(H^{\bullet}_{!!}(S^{M}, \mathcal{M}_{\mu}) \subset H^{\bullet}_{!}(S^{M}, \mathcal{M}_{\mu})\). After a base change using \(\iota: E \to \mathbb{C}\), the strongly inner cohomology precisely captures the cuspidal cohomology group. In Section \ref{sec:cohrep}, for a Hecke summand ${^{\iota} \sigma_f} \in H_{!!}(S^{M},\mathcal{M}_{\mu})\otimes_{E,\iota} \mathbb{C}$, we explicitly define an infinitesimal representation ${^{\iota}\sigma_{\infty}}$ in terms of the highest weight ${^{\iota}\mu}$ such that ${^{\iota}\sigma} = {^{\iota}\sigma_f} \otimes {^{\iota}\sigma_{\infty}}$ is a cuspidal cohomological representation.

In Section \ref{ch:LS}, we review the Langlands-Shahidi method to obtain the ratio of the product of automorphic $L$-functions appearing in the constant term of the Eisenstein series. To prove the rationality of the ratios of these $L$-functions, we develop a cohomological framework that aligns with the cohomological Langlands-Shahidi method (\cite[Section 9.2.1]{HarderB}, \cite[Figure 6.18]{HR}). The general concept of these constructions is depicted in Figure \ref{fig:proof}.

In Figure \ref{fig:proof}, the outer diagram illustrates the Langlands-Shahidi method for a standard maximal parabolic subgroup $P=MU$, where $\tilde{\sigma}$ is the contragredient and $Ind$ denotes the normalized parabolic induction evaluated at the point of evaluation $k$, see Definition \ref{def:poe}. At the point of evaluation, the normalized and algebraic parabolic inductions coincide and we can relate the cohomological and the automorphic frameworks (the dashed lines). Moreover, the map $M$ is the standard intertwining map, $Eis_{P}$ is the Eisenstein summation, and $\mathcal{F}_{P}$ is the constant term along $P$, see Equation \ref{def:constant} and its following remark.

The inner diagram illustrates the cohomological counterpart of the Langlands-Shahidi machinery. We use the notations introduced in Section \ref{sec:MD}, here
\[
I_{b}^{S}(\sigma_f)_{w_{cl}} := ^{a}\!\operatorname{Ind}_{P(\mathbb{A}_f)}^{G(\mathbb{A}_f)}(H^{b_2^{F}}(\underline{S}^{M}, \mathcal{M}_{w_{cl}.\lambda}(\sigma_f))^{K_f},
\]
where the subscript $b$ indicates that we are considering the bottom-degree situation. The term $w_{cl}$ denotes a specific Kostant representation, as described earlier, ensuring that the cohomology of the Levi factor is non-trivially contributes to boundary cohomology at bottom-degree $q_b$. The Kostant representative $w_{cl}$ is determined by one of the key steps in this paper, the combinatorial lemma (Lemma \ref{lem:combalpha} and \ref{lem:combbeta}).

\begin{figure}[h!]
    \centering
    \includegraphics[width=1\linewidth]{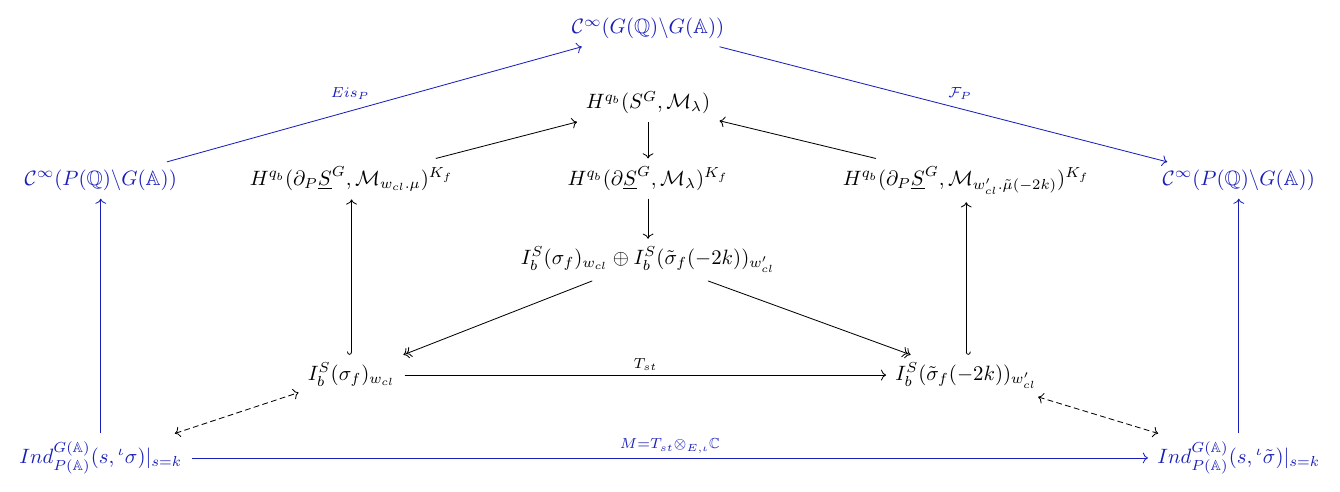}
    \caption{Cohomological Langlands-Shahidi method via Eisenstein cohomology}
    \label{fig:proof}
\end{figure}

The Hecke summand $\sigma_f$ can be naturally decomposed into its local components, allowing the induced module to be expressed as a tensor product of these components. In the cohomological setting, the intertwining operator $T_{st}$ is represented as a tensor product of local intertwining maps, each of which is $E$-linear, justifies the notation used in the Figure \ref{fig:proof}. At the point of evaluation and after a base change via $\iota: E \to \mathbb{C}$, we prove that the local intertwining operators coincide with their automorphic counterparts, contributing non-trivially to $L$-values in a manner consistent with the automorphic setting. The proof for non-archimedean places is presented in Section \ref{sec:Nonint}, following the approach outlined in \cite{Raghurampadic} for the general case. The archimedean case is addressed in Section \ref{sec:arch} and is more intricate. The proof begins by showing that, at the transcendental level, the induced modules evaluated at $k$ are irreducible and the intertwining maps are isomorphisms (Propositions \ref{prop:indirrbeta} and \ref{prop:indirrbeta}). Using cocycle decomposition in the automorphic setting and Delorme's lemma in the cohomological setting, the calculation is reduced to rank-one operators. Referring to the computations in \cite[Section 4.1]{RagTI} for $GL_2$, we demonstrate that the local intertwining operators at archimedean places contribute non-trivially to the ratios of archimedean local $L$-factors in our case, as established in Propositions \ref{archinterthm} and \ref{archinterthma}.

The next step is to state and prove a version of the Manin-Drinfeld principle \cite[Theorem 5.12]{HR} for our case. As outlined in Theorem \ref{thm:mnnb}, this principle asserts that the direct sum 
\begin{equation}\label{summand}
I_{b}^{S}(\sigma_f)_{w_{cl}} \oplus I_{b}^{S}(\sigma_f (-2k))_{w'_{cl}},
\end{equation} 
splits off as an isotypical Hecke summand within the Hecke module $H^{q_b}(\partial \underline{S}^{G}, \mathcal{M}_{\lambda})^{K_f}$. The dimensions of the individual summands are each $\mathsf{k}$, resulting in a total dimension of $2\mathsf{k}$ for the direct sum. 

We first establish a multiplicity-one result specific to our case to prove Theorem \ref{thm:mnnb}. In particular, we show that the algebraically induced representations discussed earlier are not equivalent almost everywhere to any representations induced from other parabolic subgroups. Furthermore, we demonstrate that $^{a}\!\operatorname{Ind}_{P(\mathbb{A}_{f})}^{G(\mathbb{A}_{f})}(\sigma_{f})$ is not equivalent almost everywhere to any other parabolically induced representation along other conjugacy classes of parabolic subgroups. The proof is an extension of the approach in \cite[Propositions 2.3.1 and 2.3.3]{mundy} for $G_2$ over a totally real number field to the case of a totally imaginary number field.

The final component is our main theorem on the rank-one Eisenstein cohomology for the group $G$, presented as Theorem \ref{thm:eis}. This theorem establishes that the image of Eisenstein cohomology forms a $\mathsf{k}$-dimensional $E$-subspace within the isotypic component described in \ref{summand}, which has a total dimension of $2\mathsf{k}$. The rationality results are then obtained by following the approach outlined in \cite{HR}. In our construction, ``\textit{the Eisenstein cohomology is analogous to a line within a two-dimensional plane, where the slope corresponds to the ratio of the products $L$-values}'' discussed earlier.

In Section \ref{sec:critical}, we explicitly calculate the critical integer for the $L$-factors and determine the set of critical integers for the product of $L$-factors. We show that the set of critical values forms an interval of half-integers. To apply the rationality results outlined for the evaluation point to all the critical points, we use Tate twists and track the combinatorial conditions that determine $w_{cl}$. This imposes a bound on the possible Tate twists that covers all the points in the critical set of the product of $L$-functions—``\textit{no more and no less!}''—as stated and proved in the combinatorial lemma (see Lemmas \ref{lem:combbeta} and \ref{lem:combalpha}). Thus, we can extend our main rationality results to all critical points of the products of $L$-functions by studying the rationality results for the evaluation point alone.

Furthermore, we observe that for a given representation, the critical set of the product of $L$-functions covers all critical integers of one and only one of the $L$-functions involved. By imposing a specific combinatorial condition on the representation ${^{\iota}\sigma}$, we can extend our main rationality results to any of the $L$-functions that appear in the product at all critical points, assuming that the rationality results for the other $L$ functions involved are known. This combinatorial condition is detailed in Section \ref{sec:ttr}. As an application, we can derive similar rationality results for $L(s, \mathrm{Sym}^3({^{\iota}\sigma}))$, where $\mathrm{Sym}^3$ denotes the symmetric cube transfer for $GL_2$. This is achieved by leveraging our results for $L(s, \mathrm{Ad}^3({^{\iota}\sigma}))$ and twisting the representation by the central character. Rationality results for $\mathrm{Sym}^3$ have previously been established in special cases by \cite{GarHar,KSSpecial,GrobRasym,Januszewski2011} and in the general case by Raghuram \cite{Racub}, who used functoriality to derive these results by fstudying the Eisenstein cohomology of $GL_n \times GL_{n-1}$. Although our approach is closely aligned with that of \cite{Racub}, our results are obtained independently of functoriality.

\vspace{5pt}
\noindent\textbf{Acknowledgements.} This article is a revised version of my Ph.D. thesis. I am grateful to my advisors Freydoon Shahidi and A. Raghuram for suggesting the problem and their guidance and encouragement. I also extend my gratitude to Gunter Harder for his profound work, which inspired this paper. I would also like to thank Jayce Getz for many useful discussions and numerous suggestions for the improvement of the manuscript.

\pdfbookmark{TABLE OF CONTENTS}{Contents}
\tableofcontents

\section{Preliminaries}\label{sec:Preliminaries}
\subsection{The base field}
Let $F$ be a number field of degree $d_F=[F:\mathbb{Q}]$. The set of archimedean places of $F$, denoted by $S_{\infty}$, corresponds to the embeddings of $F$ into $\mathbb{C}$, up to the action of $\text{Aut}(\mathbb{C}/\mathbb{R})$. Fix the notation for the pair of complex conjugate embeddings $\eta_v, \bar{\eta}_v \in Hom_{\mathbb{Q}}(F,\mathbb{C})$ associated with the complex place $v$.  In this paper, we are interested in totally imaginary number fields. In this case $d_F=n_2 =2\mathsf{r}$ where $\mathsf{r}$ is the number of complex places.

A totally imaginary number field $F$ contains a unique maximal totally real subfield $F_0$, it is known that $F_0$ has at most one totally imaginary quadratic extension within $F$ \cite{Weil79}. A totally imaginary quadratic extension of a totally real number field is called a CM-field. Following \cite{RagTI}, define two cases:
\begin{enumerate}
    \item[2.1.] $\mathbf{CM-case:}$  In this case, there is an intermediate CM-field extension $F_1$ of $F_0$ inside $F$.
    \item[2.2.] $\mathbf{TR-case:}$ In this case, there is no CM-subfield of $F$ containing $F_0$. We will denote $F_1 = F_0$ for future reference.
\end{enumerate}
For a number field $F$, the ring of adeles of $F$ is denoted by $\mathbb{A}_F$. If $F=\mathbb{Q}$, denote the rings of adeles of $\mathbb{Q}$ with $\mathbb{A}= \mathbb{A}_f \times \mathbb{A}_{\infty}$ where $\mathbb{A}_f$ is the ring of finite adeles. 

\subsection{The groups}
Let $G_0/F$ be a split exceptional group of type $\mathbf{G_2}$ defined over a totally imaginary number field $F$. The group $G_{0}$ is both adjoint and simply connected \cite{Zampera}. Fix a split Cartan subgroup $T_0$ in $G_0$. Let $B_0 = T_0 U_0$ be a fixed Borel subgroup of $G_0$ containing $T_0$. Let $\Delta = \{\alpha, \beta\}$ denote the set of simple roots, where $\alpha$ is the short root and $\beta$ is the long root of $T_0$. Let $\Phi$ and $\Phi^{+}$ be the sets of all roots and positive roots of $G_0$, respectively, corresponding to the chosen Borel subgroup. Then:
\[
    \Phi^{+} = \{\alpha,\; \beta,\; \alpha+\beta,\; 3\alpha+2\beta,\; 2\alpha+\beta,\; 3\alpha+\beta\}.
\]

\begin{figure}[htp]
    \centering
    \resizebox{0.4\textwidth}{0.4\textwidth}{
    \begin{tikzpicture}[
        % arrow heads for all lines (with narrower arrow head width)
        -{Straight Barb[bend,
           width=\the\dimexpr10\pgflinewidth\relax,
           length=\the\dimexpr12\pgflinewidth\relax]},
      ]
      \draw [opacity =0,dashed,fill=blue!50,fill opacity=0.2]{(0,0) -- (60:4.6) -- (0,4) -- cycle}  ;
        % straight arrows
        \foreach \i in {0, 1, ..., 5} {
          \draw[thick, blue] (0, 0) -- (\i*60:2);
          \draw[thick, blue] (0, 0) -- (30 + \i*60:3.5);
        }
        % arc arrow
        \centerarc[thick,<->, dashed,red](0,0)(0:5*30:1.5cm);
       
        % annotations
        \node[right] at (2, 0) {$\alpha$};
        \node[above left, inner sep=.2em] at (5*30:3.5) {$\beta$};
        \node[above] at (4*30:2) {$\alpha+\beta$};
        \node[above] at (2*30:2) {$\gamma_s = 2\alpha+\beta$};
        \node[above] at (3*30:3.5) {$\gamma_l = 3\alpha+2\beta$};
        \node[above right] at (30:3.5) {$3\alpha+\beta$};
        \node[right] at (15:1.5) {\color{red}$\Phi^{+}$};
    \end{tikzpicture}}
    \caption{Root Lattice of $\mathbf{G_2}$, Dominant chamber is shaded}
    \label{fig:G2}
\end{figure}
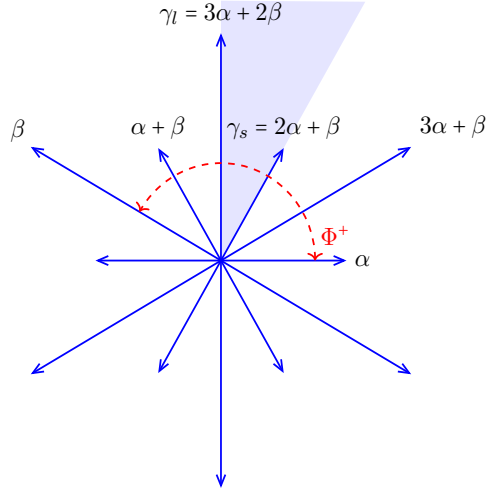%G2 Lattice
The fundamental weights are $\gamma_s = 2\alpha+\beta$ and $\gamma_l = 3\alpha+2\beta$. The half sum of positive roots is
\begin{equation}\label{eq:halfsum}
    \rho_{G} = \frac{1}{2}\sum_{\gamma \in \Phi^{+}} \gamma = 5\alpha+3\beta= \gamma_l+\gamma_s.
\end{equation}

% The dual group $G_{0}^{\vee}$ is again a group of type $\mathbf{G_2}$. The dual root of the simple short root of $G_{0}$ will be the simple long root of $G_{0}^{\vee}$, and the dual of a long root will be the short one, therefore $\Delta^{\vee}=\{\alpha^{\vee}, \beta^{\vee}\}$.

% Let $( \; ,\; )$ denote the standard inner product of the vector system generated by the simple roots and $\langle \theta_1 , \theta_2 \rangle = \frac{2(\theta_1,\theta_2)}{(\theta_2,\theta_2)}$,  then we have:
% \[
% \begin{pmatrix}
% (\alpha,\alpha) & (\alpha,\beta) \\
% (\beta,\alpha) & (\beta,\beta) 
% \end{pmatrix} = \begin{pmatrix}
% 2 & -3 \\
% -3 & 6
% \end{pmatrix}, \qquad \begin{pmatrix}
% \langle \alpha,\alpha \rangle & \langle \alpha,\beta\rangle \\
% \langle \beta,\alpha\rangle  & \langle \beta,\beta \rangle
% \end{pmatrix} = \begin{pmatrix}
% 2 & -1 \\
% -3 & 2
% \end{pmatrix},
% \]
% the matrix on the right-hand side above is the Cartan matrix of $G_0$.

The Weyl group of $G_{0}$ denoted by $W_{0}$, is isomorphic to the dihedral group $D_{6}$ with 12 elements. The Weyl group acts naturally on the root lattice of $G_{0}$. It is generated by the simple reflections $w_{\alpha} :=s_{\alpha}$ and $w_{\beta} :=s_{\beta}$ about the line perpendicular to $
\alpha$ and $\beta$, respectively. The reflection $s_{\theta}$ for any $\theta \in \Phi$ is defined as:
\[
    s_{\theta} (\vartheta ):=  \vartheta - \langle \vartheta,\theta^{\vee} \rangle \theta, \quad \forall \vartheta \in \Phi.
\]
For simplicity, denote $w_{\theta_2}w_{\theta_1}$ by $w_{\theta_1\theta_2}$ for any positive roots $\theta_1$ and $\theta_2$. Then:
\[
W=\{1,\;w_{\alpha}, \;w_{\beta}, \;w_{\alpha\beta}, \;w_{\beta\alpha}, \;w_{\alpha\beta\alpha}, \;w_{\beta\alpha\beta},\; w_{\alpha\beta\alpha\beta},\; w_{\beta\alpha\beta\alpha},\; w_{\alpha\beta\alpha\beta\alpha},\; w_{\beta\alpha\beta\alpha\beta},\; w_{G}\},
\]
where $w_{G}=w_{\alpha\beta\alpha\beta\alpha\beta}=w_{\beta\alpha\beta\alpha\beta\alpha}$ is the longest element of $W$ sending all the positive roots to the negative ones, and more precisely it sends any $\theta \in \Phi^{+}$ to $-\theta$. As $w_{\alpha}^2 = w_{\beta}^2 = 1$, the inverse of each Weyl element is simply the element with a subscript in the inverse order. Let $l(w)$ be the length of an element $w\in W$, then on has:
\[
    l(w) = \#\{\theta \in \Phi^{+} \;|\; w(\theta) \in \Phi^{-}\}, 
\]
this is equivalent to the number of elements in the subscript.  We have $0\leq l(w) \leq 6$, where elements with odd length represent the reflection of the single positive root, and elements with even length are the rotations of angles which are multiples of $\pi/3$.

The character group of $T_0$ denoted by $X^{*}(T_0)$, and the co-character group denoted  by $X_{*}(T_0)$ of $T_0$, can be described explicitly as:
\begin{equation}\label{eq:char}
X^{*}(T_0) \simeq \mathbb{Z}(2\alpha+\beta)\oplus \mathbb{Z}(\alpha+\beta), \quad X_{*}(T_0) \simeq \mathbb{Z}(2\alpha+\beta)^{\vee} \oplus \mathbb{Z}(\alpha+\beta)^{\vee}.
\end{equation}
Moreover, $T_0(F)$ can be identified with $F^{*}\times F^{*}$ as:
\begin{align*}
    t_0 : T_0(F) &\simeq F^{*}\times F^{*},\\
    t &\mapsto ((2\alpha+\beta)(t), (\alpha+\beta)(t)).
\end{align*}
More explicitly, for $x \in F^{\star}$ we have:
\begin{align*}
    \alpha(t_{0}^{-1}(a,b)) &=ab^{-1}, & \beta(t_{0}^{-1}(a,b)) &= a^{-1}b^{2},\\
    \alpha^{\vee}(x) &=t_{0}^{-1}(x,x^{-1}), & \beta^{\vee}(x) &= t_{0}^{-1}(1,x).
\end{align*}
Furthermore, under the map $t_0$ a character $\chi$ of $T_0(F)$ can be represented by $\chi_1\otimes \chi_2$ where $\chi_1$ and $\chi_2$ are characters of $F^{*}$.

The group $ G_0 $ has three proper standard parabolic $ F $-subgroups: the minimal standard parabolic $ F $-subgroup $ B_0 $, and two maximal standard parabolic $ F $-subgroups, $ P_{0,\alpha} $ and $ P_{0,\beta} $, corresponding to the roots $\alpha$ and $\beta$, respectively. For $\theta \in \Delta$, the Levi decomposition $ P_{0,\theta} = M_{0,\theta}U_{0,\theta} $ holds, where $ U_{0,\theta} $ is the unipotent radical, and $ M_{0,\theta} $ is the Levi factor containing $ T_0 $ and $\theta$ as its simple root. We define an equivalence relation between parabolic $ F $-subgroups of $ G_0 $, where two parabolic $ F $-subgroups are equivalent if their Levi factors are conjugated by an element in $ G_0(F) $. The equivalence class of a parabolic $ F $-subgroup $ P_0 $ is called an associate class and is denoted by $[P_0]$. All standard parabolic subgroups of $ G_0 $ are self-associate. Moreover, for $\theta \in \Delta$, the unipotent subgroup $ U_{0,\theta} $ is generated by the root spaces corresponding to the five roots in $ \Phi^{+} \setminus \{\theta\} $, making $ U_{0,\theta} $ a subgroup of dimension 5.

For any $\vartheta \in \Phi^{+}$, let $X_{\vartheta}:  F\to G_{0}(F)$ be the corresponding one-dimensional space and then by the application of Jacobson-Morozov theorem there is a map $\phi_{\vartheta}: SL_2(F) \to G_{0}(F)$ where for $x \in F^{*}$:
\[
    \phi_{\vartheta}\left(\begin{pmatrix}
1 & x \\
0 & 1 
\end{pmatrix}\right) = X_{\vartheta}(x), \qquad \phi_{\vartheta}\left(\begin{pmatrix}
1 & 0 \\
x & 1 
\end{pmatrix}\right) = X_{-\vartheta}(x), \qquad \phi_{\vartheta}\left(\begin{pmatrix}
x & 0 \\
0 & x^{-1} 
\end{pmatrix}\right) = \vartheta^{\vee}(x).
\]
For $\theta \in \Delta$, the Levi factor defined to be the centralizer of $\gamma^{\vee}(F^{*})$, where $\gamma$ is the fundamental root perpendicular to $\theta$, in $G_{0}(F)$. The group generated by $X_{\theta}(F^{*})$, $X_{-\theta}(F^{*})$, and $\gamma^{\vee}(F^{*})$ in $G_0(F)$ is the Levi factor $M_{0,\theta} \simeq GL_2(F)$ \cite{SH89}. We can extend the map $\phi_{\theta}: SL_2(F) \to M_{0,\theta}(F)$ to the map $\phi_{\theta}: GL_2(F) \to M_{0,\theta}(F)$, by defining:
\[
    \phi_{\theta}\left(\begin{pmatrix}
    x & 0 \\
    0 & 1 
    \end{pmatrix}\right) = \gamma^{\vee}(x).
\]
Then, the restriction of this map to the diagonal matrices will provide us with a parametrization of the maximal torus $\phi_{\theta}|_{F^{*}\times F^{*}}: F^{*}\times F^{*} \to T_{0}(F)$. Denote this map as $t_{\theta}$. Let us outline this parametrization for both short and long simple roots.

The map $t_{\alpha}: F^{*}\times F^{*} \simeq T_0(F)$ defined as above, is adjusted to the short root $\alpha$ as the simple root of $M_{0,\alpha}$ and for $t \in T_{0}(F)$ is given by:
\[
    t_{\alpha}(\text{diag}((2\alpha+\beta)(t), (\alpha+\beta)(t))) = t.
\]
It gives us the parametrization of the maximal torus inside of the $M_{0,\alpha}$ as
\begin{align*}
    \alpha(t_{\alpha}(a,b)) &=ab^{-1}, & \beta(t_{\alpha}(a,b)) &= a^{-1}b^{2}.
\end{align*}
    
Another parametrization $t_{\beta}:F^{*}\times F^{*}\to T_0(F)$ adjusted to the long root $\beta$ as the simple root of $M_{0,\beta}$ and for $t \in T_{0}(F)$ is given by:
\begin{equation}
    t_{\beta}(\text{diag}((\alpha+\beta)(t), (\alpha)(t))) = t.
\end{equation}
Again, it gives us the parametrization of the maximal torus inside of the $M_{0,\beta}$ as
\begin{align*}
    \alpha(t_{\beta}(a,b)) &=b,& \beta(t_{\beta}(a,b)) &= ab^{-1}.
\end{align*}

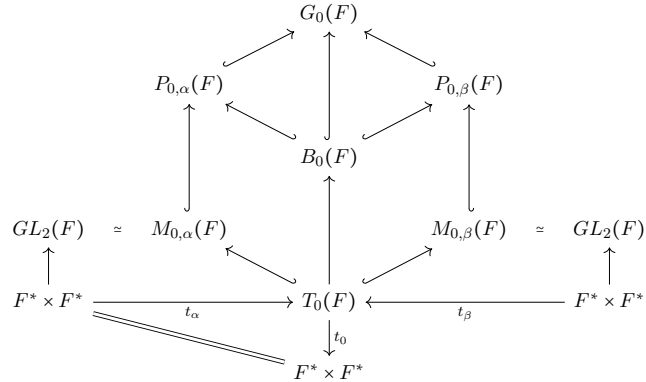
\begin{figure}[ht]
    \centering
    \[\adjustbox{scale=0.7,center}{\begin{tikzcd}
	&& {G_0(F)} \\
	& {P_{0,\alpha}(F)} && {P_{0,\beta}(F)} \\
	&& {B_{0}(F)} \\
	{GL_2(F)} & {M_{0,\alpha}(F)} && {M_{0,\beta}(F)} & {GL_2(F)} \\
	{F^* \times F^*} && {T_0(F)} && {F^* \times F^*} \\
	&& {F^* \times F^*}
	\arrow[hook, from=2-2, to=1-3]
	\arrow[hook', from=2-4, to=1-3]
	\arrow[hook, from=3-3, to=1-3]
	\arrow[hook, from=3-3, to=2-4]
	\arrow[hook', from=3-3, to=2-2]
	\arrow[hook, from=4-2, to=2-2]
	\arrow[hook', from=4-4, to=2-4]
	\arrow[hook, from=5-3, to=4-4]
	\arrow[hook', from=5-3, to=4-2]
	\arrow[from=5-3, to=3-3]
	\arrow["{t_{\beta}}"', tail reversed, no head, from=5-3, to=5-5]
	\arrow["{t_{\alpha}}", tail reversed, no head, from=5-3, to=5-1]
	\arrow["{t_0}"', tail reversed, no head, from=6-3, to=5-3]
	\arrow[from=5-5, to=4-5]
	\arrow[from=5-1, to=4-1]
	\arrow["\simeq"{description}, draw=none, from=4-1, to=4-2]
	\arrow["\simeq"{description}, draw=none, from=4-5, to=4-4]
	\arrow[Rightarrow, no head, from=5-1, to=6-3]
    \end{tikzcd}}\]
    \caption{Parametrizations of the maximal torus $T_0(F)$}
    \label{fig:parametrization}
\end{figure}%G2 coroot Lattice

\begin{remark}\label{choice of character}
    The maps $t_0^{-1}$ and $t_{\alpha}$ defined above are identical. They underscore the distinction that $t_0$ describes the maximal torus embedded in the Borel subgroup while $t_{\alpha}$ characterizes the torus embedded in the Levi factor $M_{0,\alpha}(F)$ through its parametrization in terms of $GL_2(F)$. The diagram in Figure \ref{fig:parametrization} helps to have a better understanding of these embeddings.
\end{remark}

The seven-dimensional representation of $\mathbf{G_2}$ attached to the short fundamental weight $\gamma_{s} = 2\alpha+\beta$ is recognized as the smallest representation of $\mathbf{G_2}$, wherein every representation of $\mathbf{G_2}$ can be found within its tensor algebra\cite{ful-Har}. This representation is called the standard representation of $\mathbf{G_2}$ and is denoted by $R_7$. We will follow \cite{mundy} to describe a matrix representation of the Levi factors under the standard representation $R_7$. Let $V_7$ be the representation space of $R_7$. The set of weight vectors of $V_7$ corresponding to weights of $R_7$ is $\{v_{-(\gamma_s)},\; v_{-(\alpha+\beta)},\; v_{-\alpha},\; v_{0},\; v_{\alpha},\; v_{\alpha+\beta},\; v_{\gamma_s}\}$, it forms an ordered basis of the space of $7\times 7$ matrices representing $\mathbf{G_2}$. 

Translating the weights of the standard representation by $w_{\alpha}$, determine the decomposition of the representation of $M_{0,\alpha}$. Considering the representative of the maximal torus introduced earlier we have:
\begin{equation}
    R_7 \circ t_{\alpha} = \operatorname{diag}(\tilde{\rho}_2, \operatorname{Ad}^2, \rho_2) \in SO(7,\mathbb{C}),
\end{equation}
the representations $\rho_2$ and $\tilde{\rho}_2$ are the standard representation of $GL_2(\mathbb{C})$  and its contragredient, respectively. Also $Ad^2$ is the adjoint square representation of $GL_2(\mathbb{C})$. Similarly, translations of weights by $w_{\beta}$ we have:
\begin{equation}
    R_7 \circ t_{\beta} = \operatorname{diag}(\operatorname{det}^{-1}, \tilde{\rho}_2, 1, \rho_2, \operatorname{det}) \in SO(7,\mathbb{C}).
\end{equation}

\

\subsection{Locally symmetric spaces}\label{sec:lss}
Let $G=Res_{F/\mathbb{Q}}(G_0)$ be the rational group of type $G_2$ obtained by Weil restriction. Since $G_{0}$ is a split group over a number field $F$, the group $G$ is a quasi-split group \cite{borel2012linear}, with $B = Res_{F/\mathbb{Q}}(B_{0})$ being the Borel subgroup of $G$. Similarly, we can define the restriction of scalars of all the subgroups of $G$ corresponding to those in $G_{0}$ and denote them by the same notation without the subscript $0$.

The adelic points of the group denoted by $G(\mathbb{A}) = G_0(\mathbb{A}_F)$, where its group at infinity $G(\mathbb{R})$ is a direct product of complex groups of type $\mathbf{G_2}$ over archimedean places. Fix a maximal compact subgroup of $G(\mathbb{R})$ in a regular way:
\[
K_{\infty}\simeq  \prod_{v\in S_c} \mathbf{G_2^{c}},
\]
where $\mathbf{G}^{c}_2$ is the compact form of a group of type $\mathbf{G_2}$, \cite[Chapter X.6]{Hel}. Note that $K_{\infty}$ is a connected group.

Let
\[
    S^{G} := S^{G}_{K_f} = G(\mathbb{Q})\backslash G(\mathbb{A})/ K_{\infty}K_f.
\]
be a locally symmetric space attached to $G$ and an open compact subgroup $K_f$ in $G(\mathbb{A}_f)$. We may assume $K_f$ to be neat, see \cite[Lemma 15.2.5]{GetzHahn2024}, ensuring that $S^{G}$ is a smooth manifold. When $K_f$ is clear from the context, we will omit it from the notation as above to simplify expressions.

The space $S^{G}$ is not compact as the $\mathbb{Q}$-rank of $G$ is greater than one \cite{BHC}. Let $\bar{S}^{G} \supset S^{G}$ be the Borel-Serre compactification of $S^{G}$, constructed by adding a boundary stratified as $G(\mathbb{Q})$-conjugacy classes of parabolic subgroups $P$ over the field of rational numbers, denoted by $\partial S^{G} = \cup_{P}\partial_{P} S^{G}$ \cite{BS}. Since we assumed $K_f$ to be neat, $\bar{S}^{G}$ is a compact finite-volume manifold with corners.

For a parabolic subgroup $P = M U$, we can define the locally symmetric spaces attached to its Levi quotient, $\pi_P: P \to M$,
\begin{align*}
    S^{M} &= M(\mathbb{Q})\backslash M(\mathbb{A})/ K_{\infty,M}K_{f,M},
\end{align*}
where $K_{\infty,M} = \pi_P(P\cap K_{\infty}) \subset M_{P}(\mathbb{R})$ is a maximal compact subgroup, and  $K_{f,M} \subset M_{P}(\mathbb{A}_f)$ is compact-open subset defined in \cite{HarderB} which is neat as it inherit the neatness from the $K_f$ through the quotient map \cite{SchwermerGC}. In maximal parabolic cases, since the Levi factor $M_{0, \theta}$ is isomorphic to $GL_2/F$, we can analyze the locally symmetric space of $M_{\theta}$ more explicitly using the one for $GL_2$. Consider the standard subgroups of $ GL_2/F $, the Borel subgroup $ B_{0,2} $ consisting of upper triangular matrices, the maximal torus $ T_{0,2} $ represented by the diagonal matrices subgroup, the center of $ GL_2/F $ denoted by $ Z_{0,2} $, and the maximal $\mathbb{Q}$-split subgroup of $ Z_{0,2} $ denoted by $ S_{0,2} $. The corresponding subgroups in the restriction of scalars of the Levi factor are denoted as follows:
\[M_{\theta} \supset B_{2,\theta}=T_{2,\theta}.U_{2,\theta} \supset Z_{2,\theta} \supset S_{2,\theta}.\]

 The group at infinity is: 
\[M_{\theta}(\mathbb{R})\simeq  \prod_{v\in S_c} GL_2(\mathbb{C}),\]
with the center $Z_{2,\theta} \simeq \prod_{v\in S_r} \mathbb{R}^{\times}.I_2 \times \prod_{v\in S_c} \mathbb{C}^{\times}.I_2$ where $I_2 = diag(1,1)$, and the maximal torus $S_{2,\theta}(\mathbb{R})$ sits diagonally in $Z_{2,\theta}(\mathbb{R})$. The maximal compact subgroup  of $M_{\theta}(\mathbb{R})$ is:
\[C_{2,\theta,\infty} \simeq \prod_{v\in S_c} U(2)\]

Moreover, define:
\[K_{2,\theta,\infty} = C_{2,\theta,\infty}.S_{2,\theta}(\mathbb{R})\]
and $K^{\circ}_{2,\theta,\infty} = C_{2,\theta,\infty}.S_{2,\theta}(\mathbb{R})^{\circ}$ is its connected component. The maximal compact subgroup $K_{2,\theta,\infty}$ is connected. 

To study the cohomology group discussed in this text, we need the dimensions of the spaces introduced above. We will list them here for future reference.  The dimension of the group $G(\mathbb{R})$ is:
\[
    \dim_{\mathbb{R}}(G(\mathbb{R})) = \mathsf{r}dim_{\mathbb{R}}(\mathbf{G_2}(\mathbb{C})) = 28\mathsf{r} = 14d_F.
\]
For $\theta\in \Delta$, the dimension of the unipotent subgroup is:
\[
    \dim_{\mathbb{R}}(U_{\theta}(\mathbb{R})) = d_F. dim_{\mathbb{R}}(U_{0,\theta}(\mathbb{R})) = 5d_F.
\]

Moreover, we have:
\begin{align*}
    \dim_{\mathbb{R}}(G(\mathbb{R})) &= 28\mathsf{r},\\
    \dim_{\mathbb{R}}(K_{\infty}) &= 14 \mathsf{r},  \\
    \dim_{\mathbb{R}}(S^{G}) &= \dim_{\mathbb{R}}(G)-\dim_{\mathbb{R}}(K_{\infty})= 14 \mathsf{r},  \\
    \dim_{\mathbb{R}}(\partial S^{G}) &= 14\mathsf{r}-1.
\end{align*}
For $\theta \in \Delta$, we can calculate the dimensions associated with the Levi factor $M_{\theta}$, as it is isomorphic to the group $Res_{F/\mathbb{Q}}(GL_2/F)$:
\begin{align*}
    \dim_{\mathbb{R}}(S^{M_{\theta}}) &=4\mathsf{r}-1.
\end{align*}

\subsection{Characters of the torus and its parametrizations}\label{sec:char}
Let $E/\mathbb{Q}$ denote a `large enough' Galois extension containing a copy of $F$, as outlined in \cite{RagTI}. Let $\iota:E \to \mathbb{C}$ be an embedding, define the map $\iota_{*}:\operatorname{Hom}(F,E) \to \operatorname{Hom}(F,\mathbb{C})$ by $\iota_{*}(\tau) = \iota \circ \tau =:\tau^{\iota}$, and denote the complex conjugate $\overline{\iota_{*}(\tau})$ by $\bar{\tau}^{\iota}$.

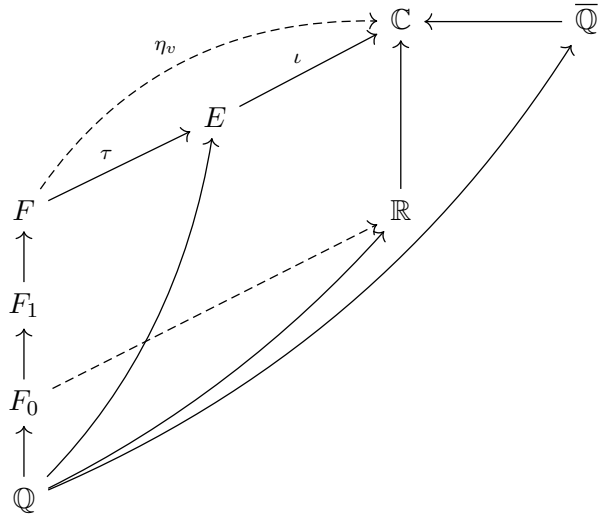
\begin{figure}[htp]
\centering 
\[\begin{tikzcd}
	&&&& {\mathbb{C}} && {\bar{\mathbb{Q}}} \\
	&& E \\
	F &&&& {\mathbb{R}} \\
	{F_1} \\
	{F_0} \\
	{\mathbb{Q}}
	\arrow[from=6-1, to=5-1]
	\arrow[from=5-1, to=4-1]
	\arrow[from=4-1, to=3-1]
	\arrow[curve={height=18pt}, from=6-1, to=2-3]
	\arrow[curve={height=12pt}, from=6-1, to=3-5]
	\arrow[from=3-5, to=1-5]
	\arrow["\iota", from=2-3, to=1-5]
	\arrow["\tau", from=3-1, to=2-3]
	\arrow[dashed, from=5-1, to=3-5]
	\arrow[curve={height=30pt}, from=6-1, to=1-7]
	\arrow[from=1-7, to=1-5]
	\arrow["{\eta_v}", curve={height=-30pt}, dashed, from=3-1, to=1-5]
    \end{tikzcd}\]
    \caption{Embedding of Fields in Totally Imaginary Case}
    \label{fig:fieldE}
\end{figure}

Now we can define the algebraic character module,
\[
    X^{*}(T\times E) := \operatorname{Hom}(T\times_{\mathbb{Q}} E, \mathbb{G}_m).
\]
There is a natural action of $\operatorname{Gal}(E/\mathbb{Q})$ on $X^{*}(T\times E)$. Moreover, since $T = Res_{F/\mathbb{Q}}(T_0)$ and $T_0$ is split over $F$, we have
\[
    X^{*}(T\times E) = \bigoplus_{\tau:F\to E}X^{*}(T_0\times_{F,\tau} E) = \bigoplus_{\tau:F\to E}X^{*}(T_0).
\]

Any $\lambda \in X^{*}_{\mathbb{Q}}(T\times E)= X^{*}(T\times E)\otimes \mathbb{Q}$, can be decomposed  to $\lambda = (\lambda^{\tau})_{\tau: F\to E}$ where $\lambda^{\tau}$ is a rational character of $T_0$. Moreover, the action of $\eta \in Gal(E/\mathbb{Q})$ on $\lambda$ can be seen as $^{\eta}\lambda =(^{\eta}\lambda^{\tau})_{\tau:F\to E} = (\lambda^{\eta^{-1}\circ \tau})_{\tau:F\to E}$. Then by (\ref{eq:char}) and the fact that $T_0$ is split over $F$, for any $\tau: F\to E$ we have:
\begin{equation}
    \lambda^{\tau}= m^{\tau} \gamma_l+ n^{\tau} \gamma_s = a^{\tau}(2\alpha+\beta) + b^{\tau}(\alpha+\beta) =: (a^{\tau},b^{\tau})_{0}.
\end{equation}    
The second equality comes from the parametrization $t_0: T_0 \to F^{*}\times F^{*}$. The relation between the two different representatives above can be described as
\begin{align}
    m^{\tau} &= b^{\tau}, \quad n^{\tau} = a^{\tau} - b^{\tau},\\
    a^{\tau} &= m^{\tau}+n^{\tau}, \quad b^{\tau} =m^{\tau}.
\end{align}

For any $\tau: F \to E$, the weight $\lambda^{\tau}$ is called an integral weight if
\[
a^{\tau}, b^{\tau} \in \mathbb{Z} \quad \iff \quad m^{\tau}, n^{\tau} \in \mathbb{Z}.
\]
The weight $\lambda = (\lambda^{\tau})_{\tau:F\to E}$ is an integral weight if $\lambda^{\tau}$ is an integral weight for all $\tau:F\to E$. 

For any $\tau : F\to E$, an integral weight $\lambda^{\tau}$ is called a dominant integral weight if 
\[
    \left\{
     \begin{array}{l}
        m^{\tau}\geq 0,\\
        n^{\tau}\geq 0;
     \end{array}
   \right. \quad \iff  \quad a^{\tau} \geq b^{\tau} \geq 0.
\]
The weight $\lambda \in X^{*}(T\times E)$ is a dominant integral weight if $\lambda^{\tau}$ is a dominant integral weight for all the embeddings $\tau: F\to E$. We will denote the set of all dominant integral weights by $X^{+} (T \times E)$.

As we mentioned, the description of the weight of the torus depends on the different parametrizations of $T_0$, in Figure \ref{fig:parametrization}. For the case $t_{\alpha}$, we have the same description as above and we will use the notation $(a^{\tau},b^{\tau})_{\alpha} := (a^{\tau},b^{\tau})_{0}$. But for the parametrization $t_{\beta}$ the weights $\lambda^{\tau}$ presents as
\begin{equation}
    \lambda^{\tau} = m^{\tau} \gamma_l + n^{\tau} \gamma_s = c^{\tau} (\alpha+\beta) + d^{\tau}\alpha =: (c^{\tau}, d^{\tau})_{\beta}.
\end{equation}
The relation between the different parametrizations above can be described as follows:
\begin{align}
    m^{\tau} &= c^{\tau}-d^{\tau}, \quad n^{\tau} = 2d^{\tau} - c^{\tau},\\
    c^{\tau} &= 2m^{\tau}+n^{\tau}, \quad d^{\tau} = m^{\tau}+n^{\tau}.
\end{align}
Therefore, $\lambda \in X^{+}(T\times E)$ if and only if for all $\tau : F\to E$, it holds that $c^{\tau}$ and $d^{\tau}$ are integers and $2d^{\tau} \geq c^{\tau} \geq d^{\tau}\geq 0$. When the parametrization of $T_0$ is evident from the context, we will simply use the notation $(a^{\tau},b^{\tau})$ to represent a weight of $T_{0}.$

\section{Cohomology of the arithmetic groups in $G_2$}
\subsection{Cohomology of arithmetic groups as a sheaf cohomology}\label{sec:fdr}
For a dominant integral weight $\lambda \in X^{+}(T\times E)$, define an absolute irreducible finite-dimensional representation $(\rho_{\lambda}, \mathcal{M}_{\lambda})$ of the group $G\times_{\mathbb{Q}} E$ with the highest weight $\lambda$. Keep in mind that $\mathcal{M}_\lambda$ carries the information about $E$, as $\lambda$ is defined over $E$. To highlight the dependence on the field $E$, we may denote it as $\mathcal{M}{\lambda,E}$. Here $\mathcal{M}_{\lambda} = \bigotimes_{\tau}\mathcal{M}_{\lambda^{\tau}}$ where $\mathcal{M}_{\lambda^{\tau}}/E$ is an irreducible finite-dimensional representation of $G_0\times_{\tau} E$ with the highest weight $\lambda ^{\tau}$. There is an action of $Gal(E/\mathbb{Q})$ on the highest weight module $\mathcal{M}_{\lambda}$ induced by its action on the highest weight. Now define the rationality field
\[ E(\lambda) := E^{\{\gamma \in  Gal(E/\mathbb{Q}) \; : \; ^{\gamma}\lambda = \lambda \}} \subset E,\]
where following \cite{HR}, it is generated by the values of $\lambda$ on $T(\mathbb{Q})$ suggesting the existence of the canonical rational field of definition $\mathbb{Q}(\lambda)$. Moreover, for any $\sigma \in Gal(E/\mathbb{Q})$ there is a natural linear map $\Phi_{\sigma}: \mathcal{M}_{\lambda} \to \;^{\sigma}\mathcal{M}_{\lambda} = \mathcal{M}_{^{\sigma}\lambda}$ which is an isomorphism. 

The group $G(\mathbb{Q}) = G_0(F)$ acts diagonally on the module $\mathcal{M}_\lambda$. Naturally, we can define a sheaf of $E$-vector spaces on $S^G$ associated with the $G(\mathbb{Q})$-module $\mathcal{M}_\lambda$, as described in \cite[Section 15.3]{GetzHahn2024}. For simplicity, we will also denote this sheaf by $\mathcal{M}_\lambda$, as the context makes it clear when it refers to the representation, for example, when it appears in relative Lie algebra cohomology. Furthermore, since $K_f$ is neat, the sheaf $\mathcal{M}_\lambda$ is locally constant and forms a local system.

% \begin{remark}
%    The vanishing property of the sheaf $\mathcal{M}_{\lambda}$ is explained in \cite[Section 2.2.8]{HR}, where an ``algebraicity'' condition on the highest weight ensures that the action of the center of the group on the highest weight module, via the central character of $\rho_{\lambda}$, is trivial. In our case, since the center of the group is trivial, the algebraicity condition is vacuously satisfied for any dominant weight. Therefore, we have the non-vanishing of sheaves in our context.
% \end{remark}

 We can extend the sheaf $\mathcal{M}_{\lambda}$ to a sheaf on $\bar{S}^{G}$, denoted by the same notation. The homotopy equivalence between $S^{G}$ and $\bar{S}^{G}$ implies that the canonical restriction map is an isomorphism between the sheaf cohomology groups $H^{\bullet}(\bar{S}^{G},\mathcal{M}_{\lambda})$ and $H^{\bullet}(S^{G},\mathcal{M}_{\lambda})$. To the manifold with corner $\bar{S}^{G}$ with the boundary $\partial S^{G}$ we can naturally assign the long exact sequence:
\begin{equation}\label{les}
    \cdots \to H^{\bullet}_{c}(S^{G},\mathcal{M}_{\lambda}) \xrightarrow{\mathfrak{i}^{\bullet}} H^{\bullet}(S^{G},\mathcal{M}_{\lambda}) \xrightarrow{\mathfrak{r}^{\bullet}} H^{\bullet}(\partial S^{G},\mathcal{M}_{\lambda}) \to \cdots,
\end{equation}
where, in the middle term, we replaced $\bar{S}^{G}$ with $S^{G}$ following the discussion above. The cohomology with compact support and the cohomology of the boundary denoted by $H^{\bullet}_{c}(S^{G},\mathcal{M}_{\lambda})$ and $H^{\bullet}(\partial S^{G},\mathcal{M}_{\lambda})$, respectively. Let $\mathcal{H}^{G}_{K_f} = \mathcal{C}^{\infty}_{c}(G(\mathbb{A}_f)/\!\!/ K_f)$ be the Hecke module with the measure normalized in the natural way. Therefore, the cohomology groups in the fundamental exact sequence (\ref{les}) are Hecke modules \cite[Section 2.3.2]{HR}.

Following \cite[Section 2.3.3]{HR}, let $E'$ be a larger Galois extension of $\mathbb{Q}$ containing $E$ and  $\iota_{1}: E \to E'$ be an injection. It induces an isomorphism on character groups $X^{*}(T\times E)$ to $X^{*}(T\times E)$ mapping $\lambda$ to $^{\iota_1}\lambda$. Moreover, any embedding $\tau : F \to E$ is in one-to-one correspondence with $\iota_1 \circ \tau: F \to E^{'}$, encompassing all embeddings from $F$ to $E'$. So, we have the unique form for any $^{\iota_1}\!\lambda \in X^{*}(T \times E^{'})$ which may described as:
\[
    ^{\iota_1}\!\lambda = (\lambda^{\iota_1\circ \tau})_{\tau : F\to E} = (\lambda^{\tau_1})_{\tau_1 : F\to E^{'}}, \qquad \tau_1 = \iota_1 \circ \tau.
\]
Again, we can define the highest module $\mathcal{M}_{{^{\iota_1}\lambda},E^{'}}$ with the highest weight $^{\iota_1}\!\lambda$ and attach the sheaf $\mathcal{M}_{{^{\iota_1}\!\lambda},E^{'}}$ as before. The identification between the character groups after the base change described above will induce a Hecke equivariant isomorphism between the sheaves $\mathcal{M}_{{^{\iota_1}\!\lambda},E^{'}} \simeq \mathcal{M}_{\lambda} \otimes_{E,\iota_{1}} E^{'}$. Finally, we can form the isomorphism between the corresponding cohomology groups after the base change:
\[
    \iota_1^{\bullet}: H^{\bullet}(S^{G},\mathcal{M}_{{^{\iota_1}\!\lambda},E^{'}}) \simeq H^{\bullet}(S^{G},\mathcal{M}_{\lambda} \otimes_{E,\iota_{1}} E^{'}).
\]
We can show that any cohomology group in the fundamental exact sequence (\ref{les}) behaves well under the field extension as described above. 

\begin{remark}
    Following \cite{HR}, we refer to the transcendental level when we consider the embedding $\iota: E \to \mathbb{C}$ and extend the base field to $\mathbb{C}$. We refer to the arithmetic level when the base field is a finite Galois extension $E/\mathbb{Q}$, as mentioned earlier in this section.
\end{remark}

We are interested in studying the image of the maps $\mathfrak{i}^{\bullet}$ and $\mathfrak{r}^{\bullet}$ in (\ref{les}). The image of the map $\mathfrak{i}^{\bullet}$ is called the inner cohomology and denoted by:
\[
    H_{!}^{\bullet}(S^{G},\mathcal{M}_{\lambda}) := \operatorname{image}(H^{\bullet}_{c}(S^{G},\mathcal{M}_{\lambda}) \xrightarrow{\mathfrak{i}^{\bullet}} H^{\bullet}(S^{G},\mathcal{M}_{\lambda}) ).
\]
And the image of the map $\mathfrak{r}^{\bullet}$ is called the Eisenstein cohomology and denoted by:
\[
    H_{Eis}^{\bullet}(\partial S^{G},\mathcal{M}_{\lambda}) := \operatorname{image}(H^{\bullet}(S^{G},\mathcal{M}_{\lambda}) \xrightarrow{\mathfrak{r}^{\bullet}} H^{\bullet}(\partial S^{G},\mathcal{M}_{\lambda}) ).
\]
We can define the cohomology group of the Levi factors as a Hecke module in the same way. In our case, Levi factors are isomorphic with Weil restriction of scalars of a group $GL_2$ over a number field $F$ where the structure of the cohomology of arithmetic groups assigned to $GL_2$ are well-understood by \cite{H2, RagTI}.

\subsection{Cohomology of boundary components} \label{boundcomp}
There is a finite union of boundaries over the  $G(\mathbb{Q})$ conjugacy classes of proper parabolic subgroups as follows:
\[
    \partial \bar{S}^G = \bigcup_P \partial_P \bar{S}^G,
\]
where the boundary component for each $P$ is
\[
    \partial_{P} S^{G} = P(\mathbb{Q})\backslash G(\mathbb{A})/ K_{\infty}K_{f}.
\]
Therefore, we can define the cohomology of a boundary component as
\[
    H^{\bullet}(\partial_{P} \bar{S}^G, \mathcal{M}_{\lambda}) = H^{\bullet}(P(\mathbb{Q}\backslash G(\mathbb{A})/K_{\infty}K_f, \mathcal{M}_{\lambda}).
\]
For $\xi$ running over the representatives in the of finite space $P(\mathbb{A}_f)\backslash G(\mathbb{A}_f)/K_{f}$, let us define $K_{f,U}(\xi) = U(\mathbb{A}_f)\cap \xi K_f \xi^{-1}$ then we have the fibration:
\[
     S^{P}_{K_{f,P}(\xi)} \to S^{M}_{K_{f,M}(\xi)}.
\]
Consider the quotient space $\Gamma_U\backslash U(\mathbb{R})$, where $\Gamma_{U}\subset U(\mathbb{Z})$ is a subgroup of finite index, depending on $K_{f,U}(\xi)$ as indicated in \cite{HarderB}. Following the work of Schwermer \cite{SchK}, for a regular weight $\lambda$, due to the $E_2$-degeneracy of the Leray-Serre spectral sequence corresponding to this fibration, we obtain:
\[
    H^{p}(\partial_{P}S^{G},\mathcal{M}_{\lambda}) = \oplus_{\xi} H^{p-q}(S^{M}_{K_{f,M}(\xi)},H^{q}(\Gamma_U\backslash U(\mathbb{R}),\mathcal{M}_{\lambda})).
\]
By a theorem of van-Est \cite{vanEst}, if $\mathfrak{u}$ is the Lie algebra of $U$ then:
\[
H^{\bullet}(\Gamma_U\backslash U(\mathbb{R}),\mathcal{M}_{\lambda}) \xrightarrow{\sim} H^{\bullet}(\mathfrak{u}, \mathcal{M}_{\lambda}).
\] 
The unipotent Lie algebra cohomology $H^{\bullet}(\mathfrak{u}, \mathcal{M}_{\lambda})$ is an $M(\mathbb{R})$-module; so we can attach a sheaf to it denotes by a same notation as before, and we have
\[
    H^{\bullet}(\partial_P S^{G}, \mathcal{M}_{\lambda}) = \bigoplus_{\xi}H^{\bullet-q}(S_{K_{f,M}(\xi)}^{M},H^{q}(\mathfrak{u},\mathcal{M}_{\lambda})).
\]

The direct limit of the boundary cohomology attached to $P$  and $M$, over open compact subgroups $K_f$  form the cohomology groups denoted by $H^{\bullet}(\partial_{P} \underline{S}^{G}, \mathcal{M}_{\lambda})$ and $H^{\bullet}(\partial \underline{S}^{M}, \mathcal{M}_{\lambda})$, where the underline indicates that it represents the direct limit of the locally symmetric spaces with the same notation. We can rewrite the equation above as
\[
    H^{\bullet}(\partial_P \underline{S}^{G}, \mathcal{M}_{\lambda})^{K_f} = \bigoplus_{\xi}H^{\bullet-q}(\underline{S}^{M},H^{q}(\mathfrak{u},\mathcal{M}_{\lambda}))^{K_{f,M}(\xi)}.
\]

Let $^{a}\!\operatorname{Ind}$ denote the algebraic (or un-normalized) induction in an ordinary group theoretic way. By Mackey theory and following \cite[Prop 4.3.]{HR}:
    \begin{equation}\label{indMG}
        H^{\bullet}(\partial_P \underline{S}^G, \mathcal{M}_{\lambda}) =\; ^{a}\!\operatorname{Ind}_{P(\mathbb{A}_f)}^{G(\mathbb{A}_f)}H^{\bullet-q}(\underline{S}^{M},H^{q}(\mathfrak{u},\mathcal{M}_{\lambda})).
    \end{equation}

 Note that in our case $G(\mathbb{R})$, $P(\mathbb{R})$, and $K_{\infty}^{M}$ are all connected, so $\pi_0(G(\mathbb{R}))$ is trivial. Therefore, the study of the cohomology of the boundary components reduces to the study of a smaller cohomology group corresponding to the Levi factor. In the next section, we will review the work of Kostant \cite{kos} which gives us an explicit description of the local system $H^{q}(\mathfrak{u},\mathcal{M}_{\lambda})$ in terms of the highest weight representations of the Levi factor.

\subsection{Kostant theorem}\label{sec:kos}
Let $\tau: F \to E$ be as described in \ref{sec:fdr}. Define $\Phi^{\tau}$, $(\Phi^{+})^{\tau}$ and $\Delta^{\tau}$ to be the set of roots, positive roots, and simple roots assigned to the image of the group over $F$ under the embedding $\tau$. Accordingly, we can define $W_{0}^{\tau}$ to be the group generated by reflections of the roots in $\Delta^{\tau}$ and define the Weyl group of $G $ to be $W = \prod_{\tau: F \to E} W_{0}^{\tau}$. Similarly, we can define the Weyl group $W_{M}$ of the Levi quotient $M$ generated by its simple reflections assigned to the simple roots $\Delta_{M}$. The Kostant representatives in the Weyl group $W$ corresponding to the parabolic subgroup $P$ is the canonical minimal system of representatives of the quotient $W_{M}\backslash W$, defined as:
\begin{align}\label{eq:kosrep}
    W^{P} &= \{ w= (w^{\tau}) \in W \;| \; w^{\tau} \in (W_{0}^{\tau})^{P_{0}^{\tau}} \},\\
    (W_{0}^{\tau})^{P_{0}^{\tau}} &:= \{ w^{\tau}\in W_{0}^{\tau} \; | \; (w^{\tau})^{-1}\alpha ^{\tau}>0 ,\;\forall \alpha^{\tau}\in \Delta_{M_{0}^{\tau}}\}.
\end{align}

Let $\mu \in X^{+}(T_{M} \times E)$ be a dominant integral weight of the Levi quotient $M$. Following \cite{HR}, by abuse of notation, we write $\mathcal{M}_{\mu}$ for an irreducible representation of the Levi quotient $M$  with the highest weight $\mu$. It will be clear from the context when $\mathcal{M}_{\Box}$ is an irreducible representation of $G$ or $M$. Moreover, for any dominant weight $\lambda \in X^{+}(T\times E)$ and $w\in W^{P}$, we can define the twisted action of $w$ on the weight $\lambda$ by 
\begin{equation}\label{eq:twact}
    w.\lambda = (w^{\tau}(\lambda^{\tau} + \rho_{G})-\rho_{G})_{\tau:F\to E}\in X^{+}(T_M \times E),
\end{equation}
where $\rho_{G}$ be the half-sum of the positive roots of $G$ as defined in (\ref{eq:halfsum}). The weights $w.\lambda$ defined above are all dominant integral weights of the Levi quotient $M$. This dominant weight induces the irreducible finite-dimensional representation $\mathcal{M}_{w.\lambda}$ of highest weight $w.\lambda$. Kostant's theorem \cite{kos} gives the following decomposition:
\begin{equation}\label{eq:kos}
    H^{*}(\mathfrak{u},\mathcal{M}_{\lambda}) \simeq \bigoplus_{w\in W^P} \mathcal{M}_{w.\lambda}.
\end{equation}
For a given dominant integral weight $\lambda$ for $G$ and Kostant representatives $w$ ranging in $W^{P}$, all the $w.\lambda$ are distinct weights for $M$. The module $\mathcal{M}_{w.\lambda}$ is a irreducible representation of $M$ with highest weight $w.\lambda$. Therefore, the decomposition of $H^{q}(\mathfrak{u},\mathcal{M}_{\lambda})$  into $M(E)$-modules as shown in (\eqref{eq:kos}) is multiplicity-free. Moreover, for a given degree $q$, only the Kostant representatives $w$ with length $l(w)=q$ will contribute to the decomposition of $H^{q}(\mathfrak{u},E)$ in (\ref{eq:kos}).

Now, combining Kostant's theorem with (\ref{indMG}) yields the following result 
\begin{align}
    H^{q}(\partial_P \underline{S}^{G}, \mathcal{M}_{\lambda}) =
    \bigoplus_{w\in W^P} {}^{a}\!\operatorname{Ind}_{P(\mathbb{A}_f)}^{G(\mathbb{A}_f)}(H^{q-l(w)}(\underline{S}^{M},\mathcal{M}_{w.\lambda})).
\end{align}
 
For simplicity in the rest of this section, we will fix an embedding $\tau : F \to E$ and omit the $\tau$ from our notations. To determine the set of Kostant representatives for each standard parabolic subgroup, we need to find the action of $w^{-1}$ on the simple roots $\alpha$ and $\beta$ for all $w \in W$. These calculations are summarized in the following table:

\vspace{\baselineskip}

\begin{table}[ht]
    \caption{Action of the Inverse Weyl Group on the Simple Roots}
    \vspace*{6pt}
    \centering
    \begin{tabular}{||c c c c||}      
    \hline
     $l(w)$ & $w$ &  $w^{-1}\alpha$ & $w^{-1}\beta$  \\ [1 ex] 
     \hline\hline
     $0$ & $1$ & $\alpha$ & $\beta$ \\ [1 ex] 
     \hline\hline
     $1 $& $w_{\beta}$& $\alpha+\beta$ & $-\beta$\\[1 ex] 
     \hline
     $2$ &$ w_{\beta\alpha}$& $\gamma_s$ & $-(3\alpha+\beta)$\\ [1 ex] 
     \hline
     $3$ & $w_{\beta\alpha\beta}$ & $\gamma_{s} $& $-\gamma_{l}$\\[1 ex] 
     \hline
     $4 $&  $w_{\beta\alpha\beta\alpha}$ & $\alpha+\beta$ & $-\gamma_l$\\ [1 ex] 
     \hline
     $5$ &$ w_{\beta\alpha\beta\alpha\beta}$ &$ \alpha $& $-(3\alpha+\beta)$\\[1 ex] 
     \hline\hline
     $1$ & $w_{\alpha}$ & $-\alpha $& $3\alpha+\beta$\\[1 ex] 
     \hline
     $2$ &$ w_{\alpha\beta}$ & $-(\alpha+\beta)$ & $\gamma_l$\\[1 ex] 
     \hline
     $3$ &$ w_{\alpha\beta\alpha}$ & $-\gamma_s$ &  $\gamma_{l}$\\ [1 ex] 
     \hline
     $4$ & $w_{\alpha\beta\alpha\beta}$  & $-\gamma_s$ & $3\alpha+\beta$\\ [1 ex] 
     \hline
     $5$ & $w_{\alpha\beta\alpha\beta\alpha}$ & $-(\alpha+\beta)$ & $\beta$\\ [1 ex] 
     \hline\hline
     $6$ &$ w_G$  & $-\alpha$ & $-\beta$\\[1ex] 
     \hline
    \end{tabular}
\end{table}

\vspace{\baselineskip}

Since $\Delta_{B}=\emptyset$, $\Delta_{M_{\alpha}} = \{\alpha\}$, and  $\Delta_{M_{\beta}} = \{\beta\}$,  using Equation (\ref{eq:kosrep}), we get:
\begin{align*}
    W^{B} & = W,\\
    W^{P_{\alpha}}&= \{1, w_{\beta}, w_{\beta\alpha}, w_{\beta\alpha\beta},w_{\beta\alpha\beta\alpha}, w_{\beta\alpha\beta\alpha\beta}\},\\
    W^{P_{\beta}}&= \{1, w_{\alpha}, w_{\alpha\beta} ,w_{\alpha\beta\alpha},w_{\alpha\beta\alpha\beta}, w_{\alpha\beta\alpha\beta\alpha}\}.
\end{align*}

\subsection{Inner cohomology}\label{sec:inner}
The image of $\mathfrak{i}^{\bullet}$, in the long exact sequence (\ref{les}), is called inner cohomology and denoted by:
\[
    H_{!}^{\bullet}(S^{G},\mathcal{M}_{\lambda}) := \operatorname{image}(H^{\bullet}_{c}(S^{G},\mathcal{M}_{\lambda}) \xrightarrow{\mathfrak{i}^{\bullet}} H^{\bullet}(S^{G},\mathcal{M}_{\lambda}) ).
\]
Following \cite[Section 2.3.5]{HR}, as we assumed $E$ is large enough, the inner cohomology is a semisimple module under the Hecke algebra and there is an isotypical decomposition,
\begin{equation}\label{eq;isoinner}
    H_{!}^{\bullet}(S^{G},\mathcal{M}_{\lambda}) = \bigoplus_{\pi_{f}\in Coh_{!}(G,K_f,\lambda)}H_{!}^{\bullet}(S^{G},\mathcal{M}_{\lambda})(\pi_f),
\end{equation}
where $Coh_{!}(G,K_f,\lambda)$ is a finite set of the isomorphism types of Hecke-module that occur in the inner cohomology. We also form the inner spectrum of $G$ with coefficient $\lambda$ as the union over all the compact open subgroups $K_f$ as:
\[
    Coh_{!}(G,\lambda) := \cup_{K_f} Coh_{!}(G,K_f,\lambda).
\]

The field embedding $\iota: E \to E'$, induces a map of the cohomology groups that preserves the inner cohomology groups:
\[
    \iota^{\bullet} :  H_{!}^{\bullet}(S^{G},\mathcal{M}_{\lambda}) \to H_{!}^{\bullet}(S^{G},\mathcal{M}_{^{\iota}\lambda, E'}),
\]
it maps an isotypic component of $ H_{!}^{\bullet}(S^{G},\mathcal{M}_{\lambda})$ attached to $\pi_f \in Coh_{!}(G,K_f,\lambda)$ onto the $^{\iota}\pi_f$-isotypic component of $H_{!}^{\bullet}(S^{G},\mathcal{M}_{^{\iota}\lambda, E'})$ where $^{\iota}\pi_f \in Coh_{!}(G,K_f,\lambda)$. We can split off the $\pi_f$-isotypic component of the inner cohomology for the large enough field extension $E$, then following \cite{HR}, define the rationality field of $\pi_f$ as:
\[
    E(\pi_f) := E^{\{\gamma\in Gal(E/G) : ^{\gamma}\pi_f = \pi_f\}}.
\]

We can now focus on studying the cohomology of the Levi factor, isomorphic to $GL_2$,  which is well understood from \cite{H2}, \cite{HarderB} and \cite{RagTI}. Following a base change to $\mathbb{C}$, the inner cohomology is encapsulated by the cohomology of the discrete spectrum. Let $S$ be a finite set of places as usual, by the strong multiplicity one theorem for the discrete spectrum of $GL_n$ \cite{JacSMO,MogWal}, the $\sigma_f \in Coh_{!}(GL_2,\mu)$ is determined by its restriction to the central subalgebra $\mathcal{H}^{G, S}$ of $\mathcal{H}^{G}$. Moreover, it implies that the field $E(\sigma_f)$ is the subfield of $E$ generated by the values of $\sigma_p$ for $p \in S$.

\subsection{Cohomology at transcendental level} \label{trans}
Fix $\iota: E \to \mathbb{C}$, it induces a bijection $X^{+}(T\times_{\tau,F} E) \to X^{+}(T\times_{\iota,E}\mathbb{C})$ by mapping $\mu$ to $^{\iota}\mu$. For any dominant weight $\mu \in X^{+}(T\times_{\tau,F} E)$, there exists an absolutely irreducible representation $\mathcal{M}_{\mu,E}$ of $M\times_{\tau,F} E$ with the highest weight $\mu$. We can then extend this irreducible representation to an irreducible representation $\mathcal{M}_{{^{\iota}\mu},\mathbb{C}}:=\mathcal{M}_{{^{\iota}\mu}}\otimes \mathbb{C}$ of $M\times_{\iota,E} \mathbb{C}$ with the highest weight $^{\iota}\mu$. Moreover, we have the local system of $\mathbb{C}$-vector spaces $\mathcal{M}_{{^{\iota}\mu},\mathbb{C}}$ following the discussion in \ref{sec:fdr}.

\begin{remark}
     We can define the dual form $\mathcal{M}_{{^{\iota}\mu},\mathbb{C}}^{\vee} := Hom_{\mathbb{C}}(\mathcal{M}_{{^{\iota}\mu},\mathbb{C}}, \mathbb{C})$ and the complex conjugate $\overline{\mathcal{M}}_{{^{\iota}\mu},\mathbb{C}} := \mathcal{M}_{^{c}\mu,\mathbb{C}} = \mathcal{M}_{^{c}\mu} \otimes_{\iota,E} \mathbb{C}$, as discussed in \ref{sec:fdr}, where $c \in Aut(\mathbb{C}/\mathbb{R})$ is the complex conjugate. Both of these modules form rational representations with the highest weights \cite[Section 8.1.1.]{HarderB}. Denote the longest Weyl element of $W_M$ as $w_0$, the highest weight of $\mathcal{M}_{{^{\iota}\mu},\mathbb{C}}^{\vee}$ is $-w_0(\mu)$. On the other hand, the highest weight of $\overline{\mathcal{M}}_{{^{\iota}\mu},\mathbb{C}} := \mathcal{M}_{^{c}\mu,\mathbb{C}}$ by definition is $^{c}\mu$. The module $\mathcal{M}_{{^{\iota}\mu},\mathbb{C}}$ is conjugate-auto dual if $^{c}\mu =  -w_0(\mu)$. In the case where the rational group is split, the complex conjugate action is trivial. Thus, being conjugate-auto dual for a split group implies that $\mu = -w_0(\mu)$.
\end{remark}

Following our earlier discussion about arithmetic level, the sheaf $\mathcal{M}_{{^{\iota}\mu},\mathbb{C}}$ attached to the finite-dimensional representation $(\rho_{{^{\iota}\mu},\mathbb{C}},\mathcal{M}_{{^{\iota}\mu},\mathbb{C}})$ of the group $M(\mathbb{R})$ on the complex vector space $\mathcal{M}_{{^{\iota}\mu},\mathbb{C}}$ forms a local system. Let $\Omega^{q}(S^{M},\mathcal{M}_{{^{\iota}\mu},\mathbb{C}})$ be the space of $\mathcal{M}_{{^{\iota}\mu},\mathbb{C}}$-valued differential forms on the space $S^{M}$ of degree $q\geq 0$. The cohomology group $H^{*}(S^{M},\mathcal{M}_{{^{\iota}\mu},\mathbb{C}})$ of the manifold $S^{M}$ with coefficients in the local system $\mathcal{M}_{{^{\iota}\mu},\mathbb{C}}$ is the de-Rham cohomology of the complex $\Omega^{*}(S^{M},\mathcal{M}_{{^{\iota}\mu},\mathbb{C}})$, \cite{BoWal}.

On the other hand, we can define the relative Lie algebra cohomology for the group of real points $M(\mathbb{R})$ with respect to its maximal compact subgroup $K_{\infty}$. We can identify the cohomology groups:
\begin{equation}\label{eq:LieCoh}
    H^{\bullet}(S^{M}, \mathcal{M}_{{^{\iota}\mu},\mathbb{C}}) \simeq H^{\bullet}(\mathfrak{g},K_{\infty}; \mathcal{C}^{\infty}(M(\mathbb{Q})\backslash M(\mathbb{A})/K_f) \times \mathcal{M}_{{^{\iota}\mu},\mathbb{C}}).
\end{equation}

 The quotient $M(\mathbb{Q}) S(\mathbb{R})^{0} \backslash M(\mathbb{A})/ K_f$ has a finite volume \cite{BHC}. Let $\omega_{\infty}$ be the restriction of the central character of $\mathcal{M}_{\mu}$ to $S(\mathbb{R})^{0}$. Define the space of functions $\mathcal{C}^{\infty}(M(\mathbb{Q})\backslash M(\mathbb{A})/K_f, \omega_{\infty}^{-1})$ as the set of all smooth functions $\phi: M(\mathbb{A})\to \mathbb{C}$ such that:
\[
    \phi(\gamma.g.k_{f}.a_{\infty}) = \omega_{\infty}^{-1}(a_{\infty})\phi(g), \forall g\in M(\mathbb{A}), \gamma \in M(\mathbb{Q}), k_f\in K_f, a_{\infty}\in S(\mathbb{R})^{0}.
\]
Moreover, we can define the space of square-integrable and smooth cuspidal functions. Following \cite[Theorem 8.1.1]{HarderB}, define the cuspidal cohomology of the symmetric space:
\begin{equation}\label{eq:cusp}
     H^{\bullet}_{\textit{cusp}}(S^{M},\mathcal{M}_{{^{\iota}\mu},\mathbb{C}}) := H^{\bullet}(\mathfrak{g},K_{\infty},\mathcal{C}^{\infty}_{\textit{cusp}}(M(\mathbb{Q})\backslash M(\mathbb{A})/K_f ,w_{\infty}^{-1}) \otimes \mathcal{M}_{\mu}).
\end{equation}
Furthermore, by \cite[8.27]{HarderB}:
\[
    H^{\bullet}_{\textit{cusp}}(S^{M},\mathcal{M}_{{^{\iota}\mu},\mathbb{C}}) \subset H^{\bullet}_{!}(S^{M},\mathcal{M}_{{^{\iota}\mu},\mathbb{C}}) \subset H^{\bullet}_{(2)}(S^{M},\mathcal{M}_{{^{\iota}\mu},\mathbb{C}}) \subset H^{\bullet}(S^{M},\mathcal{M}_{{^{\iota}\mu},\mathbb{C}}).
\]
Now we can define $Coh_{\textit{cusp}}(M,\mu) \subset Coh_{!}(M,\mu)$, as the set of all Hecke summands in the isotypical decomposition of inner cohomology (\ref{eq;isoinner}) contributing to cuspidal cohomology at the transcendental level. The decomposition of cusp forms to the automorphic cuspidal representation $\sigma = \sigma_{\infty}\times \sigma_f$, will give us:
\begin{equation}\label{eq:cuspdec}
    H^{\bullet}_{\textit{cusp}}(S^{M},\mathcal{M}_{{^{\iota}\mu},\mathbb{C}}) = \bigoplus_{\sigma_f \in Coh_{\textit{cusp}}(M,\mu,K_f)} m(\sigma) H^{\bullet}(\mathfrak{g},K_{\infty},\sigma_{\infty} \otimes \mathcal{M}_{\mu}) \otimes \sigma_f,
\end{equation}
where $m(\sigma)$ is the multiplicity of $\sigma$. In our case, multiplicity is one as the Levi factor is isomorphic to $GL_2$.

\section{Langlands-Shahidi method: Case $G_2$}\label{ch:LS}
In this section, we will review the Langlands-Shahidi method and the automorphic $L$-functions appearing in the constant term of the Eisenstein series associated with the cuspidal representations of the maximal parabolic subgroups of the group $G$.

\subsection{The case $P_{\beta}$}
Let us consider the maximal parabolic subgroup $P_{\beta}$ as in Fig \ref{fig:Dynkinb}. Therefore, $M_{\beta}\simeq GL_2$ and the root space of Lie Algebra of the unipotent radical $U_{\beta}$, are generated by the root spaces corresponding to the roots:
\[
    \alpha,\; \alpha+\beta,\; 3\alpha+2\beta,\; 2\alpha+\beta,\; 3\alpha+\beta.
\]

 Let $X^{*}(M_{\beta})=Hom(M_{\beta} ,\mathbb{Q})$, and let $A_{\beta}$ be the maximal split torus in the center of $M_{\beta}$. By considering the restriction map from $M_{\beta}$ to $A_{\beta}$, we can define the real Lie algebra of $A_{\beta}$ which is a vector space:
 \[
    \mathfrak{a}_{\beta} = \operatorname{Hom}(X^{*}(M_{\beta}),\mathbb{R}),
 \]
 and its dual is given by:
 \[
   \mathfrak{a}_{\beta}^{*}=X^{*}(M_{\beta})\otimes \mathbb{R}, \quad \mathfrak{a}_{\beta,\mathbb{C}}^{*}=X^{*}(M_{\beta})\otimes \mathbb{C}.
 \]
The half sum of the positive roots for $M_{\beta}$ is:
\[
\rho_{\beta} = 5\alpha + \frac{5}{2}\beta= \frac{5}{2}\gamma_s \in X^{*}(M_{\beta}),
\]
The modulus character of $P_{\beta}$,  is trivial on $U_{\beta}$, and on $M_{\beta}$ is given by:
\[
    \delta_{\beta}(m) := |det(Ad_{U_{\beta}}(m))|=|2\rho_{\beta}(m)| = |det(m)|^{5} \in \mathfrak{a}_{\beta}^{*}.
\]
Note that $\alpha_{\beta}:=\alpha$ is the single simple root of $G_2$ in $U$, and $(\;,\;)$ is a Weyl group invariant inner product on $ \mathfrak{a}_{\beta}^{*}$. We have:
\[
    \langle \rho_{\beta},\alpha_{\beta}\rangle = 2 \frac{(\rho_{\beta},\alpha_{\beta})}{(\alpha_{\beta},\alpha_{\beta})}= \frac{5}{2}.
\]
The fundamental weight corresponding to the simple root $\alpha_{\beta}$ will be:
\[
    \gamma_{\beta}:=\tilde{\alpha}_{\beta} := \langle \rho_{\beta},\alpha_{\beta}\rangle^{-1}\rho_{\beta}=\frac{2}{5}\rho_{\beta}=\gamma_s.
\]

\begin{defn}\label{def:poe}
     The half-integer $k$, defined as
\[
    k = k_{\beta} := -\langle \rho_{\beta}, \alpha_{\beta}\rangle = -\frac{5}{2},
\]
is called the point of evaluation for the case $P_{\beta}$.   
\end{defn}

The connected complex reductive group $^{L}G^{\circ} = G(\mathbb{C})$ is the Langlands dual of $G$, and $ ^{L}\!{P}_{\beta}$, $^{L}\!{M}_{\beta}=GL_{2}(\mathbb{C})$, and $^{L}\!{\mathfrak{u}}_{\beta}$ be the Lie algebra of the $^{L}\!{U}_{\beta}$. The dual group $^{L}\!{M}_{\beta}$ acts on $^{L}\!{\mathfrak{u}}_{\beta}$ by the adjoint action $r$. For each positive root $\lambda$ where $X_{\lambda^{\vee}}\in\mathfrak{u}_{\beta}$, we have $m=2$, i.e. $1\leq \langle \tilde{\alpha}_{\beta}, \lambda \rangle \leq m=2$. Also,
\begin{align*}
    V_1&= \langle X_{\alpha^{\vee}}, X_{\alpha^{\vee}+\beta^{\vee}}, X_{\gamma_l^{\vee}},X_{\alpha^{\vee}+\beta^{\vee}}\rangle,\\
    V_2&= \langle X_{\gamma_s^{\vee}}\rangle.
\end{align*}

The adjoint action $r$ leaves $V_j$ stable. Let us define $r|_{V_{j}}=: r_j$ and denote the contragredient representation of $r_{j}$ by $\tilde{r}_j$. Let $\rho_2$ be the standard representation of $GL_{2}(\mathbb{C})$, 1-dimensional representation $\wedge^{2}\rho_2$ is the exterior square representation of $GL_{2}(\mathbb{C)}$ given by central character $\omega$ and the 4-dimensional representation $Ad^{3}(\rho_2) := Sym^3(\rho_2)\otimes (\wedge^2 \rho_2)^{-1}$ \cite{SH89}. Then we have the following decomposition:
\begin{align}
        r = r_1\oplus r_2 = Sym^3(\rho_2)\otimes (\wedge^2\rho_2)^{-1} \oplus \wedge^{2}\rho_2.
\end{align}

\begin{figure}[h]
    \centering
    \begin{tikzpicture}
        \node[circle,radius=1pt,draw] (A) at (0,0) {};
        \node [] (C) at (0.5,0) {};
        \node [circle,fill=black,radius=1pt,draw,label={$\beta$}] (B) at (1,0) {};
        \draw[-] (A) -- (C);
        \draw[-{Classical TikZ Rightarrow[black,length=2mm]}] (B) -- (C.west);
        \draw[-] (0.13,0.1)--(0.87,0.1);
        \draw[-] (0.13,-0.1)--(0.87,-0.1);
        \draw[decoration={
            text along path, text={{$m=2; \quad r_1=Ad^{3}(\rho_2), \; r_2=\wedge^{2}\rho_2$}{}},
            text align={center},
            raise=0.2cm},
            decorate] (0.13,-1)--(0.87,-1); 
    \end{tikzpicture}
    \caption{Simple Factors $P_{\beta}$ of $G_2$}
    \label{fig:Dynkinb}
\end{figure} %Dynkin Diagram

Let $(\sigma, V_{\sigma})$ be an irreducible admissible representation of $M_{\beta}$. Then \mbox{$H_{\beta}:M_{\beta} \to \mathfrak{a}_{\beta}$} is the Harish-Chandra homomorphism, and  $\operatorname{Ind}$ is normalized parabolic induction. Identify $s\in \mathbb{C}$ with $\nu_s := s\tilde{\alpha}_{\beta} = s\gamma_s \in \mathfrak{a}^{*}_{P_{\beta},\mathbb{C}}$. Define the induced representation
\begin{align*}
     I_{\beta}(s,\sigma):=I^{G}_{P_\beta}(s, \sigma) &:= \operatorname{Ind^{G}_{P_{\beta}}}(\sigma\otimes |\;|^{s}),
\end{align*}
where $Ind$ is a normalized parabolic induction. The representation space $V(s,\sigma):= V(s\gamma_s, \sigma)$ consisting of all locally constant functions $f:G\to V_{\sigma}$ such that
\begin{align*}
    f(mng)  &=\sigma(m)\exp(\langle \nu_{s}+\rho_{\beta},H_{\beta}(m)\rangle)f(g), &\forall g\in G, \; m \in M_{\beta}, \; n\in U_{\beta}, \\
            &= \sigma(m)\delta_{P_\beta}(m)^{\frac{1}{2}+\frac{s}{5}}f(g).
\end{align*}
Moreover, in terms of algebraically parabolically induced representation, which means un-normalized, we have
\begin{equation}
    I_{\beta}(s,\sigma) = {^{a}}Ind(\sigma \otimes |\;|^{s+5/2}).
\end{equation}
Therefore, at the point of evaluation, we have:
\begin{equation}
    I_{\beta}(-5/2,\sigma) = {^{a}}Ind(\sigma).
\end{equation}

Let $K = K_{\infty} \times \prod_{v < \infty}G(\mathcal{O}_{v}) \subset G(\mathbb{A})$ be a maximal compact subgroup of $G(\mathbb{A})$.
Let $G_{v} = G(F_{v})$ and $P_{\beta,v}, M_{\beta,v}, U_{\beta,v}$ be the corresponding group of $F_{v}$-rational points. There is a natural embedding $X^{*}(M_{\beta})_{F}\to X^{*}(M_{\beta})_{F_{v}}$ and, consequently, an embedding:
\[
    \mathfrak{a}_{P_{{\beta}},v}:= \operatorname{Hom}(X^{*}(M_{\beta})_{F_{v}},\mathbb{R})\to \mathfrak{a}_{P_{{\beta}}}:= \operatorname{Hom}(X^{*}(M_{\beta})_{F},\mathbb{R}).
\]

Let $\sigma = \otimes'_{v} \sigma$ be a cuspidal representation of $M_{\beta}(\mathbb{A})$ with central character $\omega_{\sigma}$. For any $K$-finite function $\phi$ in the space $\sigma$, we extend $\phi$ to the unique function $\tilde{\phi}$ on $G(\mathbb{A})$ \cite{Shahidi1981OnCL} and set:
\begin{align*}
    \Phi_{s}(g) &:= \tilde{\phi}(g)\exp(\langle \nu_{s}+\rho_{\beta},H_{\beta}(g)\rangle), & g\in G(\mathbb{A}).
\end{align*}

For each $s\in \mathbb{C}$, the representation of $G(\mathbb{A})$ by right shifts on the space of $\Phi_{s}$ is equivalent to $I_{\beta}(s,\sigma)$. The Eisenstein series is defined by
\[
    E(s,\tilde{\phi},g,P_{\beta}) = \sum_{\gamma \in P_{\beta}(F)\backslash G(F)} \Phi_{s}(\gamma g),
\]
which converges for $Re(s) \gg 0$ and extends to a meromorphic function of $s\in \mathbb{C}$ with only finite number of simple poles in the half-plane $Re(s)\geq 0$.

As $w_{\gamma_s}(\{\beta\}) = \{\beta\}$ and $w_{\gamma_s}(\alpha)<0$, set $w_{0}:= w_{\gamma_s}$ and then the associate parabolic subgroup of $P_{\beta}$ will be 
\[
w_{0}P_{\beta}:= w_{\gamma_s}M_{\beta}w_{\gamma_s}^{-1} = P_{\beta},
\]
which implies that $P_{\beta}$ is self-associate. We can form the constant term of Eisenstein series along $P_{\beta}$, as follow:
\[
  E_{0}(s,\tilde{\phi}, g,P_{\beta}) :=   \int_{U_{\beta}(\mathbb{Q})\backslash U_{\beta}(\mathbb{A})} E(s,\tilde{\phi}, ng,P_{\beta})  dn.
\]

To describe the constant term, let us define the standard intertwining operator :
\begin{align*}
    T_{st,\iota}(s,\sigma,w_{0})f(g) &:=\int_{U^{-}(\mathbb{A})} f({w_{0}}^{-1}ng)dn, &g\in G(\mathbb{A}), f\in I_{\beta}(s,\sigma).
\end{align*}
which intertwines $I_{\beta}(s,\sigma)$ and $I_{\beta}(-s,w_{0}(\sigma))$ away from the poles. The action of $w_{\gamma_s}$ on $\sigma$ is given by $ w_{0}(\sigma)(m) = \sigma(\gamma_s^{-1} m \gamma_s )$ for $m\in M_{\beta}$. This operator is a meromorphic function of $s\in \mathbb{C}$. The constant term corresponds to $w_{\gamma_s}= w_{\alpha\beta\alpha\beta\alpha}$ is the only non-trivial term \cite{Lang}.  Therefore, for any $f\in I_{\beta}(s,\sigma)$, we have:
\begin{align*}
        E_{0}(s,f, g,P_{\beta}) &= \sum _{w\in W_{\beta}}(T_{st,\iota}(s,\sigma,w)f)(g).
\end{align*}

We can decompose $I_{\beta}(s,\sigma) = \otimes_{v}I_{\beta}(s,\sigma_{v})$, and we can define the local standard intertwining operator assigned to $P_{\beta}$ as:
\begin{align*}
    T_{st,v}(s)f_{v}(g) &=\int_{U_{\beta,v}^{-}} f_v(\gamma_{1}^{-1}ng)dn, &g\in G_{v}, f_{v}\in I_{\beta}(s,\sigma),
\end{align*}
where for $v\notin S$, the vector $f_{v}$ is spherical; i.e. it is the unique $K_{v}$-fixed function normalized by $f_{v}(e_{v})=1$, where $e_v$ is the identity in $G(k_v)$. The intertwining operator $T_{\operatorname{st}}$ maps $I_{\beta}(s,\sigma_{v})$ to $I_{Q}(-s, w_{0}\sigma_v)$, where $\tilde{f}_{v}$ is a spherical vector in the image of the local intertwining operator. 

The partial and global L-functions attached to $\sigma$ and $r_j$ are $L^{S}(s,\sigma,r_j) = \prod_{v\notin S} L(s,\sigma_v,r_j)$ and $L(s,\sigma,r_j)=\prod_{v} L(s,\sigma_v,r_j)$, respectively. Here,  $L(s,\sigma_v,\tilde{r_j})$ is the Langlands' local L-function attached to $\sigma_{v}$ and $r_j$. Let $f = \otimes_v f_v \in \otimes_{v} I_{\beta}(s,\sigma)$, where $f_v$ is a spherical vector for $v\notin S$. Then, we have:
\begin{align*}
    T_{st,\iota}(s,\sigma, w_{\gamma_s})f = &\bigotimes_{v\in S} A(s,\sigma_{v},w_{\gamma_s})f_{v} \otimes\bigotimes_{v\notin S}\tilde{f}_{v}\times r^{S}(s,\sigma,w_{\gamma_s}),
\end{align*}
where $r^{S}(s,\sigma,w_{0})$ is the ratio of the certain partial $L$-functions:
\begin{align*}
    r^{S}(s,\sigma,w_{0})&=\frac{L^{S}(s,\sigma,Ad^3(\rho_2))L^{S}(2s,\sigma,\wedge^2 \rho_2)}{L^{S}(1+s,\sigma,Ad^3(\rho_2))L^{S}(1+2s,\sigma,\wedge^2 \rho_2)}.
\end{align*}

Let $^{L}M_{\beta}$ be the $L$-group of $M_{\beta}$ as a group over $F_{v}$, having the natural homomorphism $\iota_{v}: \;^{L}M_{v}\to\; ^{L}M$, we can define the representation $r_{j,v}:= r_j \circ \iota_{v}$ of $^{L}M_v$. The $L$-functions appearing in the constant Eisenstein series, are \cite{SH89}:
\begin{align*}
    L(s,\sigma_v, \wedge^2 \rho_2) &= L(s,\omega_{\sigma_{v}})& (\text{Hecke L-function}),\\
   L(s,\sigma_v\otimes \omega_{\sigma_v},Ad^{3}(\rho_2))&=  L(s,\sigma_v, Sym^{3}(\rho_2)) & (\text{Symmetric cube L-function}).
\end{align*}

If $\omega_{\sigma}$ is non-trivial, $L(s,\sigma)$ is entire while if $\omega_{\sigma}$ is trivial then $L(s.\omega_\sigma)$ is holomorphic and may have simple poles at $s=0$ and $s=1$ \cite{TateJohnTorrence1997FAiN}. By \cite{KSannals}, the $L$-function $L(s,\sigma,Ad^{3}(\rho_2))$ is entire unless the representation $\sigma$ is monomial. If $\sigma$ is monomial, the $L$-function attached to the adjoint cube will break into multiple of smaller $L$-functions, and it may have simple poles at $s=0$ and $s=1$. 

As $P_{\beta}$ is self-associate, the Eisenstein series $E_{0}(s,\tilde{\phi},P_{\beta})$ is holomorphic in $Re(s)>0$ unless $\sigma$ is self-dual, i.e. $w_{0}(\sigma)\simeq \sigma$ \cite{MogWal}. For self-dual representations $\sigma$, our Eisenstein series has poles with $Re(s)>0$ determined by determining the poles for the constant term. Now using the information about the poles of the certain L-functions summarized in the previous section, the Eisenstein series only has a pole in the following cases:
\begin{enumerate}
    \item At $s=\dfrac{1}{2}$, when the central character is trivial and the adjoint cubic L-function does not vanish at $s=\dfrac{1}{2}$.
    \item At $s=1$, when $\sigma$ is a monomial representation and the adjoint cubic L-function has a pole at $s=1$.
\end{enumerate}

\subsection{The case $P_{\alpha}$}
By removing the long root, one can form the parabolic subgroup assigned to the short root, we have $\alpha$ as a simple root of $M_{\alpha}$, as depicted in Fig \ref{fig:Dynkin}. The main idea in this section builds on the previous one, so we only include the key data needed to clarify the notation for future use. The Lie Algebra $\mathfrak{u}_{\alpha}$ of the unipotent radical $U_{\alpha}$ is generated by the root spaces corresponding to the roots:
\[
    \beta,\; \alpha+\beta,\; 3\alpha+2\beta,\; 2\alpha+\beta,\; 3\alpha+\beta.
\]
In this case $X^{*}(M_{\alpha})=Hom(M_{\alpha} ,\mathbb{Q})$, and the half sum of positive roots for $M_{\alpha}$ is:
\[
\rho_{\alpha} = \frac{9}{2}\alpha + 3\beta= \frac{3}{2}\gamma_s \in X^{*}(M_{\alpha}).
\]

The modulus character of $P_{\alpha}$ is given by:
\[
    \delta_{\alpha}(m) := |det(Ad_{U_{\alpha}}(m))|=|2\rho_{\alpha}(m)| = |det(m)|^{3} \in \mathfrak{a}_{\alpha}^{*}.
\]

We can see $\alpha_{\alpha}=\beta$ is the single simple root in $U$, and $(\;,\;)$ is a Weyl group invariant inner product on $ \mathfrak{a}_{\alpha}^{*}$. We have:
\[
    \langle \rho_{\alpha},\alpha_{\alpha}\rangle = 2 \frac{(\rho_{\alpha},\alpha_{\alpha})}{(\alpha_{\alpha},\alpha_{\alpha})}= \frac{3}{2}.
\]
The fundamental weight corresponding to the simple root $\alpha_{\alpha}$ will be:
\[
    \gamma_{\alpha}:=\tilde{\alpha}_{\alpha} := \langle \rho_{\alpha},\alpha_{\alpha}\rangle^{-1}\rho_{\alpha}=\frac{2}{3}\rho_{\alpha}=\gamma_l.
\]
The point of evaluation is
\begin{equation}
    k_{\alpha} := -\langle \rho_{\alpha}, \alpha_{\alpha}\rangle = -\frac{3}{2}.
\end{equation}

We have $m=3$, i.e. $1\leq \langle \tilde{\alpha}_{\alpha}, \lambda \rangle \leq m=3$. Also,
\begin{align*}
    V_1&= \langle X_{\beta^{\vee}}, X_{\alpha^{\vee}+\beta^{\vee}}\rangle,\\
    V_2&= \langle X_{\gamma_l^{\vee}}\rangle,\\
    V_3&= \langle X_{\gamma_s^{\vee}}, X_{\alpha^{\vee}+3\beta^{\vee}}\rangle.\\
\end{align*}

Then we have the following decomposition:
\begin{align}
        r = r_1\oplus r_2 \oplus r_3 = \rho_2 \oplus \wedge^{2}\rho_2 \oplus \rho_2 \otimes \wedge^{2}\rho_2.
\end{align}
\begin{figure}[htp]
    \centering
    \begin{tikzpicture}
        \node[circle,fill=black,radius=1pt,draw,label={$\alpha$}] (A) at (0,0) {};
        \node [] (C) at (0.5,0) {};
        \node [circle,radius=1pt,draw] (B) at (1,0) {};
        \draw[-] (A) -- (C);
        \draw[-{Classical TikZ Rightarrow[black,length=2mm]}] (B) -- (C.west);
        \draw[-] (0.13,0.1)--(0.87,0.1);
        \draw[-] (0.13,-0.1)--(0.87,-0.1);
        \draw[decoration={
            text along path, text={{$m=3; \quad r_1= \rho_2, \; r_2=\wedge^{2}\rho_2,\; r_3 = \rho_2 \otimes \wedge^{2}\rho_2$}{}},
            text align={center},
            raise=0.2cm},
            decorate] (0.13,-1)--(0.87,-1); 
    \end{tikzpicture}
    \caption{Simple Factors $P_{\alpha}$ of $G_2$}
    \label{fig:Dynkin}
\end{figure} %Dynkin Diagram

Let $(\sigma, V_{\sigma})$ be an irreducible admissible representation of $M_{\alpha}$, and identify $s\in \mathbb{C}$ with $\nu_s := s\tilde{\alpha}_{\alpha} = s\gamma_l \in \mathfrak{a}^{*}_{P_{\alpha},\mathbb{C}}$. Define the induced representation:
\begin{equation}
     I_{\alpha}(s,\sigma):=I^{G}_{P_\alpha}(s, \sigma) := Ind^{G}_{P_{\alpha}}(\sigma\otimes |\;|^{s}) = {^{a}}Ind(\sigma\otimes |\;|^{s+3/2}).
\end{equation}
Then, at the point of evaluation, we have:
\begin{equation}
     I_{\alpha}(s,\sigma) = {^{a}}Ind(\sigma).
\end{equation}
In the same way as the previous section, we can define the Eisenstein series, where $W_{\alpha} = \{1, w_{\gamma_{1}}= w_{\beta\alpha \beta\alpha\beta} \}$ and parabolic subgroup $P_{\alpha}$ is self-associate. Using the obvious analogue 
 for notation from the previous section, we have:
\begin{align*}
    T_{st,\iota}s,\sigma, w_{\gamma_l})f = &\bigotimes_{v\in S} A(s,\sigma_{v},\gamma_l)f_{v} \otimes\bigotimes_{v\notin S}\tilde{f}_{v}\times r^{S}(s,\sigma,w_{\gamma_l}),\\
    r^{S}(s,\sigma,w_{\gamma_l})&=\frac{L^{S}(s,\sigma,\rho_2)L^{S}(2s,\sigma,\wedge^2 \rho_2)L^{S}(3s,\sigma,\rho_2\otimes \wedge^2 \rho_2)}{L^{S}(1+s,\sigma,\rho_2)L^{S}(1+2s,\sigma,\wedge^2 \rho_2)L^{S}(1+3s,\sigma,\rho_2\otimes \wedge^2 \rho_2)}.
\end{align*}
where $r^{S}(s,\sigma,w_{\gamma_l})$ is the ratio of the certain partial $L$-functions:
\[
    L^{S}(s,\sigma,r_j) := \prod_{v\notin S} L(s,\sigma_{v},r_{j,v}).
\]
Moreover, the $L$-functions involved in this case are:
\begin{align*}
    L(s,\sigma_v, \rho_2) &= L(s,\sigma_v)& (\text{The standard L-function for }GL_{2}),\\
    L(s,\sigma_v, \wedge^2 \rho_2) &= L(s,\omega_{\sigma_{v}})& (\text{The Hecke L-function})\\
    L(s,\sigma_v, \rho_2\otimes \wedge^2 \rho_2) &= L(s,\sigma_v \otimes \omega_{\sigma_{v}})& (\text{Twisted standard L-function for } GL_{2}).
\end{align*}
For a self-dual representation $\sigma$, the Eisenstein series may have a pole with $Re(s)>0$, only at $s=\frac{1}{2}$, when the central character is trivial and the standard L-function does not vanish at $s=\frac{1}{2}$.

\section{Strong purity and the strongly inner cohomology}\label{ch:SVL}
To provide a cohomological interpretation of the Langlands-Shahidi method, we need to identify the arithmetic cohomological counterpart of the cuspidal representation and generate Eisenstein cohomological classes using the representation-theoretic approach of the Langlands-Shahidi method.

\subsection{Strong purity condition}
Let ${^{\iota}}\sigma = {^{\iota}}\sigma_f\times {^{\iota}}\sigma_{\infty}$ be a cuspidal representation of $GL_2(\mathbb{A})$ with ${^{\iota}}\sigma_f \in Coh_{cusp}(GL_2, {^{\iota}}\mu)$. We are interested in the representations $\sigma$ that contribute non-trivially to the cuspidal cohomology as defined in (\ref{eq:cusp}), or simply when $\sigma_f \in Coh_{cusp}(GL_2, \mu)$. This condition will force our representation to be essentially unitary \cite{Racub}, and by Wigner's lemma it has to be essentially conjugate self-dual \cite{Racub}. Therefore, we call the cuspidal representation that contributes to the cuspidal cohomology, the cuspidal cohomological representation. A weight $\mu = (\mu^{\eta})_{\eta:F\to \mathbb{C}} \in X^{+}_{\textit{alg}}(T_2 \times \mathbb{C})$ is called pure  if it satisfies the purity condition:
\begin{equation}\label{def:pure}
    a^{\eta}+ b^{\bar{\eta}} = \mathbf{w(\mu)} = a^{\bar{\eta}}+ b^{\eta} \quad \forall \eta: F\to \mathbb{C},
\end{equation}
where $\mathbf{w(\mu)}$  is an integer, referred to as the purity weight of $\mu$. We will denote the set of all pure weights by $X_{0}^{+}(T\times \mathbb{C})$. By \cite[Thm 4.9]{cl}, pure weights are the only ones supporting cuspidal cohomological representations.

For $\gamma \in Aut(\mathbb{C})$, let ${^{\gamma}\mu} = ({\mu^{\gamma^{-1}\circ\eta}})_{\eta: F \to \mathbb{C}}$. By \cite[Lem 3.19]{cl}, the weight ${^{\gamma}\mu}$ supports cuspidal cohomology for $GL_2$ if $\mu \in X^{+}_{0}(T\times \mathbb{C})$. Therefore, it should satisfy the purity condition with the purity weight $\mathbf{w({^{\gamma}\mu})}$. Following \cite[Section 2.3.1]{RagTI}, a weight $\mu$ as above satisfies the strong purity condition if there is an integer $\mathbf{w}$ such that $\mathbf{w} = \mathbf{w({^{\gamma}\mu})}$ for all $\gamma \in Aut(\mathbb{C})$. The weight $\mu$ satisfying the strong purity is called strongly pure weight with purity weight $\mathbf{w}$. Denote the set of all strongly pure weights by $X^{*}_{00}(T\times \mathbb{C})$. It is clear from the definition that strongly pure weights support cuspidal cohomology of $GL_2$, therefore:
\[
    X^{*}_{00}(T\times \mathbb{C}) \subset X^{*}_{0}(T\times \mathbb{C}).
\]
The notion of cuspidal cohomology is not available at the arithmetic level. However, as the strong purity condition is algebraic, it allows us to replicate the same definition at the arithmetic level. Following Section \ref{sec:char}, let $E$ be the large enough finite Galois extension of $\mathbb{Q}$. Any embedding $\iota:E\to \mathbb{C}$ gives a bijection $X^{*}(T\times E) \to X^{*}(T\times \mathbb{C})$ that maps $\mu=(\mu^{\tau})_{\tau:F\to E}$ to $^{\iota}\mu= (\mu^{\iota^{-1}\circ\tau})_{\tau:F\to \mathbb{C}} $. 

\begin{defn}
    The weight $\mu \in X^{+}_{alg}(T\times E)$ is a strongly pure weight over $E$ with the purity weight $\mathbf{w}$, if for any embedding $\iota: E \to \mathbb{C}$ it maps to a strongly pure weight with purity weight $\mathbf{w}$ in transcendental level.
\end{defn}
We now need to generalize the purity condition to the strong purity condition to capture the combinatorics of strongly pure weights. The strong purity condition depends on the choice of embeddings $\tau: F \to E$ and $\iota: E \to \mathbb{C}$. Following \cite[Proposition 2.4]{RagTI}, the strong purity condition can be defined in three equivalent ways. A weight $\mu = (\mu^{\tau})_{\tau: F \to E} \in X^{+}_{alg}(T\times E)$ where $\mu^{\tau} = (a^{\tau},b^{\tau})$, is strongly pure if it satisfies one of the following equivalent conditions:
    \begin{enumerate}
        \item[(i)] For an embedding $\iota: E \to \mathbb{C}$, there exists an integer $\mathbf{w}$ such that:
        \[
         a^{\iota^{-1}\circ \gamma^{-1} \circ \eta}+ b^{\iota^{-1}\circ \gamma^{-1} \circ \bar{\eta}}=\mathbf{w}
        \]
        for all $\gamma \in Gal(\bar{\mathbb{Q}}/\mathbb{Q})$ and all $\eta:F\to \mathbb{C}$.
        
        \item[(ii)] There exists an integer $\mathbf{w}$ such that for any embedding $\iota: E \to \mathbb{C}$:
        \[
         a^{\iota^{-1}\circ \gamma^{-1} \circ \eta}+ b^{\iota^{-1}\circ \gamma^{-1} \circ \bar{\eta}}=\mathbf{w}
        \]
        for all $\gamma \in Gal(\bar{\mathbb{Q}}/\mathbb{Q})$ and all $\eta:F\to \mathbb{C}$.
        
        \item[(iii)]  For an embedding $\iota: E \to \mathbb{C}$, there exists an integer $\mathbf{w}$ such that:
        \[
         a^{\iota^{-1} \circ \eta}+ b^{\iota^{-1} \circ \bar{\eta}}=\mathbf{w}
        \]
        for all $\gamma \in Gal(\bar{\mathbb{Q}}/\mathbb{Q})$ and all $\eta:F\to \mathbb{C}$.
    \end{enumerate}
The set of all strongly pure weights over $E$ is denoted by $X^{*}_{00}(T\times E)$. We will have the following inclusions for character groups over $E$:
\[
    X^{*}_{00}(T\times E) \subset X^{+}_{alg}(T\times E) \subset X^{+}(T\times E) \subset X^{*}(T\times E).
\]

\begin{remark}
    These inclusions are generally strict. For some examples, we refer the reader to \cite{RagNH}. When $F$ is a CM field, then a pure weight is strongly pure \cite{RagTI}. This is not the case for general totally imaginary fields.
\end{remark}

Assume we have $F_0\subset F_1 \subset F$ as defined in Section \ref{sec:Preliminaries}, for the $TR$-case and $CM$-case. By \cite[Prop. 2.6.]{RagTI}, for $\mu \in X_{00}^{*}(Res_{F/\mathbb{Q}}(T)\times E)$ then there exists $\mu_1\in X_{00}^{*}(Res_{F_1/\mathbb{Q}}(T)\times E)$ such that $\mu$ is a base change of $\mu_1$ from $F_1$ to $F$, i.e $\forall\; \tau: F \to E$, if $\tau_1 = \tau|_{F_1}$ then $\mu_{1}^{\tau}=\mu^{\tau_1}$.

\subsection{Strongly inner cohomology}
The strongly pure weights are the one that contributes to the cuspidal cohomology groups at the transcendental level. We want to show that the strongly pure weights over $E$, capture the cuspidal cohomology group at the arithmetic level. For group $GL_n$ over a totally imaginary field it follows from \cite[Section 2.4]{RagTI}. Following the previous section, we have:

\begin{defn}
For the level structure $K_f$ of $GL_2/F$, the subset of inner spectrum of $\mu \in X_{00}^{*}(T\times E)$ for the level structure $K_f$ is defined as:
\[   
    Coh_{!!}(GL_2 ,\mu) = \{\sigma_f\in Coh_{!}(GL_2,\mu) \; | \; \exists\; \iota:E\to \mathbb{C} \text{ st. } {^{\iota}\sigma_f} \in Coh_{\textit{cusp}}(GL_2,{^{\iota}\mu})\}.
\]
It is called the strongly inner spectrum of $GL_2$ with $\mu$-coefficients and level structure $K_f$.
\end{defn}

The cuspidal cohomology at the transcendental level admits a rational structure by \cite[Thm 4.9]{cl}. Thus, the definition of the strongly inner spectrum of $GL_2$ with $\mu$-coefficients is independent of the choice of the embedding $\iota$. Following \cite{RagTI}, we can define the arithmetic counterpart of the cuspidal cohomology as:
\begin{equation}\label{def:strinner}
    H_{!!}^{\bullet}(S^{GL_2},\mathcal{M}_{\lambda}) = \bigoplus_{\sigma_f \in Coh_{!!}(GL_2, \mu)} H_{!}^{\bullet}(S^{GL_2},\mathcal{M}_{\lambda})(\sigma_f),
\end{equation}
it is called the strongly inner cohomology and it captures the cuspidal cohomology at the arithmetic level:
\[
    H_{!!}^{\bullet}(S^{GL_2},\mathcal{M}_{\lambda}) \otimes \mathbb{C} \simeq H_{cusp}^{\bullet}(S^{GL_2},\mathcal{M}_{\lambda}) .
\]

For $\iota: E \to \mathbb{C}$, we can form a cuspidal automorphic representation $^{\iota}\sigma = {^{\iota}\sigma_f} \times {^{\iota}\sigma_{\infty}}$ of $GL_2(\mathbb{A}_F)$ with $(^{\iota}\sigma)_f = \;^{\iota}\sigma_f$,  where $\sigma_f$ is a strongly inner Hecke summand of $\mu \in X^{*}_{00}(T\times E)$. Now, we need to find the representation at infinity ${^{\iota}\sigma_{\infty}}$, to be able to make a correspondence between strongly inner Hecke summand $\sigma_f$ in arithmetic level and a cuspidal cohomological representation $^{\iota}\sigma$ in transcendental level.

\subsection{Cohomological representations of $GL_2(\mathbb{C})$}\label{sec:cohrep}
We fix $\iota: E \to \mathbb{C}$ and omit $\iota$ from the notation for the rest of the section. Let $\mu \in X^{*}_{00}(T\times_{E} \mathbb{C})$ be a strongly pure weight. For any archimedean place $v$, fix the embeddings $\{\eta_v, \bar{\eta}_v \}$ of $F$ to $\mathbb{C}$, as in Section \ref{sec:Preliminaries}. Define the cuspidal parameters of $\mu$ at $v$:
\begin{equation}\label{def:cuspidalpar}
    \alpha^{v}: = -w_{0}\mu^{\eta_{v}}+\rho_{GL_2}, \quad \beta^{v}: = -\mu^{\bar{\eta}_{v}}-\rho_{GL_2},
\end{equation}
where $w_0$ is the longest element of $W^{P}$ as defined for each maximal parabolic subgroup. Denoting $\alpha^{v}=(\alpha^{v}_1,\alpha^{v}_{2})$ and $\beta^{v}=(\beta^{v}_1,\beta^{v}_{2})$, then the purity condition implies $\alpha^{v}_{j}+\beta^{v}_{j}= -\mathbf{w}$. The normalized parabolic induction:
\[
    \mathbb{J}_{\mu_{v}} := \mathbb{J}(\mu^{\eta_v},\mu^{\bar{\eta}_v}) := \operatorname{Ind}_{B_{2}(\mathbb{C})}^{GL_{2}(\mathbb{C})}(z^{\alpha_1^{v}}\bar{z}^{\beta_1^{v}}\otimes z^{\alpha_2^{v}}\bar{z}^{\beta_2^{v}}),
\]
defines a representation of $GL_2(\mathbb{C})$. Now we can form
\[
    \mathbb{J}_{\mu} := \otimes_{v\in S_{\infty}} \mathbb{J}_{\mu_v},
\]
which is a representation of $\prod_v GL_2(F_v)$. The following proposition explicitly determines the archimedean constituents of a cuspidal cohomological representation attached to a strongly inner Hecke summand.
\begin{prop}\label{archrep}
    Let ${\mu} \in X_{00}^{*}(T\times_{E} \mathbb{C})$ and $\mathbb{J}_{\mu}$ as above. Then:
    \begin{enumerate}
        \item[(i)] $\mathbb{J}_{\mu}$ is an irreducible essentially tempered representation admitting a Whittaker model.
        \item[(ii)] $H^{\bullet}(\mathfrak{gl}_2,K_{2,\infty}; \mathbb{J}_{^{\iota}\mu}\otimes \mathcal{M}_{{\mu},\mathbb{C}})\neq 0$.
        \item[(iii)] Let $\vartheta$ be an irreducible essentially tempered representation of $GL_2(\mathbb{R})$. If $H^{\bullet}(\mathfrak{gl}_2,K_{2,\infty}; \vartheta \otimes \mathcal{M}_{{\mu},\mathbb{C}})\neq 0$ then $\vartheta \simeq \mathbb{J}_{\mu}$.
        \item[(iv)] If $\sigma \in Coh_{\textit{cusp}}(G,\mu)$, then $\sigma_{\infty}=\mathbb{J}_{\mu}$.
    \end{enumerate}
\end{prop}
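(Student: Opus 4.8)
The plan is to reduce every assertion to a single complex place $v$, where $GL_2(F_v)=GL_2(\mathbb{C})$, prove it there, and then recombine over $S_\infty$ by a Künneth/tensor-product argument; so fix $v$ and write $\mu^{\eta_v}=(a,b)$, $\mu^{\bar\eta_v}=(a',b')$ with $a\ge b$, $a'\ge b'$ integers. For (i), I would first read off from \eqref{def:cuspidalpar} that $\alpha^{v}=(-b+\tfrac12,\,-a-\tfrac12)$ and $\beta^{v}=(-a'-\tfrac12,\,-b'+\tfrac12)$, so the exponent differences $\alpha^{v}_1-\alpha^{v}_2=a-b+1\ge 1$ and $\beta^{v}_1-\beta^{v}_2=b'-a'-1\le -1$ are integers of opposite sign. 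Since $\operatorname{Ind}_{B_{2}(\mathbb{C})}^{GL_2(\mathbb{C})}(\chi_1\otimes\chi_2)$ is reducible only when $\chi_1\chi_2^{-1}(z)=z^{p}\bar z^{q}$ with $p,q$ nonzero integers of the same sign, this already gives irreducibility of $\mathbb{J}_{\mu_v}$. The purity condition forces $\alpha^{v}_j+\beta^{v}_j=-\mathbf{w}$, so a suitable twist by a power of $|\det|$ turns the inducing character into a unitary character of the torus; hence $\mathbb{J}_{\mu_v}$ is essentially tempered. Finally, a full normalized principal series of $GL_2(\mathbb{C})$ is always generic, so the irreducible $\mathbb{J}_{\mu_v}$ admits a Whittaker model, and tensoring over $v\in S_\infty$ yields (i).

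For (ii), I would argue through relative Lie algebra cohomology. By design the shifts $-w_0(\cdot)+\rho_{GL_2}$ and $-(\cdot)-\rho_{GL_2}$ in \eqref{def:cuspidalpar} make the infinitesimal character of $\mathbb{J}_{\mu_v}$ equal to that of $\mathcal{M}_{\mu_v,\mathbb{C}}^{\vee}$, which is the necessary condition imposed by Wigner's lemma. Granting this, one has $H^{q}(\mathfrak{gl}_2(\mathbb{C}),K_{2,\infty};\,\mathbb{J}_{\mu_v}\otimes\mathcal{M}_{\mu_v,\mathbb{C}})\cong \operatorname{Hom}_{K_{2,\infty}}(\wedge^{q}\mathfrak{p},\,\mathbb{J}_{\mu_v}\otimes\mathcal{M}_{\mu_v,\mathbb{C}})$, with $\mathfrak{p}$ the $(-1)$-eigenspace of the Cartan involution, and a direct $K$-type count — the symmetric space of $GL_2(\mathbb{C})$ modulo its split centre being $\mathbb{H}^3$, of real dimension $3$ — shows this is nonzero precisely in degrees $q=1,2$. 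I would quote the explicit shape of this computation from \cite[Section 4.1]{RagTI} or \cite{BoWal}, and then apply the Künneth formula over $S_\infty$ to obtain (ii), with the cohomology concentrated in the expected range of degrees.

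For (iii), Wigner's lemma again forces $\vartheta$ to have the infinitesimal character of $\mathcal{M}_{\mu,\mathbb{C}}^{\vee}$. The irreducible essentially tempered representations of $GL_2(\mathbb{C})$ are exactly the (automatically irreducible) twists $\operatorname{Ind}_{B_{2}(\mathbb{C})}^{GL_2(\mathbb{C})}(\chi_1\otimes\chi_2)$ with $\chi_1,\chi_2$ of common real exponent, and two such are isomorphic iff $\{\chi_1,\chi_2\}$ agree as unordered pairs; the infinitesimal-character constraint leaves only finitely many pairs compatible with the integrality of the $\chi_i$, and essential temperedness (real exponents balanced against $\mathbf{w}$) together with genuine non-vanishing of the cohomology singles out $\{z^{\alpha^{v}_1}\bar z^{\beta^{v}_1},\,z^{\alpha^{v}_2}\bar z^{\beta^{v}_2}\}$, i.e. $\vartheta\cong\mathbb{J}_{\mu_v}$. (Alternatively, invoke the Vogan--Zuckerman classification of cohomological $(\mathfrak{g},K)$-modules, which for $GL_2(\mathbb{C})$ leaves only $\mathbb{J}_{\mu_v}$ among the tempered ones with coefficients $\mathcal{M}_{\mu_v,\mathbb{C}}$.) Then (iv) is assembly: if $\sigma\in Coh_{\textit{cusp}}(G,\mu)$ then $\sigma$ is cuspidal, hence globally generic, so $\sigma_\infty=\bigotimes_{v\mid\infty}\sigma_v$ with each $\sigma_v$ irreducible and cohomological against $\mathcal{M}_{\mu_v,\mathbb{C}}$; being the archimedean component of a cohomological cuspidal representation of $GL_2$ it is also essentially tempered (using the essential unitarity recorded in Section \ref{ch:SVL} together with genericity, since the complementary series and the finite-dimensional members of the unitary dual of $GL_2(\mathbb{C})$ are, respectively, never cohomological and never generic). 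By (ii) such a $\sigma_v$ exists and by (iii) it is unique, namely $\mathbb{J}_{\mu_v}$; hence $\sigma_\infty=\mathbb{J}_\mu$.

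I expect the main obstacle to be (ii): carrying out the relative Lie algebra cohomology computation for $GL_2(\mathbb{C})$ regarded as a \emph{real} group — whose complexified Lie algebra is $\mathfrak{gl}_2(\mathbb{C})\oplus\mathfrak{gl}_2(\mathbb{C})$ and from which the split centre must be divided out — while keeping the infinitesimal-character and $K$-type bookkeeping precise enough that it dovetails with the uniqueness argument of (iii). Parts (i), (iii) and (iv) are then comparatively formal, resting only on the $GL_2(\mathbb{C})$ reducibility criterion, Wigner's lemma, and the structure of the unitary dual.
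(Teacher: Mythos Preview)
Your proposal is correct and gives a self-contained sketch of the standard argument. The paper itself does not prove this proposition: immediately after the statement it simply remarks that ``these archimedean constituents for the $CM$-case and $TR$-case are explicitly defined in \cite{RagTI}'' and moves on, so there is no in-paper proof to compare against. Your reduction to a single complex place, the $GL_2(\mathbb{C})$ reducibility criterion for (i), Wigner's lemma plus the Delorme/Borel--Wallach computation for (ii), the Vogan--Zuckerman (or direct infinitesimal-character plus temperedness) pinning-down for (iii), and the assembly for (iv) are exactly the ingredients that \cite{RagTI} packages; you have unpacked the citation rather than taken a different route.
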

These archimedean constituents for the $CM$-case and $TR$-case are explicitly defined in \cite{RagTI}.

\subsection{The Tate twist}\label{sec:tt}
Let $\mu \in X^{*}_{00}(T_2\times E)$ be a strongly pure weight with purity weight $\mathbf{w}$, where $T_2$ is the maximal torus of $\mathbf{GL_2} := Res_{F/\mathbb{Q}}(GL_2)$ . For any integer $m \in \mathbb{Z}$, one can define $\mu+ m\delta_2$ where $\delta_2$ is the determinant character of $\mathbf{GL_2}$ and $\mu = (\mu^{\eta})_{\eta:F\to E} = (a^{\eta},b^{\eta})_{\eta:F\to E}$ by:
\[
    \mu+ m\delta_2 = (a^{\eta}+m,b^{\eta}+m)_{\eta:F\to E}.
\]
Furthermore, we have:
\begin{align*}
    a^{\eta}&+m, b^{\eta}+m \in \mathbb{Z}, &\hfill \textit{Integrality}\\
    a^{\eta}&+m\geq b^{\eta}+m, &\hfill \textit{Dominance}\\
    a^{\eta}&+m+ b^{\bar{\eta}}+m=b^{\eta}+m+ a^{\bar{\eta}}+m=\mathbf{w}+2m, &\hfill \textit{Strong Purity}
\end{align*}
so $\mu+m\delta_2 \in X^{*}_{00}(T_2\times E)$ is a strongly pure weight with purity weight $\mathbf{w}+2m$. By \cite[Section 5.2.4.]{HR}, we have the isomorphism $\mathcal{M}_{\mu,\mathbb{Q}}\otimes \mathbb{Q}[\delta_2] \xrightarrow{\sim} \mathcal{M}_{\mu+\delta_2,\mathbb{Q}}$, and the cup product with the fundamental class $e_{\delta_2}^m \in H^0 (S^{\mathbf{GL_2}},\mathbb{Q}[\delta_2])$ gives us the following isomorphism:
\[
    T^{\bullet}_{Tate}(m):H^{\bullet}_{!!}(S^{\mathbf{GL_2}},\mathcal{M}_{\mu}) \xrightarrow{\sim} H^{\bullet}_{!!}(S^{\mathbf{GL_2}},\mathcal{M}_{\mu+m\delta_2}),
\]
which maps any $\sigma_f\in Coh_{!!}(G,\mu)$ to $\sigma_{f}(-m) := \sigma_{f}\otimes | \;|^{-m}$.

\section{Critical Points of the Products of $L$-functions}\label{sec:critical}
In this section, we will compute the critical values of the $L$-functions attached to the maximal parabolic subgroup $P_{\beta}$. Since the calculations for both maximal parabolic subgroups follow the same procedure verbatim, we will omit the cumbersome calculations for the case of $P_{\alpha}$, and only report the results at the end of this section.

\subsection{The case $P_{\beta}$}\label{sec:criticalb}
Let $\mathbf{GL_2} = Res_{F/\mathbb{Q}}(GL_2)$ be the Levi factor of $P_\beta$. Fix an embedding $\iota: E \to \mathbb{C}$, which we omit from the notation throughout this section. Let $\mu \in X^*_{00}(T \times \mathbb{C})$. Write $\mu = (\mu^\eta)_{\eta: F \to \mathbb{C}}$, where $\mu^\eta = (a^\eta, b^\eta) \in \mathbb{Z}^2$ satisfies $a^\eta \geq b^\eta$ and $a^\eta + b^{\bar{\eta}} = \mathbf{w}$, where $\mathbf{w}$ is the purity weight. Let $\sigma_f \in \mathrm{Coh}_{!!}(\mathbf{GL_2}, \mu)$. By proposition \ref{archrep}, we can define $\sigma_\infty = \mathbb{J}_\mu$, so that $\sigma = \sigma_f \times \sigma_\infty$ is a cuspidal automorphic representation, cohomological with respect to the module $\mathcal{M}_\mu$ of highest weight $\mu$.

The adjoint symmetric transfer of $\mu$ is denoted by $Ad^3(\mu) = (Ad^3(\mu^{\eta}))_{\eta:F\to \mathbb{C}}$, we have:
\begin{align*}
    Ad^3(\mu^{\eta}) &:= (2a^{\eta}-b^{\eta},a^{\eta},b^{\eta},2b^{\eta}-a^{\eta})\in \mathbb{Z}^{4},\\
    b^{\eta}\leq a^{\eta} &\implies 2b^{\eta}-a^{\eta}\leq b^{\eta} \leq a^{\eta} \leq 2a^{\eta}-b^{\eta},\\
    2a^{\eta}-b^{\eta}+2b^{\bar{\eta}}-a^{\bar{\eta}} &=\mathbf{w},\qquad a^{\eta}+b^{\bar{\eta}}=\mathbf{w},\\
    b^{\eta}+a^{\bar{\eta}} &= \mathbf{w}, \qquad 2b^{\eta}-a^{\eta}+2a^{\bar{\eta}}-b^{\bar{\eta}}=\mathbf{w}.
\end{align*}

\begin{theorem}
    Let $\mu \in X^{*}_{00}(T\times \mathbb{C})$ . If $\sigma_f \in Coh_{!!}(\mathbf{GL_2},\mu)$ and  $Ad^3(\sigma)$ be a cuspidal automorphic representation of $Res_{F/\mathbb{Q}}(GL_{4})$, then:
    \[
        Ad^3(\sigma) \in Coh_{!!}(GL_{4},Ad^3(\mu)).
    \]
\end{theorem}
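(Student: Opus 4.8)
The plan is to reduce the statement to an archimedean computation plus a purity check, using the characterization of strongly inner cohomology for $\mathrm{GL}_n$ over a totally imaginary field. Recall that $Coh_{!!}(\mathrm{GL}_4,Ad^3(\mu))$ consists of those Hecke summands $\Pi_f$ for which some base change ${}^\iota\Pi_f$ lies in $Coh_{\textit{cusp}}(\mathrm{GL}_4,{}^\iota Ad^3(\mu))$, and the latter is exactly the set of finite parts of cuspidal automorphic representations $\Pi$ that are cohomological with respect to $\mathcal{M}_{Ad^3(\mu),\CC}$, i.e. with $H^{\bullet}(\mathfrak{gl}_4,K_{4,\infty};\Pi_\infty\otimes\mathcal{M}_{Ad^3(\mu),\CC})\neq 0$; moreover, by the rationality of cuspidal cohomology \cite{cl} and its transfer to the arithmetic level over the large field $E$ \cite{RagTI}, this set is independent of $\iota$. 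Since $Ad^3(\sigma)$ is assumed cuspidal, it therefore suffices to prove: (a) the weight $Ad^3(\mu)$ is strongly pure with the same purity weight $\mathbf{w}$, so that the distinguished cohomological representation $\mathbb{J}_{Ad^3(\mu)}$ is defined by the $\mathrm{GL}_4$-analogue of Proposition~\ref{archrep}; and (b) $Ad^3(\sigma)_\infty\cong\mathbb{J}_{Ad^3(\mu)}$.

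Part (a) is essentially recorded just before the statement: the displayed identities $2a^{\eta}-b^{\eta}+2b^{\bar\eta}-a^{\bar\eta}=\mathbf{w}$, $\ a^{\eta}+b^{\bar\eta}=\mathbf{w}$, $\ b^{\eta}+a^{\bar\eta}=\mathbf{w}$, $\ 2b^{\eta}-a^{\eta}+2a^{\bar\eta}-b^{\bar\eta}=\mathbf{w}$ say precisely that the $4$-tuple $Ad^3(\mu^{\eta})=(2a^{\eta}-b^{\eta},a^{\eta},b^{\eta},2b^{\eta}-a^{\eta})$ satisfies the purity condition \eqref{def:pure} for $\mathrm{GL}_4$ with purity weight $\mathbf{w}$, while dominance $2a^{\eta}-b^{\eta}\geq a^{\eta}\geq b^{\eta}\geq 2b^{\eta}-a^{\eta}$ and integrality are immediate; strong purity is inherited from that of $\mu$ by applying the same four identities to every ${}^{\gamma}\mu$, using that the assignment $\mu\mapsto Ad^3(\mu)$ is Galois-equivariant, $Ad^3({}^{\gamma}\mu)={}^{\gamma}Ad^3(\mu)$.

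For part (b) I would argue one complex place $v$ at a time. By Proposition~\ref{archrep}, $\sigma_v=\mathbb{J}_{\mu_v}=\operatorname{Ind}_{B_2(\CC)}^{\mathrm{GL}_2(\CC)}(z^{\alpha_1^v}\bar z^{\beta_1^v}\otimes z^{\alpha_2^v}\bar z^{\beta_2^v})$ with $(\alpha^v,\beta^v)$ the cuspidal parameters \eqref{def:cuspidalpar} of $\mu$ at $v$; its $L$-parameter $\phi_v$ is the sum of the two characters $z\mapsto z^{\alpha_j^v}\bar z^{\beta_j^v}$ of $W_\CC=\CC^{\times}$. Since the Kim--Shahidi symmetric-cube transfer is compatible with the archimedean local Langlands correspondence, and $Ad^3(\rho_2)=\mathrm{Sym}^3(\rho_2)\otimes(\wedge^2\rho_2)^{-1}$, the parameter of $Ad^3(\sigma_v)$ is $\mathrm{Sym}^3(\phi_v)\otimes(\det\phi_v)^{-1}$, which is the sum of the four characters with $z$-exponents $2\alpha_1^v-\alpha_2^v,\ \alpha_1^v,\ \alpha_2^v,\ 2\alpha_2^v-\alpha_1^v$ (and matching $\bar z$-exponents in $\beta^v$); hence $Ad^3(\sigma_v)=\operatorname{Ind}_{B_4(\CC)}^{\mathrm{GL}_4(\CC)}$ of the corresponding character of the diagonal torus. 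On the other hand, computing the cuspidal parameters of $Ad^3(\mu)$ at $v$ directly from \eqref{def:cuspidalpar} with $\rho_{\mathrm{GL}_4}=(\tfrac32,\tfrac12,-\tfrac12,-\tfrac32)$ gives $\alpha^v_{\mathrm{GL}_4}=(a^{\eta}-2b^{\eta}+\tfrac32,\ -b^{\eta}+\tfrac12,\ -a^{\eta}-\tfrac12,\ b^{\eta}-2a^{\eta}-\tfrac32)$ (writing $\eta=\eta_v$), which is exactly the tuple of $z$-exponents above after substituting $\alpha_1^v=-b^{\eta}+\tfrac12$, $\alpha_2^v=-a^{\eta}-\tfrac12$; the $\rho_{\mathrm{GL}_2}$-shifts inside $\alpha^v,\beta^v$ and the $\rho_{\mathrm{GL}_4}$-shift combine correctly because $\mathrm{Sym}^3$ is compatible with these normalizations. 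Therefore $Ad^3(\sigma_v)\cong\mathbb{J}_{Ad^3(\mu)_v}$; tensoring over $v$ gives $Ad^3(\sigma)_\infty\cong\mathbb{J}_{Ad^3(\mu)}$. Combining with (a), the $\mathrm{GL}_4$-version of Proposition~\ref{archrep}(ii) yields $H^{\bullet}(\mathfrak{gl}_4,K_{4,\infty};Ad^3(\sigma)_\infty\otimes\mathcal{M}_{Ad^3(\mu),\CC})\neq 0$, so $Ad^3(\sigma)\in Coh_{\textit{cusp}}(\mathrm{GL}_4,Ad^3(\mu))$, and descent plus Galois-equivariance of the transfer places it in $Coh_{!!}(\mathrm{GL}_4,Ad^3(\mu))$.

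The most delicate input is step (b): one must invoke (rather than reprove) the compatibility of the global $\mathrm{Sym}^3$/$Ad^3$ transfer with the archimedean local correspondence, carefully track the half-integral $\rho$-shifts built into \eqref{def:cuspidalpar} together with the twist by $(\wedge^2\rho_2)^{-1}=\omega_{\sigma}^{-1}$, and verify that $\mathrm{Sym}^3$ carries the regularity forcing $\mathbb{J}_{\mu_v}$ to be the unique cohomological representation to the corresponding regularity for $\mathbb{J}_{Ad^3(\mu)_v}$ — this is where the hypothesis that $Ad^3(\sigma)$ is cuspidal, equivalently that $\sigma$ is non-monomial \cite{KSannals}, re-enters on the archimedean side. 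The remaining points — purity of $Ad^3(\mu)$, independence of $\iota$, and the passage from $Coh_{\textit{cusp}}$ to $Coh_{!!}$ — are then routine given the structure of strongly inner cohomology for $\mathrm{GL}_n$ over totally imaginary fields established in \cite{RagTI} and \cite{cl}, along the lines of \cite{Racub}.
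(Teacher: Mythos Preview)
Your proposal is correct and follows essentially the same approach as the paper: the paper's proof likewise works one archimedean place at a time, writes down the cuspidal parameters $\alpha^v=-w_0\,Ad^3(\mu^{\eta})+\rho_4$ and $\beta^v=-Ad^3(\mu^{\bar\eta})-\rho_4$ (arriving at exactly the tuple you compute), forms $\mathbb{J}_{Ad^3(\mu^v)}$, and then observes that the $Ad^3$-transfer of $\sigma_v=\mathbb{J}_{\mu_v}$ has the same Langlands parameter. Your write-up is in fact more explicit than the paper's on two points---the strong-purity check for $Ad^3(\mu)$ (which the paper records in the displayed identities just before the statement rather than inside the proof) and the passage from $Coh_{\textit{cusp}}$ to $Coh_{!!}$---but the underlying argument is the same.
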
 
\begin{proof}
    The proof following \cite{Racub} for the symmetric power transfers. Consider the archimedean place $v\in S_{\infty}$ and fix the pair of complex embeddings $(\eta,\bar{\eta})$ of $F$ into the field of complex numbers. We may assume that $\eta$ is the embedding that identifies the completion $F_{v}$ with $\mathbb{C}$. Now, define the cuspidal parameters of $Ad^3(\mu)$ at $v$ as follows:
\begin{align*}
    \alpha^{v} &:= -w_{0} Ad^{3}(\mu^{\eta})+\rho_4 = (\alpha^{v}_1,\dots,\alpha^{v}_4) \\
    &= (-2b^{\eta}+a^{\eta}+\frac{3}{2}, -b^{\eta}+\frac{1}{2},-a^{\eta}-\frac{1}{2},-2a^{\eta}+b^{\eta}-\frac{3}{2}),\\
    \beta^{v} &:= -Ad^{3}(\mu^{\bar{\eta}})-\rho_4=(\beta^{v}_1,\dots,\beta^{v}_4) \\
    &= (2b^{\eta}-a^{\eta}-\frac{3}{2}-\mathbf{w}, b^{\eta}-\frac{1}{2}-\mathbf{w},a^{\eta}+\frac{1}{2}-\mathbf{w},2a^{\eta}-b^{\eta}+\frac{3}{2}-\mathbf{w}),
\end{align*}
 where $\rho_4$ is half the sum of positive roots for $GL_4$. For any $1\leq i \leq 4$, we have 
 \begin{align*}
    \alpha^{\eta}_i, \beta^{\eta}_i \in \frac{3}{2}+\mathbb{Z}, \quad \alpha^{\eta}_i+\beta^{\eta}_i = -\mathbf{w}.
\end{align*} 

Then, one can form the representation $\mathbb{J}_{Ad^{3}(\mu^{v})}$ of $GL_4(\mathbb{C})$ using normalized parabolic induction:
 \[
    \mathbb{J}_{Ad^{3}(\mu^{v})} := \operatorname{Ind}_{B_4(\mathbb{C})}^{GL_4(\mathbb{C})}(z^{\alpha^{v}_1}\bar{z}^{\beta^{v}_1}\otimes\dots\otimes z^{\alpha^{v}_4}\bar{z}^{\beta^{v}_4}),
 \]
where $B_4$ is a subgroup of upper-triangular matrices of $GL_4$. Consequently, we can define a representation of $GL_4(\mathbb{R}) = \prod_v GL_4(F_v)$:
\[
    \mathbb{J}_{Ad^{3}(\mu)} := \bigotimes_{v\in S_{\infty}} \mathbb{J}_{Ad^{3}(\mu^{v})} .
\]
On the other hand, it is easy to see that the $Ad^3$ transfer of $\sigma_v = \mathbb{J}_{\mu_v}$ gives the same representation at infinity as it will have the same Langlands parameters. Therefore, $Ad^{3}(\sigma_v)$ has the highest weight $Ad^{3}(\mu_v)$.
\end{proof}

\begin{remark}
    By \cite{Kim2002CuspidalityOS}, the representation $Ad^{3}(\sigma)$ is cuspidal unless $\sigma$ is either monomial or tetrahedral. The tetrahedral case doesn't support cohomological representation by \cite{GrobRasym}. Therefore, for the remainder of this section, we can safely assume that $\sigma$ is non-monomial.
\end{remark}

By Local-Langlands correspondence for archimedean case \cite{knappArch}, the associated local $L$-factors of $L(s,Ad^{3}(\sigma))$ is:
\begin{align*}
    L_{\infty}(s,Ad^3(\sigma)) &\approx \prod_{v\in S_{\infty}} \Gamma(s-\frac{\mathbf{w}}{2}+\frac{|4b^{\eta}-2a^{\eta}-\mathbf{w}-3|}{2})\\
    &\hspace{40pt}.\Gamma(s-\frac{\mathbf{w}}{2}+\frac{|2b^{\eta}-\mathbf{w}-1|}{2})\\
    &\hspace{40pt}.\Gamma(s-\frac{\mathbf{w}}{2}+\frac{|2a^{\eta}-\mathbf{w}+1|}{2})\\ 
    & \hspace{40pt}.\Gamma(s-\frac{\mathbf{w}}{2}+\frac{|4a^{\eta}-2b^{\eta}-\mathbf{w}+3|}{2}),
\end{align*}
where $\approx$ means up to nonzero constants and exponential functions. Moreover, let $Ad^{3}(\sigma)^{\vee}$ be the dual of $Ad^{3}(\sigma)$, we have:
\begin{align*}
    L_{\infty}(1-s,Ad^3(\sigma)^{\vee}) &\approx \prod_{v\in S_{\infty}} \Gamma(1-s+\frac{\mathbf{w}}{2}+\frac{|4b^{\eta}-2a^{\eta}-\mathbf{w}-3|}{2})\\
    &\hspace{40pt}.\Gamma(1-s+\frac{\mathbf{w}}{2}+\frac{|2b^{\eta}-\mathbf{w}-1|}{2})\\
    &\hspace{40pt}.\Gamma(1-s+\frac{\mathbf{w}}{2}+\frac{|2a^{\eta}-\mathbf{w}+1|}{2})\\ 
    &\hspace{40pt}.\Gamma(1-s+\frac{\mathbf{w}}{2}+\frac{|4a^{\eta}-2b^{\eta}-\mathbf{w}+3|}{2}).
\end{align*}

Following \cite[Section 7.1]{HR}, we define quantities: 
\begin{align*}
    a_1(\mu) &= -\frac{\alpha^{\eta}_i+\beta^{\eta}_i}{2}= \frac{\mathbf{w}}{2},\\
    \ell_1(\mu) &= \min_{\eta:F\to \mathbb{C}} \{ |4b^{\eta}-2a^{\eta}-\mathbf{w}-3|,|2b^{\eta}-\mathbf{w}-1|,\\ &\qquad\qquad|2a^{\eta}-\mathbf{w}+1|, |4a^{\eta}-2b^{\eta}-\mathbf{w}+3\}.
\end{align*}
that are both invariant under different embeddings. We call $a_1(\mu)$ and $\ell_{1}(\mu)$ the abelian width and cuspidal with of $Ad^{3}(\mu)$, respectively. We may omit $\mu$ from the notation for simplicity. The set of critical points of $L(s,Ad^{3}(\sigma))$ is denoted by $Crit(Ad^3({\mu}))$. Using the fact that the gamma factor $\Gamma(s)$ has no poles for $Re(s) > 0$, we have:
\[
    Crit(Ad^3(\mu)) = \{ s_0 \in \frac{3}{2}+\mathbb{Z} \;|\; 1+a_1-\frac{\ell_1}{2} \leq s_0 \leq a_1+\frac{\ell_1}{2}\},
\]
which form a set of contiguous half integers centered around $\frac{1}{2}+a_1$ and of length $\ell_1$. We want the point of evaluation, $-\frac{5}{2}$, and $1-\frac{5}{2}$ to be in $Crit(Ad^{3})$. This condition satisfies if and only if:
\[
    -\frac{3}{2}-\frac{\ell_1}{2}\leq a_1 \leq -\frac{7}{2}+\frac{\ell_1}{2},
\]
or equivalently:
\[
    -3-\ell_1 \leq \mathbf{w} \leq -7+\ell_1.
\]
Note that inequalities above are valid only if $\ell_1\geq 2$.

The central character of $\sigma$ is $\omega_{\sigma} \in Coh_{!!}(\operatorname{Res}_{F/\mathbb{Q}}(GL_1), det(\mu))$ with the highest weight $det(\mu) = (det(\mu^{\eta}))_{\eta:F\to \mathbb{C}}$. We have:
\begin{align*}
    det(\mu^{\eta})=a^{\eta}+b^{\eta}\in \mathbb{Z},\\
    det(\mu^{\eta})+det(\mu^{\bar{\eta}})=2\mathbf{w}.
\end{align*}
So $det(\mu) \in X^{*}_{00}(T_1\times \mathbb{C})$, where $T_1$ is the torus $\operatorname{Res}_{F/\mathbb{Q}}(GL_1)$, is a strongly pure weight with the purity weight $2\mathbf{w}$. Likewise, one can define cuspidal parameters of $det(\mu)$ at $v\in S_{\infty}$:
\begin{align*}
    \alpha^{v} &= -det(\mu^{\eta}),\\ 
    \beta^{v} &= det(\mu^{\bar{\eta}}) = det(\mu^{\eta}) -2\mathbf{w}.
\end{align*}
It is easy to see that: 
\begin{align*}
    \alpha^{v}, \beta^{v} \in \mathbb{Z}, \quad \alpha^{v}+\beta^{v} = -2\mathbf{w},
\end{align*} 
the associated local $L$-factor of $L(s,\omega_{\sigma})$ will be:
\begin{align*}
    L_{\infty}(s,\omega_{\sigma}) &\approx \prod_{v\in S_{\infty}}\Gamma(s-\mathbf{w}+|a^{\eta}+b^{\eta}-\mathbf{w}|),\\
    L_{\infty}(1-s,\omega_{\sigma}^{\vee}) &\approx \prod_{v\in S_{\infty}}\Gamma(1-s+\mathbf{w}+|a^{\eta}+b^{\eta}-\mathbf{w}|).
\end{align*}
We define the following quantities that are invariant under different embeddings:
\begin{align*}
    a_2(\mu) &= -\frac{\alpha^{\eta}+\beta^{\eta}}{2} = \mathbf{w},\\
    \ell_2(\mu) &= \min_{\eta:F\to \mathbb{C}} \{|a^{\eta}+b^{\eta}-\mathbf{w}|\}.
\end{align*}
Then $s_0 \in \mathbb{Z}$ is a critical points of $L(s,\omega_{\sigma})$ if and only if
\[
    1+a_2-\ell_2 \leq s_0 \leq a_2+\ell_2.
\]
Note that we are interested in studying the critical points of $L(2s,\omega_{\sigma})$ not $L(s,\omega_{\sigma})$. Again, we need $2(-\frac{5}{2}) = -5$ and $1+2(-\frac{5}{2}) = -4$ to be critical points of $L(s,\omega_{\sigma})$ which are satisfied if and only if
\[
    -4-\ell_2(\mu) \leq a_2(\mu) \leq -6+\ell_2(\mu).
\]
or equivalently:
\[
    -4-\ell_2 \leq \mathbf{w} \leq -6+\ell_2.
\]
These inequalities are valid only if $\ell_2\geq 1$.

The set of critical points of the product of the $L$-functions $L(s,Ad^3(\sigma))L(2s,\omega_{\sigma})$ is the intersection of critical set for $L$-functions involved and denoted by $Crit(\mu, P_{\beta})$: 
\begin{align*}
    Crit(\mu,P_{\beta}) = \{ s_0 \in \frac{3}{2}+\mathbb{Z} \;|\;&  1+a_1-\frac{\ell_1}{2} \leq s_0 \leq a_1+\frac{\ell_1}{2}, \\
    & \frac{1+a_2-\ell_2}{2} \leq s_0 \leq\frac{ a_2+\ell_2}{2}\} .  
\end{align*}

The evaluation point $-\frac{5}{2}$ is a critical point of the products $L(s,Ad^3(\sigma))L(2s,\omega_{\sigma})$ and $L(1+s,Ad^3(\sigma))L(1+2s,\omega_{\sigma})$ if and only if 
\begin{align*}
    -3-\ell_1(\mu)&\leq \mathbf{w} \leq -7+\ell_1(\mu),\\
    -4-\ell_2(\mu) &\leq \mathbf{w} \leq -6+\ell_2(\mu),\\
    \ell_1(\mu)\geq 2,&\qquad \ell_2(\mu)\geq 1.
\end{align*}
We can divide these into two cases, based on a comparison between $\ell_1(\mu)$ and $\ell_2(\mu)$:
\begin{itemize}
    \item[(I)] $\ell_1(\mu) \leq \ell_2(\mu) \implies -4-\ell_2(\mu) \leq -3-\ell_1(\mu) \leq \mathbf{w} \leq -7+\ell_1(\mu)\leq -6+\ell_2(\mu)$,
    \item[(II)] $\ell_1(\mu) > \ell_2(\mu) \implies -3-\ell_1(\mu) \leq -4-\ell_2(\mu) \leq \mathbf{w} \leq -6+\ell_2(\mu)\leq -7+\ell_1(\mu)$.
\end{itemize}
For any $\eta: F\to \mathbb{C}$ as $a^{\eta}\geq b^{\eta}$, we have:
\[
    4b^{\eta}-2a^{\eta}-3-\mathbf{w} < 2b^{\eta}-1-\mathbf{w} < a^{\eta}+b^{\eta}-\mathbf{w} < 2a^{\eta}+1-\mathbf{w} < 4a^{\eta}-2b^{\eta}+3-\mathbf{w}.
\]
The fact that embeddings can impose different orderings for $\ell_1$ and $\ell_2$ adds complexity to writing the general case. To avoid this, we will focus on the case where $F$ is an imaginary quadratic number field, as this can naturally generalize to the totally imaginary field case. In this scenario, the only embeddings are $\{\eta,\bar{\eta}\}$. For simplicity, we denote $\mu^\eta = (a, b)$. With this setup, we can analyze the different cases based on the position of $0$ in the chain of inequalities above, as illustrated in Figure \ref{fig:zerosbeta}:
\begin{figure}[htp]
     \centering
      \includegraphics[width=\textwidth]{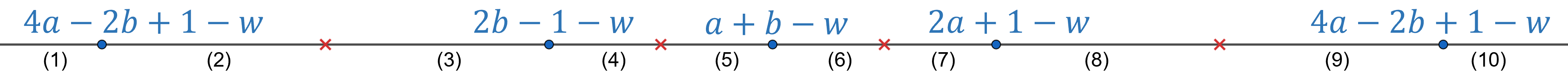}
     \caption{Cases for the Place of Zero, $P_{\beta}$.}
     \label{fig:zerosbeta}
 
\end{figure}
\begin{enumerate}
    \item $\mathbf{0 \leq 4b-2a-3-\mathbf{w}}$. In this, case we have
    \begin{align*}
        \ell_1 = 4b-2a-3-\mathbf{w} < a+b-\mathbf{w} = \ell_2.
    \end{align*}
    So it is the case \textbf{(I)}.
    \[
    \begin{cases}
       2a-4b+\mathbf{w}\leq \mathbf{w} \leq -10+4b-2a-\mathbf{w};\\
       4b-2a-3-\mathbf{w}\geq 2.
    \end{cases}
    \]
    We can rewrite it as:
    \[
    \begin{cases}
        a \geq b;\\
        a \leq 2b;\\
        a \leq 2b-\mathbf{w}-5;\\
        a \leq 2b-\frac{5+\mathbf{w}}{2}.
    \end{cases}
    \]
    Using the fact that $a\geq b$ and varying the value for $w$, we will have:
    \[
    \begin{cases}
        b \leq a \leq 2b, & \mathbf{w}\leq -5;\\
        b \leq a \leq 2b-\mathbf{w}-5, & \mathbf{w}\geq -5.
    \end{cases}
    \]
    
    \item $\mathbf{4b-2a-3-w\leq 0 \leq 3b-a-2-w}$ In this case we have
    \begin{align*}
        \ell_1 = 2a-4b+3+\mathbf{w} < a+b-\mathbf{w} = \ell_2.
    \end{align*}
    So it is the case \textbf{(I)}.
     \[
        \begin{cases}
            4b-2a-6-\mathbf{w}\leq \mathbf{w} \leq 2a-4b-4+\mathbf{w};\\
            2a-4b+3-\mathbf{w}\geq 2.
        \end{cases}
    \]
    like the previous part, we have:
    \[
    \begin{cases}
        2b-3-\mathbf{w} \leq a \leq 3b-2-\mathbf{w}, & \mathbf{w}\leq -5;\\
        2b+2 \leq a \leq 3b-2-\mathbf{w}, & \mathbf{w} \geq -5.
    \end{cases}
    \]
    
    \item $\mathbf{3b-a-2-w\leq 0 \leq 2b-1-w}$ In this case we have
    \begin{align*}
        \ell_1 = 2b-1-\mathbf{w} < a+b-\mathbf{w} = \ell_2.
    \end{align*}
    So it is the case \textbf{(I)}.
     \[
        \begin{cases}
            -2b-2+\mathbf{w} \leq \mathbf{w} \leq 2b-8-\mathbf{w};\\
            2b-1-\mathbf{w}\geq 2.
        \end{cases}
    \implies
        \begin{cases}
            -1 \leq b \leq \frac{a+2+\mathbf{w}}{3}, & \mathbf{w} \leq -5;\\
            4+\mathbf{w} \leq b \leq \frac{a+2+\mathbf{w}}{3}, & \mathbf{w}\geq -5.
        \end{cases}
    \]
    \item $\mathbf{2b-1-w\leq 0 \leq \frac{a+3b-1}{2}-w }$. In this case, we have:
    \begin{align*}
        \ell_1 = -2b+1+\mathbf{w} < a+b-\mathbf{w} = \ell_2.
    \end{align*}
    So it is the case \textbf{(I)}.
     \[
        \begin{cases}
            2b-4-\mathbf{w} \leq \mathbf{w} \leq -2b-6+\mathbf{w};\\
            -2b+1+\mathbf{w}\geq 2.
        \end{cases}
    \implies
        \begin{cases}
            \frac{1-a+2\mathbf{w}}{3} \leq b \leq \mathbf{w}+2, & \mathbf{w}\leq -5;\\
            \frac{1-a+2\mathbf{w}}{3} \leq b \leq -3 & \mathbf{w}\geq -5.
        \end{cases}
    \]
    \item $\mathbf{\frac{a+3b-1}{2}-w \leq 0 \leq a+b-w}$. In this case, we have:
    \begin{align*}
        \ell_1 = -2b+1+\mathbf{w} > a+b-\mathbf{w} = \ell_2.
    \end{align*}
    So it is the case \textbf{(II)}.
     \[
        \begin{cases}
            -4-a-b+\mathbf{w} \leq \mathbf{w} \leq -6+a+b-\mathbf{w};\\
            a+b-\mathbf{w}\geq 1.
        \end{cases}
    \]
    Then,
    \[
        \begin{cases}
            -4-a \leq b \leq \frac{1-a+2\mathbf{w}}{3}, & \mathbf{w}\leq -5;\\
            2\mathbf{w}-a+6 \leq b \leq \frac{1-a+2\mathbf{w}}{3}, & \mathbf{w}\geq -5.
        \end{cases}
    \]
    \item $\mathbf{a+b-w \leq 0 \leq \frac{3a+b+1}{2}-w}$. In this case, we have:
    \begin{align*}
        \ell_1 = 2a+1-\mathbf{w} > -(a+b)+\mathbf{w} = \ell_2.
    \end{align*}
    So it is the case \textbf{(II)}.
     \[
        \begin{cases}
            a+b-4-\mathbf{w} \leq \mathbf{w} \leq -a-b-6+\mathbf{w};\\
            -a-b+\mathbf{w} \geq 1.
        \end{cases}
    \]
    Then,
    \[
        \begin{cases}
            2\mathbf{w}-3a-1 \leq b \leq 2\mathbf{w}-a+4, & \mathbf{w}< -5;\\
            2\mathbf{w}-3a-1 \leq b \leq -a-6, & \mathbf{w}\geq -5.
        \end{cases}
    \]
    \item $\mathbf{\frac{3a+b+1}{2}-w \leq 0 \leq 2a+1-w }$. In this case, we have:
    \begin{align*}
        \ell_1 = 2a+1-\mathbf{w} < -(a+b)+\mathbf{w} = \ell_2.
    \end{align*}
    So we are back to the case \textbf{(I)} again.
     \[
        \begin{cases}
            -2a-4+\mathbf{w} \leq \mathbf{w} \leq 2a-6-\mathbf{w};\\
            2a+1-\mathbf{w}\geq 2.
        \end{cases}
    \implies
    \begin{cases}
        -2\leq a \leq \frac{2\mathbf{w}-b-1}{3}, & \mathbf{w}\leq -5;\\
        \mathbf{w}+3\leq a \leq \frac{2\mathbf{w}-b-1}{3} ,& \mathbf{w} \geq -5.
    \end{cases}
    \]
    \item $\mathbf{2a+1-w \leq 0 \leq 3a-b+2-w }$. In this case, we have:
    \begin{align*}
        \ell_1 = -2a-1+\mathbf{w} < -(a+b)+\mathbf{w} = \ell_2.
    \end{align*}
    So it is the case \textbf{(I)}.
     \[
        \begin{cases}
            2a-2-\mathbf{w} \leq \mathbf{w} \leq -2a-8+\mathbf{w};\\
            -2a-1+\mathbf{w}\geq 2.
        \end{cases}
    \implies
        \begin{cases}
            \frac{b-2+\mathbf{w}}{3} \leq a \leq \mathbf{w}+1, & \mathbf{w} \leq 5;\\
            \frac{b-2+\mathbf{w}}{3} \leq a \leq  -4, & \mathbf{w} \geq 5.
        \end{cases}
    \]
    \item $\mathbf{3a-b+2-w \leq 0 \leq 4a-2b+3-w}$. In this case, we have:
    \begin{align*}
        \ell_1 = 4a-2b+3-\mathbf{w} < -(a+b)+\mathbf{w} = \ell_2.
    \end{align*}
    So it is the case \textbf{(I)}.
    
     \[
        \begin{cases}
            2b-4a-6+\mathbf{w} \leq \mathbf{w} \leq 4a-2b-4-\mathbf{w};\\
            4a-2b+3-\mathbf{w}\geq 2.
        \end{cases}
    \]
    Then,
    \[
    \begin{cases}
        3a+2-\mathbf{w} \leq b\leq 2a+3,& \mathbf{w}\leq -5;\\
        3a+2-\mathbf{w} \leq b \leq 2a-2-\mathbf{w}, & \mathbf{w}\geq -5.
    \end{cases}
    \]
    \item $\mathbf{4a-2b+3-w \leq 0}$. In this case, we have:
    \begin{align*}
        \ell_1 = -4a+2b-3+\mathbf{w} < -(a+b)+\mathbf{w} = \ell_2.
    \end{align*}
    So it is the case \textbf{(I)}.
     \[
        \begin{cases}
            4a-2b-\mathbf{w} \leq \mathbf{w} \leq -4a+2b-10+\mathbf{w};\\
            -4a+2b-3+\mathbf{w}\geq 2.
        \end{cases}
    \implies
        \begin{cases}
            2a-\mathbf{w} \leq b \leq a, & \mathbf{w}\leq -5;\\
            2a+5 \leq b \leq a, & \mathbf{w}\geq 5.
        \end{cases}
    \]
\end{enumerate}

\begin{figure}[htp]
    \centering
        \includegraphics[width=.35\textwidth]{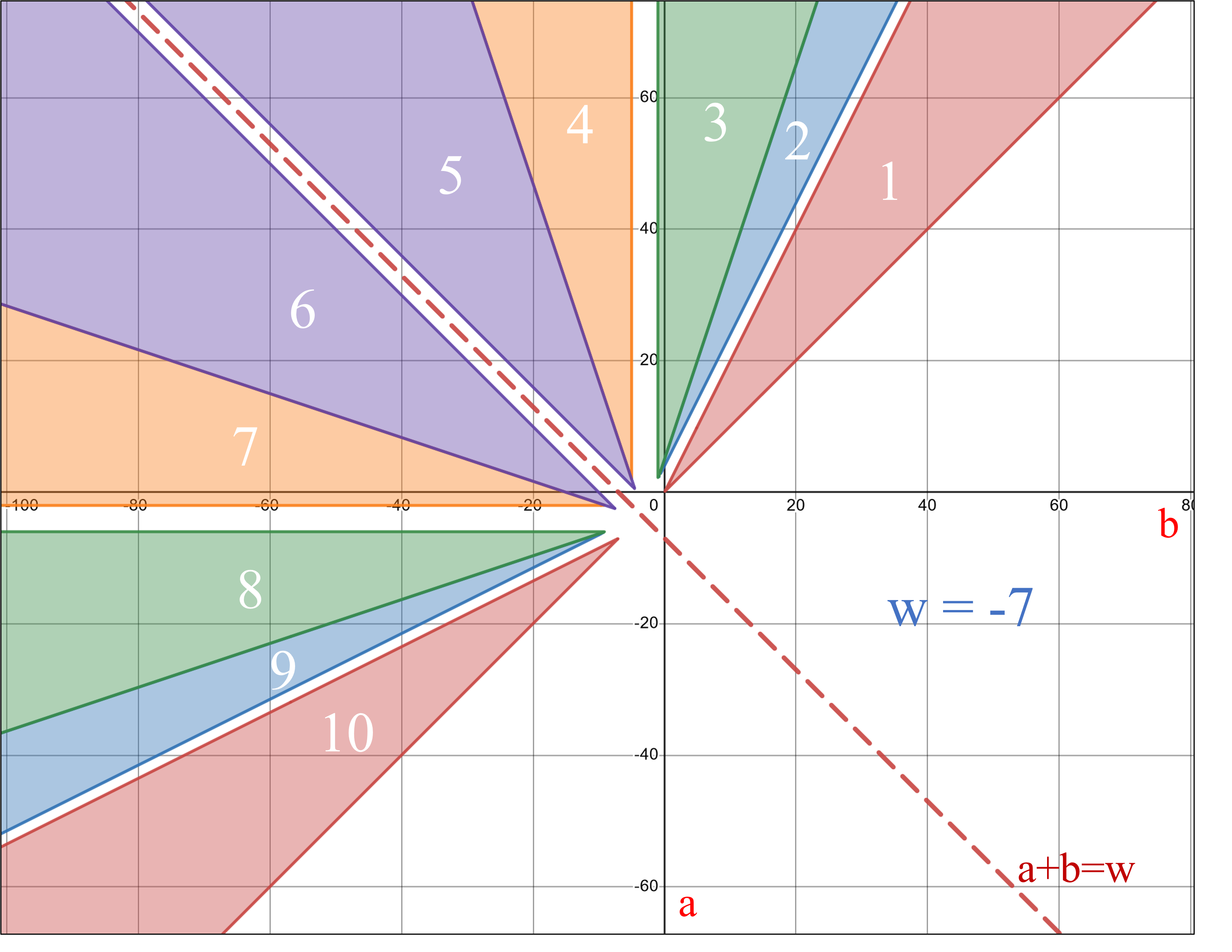}
        \includegraphics[width=.35\textwidth]{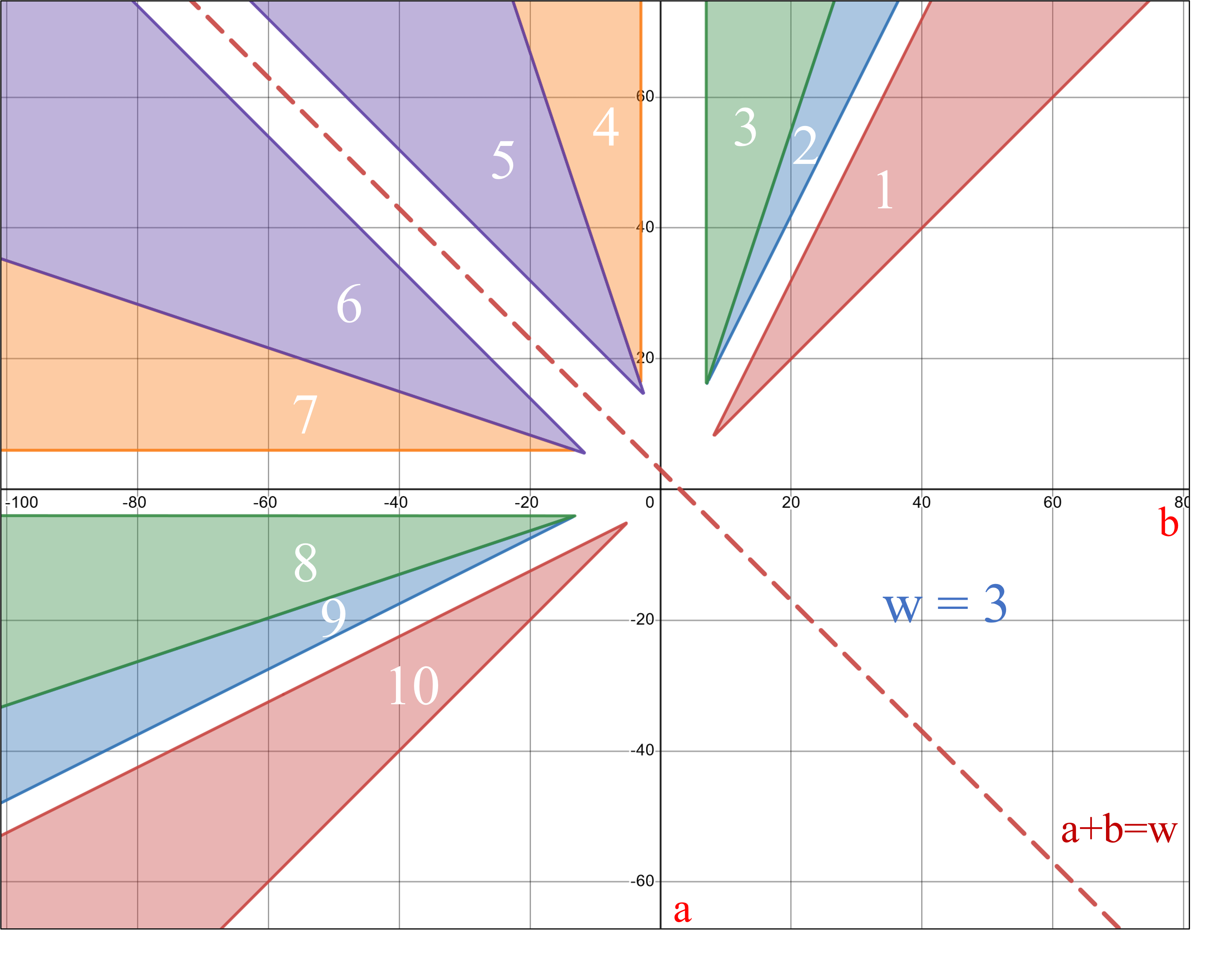}
    \caption{Critical Regions for the case $P_{\beta}$ at different $\mathbf{w}$.}
    \label{fig:criticalregbeta}
\end{figure}

We call each region above a critical region. The critical regions are disjoint and they share at most one boundary with another critical region. Figure \ref{fig:criticalregbeta} depicts the critical regions for different $\mathbf{w}\in \{-7,3\}$. Let us call the intersection point of boundaries of each critical region, the vertex of the region, and denote the vertex of the region $? \in \{(1),\; (2),\; \dots,\; (10)\}$ by $v_{(?)}=(b_0,a_0)$. 
\begin{itemize}
    \item Critical regions (2) and (3) share a boundary, the vertices are as follows:
    \[
     v_{(2)}=v_{(3)}=\begin{cases}
         (-1,-5-\mathbf{w}),&\mathbf{w}\leq -5;\\
         (4+\mathbf{w},10+2\mathbf{w}),&\mathbf{w}\geq -5.
     \end{cases}
    \]
    
     \item Critical regions (4) and (5) share a boundary, we have:
    \[
     v_{(4)}=\begin{cases}
         (2+\mathbf{w},-5-\mathbf{w}),&\mathbf{w}\leq -5;\\
         (-3,10+2\mathbf{w}), & \mathbf{w} \geq -5.
     \end{cases},\quad v_{(5)}=\begin{cases}
         (\frac{5}{2}+\mathbf{w},-\frac{13}{2}-\mathbf{w}),&\mathbf{w}\leq -5;\\
         (-\frac{5}{2},\frac{17}{2}+2\mathbf{w},-3),& \mathbf{w}\geq -5.
     \end{cases} 
    \]
    Even though critical regions share a boundary they have different vertices.
    
    \item Critical regions (6) and (7) share a boundary, we have:
    \[
     v_{(6)}=\begin{cases}
         (2\mathbf{w}+\frac{13}{2},-\frac{5}{2}),&\mathbf{w}\leq -5;\\
         (-\mathbf{w}-\frac{17}{2},\mathbf{w}+\frac{5}{2}), & \mathbf{w} \geq -5. 
     \end{cases},\quad v_{(7)}=\begin{cases}
         (2\mathbf{w}+5,-2),&\mathbf{w}\leq -5;\\
         (-\mathbf{w}-10,\mathbf{w}+3),& \mathbf{w}\geq -5.
     \end{cases}
    \]
    Therefore, the vertices are not aligned.
    
       \item Critical regions (8) and (9) share a boundary, we have:
    \[
     v_{(8)}=v_{(9)}=\begin{cases}
         (2\mathbf{w}+5,\mathbf{w}+1),&\mathbf{w}\leq -5;\\
         (-\mathbf{w}-10,-4), & \mathbf{w} \geq -5.
     \end{cases}
    \]
\end{itemize}

\begin{remark}\label{rmk: wrunit}
The analysis above shows that we can explicitly categorize different cases by considering whether $\mathbf{w} \leq -5$ or $\mathbf{w} > -5$. These two conditions have a deeper significance, which we elaborate on later in Section \ref{runit}, as they determine whether the representation lies on the right side of the unitary axis or not.
\end{remark}

\begin{prop}
    Assuming the notations above, there are two cases:
    \begin{enumerate}
    \item If $\ell_1 \leq \ell_2$ or equivalently $\frac{a^\eta+3b^\eta-1}{4}\leq \mathbf{w} \leq \frac{3a^\eta+b^\eta-1}{4}$, then :
        \[
            Crit(L(s,Ad^3(^{\iota}\sigma))L(2s,\omega_{^{\iota}\sigma})) = Crit(L(2s,\omega_{^{\iota}\sigma})) \subset Crit(L(s,Ad^3(^{\iota}\sigma))).
        \]
        \item Otherwise:
        \[
            Crit(L(s,Ad^3(^{\iota}\sigma))L(2s,\omega_{^{\iota}\sigma})) = Crit(L(s,Ad^3(^{\iota}\sigma))) \subset Crit(L(2s,\omega_{^{\iota}\sigma})).
        \]
    \end{enumerate}
\end{prop}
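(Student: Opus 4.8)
The plan is to prove the proposition by directly analyzing the two critical sets $\mathrm{Crit}(L(s,Ad^3(^{\iota}\sigma)))$ and $\mathrm{Crit}(L(2s,\omega_{^{\iota}\sigma}))$, which have already been computed in the section above in terms of the invariants $a_1 = \mathbf{w}/2$, $\ell_1$, $a_2 = \mathbf{w}$, $\ell_2$. Recall from the text that
\[
    \mathrm{Crit}(Ad^3(\mu)) = \Bigl\{ s_0 \in \tfrac{3}{2}+\mathbb{Z} \;\Bigm|\; 1+a_1-\tfrac{\ell_1}{2} \leq s_0 \leq a_1+\tfrac{\ell_1}{2}\Bigr\},
\]
while $s_0 \in \mathbb{Z}$ lies in $\mathrm{Crit}(\omega_\sigma)$ iff $1+a_2-\ell_2 \leq s_0 \leq a_2+\ell_2$, so that $s_0 \in \frac{1}{2}\mathbb{Z}$ lies in $\mathrm{Crit}(L(2s,\omega_\sigma))$ iff $\frac{1+a_2-\ell_2}{2} \leq s_0 \leq \frac{a_2+\ell_2}{2}$, i.e. $\frac{1+\mathbf{w}-\ell_2}{2}\le s_0 \le \frac{\mathbf{w}+\ell_2}{2}$. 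The key observation is that both sets are (possibly empty) intervals of half-integers whose \emph{midpoints coincide}: the midpoint of $\mathrm{Crit}(Ad^3(\mu))$ is $\frac{1}{2}+a_1 = \frac{1+\mathbf{w}}{2}$, and the midpoint of $\mathrm{Crit}(L(2s,\omega_\sigma))$ is $\frac{1}{2} + \frac{a_2}{2} = \frac{1+\mathbf{w}}{2}$ as well. Because two intervals of half-integers with a common center are nested, with the shorter one contained in the longer, the containment is entirely governed by comparing their lengths, which are $\ell_1$ (in units of half-integers, spanning $\ell_1$) for $Ad^3$ and $\ell_2$ for $L(2s,\omega)$.

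Concretely, the first step is to verify the length bookkeeping: $\mathrm{Crit}(Ad^3(\mu))$ consists of the half-integers at distance $\leq \frac{\ell_1}{2} - \frac{1}{2}$ from the center $\frac{1+\mathbf{w}}{2}$ (there are $\ell_1$ of them when $\ell_1 \geq 1$), and likewise $\mathrm{Crit}(L(2s,\omega_\sigma))$ consists of those at distance $\leq \frac{\ell_2}{2}-\frac{1}{2}$ from the same center. One must also check the parity/offset: $\ell_1$ is always even in the $P_\beta$ case (indeed $4b^\eta - 2a^\eta - \mathbf{w} - 3$ and the other three entries defining $\ell_1$ all have the same parity, and are congruent to $-\mathbf{w}-3 \pmod 2$, making $\ell_1 - |{-\mathbf{w}-3}|$ even — so the two chains of critical points live on compatible half-integer lattices, both in $\frac{3}{2}+\mathbb{Z}$). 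Granting this, $\mathrm{Crit}(Ad^3(\mu)) \subseteq \mathrm{Crit}(L(2s,\omega_\sigma))$ iff $\ell_1 \le \ell_2$, and conversely; the intersection is then the smaller set, which proves both displayed equalities. The second step is to translate the condition $\ell_1 \leq \ell_2$ into the stated inequality $\frac{a^\eta+3b^\eta-1}{4} \leq \mathbf{w} \leq \frac{3a^\eta+b^\eta-1}{4}$; this is an elementary unravelling of the definitions $\ell_1 = \min_\eta\{|4b^\eta-2a^\eta-\mathbf{w}-3|, |2b^\eta-\mathbf{w}-1|, |2a^\eta-\mathbf{w}+1|, |4a^\eta - 2b^\eta - \mathbf{w}+3|\}$ and $\ell_2 = \min_\eta\{|a^\eta+b^\eta-\mathbf{w}|\}$, using the chain of inequalities displayed just before the case analysis, namely $4b^\eta - 2a^\eta - 3 - \mathbf{w} < 2b^\eta - 1 - \mathbf{w} < a^\eta+b^\eta-\mathbf{w} < 2a^\eta+1-\mathbf{w} < 4a^\eta - 2b^\eta + 3 - \mathbf{w}$, which identifies $\ell_2$ with $|a^\eta+b^\eta-\mathbf{w}|$ sitting at the ``center'' of the five quantities whose absolute values bound $\ell_1$.

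The main obstacle is the last step in the general totally imaginary setting: $\ell_1$ and $\ell_2$ are each a \emph{minimum over all embeddings} $\eta: F\to\mathbb{C}$, and a priori the embedding realizing the minimum for $\ell_1$ need not be the one realizing it for $\ell_2$, so ``$\ell_1 \leq \ell_2$ iff the per-embedding inequality holds for \emph{every} $\eta$'' requires justification. The way around this is the strong purity structure: by \cite[Prop. 2.6]{RagTI} the weight $\mu$ is a base change from the maximal CM (or totally real) subfield $F_1$, so the pairs $(a^\eta, b^\eta)$ take at most as many distinct values as $[F_1:\mathbb{Q}]$ dictates, and — more importantly — the quantity $a^\eta + b^{\bar\eta} = \mathbf{w}$ couples $\eta$ and $\bar\eta$. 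I would argue that for the comparison of $\ell_1$ and $\ell_2$ it suffices to restrict to a single embedding (equivalently, to the imaginary quadratic case, exactly as the text does for the case analysis), since the inequality $\frac{a^\eta+3b^\eta-1}{4}\le \mathbf{w}\le\frac{3a^\eta+b^\eta-1}{4}$ is stated for ``$a^\eta, b^\eta$'' with a generic $\eta$ and should be read as holding simultaneously across embeddings; the reduction to $F_1$ guarantees the minimum-defining embeddings for $\ell_1$ and $\ell_2$ can be taken to be the same (or a conjugate pair with matching contribution). Once this embedding-independence is in place, the proposition follows by combining the nestedness of the two centered half-integer intervals with the elementary inequality manipulation, and I would close by noting that when $\ell_1 > \ell_2$ the symmetric argument gives the second alternative.
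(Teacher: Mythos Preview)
Your approach is the right idea and is cleaner than the paper's case-by-case enumeration, but there are two concrete errors.

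\medskip
\textbf{The midpoints do not coincide.} You compute the midpoint of $\mathrm{Crit}(L(2s,\omega_{^\iota\sigma}))$ as $\tfrac{1}{2}+\tfrac{a_2}{2}=\tfrac{1+\mathbf w}{2}$, but this is off by $\tfrac14$: the endpoints are $\tfrac{1+a_2-\ell_2}{2}$ and $\tfrac{a_2+\ell_2}{2}$, whose average is $\tfrac{1+2a_2}{4}=\tfrac14+\tfrac{\mathbf w}{2}$, whereas the midpoint of $\mathrm{Crit}(Ad^3(\mu))$ is $\tfrac12+a_1=\tfrac12+\tfrac{\mathbf w}{2}$. So the two intervals are \emph{not} concentric. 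Fortunately this does not kill the argument: comparing endpoints directly still gives nestedness. With $\ell_1,\ell_2\in\mathbb Z$, one checks that $\ell_1\le\ell_2$ forces both $\tfrac{\mathbf w+\ell_1}{2}\le\tfrac{\mathbf w+\ell_2}{2}$ and $\tfrac{1+\mathbf w-\ell_2}{2}\le 1+\tfrac{\mathbf w-\ell_1}{2}$, while $\ell_1\ge\ell_2+1$ forces the reverse inequalities. So the intervals are nested, but you should argue via the endpoints, not via a (false) common centre.

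\medskip
\textbf{Your containment runs opposite to the stated proposition.} The endpoint comparison you carry out yields
\[
\ell_1\le\ell_2\ \Longrightarrow\ \mathrm{Crit}(L(s,Ad^3({}^\iota\sigma)))\subseteq \mathrm{Crit}(L(2s,\omega_{^\iota\sigma})),
\]
so the intersection equals $\mathrm{Crit}(L(s,Ad^3({}^\iota\sigma)))$. This is exactly what the paper's own case split confirms: cases (I) with $\ell_1\le\ell_2$ are precisely those where the $Ad^3$ inequality $-3-\ell_1\le\mathbf w\le -7+\ell_1$ is the binding one. But the proposition as printed asserts the \emph{opposite} containment under $\ell_1\le\ell_2$. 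You cannot both derive $\mathrm{Crit}(Ad^3)\subseteq\mathrm{Crit}(L(2s,\omega))$ and claim this ``proves both displayed equalities'' --- it proves the displayed equalities with the two case labels interchanged. The same mismatch appears in the ``equivalently'' clause: from the paper's chain of inequalities, cases (5)--(6) are exactly the $\ell_1>\ell_2$ range and give $\tfrac{a^\eta+3b^\eta-1}{2}\le\mathbf w\le\tfrac{3a^\eta+b^\eta+1}{2}$ (denominators $2$, not $4$), so the interval in the statement characterises $\ell_1>\ell_2$, not $\ell_1\le\ell_2$. Your ``elementary unravelling'' would have revealed this; instead you should flag that the proposition as stated has the two alternatives swapped (and the equivalence misstated), rather than assert you have proved it verbatim.
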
 

\begin{remark}\
When $F$ is a TR-case, the product of Langlands-Shahidi L-functions at hand has no critical points. Assume $F$ is in $TR$-case. Base-change of a strongly pure weight over a totally real field $F_1$, as $\eta_v|_{F_1}= \bar{\eta}_v|_{F_1}$, we have:
 \[
    \alpha^{v}= w_0{\beta^{v}} \implies a^{\eta_v}= b^{\bar{\eta}_v} \;\textit{and} \; b^{\eta_v} = a^{\bar{\eta}_v}.
 \]
For the Hecke L-functions, it means: \[\ell_2(\mu) \leq |a^{\eta_v}+b^{\eta_v}- (a^{\bar{\eta}_v}+b^{\bar{\eta}_v})| = 0 \implies \ell_2(\mu) = 0. \]
\end{remark}

\subsection{The case $P_{\alpha}$}\label{sec:criticala} We summarize the results for the case $P_\alpha$ without providing detailed explanations, as the key arguments are identical to those for the case $P_\beta$. Let $\mathbf{GL_2} = Res_{F/\mathbb{Q}}(GL_2)$ be the Levi factor of $P_{\alpha}$, and we use the obvious analog of notations from the previous section. We aim to identify the critical points of the automorphic $L$-functions attached to $\sigma$, $\omega_{\sigma}$, and $\sigma \otimes \omega_{\sigma}$.

Let $\sigma_f \in Coh_{!!}(\mathbf{GL_2}, \mu)$. For $\sigma_{\infty} = \mathbb{J}_{\mu}$, the automorphic representation $\sigma = \sigma_f \otimes \sigma_{\infty}$ is cuspidal. By Local-Langlands correspondence, \cite{knappArch}:

\begin{align*}
    L_{\infty}(s,\sigma) &\approx \prod_{v\in S_{\infty}} \Gamma(s-\frac{\mathbf{w}}{2}+\frac{|2b^{\eta}-\mathbf{w}-1|}{2})\\
    &\hspace{40pt}.\Gamma(s-\frac{\mathbf{w}}{2}+\frac{|2a^{\eta}-\mathbf{w}+1|}{2}).
\end{align*}
 Moreover, let $\sigma^{\vee}$ be the dual of $\sigma$, we have:
\begin{align*}
      L_{\infty}(1-s,\sigma^{\vee}) &\approx \prod_{v\in S_{\infty}} \Gamma(1-s+\frac{\mathbf{w}}{2}+\frac{|2b^{\eta}-\mathbf{w}-1|}{2})\\
    &\hspace{40pt}.\Gamma(1-s+\frac{\mathbf{w}}{2}+\frac{|2a^{\eta}-\mathbf{w}+1|}{2}).
\end{align*}

Following the previous section, define:
\begin{align*}
    a_1(\mu) &= -\frac{\alpha^{\eta}_i+\beta^{\eta}_i}{2}= \frac{\mathbf{w}}{2},\\
    \ell_1(\mu) &= \min_{\eta:F\to \mathbb{C}} \{ |2b^{\eta}-\mathbf{w}-1|,|2a^{\eta}-\mathbf{w}+1|\}.
\end{align*}
By abuse of notation, we use the same notation as in the previous section. However, since the choice of parabolic will be clear, there will be no ambiguity. The set of critical points of $L(s,\sigma)$ is denoted by $Crit(\mu)$.  We have:
\[
    Crit(\mu) = \{ s_0 \in \frac{3}{2}+\mathbb{Z} \;|\; 1+a_1(\mu)-\frac{\ell_1(\mu)}{2} \leq s_0 \leq a_1(\mu)+\frac{\ell_1(\mu)}{2}\},
\]
which forms a set of contiguous half integers centered around $\frac{1}{2}+a_1(\mu)$ and of length $\ell_1(\mu)$. As we computed in \ref{ch:LS}, the point of evaluation is $m_0 = -\frac{3}{2}$. We want $-\frac{3}{2}$ and $1-\frac{3}{2}$ be in $Crit(\mu)$, which is satisfied if and only if
\[
    -\frac{1}{2}-\frac{\ell_1(\mu)}{2}\leq a_1(\mu) \leq -\frac{5}{2}+\frac{\ell_1(\mu)}{2},
\]
or equivalently:
\[
    -1-\ell_1 \leq \mathbf{w} \leq -5+\ell_1.
\]
The inequality above is valid only if $\ell_1(\mu)\geq 2$.

The calculations and notations for the critical points of $L(2s,\omega_{\sigma})$ are exactly the same as the one in the case $P_{\beta}$, the only difference is the final inequality for the cuspidal weight, as it depends on the point of evaluation. Therefore:
\[
    -2-\ell_2 \leq \mathbf{w} \leq -4+\ell_2,
\]
the inequality  above is valid only if $\ell_2(\mu)\geq 1$.

For $\sigma\otimes \omega_{\sigma}$, the highest weight is $\mu+\det(\mu) = (2a^{\eta}+b^{\eta},a^{\eta}+2b^{\eta})_{\eta:F\to \mathbb{C}}$, and:
\begin{align*}
    &2a^{\eta}+b^{\eta},a^{\eta}+2b^{\eta} \in \mathbb{Z},\\
    &2a^{\eta}+b^{\eta} > a^{\eta}+b^{\eta},\\
    &2a^{\eta}+b^{\eta}+ 2b^{\bar{\eta}}+a^{\bar{\eta}} = 3\mathbf{w} = a^{\eta}+2b^{\eta}+ b^{\bar{\eta}}+2a^{\bar{\eta}}.
\end{align*}
So, $\sigma\otimes \omega_{\sigma} \in Coh_{!!}(\mathbf{GL_2},\rho_2\otimes \wedge^{2}(\mu))$ with the purity weight $3\mathbf{w}$. The cuspidal parameters are:
\begin{align*}
    \alpha^{v} = (-a^{\eta}-2b^{\eta}+\frac{1}{2},-2a^{\eta}-b^{\eta}-\frac{1}{2}),
    \beta^{v} = (a^{\eta}+2b^{\eta}-\frac{1}{2}-3\mathbf{w},2a^{\eta}+b^{\eta}+\frac{1}{2}-3\mathbf{w}).
\end{align*}
We can form the $L$-factor:
\begin{align*}
    L_{\infty}(s,\sigma\otimes \omega_{\sigma}) &\approx \prod_{v\in S_{\infty}} \Gamma(s-\frac{3\mathbf{w}}{2}+|a^{\eta}+2b^{\eta}-3\mathbf{w}-1|)\\
    &\hspace{40pt}.\Gamma(s-\frac{3\mathbf{w}}{2}+|2a^{\eta}+b^{\eta}-3\mathbf{w}+1|).
\end{align*}
 Moreover, let $(\sigma\otimes \omega_{\sigma})^{\vee}$ be the dual of $\sigma$, we have:
\begin{align*}
      L_{\infty}(s,(\sigma\otimes \omega_{\sigma})^{\vee}) &\approx \prod_{v\in S_{\infty}} \Gamma(1-s+\frac{3\mathbf{w}}{2}+|a^{\eta}+2b^{\eta}-3\mathbf{w}-1|)\\
    &\hspace{40pt}.\Gamma(1-s+\frac{3\mathbf{w}}{2}+|2a^{\eta}+b^{\eta}-3\mathbf{w}+1|).
\end{align*}
Define the following quantities that are invariant under different embeddings:
\begin{align*}
    a_3(\mu) &= -\frac{\alpha^{\eta}_i+\beta^{\eta}_i}{2}= \frac{3\mathbf{w}}{2},\\
    \ell_3(\mu) &= \min_{\eta:F\to \mathbb{C}} \{ |a^{\eta}+2b^{\eta}-3\mathbf{w}-1|,|2a^{\eta}+b^{\eta}-3\mathbf{w}+1|\}.
\end{align*}

The set of critical points of $L(s,\sigma\otimes \omega_{\sigma})$ satisfy :
\[
   1+a_{3}(\mu)-\ell_{3}(\mu) \leq s_0 \leq a_3(\mu)+\ell_3(\mu).
\]
We want $-3\frac{3}{2}$ and $1-3\frac{3}{2}$ be critical points for this $L$-function, we have this property if and only if
\[
    -\frac{7}{2}-\ell_3(\mu)\leq a_3(\mu) \leq -\frac{11}{2}+\ell_3(\mu),
\]
or equivalently:
\[
    \frac{-7-\ell_3}{3} \leq \mathbf{w} \leq \frac{-11+\ell_3}{3},
\]
the inequality  above is valid only if $\ell_3(\mu)\geq 2$.

The evaluation point $-\frac{3}{2}$ is a critical point of $L(s,\sigma)L(2s,\omega_{\sigma})L(3s,\sigma\otimes\omega_{\sigma})$ and $L(1+s,\sigma)L(1+2s,\omega_{\sigma})L(1+3s,\sigma\otimes\omega_{\sigma})$ if and only if 
\begin{align*}
            -1-\ell_1(\mu)&\leq \mathbf{w} \leq -5+\ell_1(\mu),\\
            -2-\ell_2(\mu) &\leq \mathbf{w} \leq -4+\ell_2(\mu),\\
            -\frac{7}{3}-\frac{\ell_3(\mu)}{3} &\leq \mathbf{w} \leq -\frac{11}{3}+\frac{\ell_3(\mu)}{3},\\
            \ell_1(\mu)\geq 2,&\qquad \ell_2(\mu)\geq 1,\qquad \ell_3(\mu)\geq 2.
\end{align*}

Which is non-empty if all of the inequalities are non-trivial. We can break this in three cases, comparing $\ell_1(\mu)$, $\ell_2(\mu)$, and $\ell_3(\mu)$:
\begin{itemize}
    \item[(I)] $\ell_1(\mu) \leq \min\{\ell_2(\mu), \ell_3(\mu)\} \implies  -1-\ell_1(\mu)\leq \mathbf{w} \leq -5+\ell_1(\mu)$,
    \item[(II)] $\ell_2(\mu) \leq \min\{\ell_1(\mu), \ell_3(\mu)\}\implies -2-\ell_2(\mu) \leq \mathbf{w} \leq -4+\ell_2(\mu)$,
    \item[(III)] $\ell_3(\mu) < \min\{\ell_1(\mu), \ell_2(\mu)\} \implies \-\frac{7}{3}-\frac{\ell_3(\mu)}{3} \leq \mathbf{w} \leq -\frac{11}{3}+\frac{\ell_3(\mu)}{3}$.
\end{itemize}

Following over calculation for $P_{\beta}$, we can calculate the critical regions for $P_{\alpha}$.
\begin{figure}[htp]
    \centering
        \includegraphics[width=.35\textwidth]{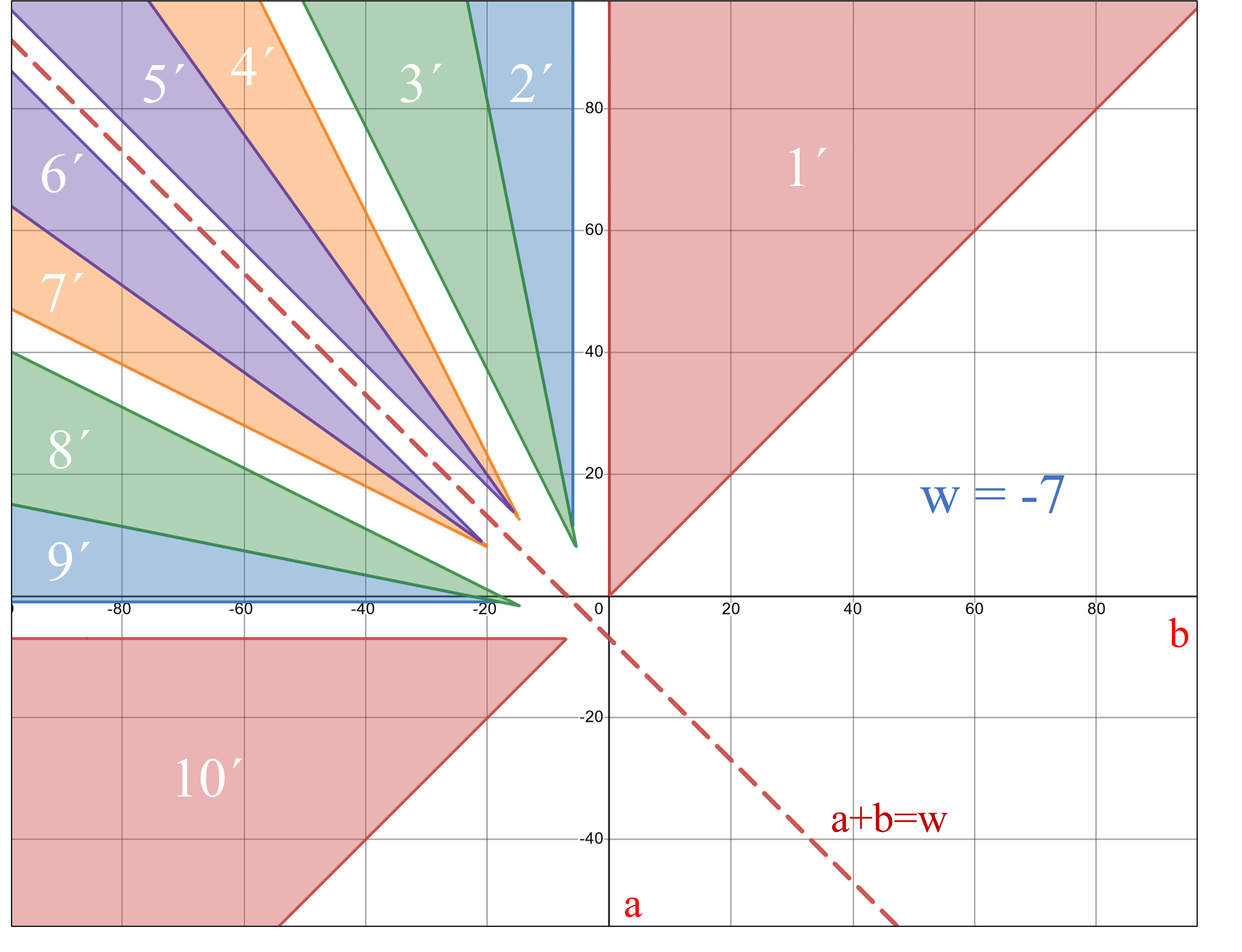}
        \includegraphics[width=.35\textwidth]{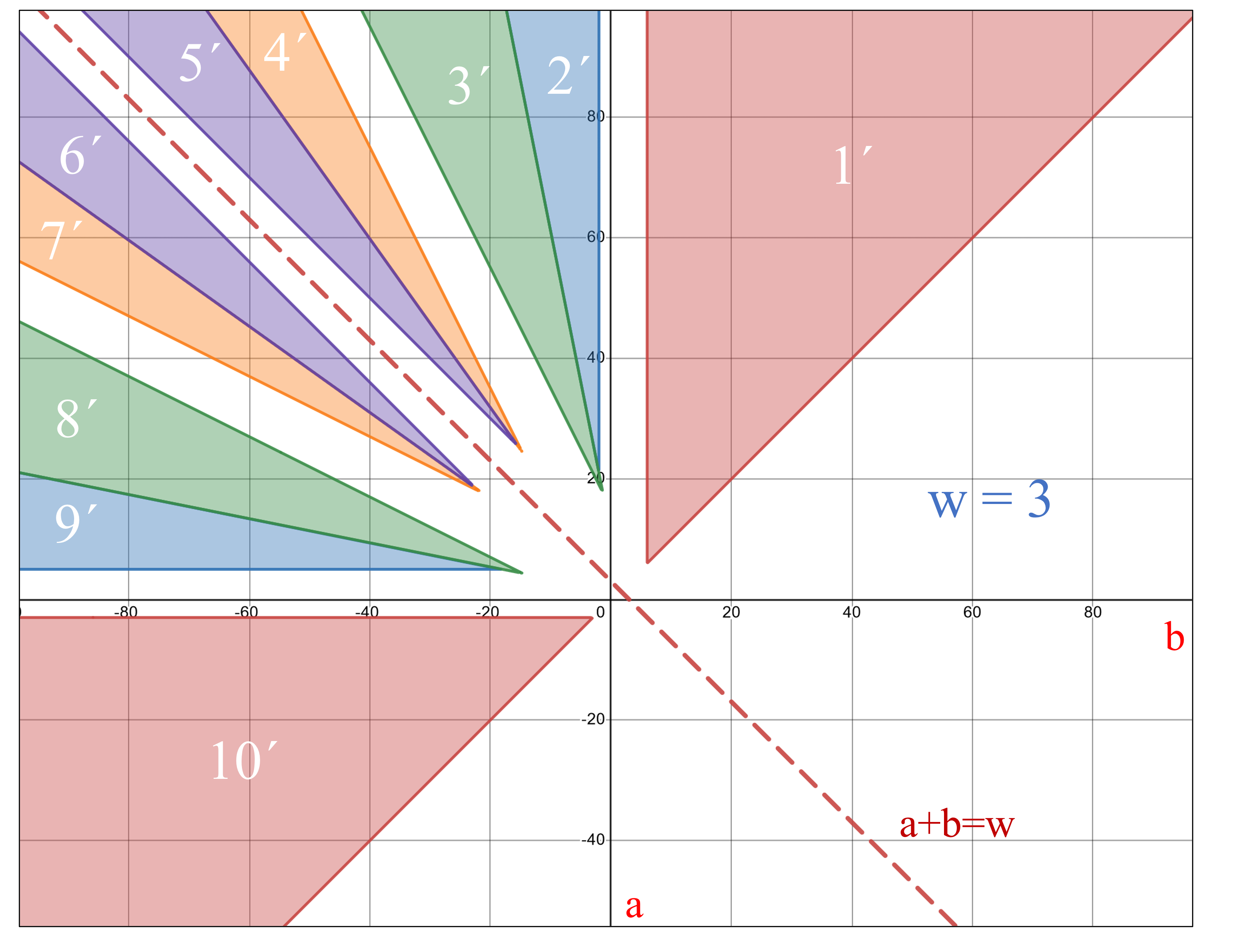}
    \caption{Critical Regions for the case $P_{\alpha}$ at different $\mathbf{w}$.}
    \label{fig:critregalpha}
\end{figure}
\begin{remark}
    The analogue of the condition discussed in Remark \ref{rmk: wrunit}, is $\mathbf{w}\leq -3$ and $\mathbf{w}>-3$ in this case.  Moreover, the critical set in $TR$-case is trivial.
\end{remark}

\section{The Combinatorial lemma: The case $G_2$}\label{sec:comb}
In this section, we state and prove the combinatorial lemmas, Lemma \ref{lem:combbeta} and Lemma \ref{lem:combalpha}. These lemmas generalize the combinatorial lemma introduced by Harder and Raghuram \cite[Lemma 7.14]{HR} to the case of $G_2$.
\subsection{The case $P_{\beta}$}
In this section, we form a combinatorial assumption on weight $\mu = (a^{\eta},b^{\eta})_{\eta:F\to \mathbb{C}}$ of the Levi factor $\mathbf{GL_2} \simeq M_{\beta}$ to find a relation between critical points assigned to weight as previous section and boundary cohomology of the ambient group.

\begin{defn}
        A Kostant representative $w = (w^{\eta})\in W^{P_{\beta}}$ is called balanced if for any embedding $\eta: F\to \mathbb{C}$, we have:
        \[l(w^{\eta})+l(w^{\bar{\eta}}) = dim(U_{P_{\beta}}/F).\]
\end{defn}
\begin{remark} \label{rem:balanced}
         In the case $P_{\beta}$,  being balanced means that the Kostant representative $(w^{\eta},w^{\bar{\eta}})$ mentioned in part (3) of the combinatorial lemma is one of the Kostant representatives $(w_{\alpha\beta\alpha\beta\alpha}^{\eta},1^{\bar{\eta}}),\;$ $(1^{\eta},\; w_{\alpha\beta\alpha\beta\alpha}^{\bar{\eta}}),\; (w_{\alpha\beta\alpha\beta}^{\eta},w_{\alpha}^{\bar{\eta}}), \;(w_{\alpha}^{\eta},w_{\alpha\beta\alpha\beta}^{\bar{\eta}}), \;(w_{\alpha\beta\alpha}^{\eta},w_{\alpha\beta}^{\bar{\eta}}),\;$ or $(w_{\alpha\beta}^{\eta},w_{\alpha\beta\alpha}^{\bar{\eta}})$. It assures that the degree of the cohomology in hand stays in an interval where the inner cohomology groups are non-vanishing.
\end{remark}

\begin{lemma}[Combinatorial Lemma for $P_{\beta}$] \label{lem:combbeta}
    Using the notation introduced in the previous section, the following statements are equivalent:
    \begin{enumerate}
        \item The evaluation point $s=-\frac{5}{2}$ is a critical point for both $L(s,Ad^{3}(\sigma))L(2s,\omega_{\sigma})$ and $L(1+s,Ad^{3}(\sigma))L(1+2s,\omega_{\sigma})$.
        \item The abelian width is bounded in terms of the cuspidal widths as follows:
        \begin{align*}
            7-\ell_1(\mu)&\leq \mathbf{w} \leq 3+\ell_1(\mu),\\
            6-\ell_2(\mu) &\leq \mathbf{w} \leq 4+\ell_2(\mu),\\
            \ell_1(\mu)\geq 2,&\qquad \ell_2(\mu)\geq 1.
        \end{align*}
        
        \item There exists a balanced Kostant representative $w = (w^{\eta})\in W^{P_{\beta}}$ such that $w^{-1}.\mu$ is a dominant weight of $G$.
        \end{enumerate}
    \end{lemma}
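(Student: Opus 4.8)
The plan is to prove the three-way equivalence by establishing $(1)\Leftrightarrow(2)$ directly from the gamma-factor computations of Section~\ref{sec:criticalb}, and then $(2)\Leftrightarrow(3)$ by a careful analysis of the twisted action $w^{-1}.\mu$ on the balanced Kostant representatives enumerated in Remark~\ref{rem:balanced}. The equivalence $(1)\Leftrightarrow(2)$ should be essentially a bookkeeping matter: in Section~\ref{sec:criticalb} we showed that $s=-\tfrac52$ and $s=-\tfrac32$ lie in $Crit(Ad^3(\mu))$ iff $-3-\ell_1\le \mathbf w\le -7+\ell_1$ with $\ell_1\ge 2$, and that $s=-\tfrac52,-\tfrac32$ give critical points of $L(2s,\omega_\sigma)$ iff $-4-\ell_2\le\mathbf w\le -6+\ell_2$ with $\ell_2\ge 1$. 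The inequalities in $(2)$ are exactly these with $\mathbf w$ replaced by $-\mathbf w$ (equivalently, after passing to the contragredient $\sigma^\vee$, whose purity weight is $-\mathbf w$), so $(1)\Leftrightarrow(2)$ follows once one matches the normalization conventions — I would spell out that the ``abelian width'' condition $7-\ell_1\le\mathbf w\le 3+\ell_1$ is the same statement read off the functional equation $s\mapsto 1-s$.

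For $(2)\Rightarrow(3)$: given $\mu=(a^\eta,b^\eta)_{\eta}$ satisfying the bounds in $(2)$, I would produce the balanced Kostant representative explicitly. For each complex place $v$ with embeddings $\{\eta,\bar\eta\}$, the pair $(w^\eta,w^{\bar\eta})$ must be one of the six listed in Remark~\ref{rem:balanced}; which one it is should be dictated by the position of $0$ in the chain
\[
4b^\eta-2a^\eta-3-\mathbf w \;<\; 2b^\eta-1-\mathbf w \;<\; a^\eta+b^\eta-\mathbf w \;<\; 2a^\eta+1-\mathbf w \;<\; 4a^\eta-2b^\eta+3-\mathbf w,
\]
i.e.\ by exactly the case division $(1)$--$(10)$ of Section~\ref{sec:criticalb}. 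Concretely, I would compute $w^{-1}.\mu$ using the twisted action \eqref{eq:twact}, $w.\lambda=(w^\tau(\lambda^\tau+\rho_G)-\rho_G)_\tau$, together with Table~1 (action of the inverse Weyl group on simple roots) and the translation formulas between the parametrizations $(a^\tau,b^\tau)_0$ and $(c^\tau,d^\tau)_\beta$ of Section~\ref{sec:char}. The dominance of $w^{-1}.\mu$ as a weight of $G$ amounts to the two inequalities $2d^\tau\ge c^\tau\ge d^\tau\ge 0$ for each $\tau$, and the claim is that translating these back through Kostant's shift reproduces precisely the bounds on $\mathbf w$ in $(2)$ — ``no more and no less''. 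For the reverse direction $(3)\Rightarrow(2)$, one runs the same computation backwards: a balanced $w$ with $w^{-1}.\mu\in X^+(T\times E)$ forces the dominance inequalities, which unwind to $(2)$.

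The main obstacle I anticipate is the case analysis in $(2)\Leftrightarrow(3)$: there are six balanced pairs per complex place, and for each I must compute $w^{-1}.\mu$ and read off when it is $G$-dominant, matching the result against the regions $(1)$--$(10)$. The bookkeeping is delicate because the roles of $\ell_1$ and $\ell_2$ (hence of the relevant inequality) switch depending on the embedding and on the sign of $\mathbf w+5$ (cf.\ Remark~\ref{rmk: wrunit}), so I would organize the argument by first treating the imaginary-quadratic case with a single pair $\{\eta,\bar\eta\}$ — exactly as in Section~\ref{sec:criticalb} — verifying the correspondence region-by-region, and then noting that the general totally imaginary case follows place-by-place since both the balanced condition and the dominance condition are defined coordinatewise over $\tau:F\to E$, while $\ell_1(\mu),\ell_2(\mu)$ are the minima over embeddings. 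A secondary subtlety is checking that $w^{-1}.\mu$ is not merely dominant for $M_\beta$ but for all of $G$; this is where the ``balanced'' hypothesis $l(w^\eta)+l(w^{\bar\eta})=\dim U_{P_\beta}=5$ is used, and I would verify it is both necessary (an unbalanced $w$ lands in a cohomological degree outside the range where the relevant inner cohomology is nonzero, so the associated $w^{-1}.\mu$ cannot be $G$-dominant) and sufficient.
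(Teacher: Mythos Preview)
Your overall strategy matches the paper's: $(1)\Leftrightarrow(2)$ is read off from the gamma-factor bounds of Section~\ref{sec:criticalb}, and $(2)\Leftrightarrow(3)$ is established by computing the twisted action $w^{-1}.\mu$ for each of the six balanced pairs, writing down the resulting $G$-dominance inequalities (the ``twisted action regions''), and matching them against the ten ``critical regions'' of Section~\ref{sec:criticalb}, first for $F$ imaginary quadratic and then place-by-place in general.

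There is, however, a genuine technical point you have not anticipated. You write that the dominance inequalities ``reproduce precisely the bounds on $\mathbf w$ in $(2)$ --- no more and no less'', suggesting you expect the critical regions and the twisted-action regions to coincide. They do not. In the paper's analysis the union of the six twisted-action regions is strictly smaller than the union of the ten critical regions: critical regions (5) and (6) each contain a small triangle not covered by any twisted-action region (compare the vertices $v_{(4)},v_{(5)}$ with $v_{(III)}$, and symmetrically $v_{(6)},v_{(7)}$ with $v_{(IV)}$). The implication $(2)\Rightarrow(3)$ therefore requires an additional argument: one must show that these uncovered triangles contain no integer lattice points $(a,b)\in\mathbb Z^2$. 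The paper does this by an explicit translation of the triangle to one with vertices $(0,0),(0,1),(\tfrac12,-\tfrac12)$, which visibly contains no interior lattice point. Without this step the direction $(2)\Rightarrow(3)$ has a gap.

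Two smaller remarks. First, your reconciliation of the inequalities in $(2)$ with those derived in Section~\ref{sec:criticalb} via ``$\mathbf w\mapsto -\mathbf w$ and the contragredient'' is over-engineered; the discrepancy is a sign convention (or typo) in the statement, and the paper simply asserts $(1)\Leftrightarrow(2)$ was done in Section~\ref{sec:criticalb}. Second, your final paragraph about proving that the balanced hypothesis is \emph{necessary} via cohomological-degree considerations is off target: the lemma is purely combinatorial, ``balanced'' is part of the hypothesis in $(3)$, and no cohomological input is needed at this stage.
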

        
We have already proven $(1) \iff (2)$ in Section \ref{sec:criticalb}. To address part $(3)$, we need to interpret $\mu = (a^\eta, b^\eta)_{\eta: F \to \mathbb{C}}$ as a weight of $M_\beta \simeq \mathbf{GL_2}$ within $G$, using the parametrization $t_\beta$ of $\mathbf{GL_2}$ defined in Section \ref{sec:Preliminaries}, adjusted for $P_\beta$. For the rest of this section, fix an embedding $\eta$ and the notations $\mu := \mu^{\eta}$, $\;(a,b) = (a^{\eta},b^{\eta})$, $\;\mu^{*}= \mu^{\bar{\eta}}$, $\;(a^* ,b^* )= (a^{\bar{\eta}}, \;a^{\bar{\eta}})$, $\;w^{\eta} = w$, and $w^{\bar{\eta}} = w^{*}$.

Using $t_{\beta}: M_{\beta} \simeq \mathbf{GL_2}$ for a strongly pure weight $(\mu,\mu^{*})$ of $\mathbf{GL_2}$, we have: 
\begin{align*}
    \mu &= a(\alpha+\beta)+b(\alpha) \hspace{57pt}= (2b-a)\gamma_{s}+(a-b)\gamma_{l},\\
    \mu^{*} &= (\mathbf{w}-b)(\alpha+\beta)+(\mathbf{w}-a)(\alpha) = (\mathbf{w}+b-2a)\gamma_{s}+(a-b)\gamma_{l}.
\end{align*}
We can check the integrality and dominance of the same weight as a weight of $G$.

\textbf{Integrality :}
\begin{align*}
    a,b,\mathbf{w} \in \mathbb{Z} \iff (2b-a),(a-b),(\mathbf{w}+b-2a)\in \mathbb{Z},
\end{align*}
which means, $(\mu,\mu^{*})$ is an integral weight of $\mathbf{GL_2} \simeq M_{\beta}$ if and only if it corresponds to an integral weight of $G$.

\textbf{Dominance :}
\begin{itemize}
    \item $(\mu,\mu^{*})$ is a dominant weight of $\mathbf{GL_2} \simeq M_{\beta}$ if 
    \[
        b\leq a.
    \]
    \item $(\mu,\mu^{*})$ corresponds to a dominant weight of $G$ if
    \[
        \begin{cases}
            b\leq a \leq 2b;\\
            b^*\leq a^* \leq 2b^* .
        \end{cases}
    \]
\end{itemize}
Hence, $(\mu,\mu^{*})$ is a dominant weight of $M_{\beta}$ if it corresponds to a dominant weight of $G$.

\begin{table}[ht]
    \caption{The Twisted Action of Kostant Representatives in $W^{P_{\beta}}$}
    \vspace*{6pt}
    \centering
     \begin{tabular}{||c c c c||} 
     \hline\rule{0pt}{4ex}
     $l(w)$ & $w\in W^{P_{\beta}}$ & $w^{-1}.(a,b)$ & $(w^{*})^{-1}.(\mathbf{w}-b,\mathbf{w}-a)$\\
     \hline\hline
     $0$ & $1$ & $(a,b)$ & $(\mathbf{w}-b,\mathbf{w}-a)$\\ 
     \hline\rule{0pt}{4ex}
     $1$ & $w_{\alpha}$ &  $(a,a-b-1)$ & $(\mathbf{w}-b,a-b-1)$\\
     \hline\rule{0pt}{4ex}
     $2$ & $w_{\alpha\beta}$&  $(a-b-2,a+1)$ & $(a-b-2,\mathbf{w}-b+1)$\\ 
     \hline\rule{0pt}{4ex}
     $3$ & $w_{\alpha\beta\alpha}$ & $(a-b-2,-b-4)$ & $(a-b-2,a-\mathbf{w}-4)$\\
     \hline\rule{0pt}{4ex}
     $4$ &  $w_{\alpha\beta\alpha\beta}$ & $(-b-5,a-b-1)$ & $(a-\mathbf{w}-5,a-b-1)$\\ 
     \hline\rule{0pt}{4ex}
     $5$ & $w_{\alpha\beta\alpha\beta\alpha}$ & $(-b-5,-a-5)$ & $(a-\mathbf{w}-5,b-\mathbf{w}-5)$\\
    \hline
    \end{tabular}
    \label{table: twistedbeta}
\end{table}

\vspace{\baselineskip}

The twisted action of Kostant representatives on weights $\mu = a(\alpha+\beta) + b\alpha$, written in the form $(a,b) := a(\alpha+\beta) + b\alpha$, is summarized in Table \ref{table: twistedbeta}. According to part $(3)$ of the combinatorial lemma, there exists a balanced Kostant representative $w$ such that its twisted action on the strongly inner weight $\mu$ produces a dominant weight of $G$. The first step is to classify all strongly inner weights that satisfy condition $(3)$ for each balanced Kostant representative.

\begin{itemize}
    \item[(I)]$(1,w_{\alpha\beta\alpha\beta\alpha})$:
    \[
    \begin{cases}
        b\leq a \leq 2b;\\
        b\leq a\leq 2b- \mathbf{w}-5.
    \end{cases}
    \]
    We can split it into cases by varying $\mathbf{w}$:
    \[
        \begin{cases}
            b\leq a \leq 2b, & \mathbf{w}\leq -5;\\
            b\leq a \leq 2b-\mathbf{w}-5,& \mathbf{w}\geq -5.
        \end{cases}
    \]

    \item[(II)]$(w_{\alpha},w_{\alpha\beta\alpha\beta})$:
    \[
    \begin{cases}
        -1\leq b  \leq a-b-2;\\
        4+\mathbf{w}\leq b \leq a-b+3+\mathbf{w}.
    \end{cases} \implies \begin{cases}
            -1\leq b \leq a-b+3+\mathbf{w},& \mathbf{w}\leq -5;\\
            4+w \leq b \leq a-b-2,& \mathbf{w}\geq -5.
        \end{cases}
    \]

    \item[(III)]$(w_{\alpha\beta},w_{\alpha\beta\alpha})$:
    \[
    \begin{cases}
        -4-a\leq b \leq -3;\\
        2\mathbf{w}-a+6 \leq b \leq 2+\mathbf{w}.
    \end{cases} \implies
        \begin{cases}
            -4-a\leq b \leq 2+\mathbf{w}, &\mathbf{w}\leq -5;\\
            2\mathbf{w}-a+6 \leq b \leq -3, & \mathbf{w}\geq -5.
        \end{cases}
    \]
    \item[(IV)]$(w_{\alpha\beta\alpha},w_{\alpha\beta})$:
     \[
    \begin{cases}
        -2\leq a \leq -b-6;\\
        \mathbf{w}+3 \leq a \leq 2\mathbf{w}-b+4.
    \end{cases} \implies         \begin{cases}
            -2\leq a \leq 2\mathbf{w}-b+4,& \mathbf{w}\leq -5;\\
            \mathbf{w}+3\leq a \leq -b-6,&\mathbf{w}\geq -5.
        \end{cases}
    \]
    \item[(V)]$(w_{\alpha\beta\alpha\beta},w_{\alpha})$:
    \[
    \begin{cases}
        a+b+4\leq b \leq 2a+3;\\
        a+b-\mathbf{w}-1\leq b \leq 2a-\mathbf{w}-2.
    \end{cases} \implies
     \begin{cases}
            a+b-\mathbf{w}-1 \leq b \leq 2a+3,& \mathbf{w}\leq -5;\\
            a+b+4\leq b \leq 2a-\mathbf{w}-2,& \mathbf{w} \geq -5.
        \end{cases}
    \]
    \item[(VI)]$(w_{\alpha\beta\alpha\beta\alpha},1)$:
    \[
    \begin{cases}
        2a+5\leq b \leq a;\\
        2a-\mathbf{w}\leq b \leq a.
    \end{cases} \implies
        \begin{cases}
            2a-\mathbf{w} \leq b \leq a, &\mathbf{w} \leq -5;\\
            2a+5 \leq b \leq a,& \mathbf{w}\geq -5.
        \end{cases}
    \]

\begin{figure}[htp]
    \centering
        \includegraphics[width=.35\textwidth]{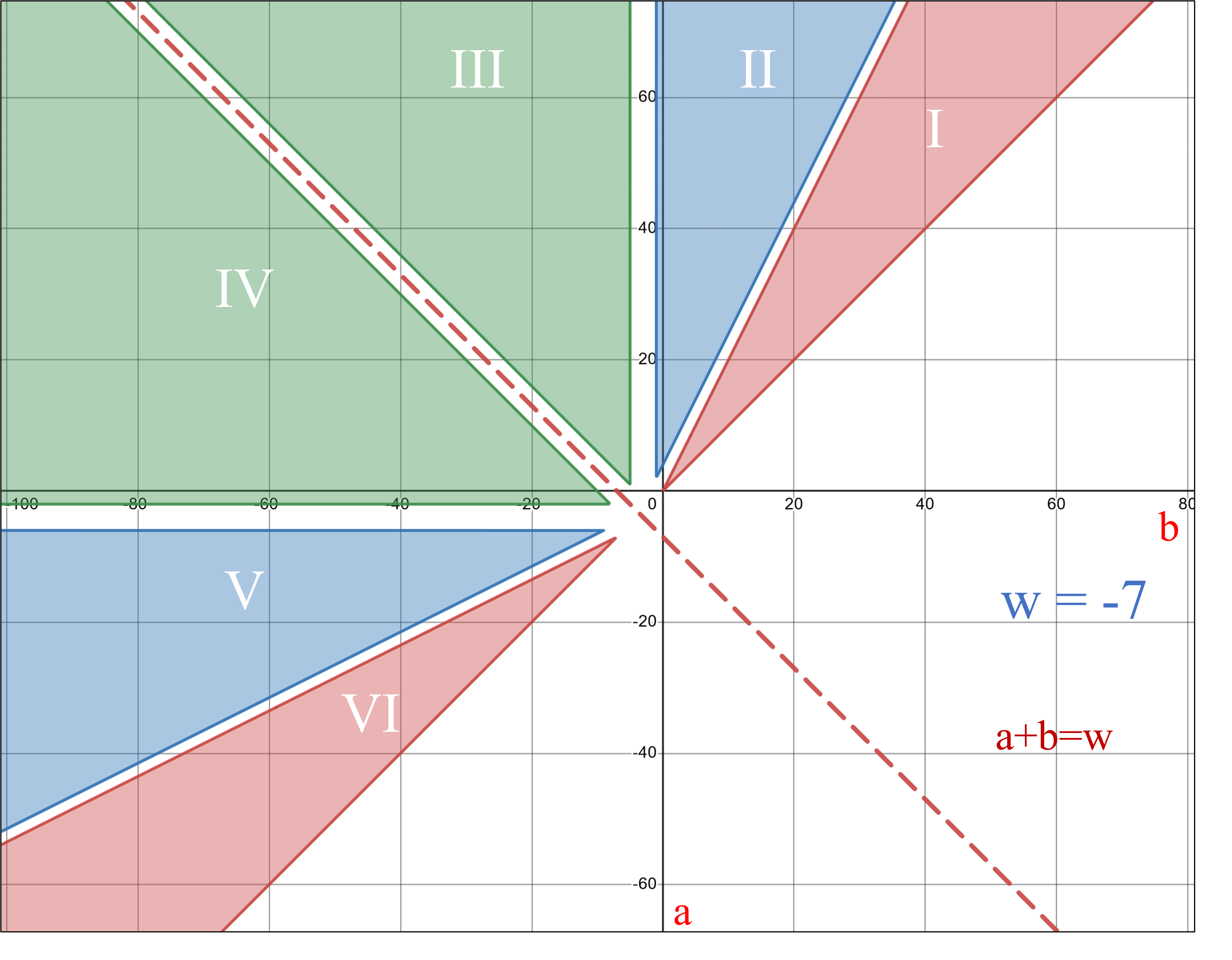}
        \includegraphics[width=.35\textwidth]{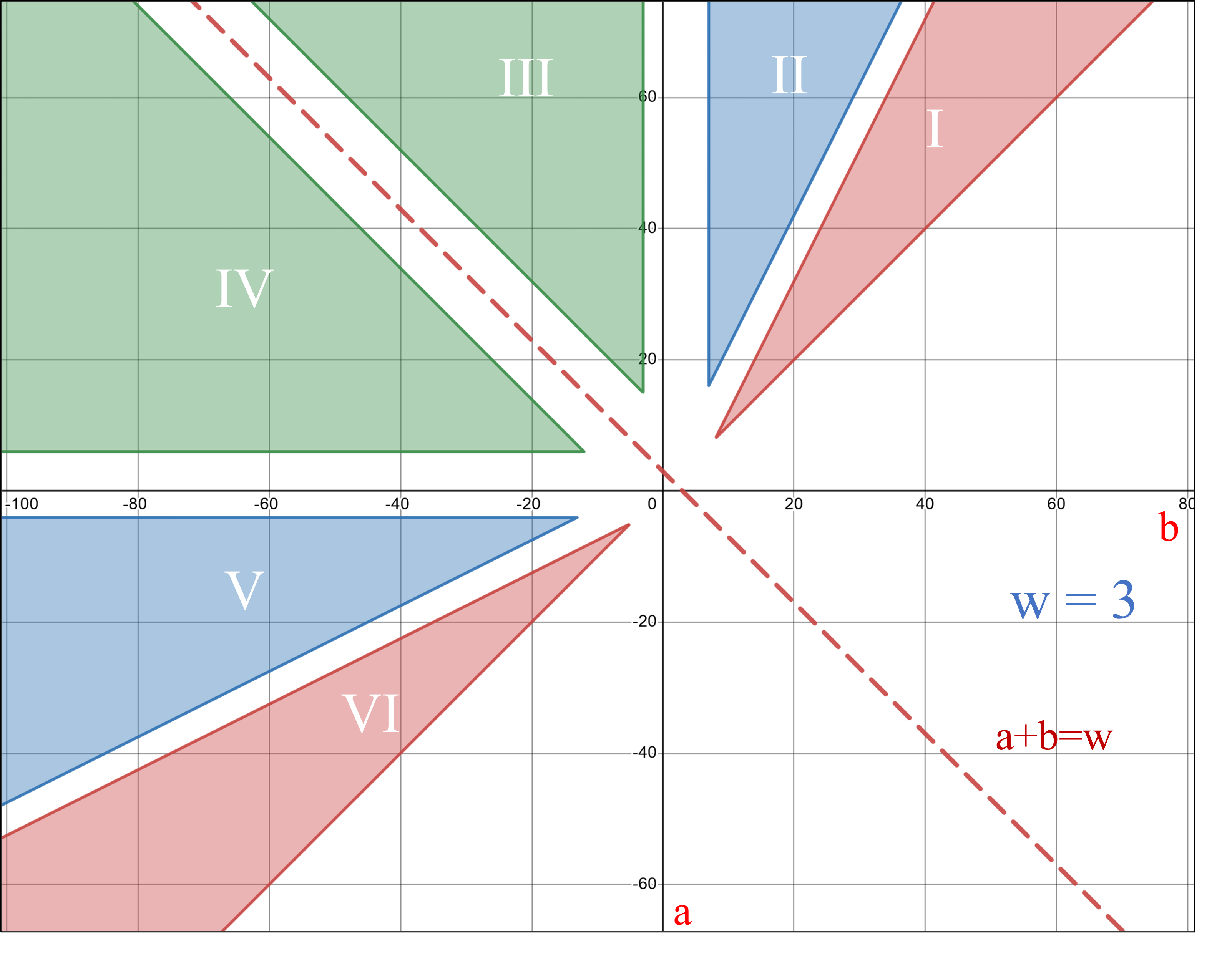}
    \caption{Twisted Action Regions of $P_{\beta}$ at different $\mathbf{w}$.}
    \label{fig:twistedregbeta}
\end{figure}
\end{itemize}

\begin{figure}[htp]
    \centering
        \includegraphics[width=.35\textwidth]{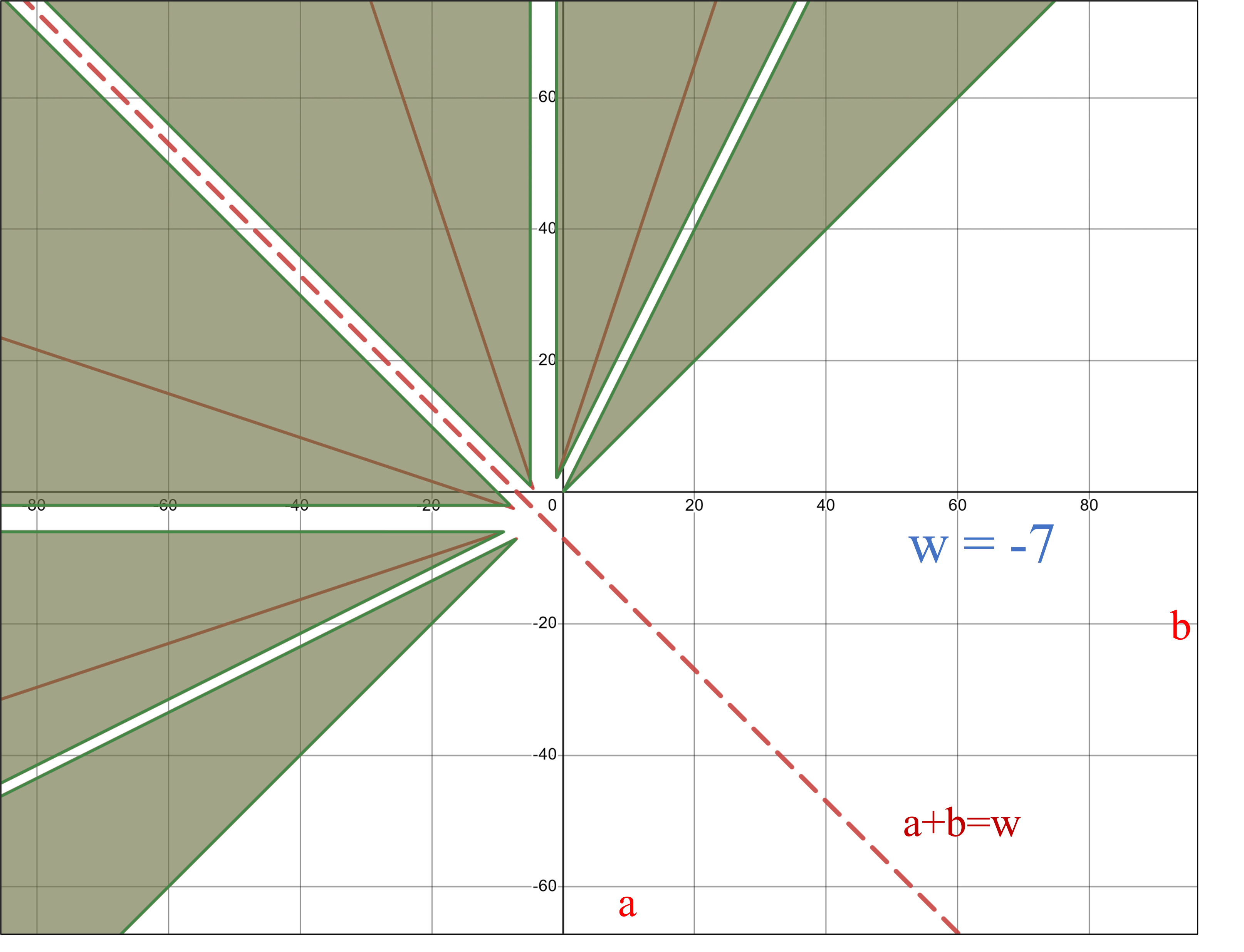}
        \includegraphics[width=.35\textwidth]{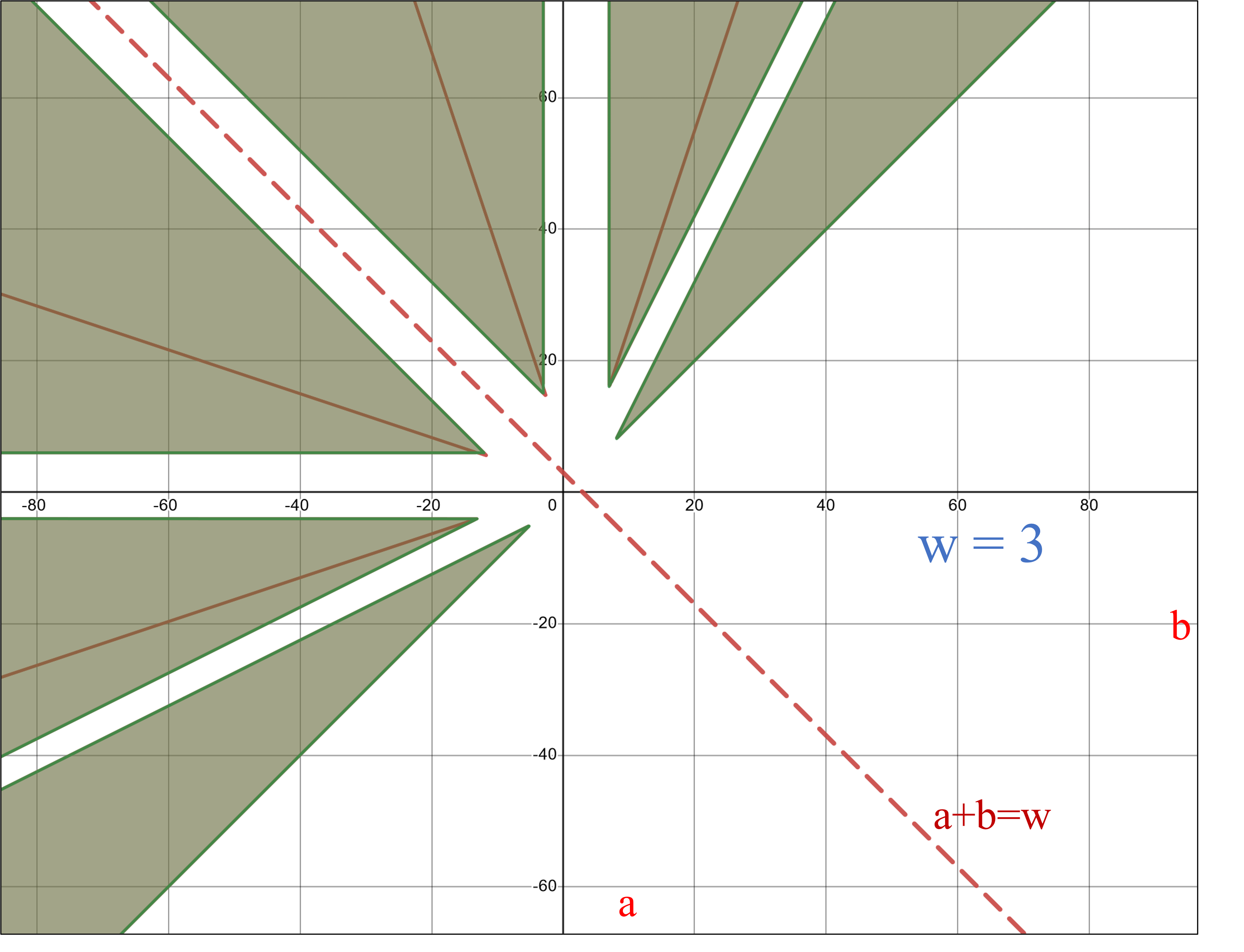}
    \caption{Critical Region in Red and Twisted Action Region in Green}.
    \label{fig:intersectbeta}
\end{figure}

We refer to the regions described above as twisted action regions, as illustrated in Figure \ref{fig:twistedregbeta}. By comparing the outer boundaries and the vertex of each twisted action region, it is evident that each critical region intersects with only one twisted action region, and each twisted action region contains at most two critical regions.

The critical region (5) and (6) are not fully covered with a twisted action region. We claim that this small region does not contain any lattice point of $\mathbb{Z}^{2}$. First, we have:
\[
    v_{(III)} = \begin{cases}
        (2+\mathbf{w},-6-\mathbf{w}), & \mathbf{w}\leq -5,\\
        (-3,2\mathbf{w}+9), &\mathbf{w}\geq -5.
    \end{cases}
\]
For $\mathbf{w}\leq -5$, translate vertices by adding the vector $(-2-\mathbf{w},6+\mathbf{w})$. As the translation vector has integer arguments, If the triangle formed by $v_{(4)}$, $v_{(5)}$, and $v_{(III)}$ contains a lattice point, its translation by this vector will be a lattice point inside of the translated triangle. The translated vertices are:
\[
 \begin{cases}
    v'_{(III)} = (2+\mathbf{w},-6-\mathbf{w})+(-2-\mathbf{w},6+\mathbf{w}) = (0,0),\\
    v'_{(4)} = (2+\mathbf{w},-5-\mathbf{w})+(-2-\mathbf{w},6+\mathbf{w}) = (0,1),\\
    v'_{(5)} = (\frac{5}{2}+\mathbf{w},-\frac{13}{2}-\mathbf{w})+(-2-\mathbf{w},6+\mathbf{w}) = (\frac{1}{2},-\frac{1}{2}).
    \end{cases}
\]
We can also transform this triangle when $\mathbf{w}\geq -5$, by the vector $(3,-2\mathbf{w}-9)$ which transforms the triangle formed by $v_{(4)}$, $v_{(5)}$, and $v_{(III)}$ to the same triangle as the last part. The transformed triangle clearly doesn't have any lattice points. It would finish the proof of the claim for the critical region (5) when $\mathbf{w} \leq -5$, as the transformed vertices are independent of $a,b$, and $\mathbf{w}$.
\begin{figure}[htp]
    \centering
        \includegraphics[width=.30\textwidth]{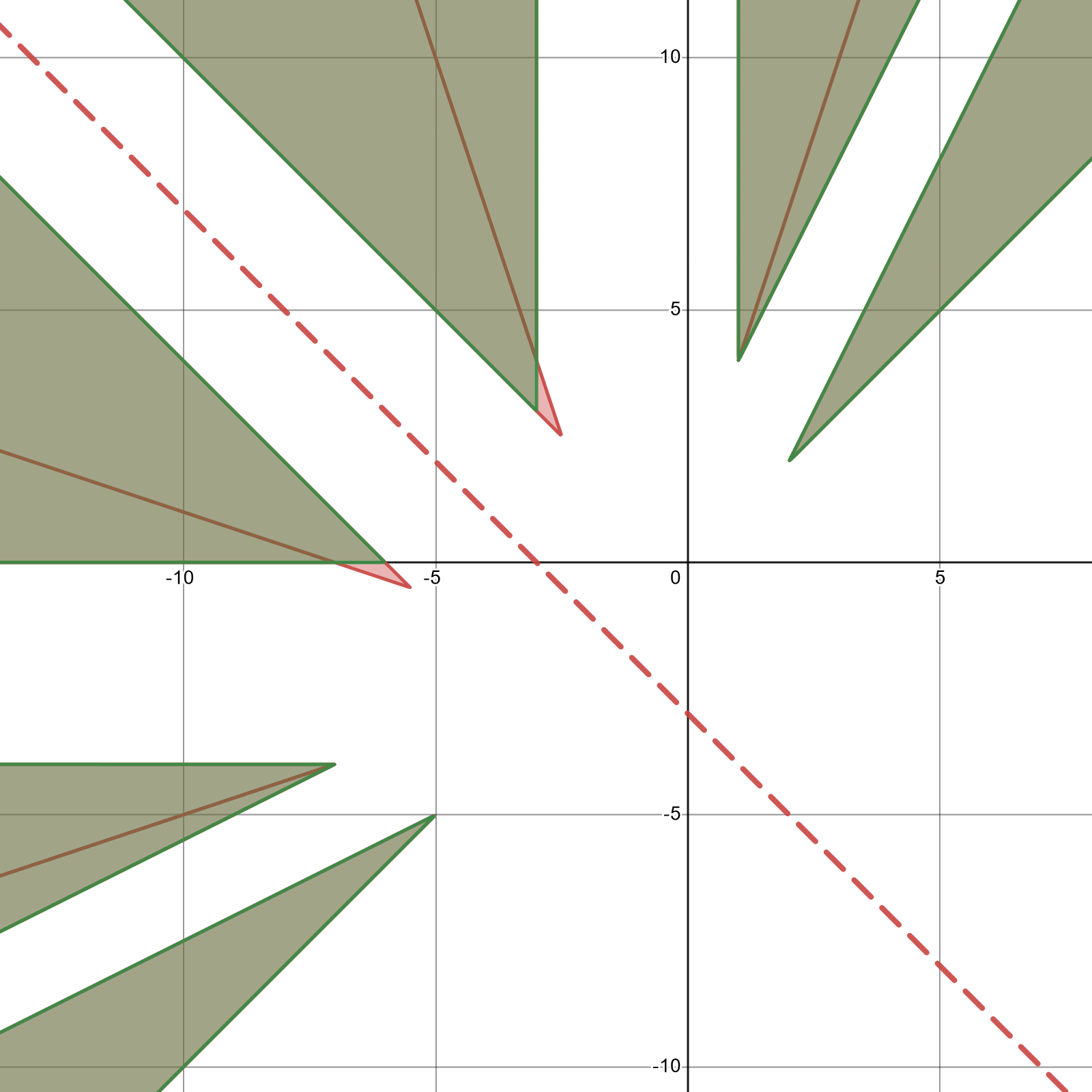}
        \includegraphics[width=.30\textwidth]{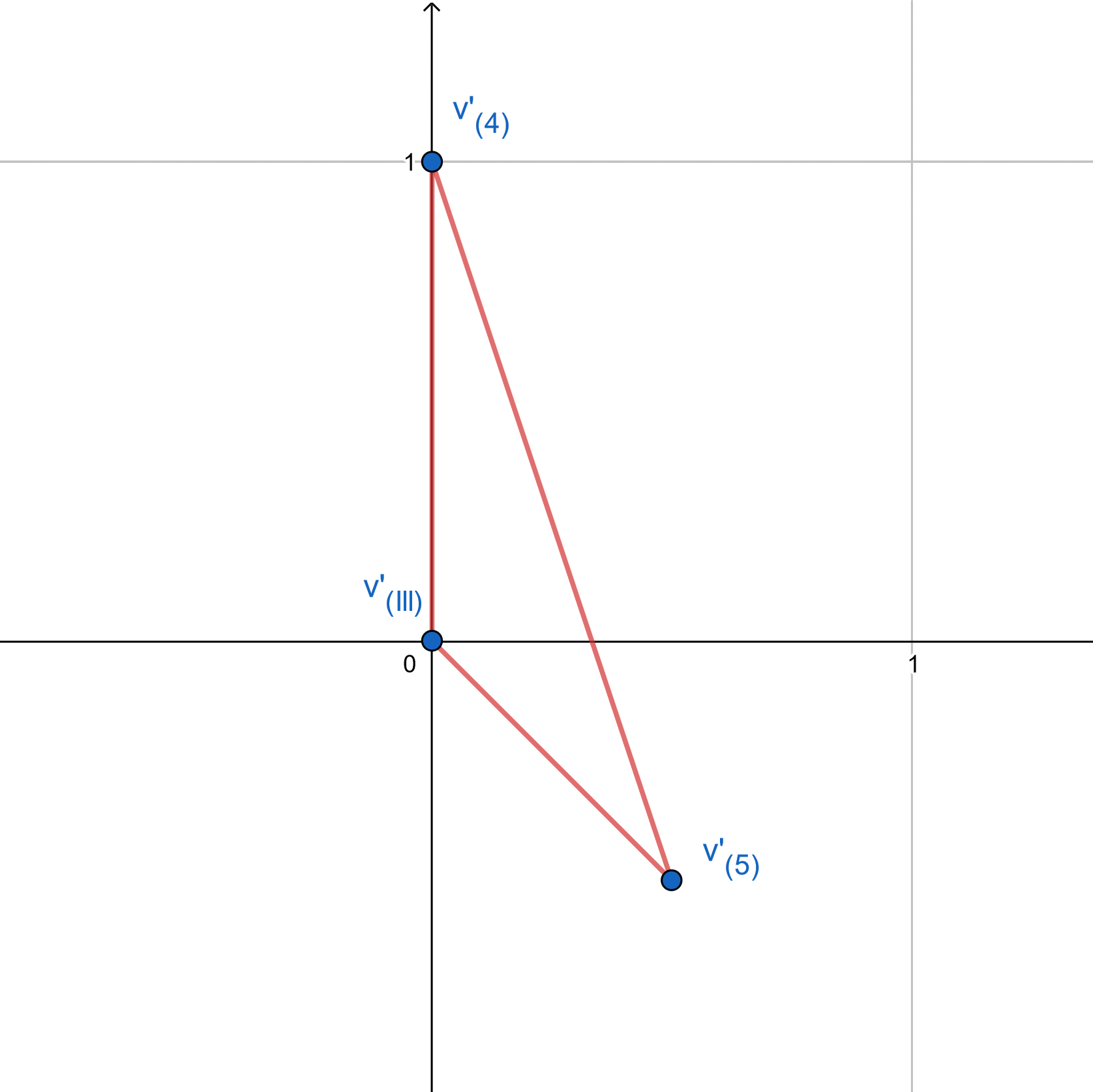}
    \caption{Uncovered Region Doesn't Contain Any Lattice Points}
    \label{fig:mesh1}
\end{figure}

We can repeat this process to show that the uncovered part of the critical region (7) does not contain any lattice point, but it is clear due to the symmetry of the regions. It will complete the proof of combinatorial lemma over imaginary quadratic number fields. 

\begin{remark}
    The proof of the combinatorial lemma for a general totally imaginary field case is reduced to working with pairs of complex embeddings corresponding to each archimedean place. So our proof implies the combinatorial lemma in the general case.
\end{remark}

\subsection{The case $P_{\alpha}$}
In this section, we state the combinatorial assumption for the weight $\mu = (a^{\eta},b^{\eta})_{\eta:F\to \mathbb{C}}$ of the Levi factor $\mathbf{GL_2} \simeq M_{\alpha}$.

\begin{lemma}[Combinatorial Lemma]\label{lem:combalpha}
     Using the notation introduced in the previous section, the following statements are equivalent:
    \begin{enumerate}
        \item The evaluation point $s=-\frac{3}{2}$ is a critical point for both 
        \[\begin{cases}
            L(s,\sigma)L(2s,\omega_{\sigma})L(3s,\sigma\otimes\omega_{\sigma}),\\
            L(1+s,\sigma)L(1+2s,\omega_{\sigma})L(1+3s,\sigma\otimes\omega_{\sigma}).   
        \end{cases}\]
        \item The abelian width is bounded in terms of the cuspidal widths as follows:
        \begin{align*}
            -1-\ell_1(\mu)&\leq \mathbf{w} \leq -5+\ell_1(\mu),\\
            -2-\ell_2(\mu) &\leq \mathbf{w} \leq -4+\ell_2(\mu),\\
            -\frac{7}{3}-\frac{\ell_3(\mu)}{3} &\leq \mathbf{w} \leq -\frac{11}{3}+\frac{\ell_3(\mu)}{3},\\
            \ell_1(\mu)\geq 2,&\qquad \ell_2(\mu)\geq 1,\qquad \ell_3(\mu)\geq 2.
        \end{align*}
        
        \item There exists a balanced Kostant representative $w = (w^{\eta})\in W^{P_{\alpha}}$ such that $w^{-1}.\mu$ is a dominant weight of $G$.
        \end{enumerate}
    \end{lemma}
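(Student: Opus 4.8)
The plan is to mirror, \emph{mutatis mutandis}, the proof of Lemma \ref{lem:combbeta} just completed, transposing every step to the parabolic $P_\alpha$, whose Levi factor $M_\alpha\simeq\mathbf{GL_2}$ carries $\alpha$ as its simple root, whose modulus character has exponent $3$, and whose constant term involves the \emph{three} $L$-functions $L(s,\sigma)$, $L(2s,\omega_\sigma)$, $L(3s,\sigma\otimes\omega_\sigma)$. The equivalence $(1)\iff(2)$ has already been recorded in Section \ref{sec:criticala}, where the critical sets of these three $L$-functions were computed and the constraints on $\mathbf{w}$, $\ell_1(\mu)$, $\ell_2(\mu)$, $\ell_3(\mu)$ forcing both $-\tfrac{3}{2}$ and $-\tfrac{1}{2}$ to be critical were extracted; so the remaining task is $(2)\iff(3)$.

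First I would set up the dictionary between weights of $M_\alpha$ and weights of $G$. Fixing a pair of complex embeddings $\{\eta,\bar\eta\}$ and writing $(\mu,\mu^{*})=\big((a,b),(\mathbf{w}-b,\mathbf{w}-a)\big)$ with $a\ge b$, I would use the parametrization $t_\alpha=t_0$ of the torus (Remark \ref{choice of character}, Figure \ref{fig:parametrization}) to rewrite $\mu=a(2\alpha+\beta)+b(\alpha+\beta)=b\,\gamma_l+(a-b)\,\gamma_s$, and similarly for $\mu^{*}$, in the fundamental-weight coordinates of $X^{*}(T_0)$. As in the $P_\beta$ case, one checks that $\mathbf{GL_2}$-integrality of $(\mu,\mu^{*})$ is equivalent to $G$-integrality, and that $G$-dominance of $(\mu,\mu^{*})$ implies $\mathbf{GL_2}$-dominance $b\le a$ but is strictly stronger; this is the translation needed to read condition $(3)$.

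Next I would compute, for each of the six Kostant representatives in $W^{P_\alpha}=\{1,w_\beta,w_{\beta\alpha},w_{\beta\alpha\beta},w_{\beta\alpha\beta\alpha},w_{\beta\alpha\beta\alpha\beta}\}$, the twisted action $w^{-1}.\mu=w^{-1}(\mu+\rho_G)-\rho_G$ with $\rho_G=\gamma_s+\gamma_l$, producing the analogue of Table \ref{table: twistedbeta}; the table of $w^{-1}$ on the simple roots in Section \ref{sec:kos} supplies all the reflection data, so this is bookkeeping. The balanced Kostant representatives are the pairs $(w^\eta,w^{\bar\eta})$ with $l(w^\eta)+l(w^{\bar\eta})=\dim(U_{P_\alpha}/F)=5$, namely $(1,w_{\beta\alpha\beta\alpha\beta})$, $(w_\beta,w_{\beta\alpha\beta\alpha})$, $(w_{\beta\alpha},w_{\beta\alpha\beta})$ and their three $\eta\leftrightarrow\bar\eta$ mirror images. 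For each such pair I would impose $G$-dominance of $\big(w^{-1}.\mu,\ (w^{*})^{-1}.\mu^{*}\big)$, read off a system of linear inequalities in $(a,b,\mathbf{w})$, and split into the two regimes $\mathbf{w}\le -3$ and $\mathbf{w}>-3$ (the analogue of the $\mathbf{w}\le -5$ / $\mathbf{w}>-5$ dichotomy for $P_\beta$; see the remark following Figure \ref{fig:critregalpha}), obtaining the ``twisted action regions'' for $P_\alpha$.

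Finally I would overlay the twisted action regions on the critical regions of Section \ref{sec:criticala} (Figure \ref{fig:critregalpha}) and argue, by comparing outer boundaries and vertices exactly as in the $P_\beta$ proof, that their union over all balanced $w$ equals the full critical region up to a union of boundary triangles, each of which contains no point of $\mathbb{Z}^{2}$; the lattice-point-free claim I would settle by the same translation trick used above, namely translating each offending triangle by an integer vector carrying one of its vertices to the origin and checking that the resulting normalized triangle --- whose vertices are then independent of $a$, $b$, $\mathbf{w}$ --- has no lattice points on it or inside it. Since the general totally imaginary case reduces to the analysis of pairs of complex embeddings attached to the archimedean places, this completes $(2)\iff(3)$. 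I expect the main obstacle to be precisely this matching step: because $P_\alpha$ has three $L$-functions in play (hence three cuspidal widths and $m=3$), the chain of inequalities that orders the five linear forms $2b-\mathbf{w}-1$, $2a-\mathbf{w}+1$, $a+b-\mathbf{w}$, $a+2b-3\mathbf{w}-1$, $2a+b-3\mathbf{w}+1$ governing $\ell_1(\mu),\ell_2(\mu),\ell_3(\mu)$ --- and thereby the position of $0$ relative to them --- is longer and admits more orderings than in the $P_\beta$ case, so there will be more critical regions, more twisted action regions, and more uncovered boundary triangles to dispose of; keeping that case analysis organized, ideally halving it via the $\eta\leftrightarrow\bar\eta$ symmetry, is the crux.
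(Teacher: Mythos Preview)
Your proposal is correct and follows essentially the same approach as the paper. The paper likewise records that $(1)\iff(2)$ was already done in Section~\ref{sec:criticala}, then for $(2)\iff(3)$ sets up the $t_\alpha$-dictionary (with $G$-dominance reading $0\le b\le a$), tabulates the twisted actions $w^{-1}.(a,b)$ for $w\in W^{P_\alpha}$ (Table~\ref{tab:twistalpha}), and asserts that the same overlay-and-lattice-point argument used for $P_\beta$ shows the critical regions and twisted-action regions cover the same integral points, supported by Figures~\ref{fig:twistedregalpha}--\ref{fig:mesh2}; your anticipated $\mathbf{w}\le -3$ versus $\mathbf{w}>-3$ split and the translation trick for the uncovered triangles match exactly.
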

    \begin{remark}
We defined a balanced Kostant representative $w = (w^{\eta})\in W^{P_{\alpha}}$ in previous section, where it satisfies $l(w^{\eta})+l(w^{\bar{\eta}}) = dim(U_{P_{\alpha}}/F)=10$. Therefore, the Kostant representative in part (3) of the combinatorial lemma is one of the $(w_{\beta\alpha\beta\alpha\beta}^{\eta},1^{\bar{\eta}})$ ,  $(1^{\eta},w_{\beta\alpha\beta\alpha\beta}^{\bar{\eta}})$ , $(w_{\beta\alpha\beta\alpha}^{\eta},w_{\beta}^{\bar{\eta}})$ , $(w_{\beta}^{\eta}, w_{\beta\alpha\beta\alpha}^{\bar{\eta}})$ , $(w_{\beta\alpha\beta}^{\eta},w_{\beta\alpha}^{\bar{\eta}})$ , or $(w_{\beta\alpha}^{\eta},w_{\beta\alpha\beta}^{\bar{\eta}})$.
\end{remark}

\begin{table}[ht]\label{tab:twistalpha}
    \caption{The Twisted Action of Kostant Representatives in $W^{P_{\alpha}}$}
    \vspace*{6pt}
    \centering
     \begin{tabular}{||c c c c||} 
     \hline\rule{0pt}{4ex}
     $l(w)$ & $w\in W^{P_{\alpha}}$ & $w^{-1}.(a,b)$ & $(w^{*})^{-1}.(\mathbf{w}-b,\mathbf{w}-a)$\\
     \hline\hline
     $0$ & $1$ & $(a,b)$ & $(\mathbf{w}-b,\mathbf{w}-a)$\\ 
     \hline\rule{0pt}{4ex}
     $1$ & $w_{\beta}$ &  $(a+b+1,-b-2)$ & $(2\mathbf{w}-b-a+1,-\mathbf{w}+b-1)$\\
     \hline\rule{0pt}{4ex}
     $2$ & $w_{\beta\alpha}$&  $(a-b-1,b-1)$ & $(a-b-1,\mathbf{w}-b-1)$\\ 
     \hline\rule{0pt}{4ex}
     $3$ & $w_{\beta\alpha\beta}$ & $(a,b-a-2)$ & $(\mathbf{w}-b,b-a-2)$\\
     \hline\rule{0pt}{4ex}
     $4$ &  $w_{\beta\alpha\beta\alpha}$ & $(-b-a-5,a+1)$ & $(a+b-2\mathbf{w}-5,\mathbf{w}-b+1)$\\ 
     \hline\rule{0pt}{4ex}
     $5$ & $w_{\beta\alpha\beta\alpha\beta}$ & $(-b-3,-a-3)$ & $(a-\mathbf{w}-3,b-\mathbf{w}-3)$\\
    \hline
    \end{tabular}
\end{table}
\vspace{\baselineskip}
   Following the previous section, assume $F$ is a quadratic imaginary field. Let $(\mu,\mu^{*})$ be a strongly pure weight of $t_{\alpha}: M_{\alpha}\simeq GL_2$, we have: 
\begin{align*}
    \mu &= a(2\alpha+\beta)+b(\alpha+\beta) \hspace{57pt}= (b-a)\gamma_{s}+(b)\gamma_{l},\\
    \mu^{*} &= (\mathbf{w}-b)(\alpha+\beta)+(\mathbf{w}-a)(\alpha) = (b-a)\gamma_{s}+(\mathbf{w}-a)\gamma_{l}.
\end{align*}

\textbf{Integrality :}
\begin{align*}
    a,b,\mathbf{w} \in \mathbb{Z} \iff (b-a),(\mathbf{w}-a)\in \mathbb{Z},
\end{align*}

\textbf{Dominance :}
\begin{itemize}
    \item $(\mu,\mu^{*})$ is a dominant weight of $M_{\alpha}$ if 
    \[
         b\leq a.
    \]
    \item $(\mu,\mu^{*})$ corresponds to a dominant weight of $G$ if
    \[
            0\leq b\leq a .
    \]
\end{itemize}

Table \ref{tab:twistalpha} presents the twisted action of the Kostant representatives for the weights $\mu = a(2\alpha + \beta) + b(\alpha + \beta)$, expressed in the form $(a, b) := a(2\alpha + \beta) + b(\alpha + \beta)$.
Using the same technique in the previous section for the case of $P_{\beta}$, we can show that the critical regions and twisted action regions for $P_{\beta}$ cover the same integral lattice points in the weight plane.

\begin{figure}[htp]
    \centering
        \includegraphics[width=.35\textwidth]{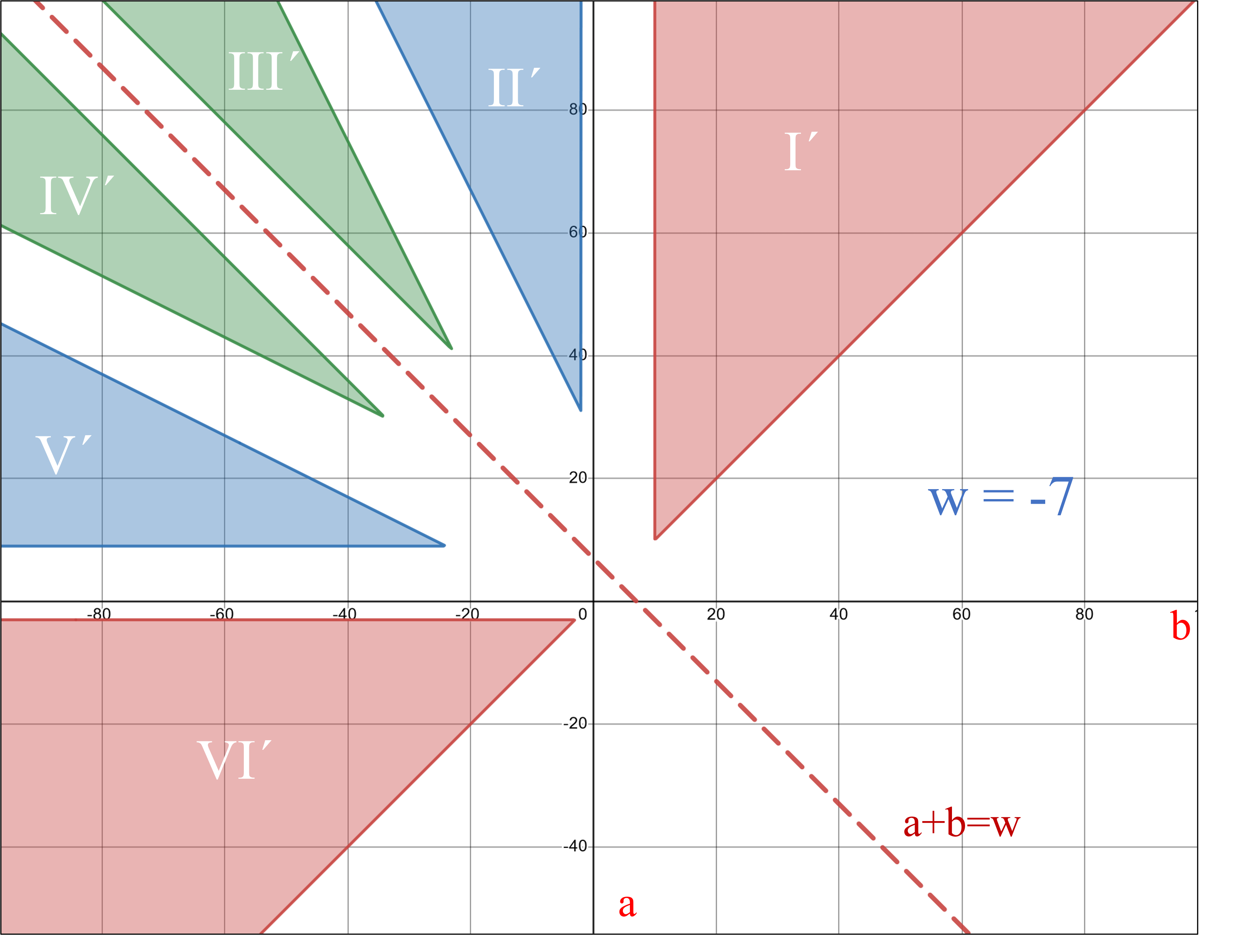}
        \includegraphics[width=.35\textwidth]{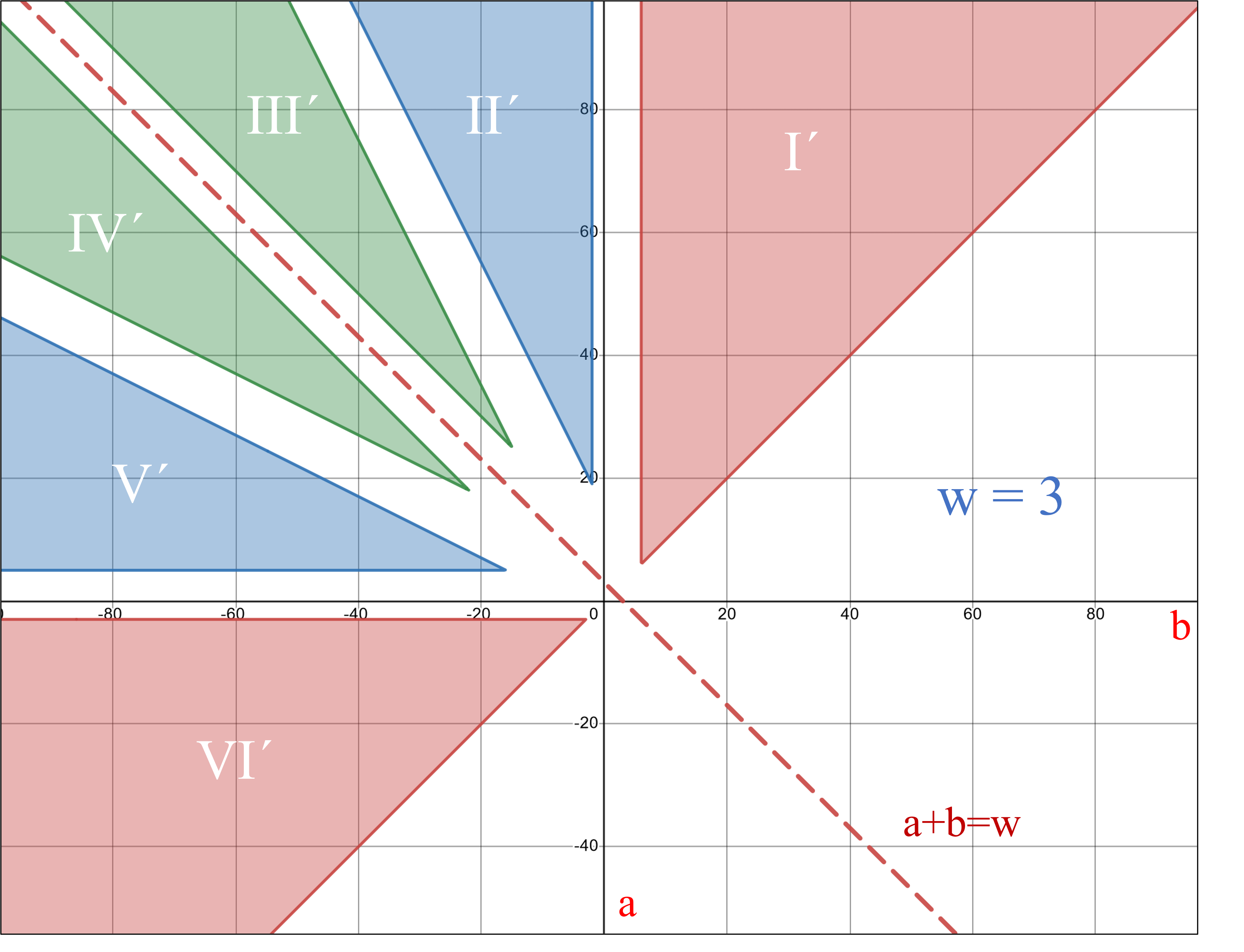}
    \caption{Twisted Action Regions of $P_{\alpha}$ at different $\mathbf{w}$.}
    \label{fig:twistedregalpha}
\end{figure}

\begin{figure}[htp]
    \centering
        \includegraphics[width=.35\textwidth]{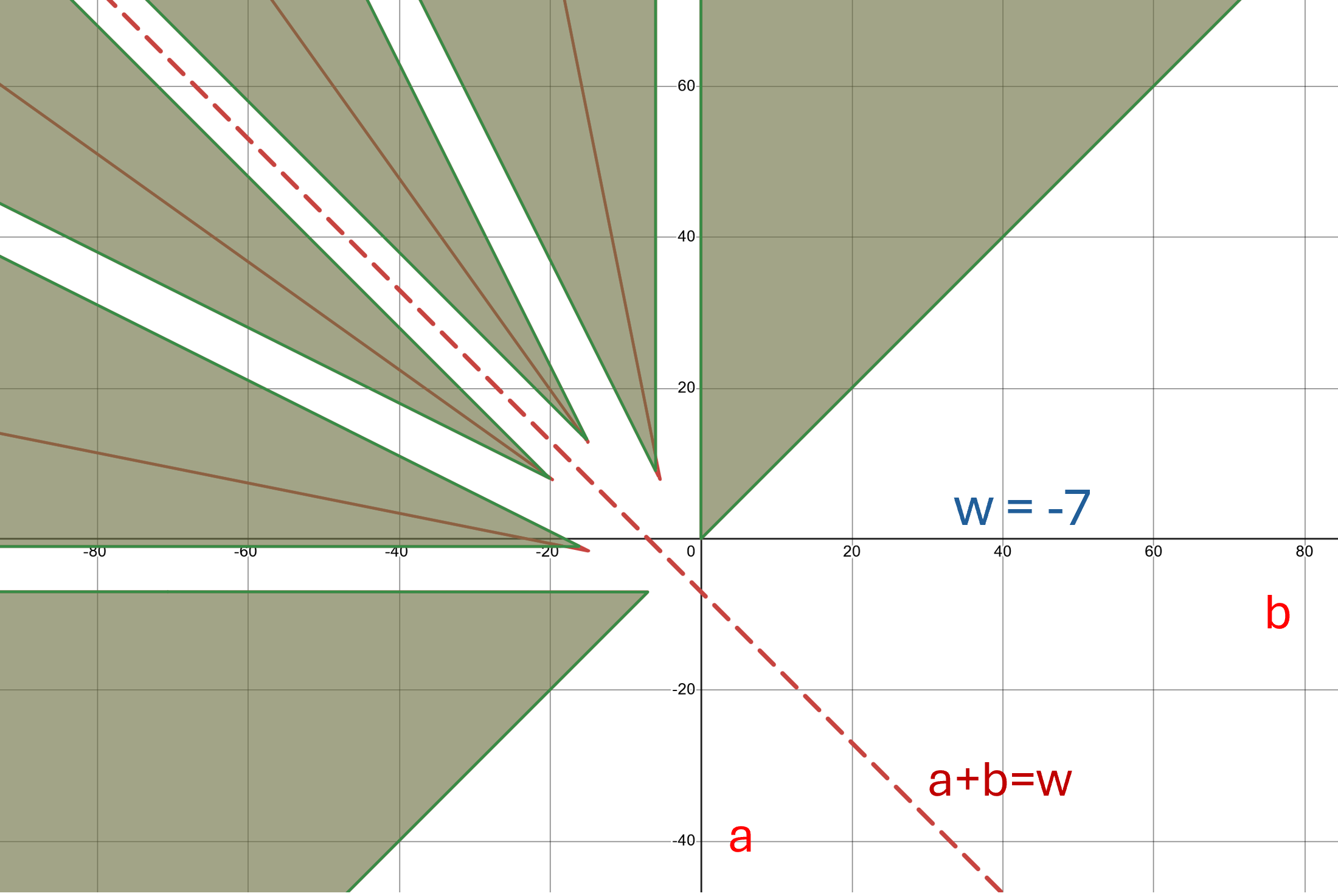}
        \includegraphics[width=.35\textwidth]{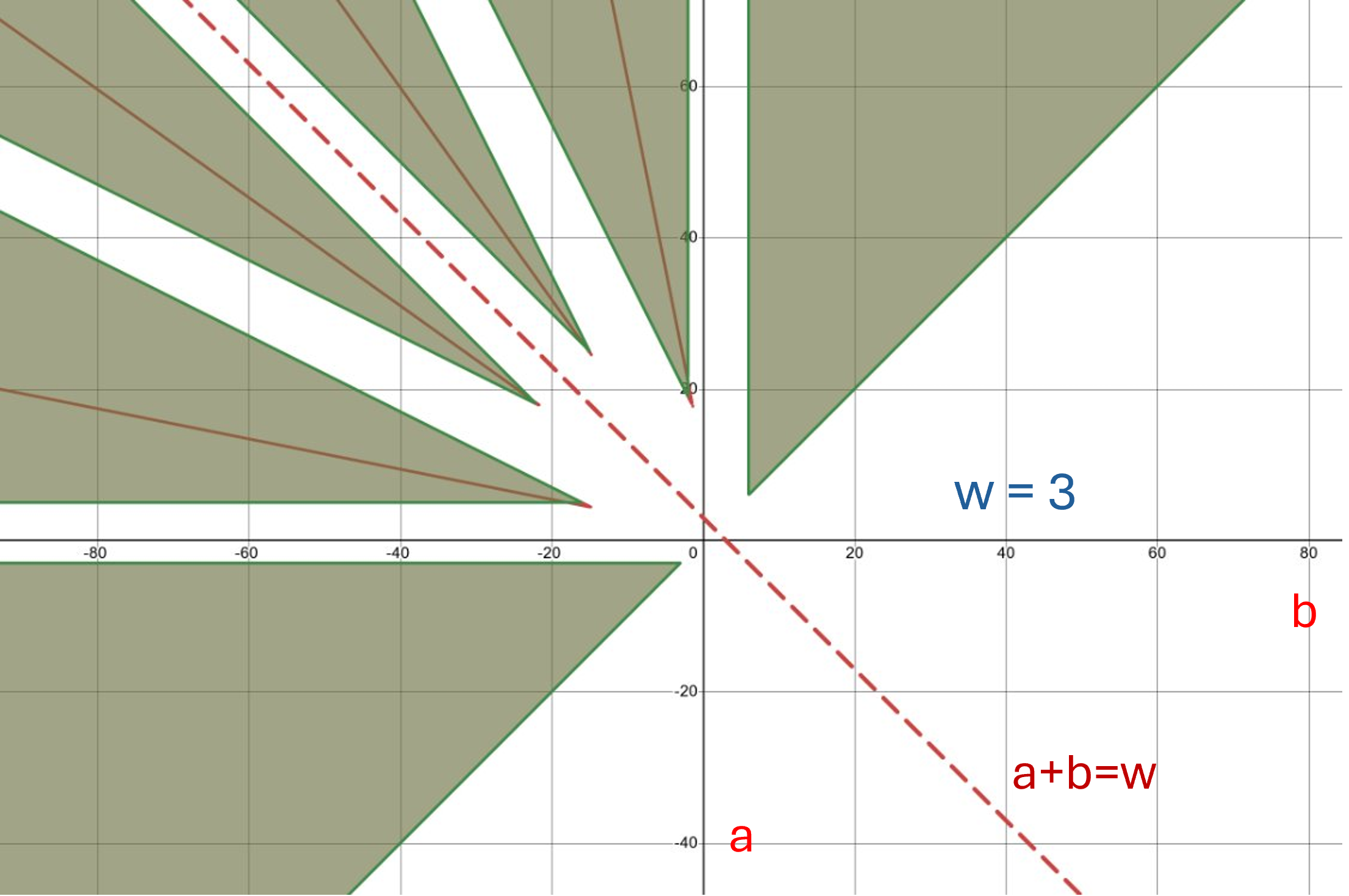}
    \caption{Critical Region in Red and Twisted Action Region in Green}.
    \label{fig:intersectregalpha}
\end{figure}

\begin{figure}[htp]
    \centering
        \includegraphics[width=.45\textwidth]{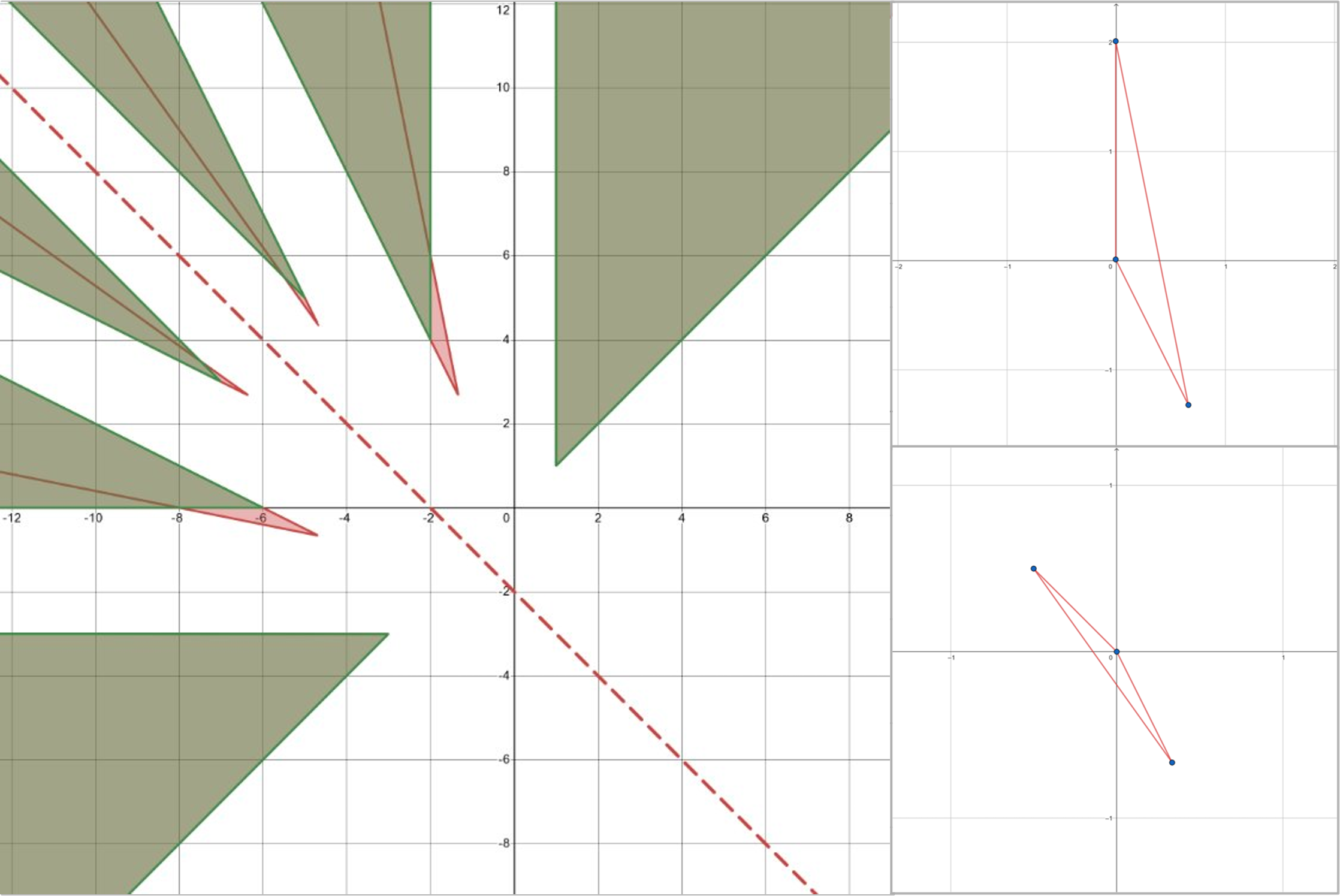}
    \caption{Uncovered Region contains no lattice points}
    \label{fig:mesh2}
\end{figure}

\subsection{The Tate twist revisited}\label{sec:ttr}
In Section \ref{sec:tt}, we introduced the notion of the Tate twist. Now we want to study the effect of Tate twists on the arguments in this section. Consider the notations introduced in the previous section. For $m\in \mathbb{Z}$, the Tate twist of $\mu \in X_{00}^{*}(T_2\times E)$ with purity weight $\mathbf{w}$ is $\mu-m\delta_2 \in X_{00}^{*}(T_2\times E)$ with purity weight $\mathbf{w}-2m$. Now we can calculate the abelian width and the cuspidal width of the twisted weight following the previous section:
\begin{align*}
    a_1(\mu-m\delta_2) &= a_1(\mu)-m,\\
    \ell_1(\mu-m\delta_2) &= \ell_1(\mu),\\
    a_2(\mu-m\delta_2) &= a_2(\mu) -m,\\
    \ell_2(\mu-m\delta_2) &= \ell_2(\mu).
\end{align*}
The bounds obtained in the second part of the combinatorial lemma will restrict the possibility of the Tate twist. For example, in the case of a maximal parabolic subgroup $P_{\beta}$, we can consider the following cases:

\textbf{Case $\ell_1 \leq \ell_2$.} This condition is independent of the Tate twist as the cuspidal width is invariant under the Tate twist. In this case, the combinatorial lemma satisfies all the successive critical points of the adjoint symmetric cubic L-function. Then the bounds implied by the combinatorial lemma will control the possible Tate twists:
\[
    1-\frac{5}{2}-\frac{\ell_1(\mu)}{2}-a_1(\mu)\leq -m \leq -1-\frac{5}{2}+\frac{\ell_1(\mu)}{2}-a_1(\mu),
\]
which means by Tate twist we can transverse over all successive pairs of critical points of the adjoint symmetric cubic L-function.

\textbf{Case $\ell_1 > \ell_2$.} Again, this condition is independent of the Tate twist as the cuspidal width is invariant under the Tate twist. In this case, the combinatorial lemma satisfies all the successive critical points of the Hecke L-function of the central character of the representation of $GL_2$ with the highest weight $\mu-m\delta_2$. In the same way, the possible Tate twists will satisfy:
\[
    1-2\frac{5}{2}-\ell_2(\mu)-a_2(\mu)\leq -m \leq -1-2\frac{5}{2}+\ell_2(\mu)-a_2(\mu),
\]
which makes it possible to run over all successive pairs of critical points of the Hecke L-function.

\begin{remark}\label{nomore}

We now arrive at a key principle central to the Harder-Raghuram approach: it is sufficient to establish the rationality result only at the point of evaluation. From there, we can deduce the rationality results for the ratios of $L$-values of the ``product'' of $L$-functions at all successive critical values within the critical set of the product. \textit{No more, and no less!}
\end{remark}
\begin{remark}
By the discussion above, starting with a representation whose highest weight satisfies one of the stated conditions, we can prove the rationality results for all critical values of one $L$-function, provided we know the rationality for the others.
\end{remark}

\section{Non-archimedean Considerations}\label{sec:Nonint}
This section examines the non-archimedean aspects of the cohomological interpretation of the Langlands-Shahidi method, as developed in \cite{Raghurampadic}, based on \cite{Waldspurger_2003}, under specific hypotheses, addressing the $p$-adic cases in full generality. We will review these hypotheses and verify their validity in our particular case.

\subsection{Being on the right of the unitary axis}\label{runit}
Fix a non-archimedean place $v$ of $F$. For the remainder of this section, all arguments will be restricted to the local field $F_v$. To simplify notation, we will omit $v$ from the expressions. 

Let $\sigma$ be a smooth irreducible admissible representation of $M$, where $P$ is one of the maximal parabolic subgroups of $G$. As $M\simeq GL_2$, $\sigma$ is generic;  also assume $\sigma$ is essentially tempered, i.e. tempered mod the center. There exists $e\in \mathbb{R}$ and a smooth irreducible unitary tempered representation $\sigma^u$ such that 
\[
    \sigma \simeq \sigma^u \otimes \eta^{e}.
\]

The admissible representation $\tilde{r}_j(\sigma) :=  \tilde{r}_j \circ \phi_{\sigma_v}:W_F\times SL_2(\mathbb{C}) \to \;^{L}GL_{d_j}$ is also essentially tempered. Therefore, there is an exponent $\tilde{f}_j\in \mathbb{R}$ which is a function of the $r_j$ and $\mathbf{e}$, that satisfies
\[
    \tilde{r}_j(\sigma) \simeq \tilde{r}_j(\sigma)^u \otimes |det|^{\tilde{f}_j}.
\]

\begin{defn}
    A smooth irreducible admissible representation $\sigma$ of $M$ that is essentially tempered with the exponents $e\in \mathbb{R}$ is on the right side of the unitary axis with respect to the ambient group $G$, if
\[
    k +\tilde{f_1} >0.
\]
where $k$ is the point of evaluation, as defined in Definition \ref{def:poe}.
\end{defn}
By \cite[Lemma 2.4.2]{Raghurampadic}, we have $\tilde{f}_j = j \cdot \tilde{f}_1$. Consequently, if $\sigma$ lies to the right of the unitary axis, all transfers of $\sigma$ that appear in the constant term introduced in Section \ref{ch:LS} also lie to the right of the unitary axis. Moreover, let $\sigma$ be on the right of the unitary axis concerning ambient group G, then local $L$-values 
\begin{align*}
    L(jk,\;\sigma,\;\tilde{r}_j) &= L(jk,\;\tilde{r}_j(\sigma))= L(jk+\tilde{f}_j,\;\tilde{r}_j(\sigma)^{t}),\\
    L(jk+1,\;\sigma,\;\tilde{r}_j) &= L(jk+\tilde{f}_j+1,\;\tilde{r}_j(\sigma)^{t}),
\end{align*}
are finite for each $1\leq j \leq m$ \cite[Corollary 2.4.4.]{Raghurampadic}.

\subsection{Local critical L-values}
In this section, we establish the validity of Hypothesis in \cite[Hypothesis 4.2.1]{Raghurampadic}, which resolves the parity constraint introduced in \textit{loc. cit.} for our case. Fix an embedding $\iota: E \to \mathbb{C}$, and define
\[
    \chi_{\vartheta}^{1/2}(a) := diag(|a|^{1/2},|a|^{1/2}),  \quad a \in F^{*}.
\]
For each $1\leq j \leq m$, there exists $h_j\in \frac{1}{2}+\mathbb{Z}$ such that $\tilde{r}_j(\chi_{\vartheta}^{\epsilon_{\vartheta}/2})=|\; |^{h_j}$. For each $1 \leq j \leq m$, there exists $h_j \in \frac{1}{2} + \mathbb{Z}$ such that $\tilde{r}j(\chi_{\vartheta}^{\epsilon_{\vartheta}/2}) = |\cdot|^{h_j}$. Let $\sigma$ be an irreducible admissible, half-integrally unitary, essentially tempered representation of $M_{\vartheta}$. The condition ``half-integrally unitary'' means there exists an integer $w$ such that ${^{\iota}\sigma} \otimes \eta^{w/2}$ is unitary. Therefore, we have:
\[
    L(s,\;\sigma,\;\tilde{r}_j) = L(s-h_j,\;\sigma\otimes \chi_{\vartheta}^{\epsilon_{\vartheta}/2},\;\tilde{r}_j).
\]
This twist provides us with algebraic data. For an $L$-value to be critical in the sense of Deligne \cite{Deligne1979ValeursDF}, the evaluation point must be an integer. In our context, this implies that $h_j$ must be a half-integer. A parabolic subgroup $P$ is said to be critical for $G$ if the evaluation point $k=-\langle \rho_P,\alpha_P\rangle$ satisfies the condition:
\[
    jk\in h_j +\mathbb{Z}, \quad \forall 1\leq j \leq m.
\]
Moreover, a parabolic subgroup $P$ is called integral if it satisfies:
\[
    \rho_{P_{\vartheta}}|_{A_{\vartheta}} \in X^{*}(A_{\vartheta}.
\]
\begin{prop}
    Both standard maximal parabolic subgroups of $G$ satisfy the critical and integrality conditions described above.
\end{prop}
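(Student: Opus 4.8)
The plan is to verify the two conditions---criticality and integrality---for each of the two standard maximal parabolics $P_\alpha$ and $P_\beta$ directly, using the data already assembled in the excerpt. For each parabolic we must (a) compute the exponents $h_j \in \frac12 + \mathbb Z$ attached to the representations $\tilde r_j$ via the relation $\tilde r_j(\chi_\vartheta^{\epsilon_\vartheta/2}) = |\;|^{h_j}$, where $\chi_\vartheta^{1/2}(a) = \mathrm{diag}(|a|^{1/2},|a|^{1/2})$; (b) check that $jk \in h_j + \mathbb Z$ for every $1 \le j \le m$, where $k = k_\beta = -5/2$ (resp.\ $k_\alpha = -3/2$) is the point of evaluation from Definition~\ref{def:poe}; and (c) check that $\rho_{P_\vartheta}|_{A_\vartheta} \in X^*(A_\vartheta)$.

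First I would do the $P_\beta$ case. Here $m=2$, $r_1 = \mathrm{Ad}^3(\rho_2) = \mathrm{Sym}^3(\rho_2)\otimes(\wedge^2\rho_2)^{-1}$ and $r_2 = \wedge^2\rho_2$. Since $\chi_\vartheta^{1/2}$ is the character $a \mapsto \mathrm{diag}(|a|^{1/2},|a|^{1/2})$ of $GL_2$, i.e.\ $|\det|^{1/2}\cdot(\text{triv})$ up to the square-root normalization, we compute $\tilde r_1(\chi_\vartheta^{1/2})$ and $\tilde r_2(\chi_\vartheta^{1/2})$ by how each representation scales under the center: $\wedge^2\rho_2$ on $\mathrm{diag}(t,t)$ is $t^2$, so on $\chi_\vartheta^{1/2}$ it contributes $|\;|^{1}$ up to a factor of $\epsilon_\vartheta$; and $\mathrm{Ad}^3(\rho_2)$ has central character $\det^{3}\cdot\det^{-2} = \det$, hence contributes $|\;|^{1}$ likewise, but the half-integral twist by $\chi_\vartheta^{\epsilon_\vartheta/2}$ shifts the relevant exponent by a half-integer governed by $\rho_{GL_2}$, producing $h_1, h_2 \in \frac12 + \mathbb Z$. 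One then checks $k = -5/2 \in h_1 + \mathbb Z$ and $2k = -5 \in h_2 + \mathbb Z$; since $-5/2 \in \frac12 + \mathbb Z$ and $-5 \in \mathbb Z$, the parity works out precisely because $h_1$ is a half-integer and $h_2$ is an integer (or a half-integer, to be pinned down by the explicit $\rho$-shift). The integrality condition $\rho_\beta|_{A_\beta} \in X^*(A_\beta)$ follows from $\rho_\beta = \frac52\gamma_s$ together with the fact that the restriction to the one-parameter split center $A_\beta$ lands on the $\mathbb Z$-span of the fundamental coweight dual to $\alpha_\beta$; concretely $\rho_\beta = 5\alpha + \frac52\beta$ restricts to $A_\beta$ as $|\det|^5$, which is integral. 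The $P_\alpha$ case is identical \emph{mutatis mutandis}: $m=3$, $k_\alpha = -3/2$, $\rho_\alpha = \frac32\gamma_l$, the three constituents $r_1 = \rho_2$, $r_2 = \wedge^2\rho_2$, $r_3 = \rho_2\otimes\wedge^2\rho_2$ have central characters $\det$, $\det^2$, $\det^3$, so the corresponding $h_j$ have parities matching $jk_\alpha = -3/2, -3, -9/2$, and $\delta_\alpha(m) = |\det(m)|^3$ restricts integrally to $A_\alpha$.

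The main obstacle I anticipate is pinning down the exact half-integer value of each $h_j$ rather than merely its residue class, because the definition $\tilde r_j(\chi_\vartheta^{\epsilon_\vartheta/2}) = |\;|^{h_j}$ folds together the central-character scaling of $r_j$ with the normalization constant $\epsilon_\vartheta$ attached to $\chi_\vartheta^{1/2}$ (which itself encodes a $\rho_{GL_2}$-shift), and one must be careful that the ``essentially tempered, half-integrally unitary'' hypothesis on $\sigma$ is exactly what makes the twisted representation $\sigma\otimes\chi_\vartheta^{\epsilon_\vartheta/2}$ have integral infinitesimal data at the points $jk$ and $jk+1$. Once the bookkeeping of these half-integers is done correctly, the verification of $jk \in h_j + \mathbb Z$ reduces to the observation that $k_\beta, k_\alpha \in \frac12 + \mathbb Z$ and that $h_j - jk \in \mathbb Z$ because both sides track the same $\rho$-shift; and the integrality of $\rho_P|_{A_P}$ is immediate from the explicit formulas $\delta_\beta(m) = |\det m|^5$ and $\delta_\alpha(m) = |\det m|^3$ computed in Section~\ref{ch:LS}. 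I would present the proof as a short case analysis, one paragraph per parabolic, citing \cite[Hypothesis 4.2.1]{Raghurampadic} for the shape of the statement and the formulas for $\rho_\beta$, $\rho_\alpha$, $k_\beta$, $k_\alpha$, and the decompositions of $r$ established earlier in the excerpt.
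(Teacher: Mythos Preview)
Your overall strategy---verify criticality and integrality case-by-case for $P_\beta$ and $P_\alpha$ by computing the $h_j$ and evaluating $\rho_P$ on the split center---is exactly the paper's approach. The gap is that you never actually carry out the computation of the $h_j$, and the obstacles you anticipate are not real: there is no $\rho_{GL_2}$-shift hidden in the definition of $h_j$, and the symbol $\epsilon_\vartheta$ plays no substantive role here. The quantity $h_j$ is literally the exponent obtained by evaluating $\tilde r_j$ on the scalar matrix $\mathrm{diag}(|a|^{1/2},|a|^{1/2})$, nothing more.

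Concretely, for $P_\beta$: since $\mathrm{Ad}^3(\rho_2)=\mathrm{Sym}^3(\rho_2)\otimes(\wedge^2\rho_2)^{-1}$ acts on each weight vector of its four-dimensional space by the central scalar (the $\mathrm{Sym}^3$ factor contributes $t^3$ and the $\det^{-1}$ factor $t^{-2}$ on $\mathrm{diag}(t,t)$), one gets $\tilde r_1(\mathrm{diag}(|a|^{1/2},|a|^{1/2}))=\mathrm{diag}(|a|^{1/2},|a|^{1/2},|a|^{1/2},|a|^{1/2})$, hence $h_1=\tfrac12$ and $k_\beta=-\tfrac52\in\tfrac12+\mathbb Z$. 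For $j=2$ the check is immediate since $2k_\beta=-5\in\mathbb Z$. For $P_\alpha$: $\tilde r_1=\rho_2$ gives $h_1=\tfrac12$ on the nose, $j=2$ is again integral, and $\tilde r_3$ (standard twisted by central character) gives $h_3\in\tfrac12+\mathbb Z$ by the same scalar evaluation, matching $3k_\alpha=-\tfrac92$. The integrality of $\rho_P|_{A_P}$ is exactly your observation $\rho_\beta(t_\beta(a,a))=a^5$ and $\rho_\alpha(t_0(a,a))=a^3$.

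So the proof you outline is correct once you drop the speculative $\rho$-shift language and simply write down the four diagonal entries of $\tilde r_j$ on $\mathrm{diag}(|a|^{1/2},|a|^{1/2})$; that is precisely what the paper does, in about five lines per parabolic.
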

\begin{proof}
   For $P_{\beta}$, recall that $\rho_{\beta} = \frac{5}{2}\gamma_s$. Considering the parametrization $t_{\beta}: F^{*}\times F^{*} \to T_{\beta}$. Let $t_{\beta}(a,a) \in A_P$, we have:
\[
    \rho_{\beta}(t_{\beta}(a,a)) = a^{5/2}a^{5/2} = a^5, \quad \textit{is integral}.
\]
The point of evaluation, $k= -\frac{5}{2}$, is half-integral and $m=2$. To show that $P_{\beta}$ is criticality for $G$, it is enough to show that $h_1$ is half-integral. As $\tilde{r_1}=r_3^{0}$, we have:
\begin{align*}
    \tilde{r}_1(\textit{diag}(|a|^{1/2},|a|^{1/2})) &= \textit{diag}((|a|^{1/2})^2(|a|^{1/2})^{-1},|a|^{1/2},|a|^{1/2},(|a|^{1/2})^{-1}(|a|^{1/2})^2)\\
    &=\textit{diag}(|a|^{1/2},|a|^{1/2},|a|^{1/2},|a|^{1/2}),\\
    &\Rightarrow h_1 = \frac{1}{2} \quad \implies -\frac{5}{2} \in \frac{1}{2}+\mathbb{Z}.
\end{align*}
Case $j=2$ is obvious as $2k \in \mathbb{Z}$.

For $P_{\alpha}$, consider the parametrization $t_{0}: F^{*}\times F^{*} \to T_{\alpha}$. Let $t(a,a) \in A_P$, we have:
\[
    \rho_{\alpha}(t(a,a)) = a^{3/2}a^{3/2} = a^3, \quad \textit{is integral} .
\]
In this case the point of evaluation, $k= -\frac{3}{2}$, is half integral and $m=3$. The representation $\tilde{r}_1$ is the standard representation of $GL_2$ so $h_1 = 1/2$. Moreover, for $\tilde{r}_3$ the twist of standard representation by the central character, we have $h_3 = \frac{1}{2}$.

\end{proof}

\subsection{Arithmetic of local intertwining operators: non-archimedean case }
Fix $\vartheta \in \Delta$ and $\iota: E \to \mathbb{C}$. As the Levi is isomorphic to $GL_2$, it follows from the discussion in \cite[Section 3.3.1.]{Raghurampadic} that if a smooth, absolutely irreducible representation ${^\iota}\sigma$ of $M_{\vartheta}$ is half-integrally, essentially tempered, and lies to the right of the unitary axis, then these conditions hold for ${^{\iota_1}\sigma}$ for any embedding ${\iota_1}: E \to \mathbb{C}$.

Define the normalized intertwining operator corresponding to $T_{st}(s,\sigma)$ as:
\[
    T_{norm}(s,\sigma) = r(s,\sigma,w_0)^{-1} T_{st}(s,\sigma)
\]
where $r(s,\sigma,w_0)$ is the normalization factor introduced in Section \ref{ch:LS}.
\begin{prop} \label{thm:nonarchint}
    Let $E$ be large enough, and $\sigma$ be as above. then there exists an $E$-linear G-equivariant map:
    \[
        T_{st,E}: ^{a}\!\operatorname{Ind}_{P}^{G}(\sigma) \to ^{a}\!\operatorname{Ind}_{P}^{G}(\tilde{\sigma}\otimes |\; |^{5}),
    \]
    such that for any embedding $\iota: E \to \mathbb{C}$, we have:
    \[
        T_{norm,E}\otimes_{E,\iota} \mathbb{C} = T_{norm,\iota},
    \]
    where $T_{norm,\; \iota}$ is the normalized intertwining operator at the point of evaluation.
\end{prop}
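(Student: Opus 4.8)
The plan is to obtain this statement as the $G_2$-specialisation of the general non-archimedean formalism of \cite[Section 3.3]{Raghurampadic} (resting in turn on \cite{Waldspurger_2003}), once all of its hypotheses have been checked in our situation. First I would fix $\vartheta\in\Delta$ and the finite place $v$, write $P=P_{\vartheta}$ and $M\simeq GL_2(F_v)$, and record that the local component ${}^{\iota}\sigma_v$ inherits a model $\sigma$ over $E$ from the $E$-rational Hecke summand $\sigma_f$. Since at the point of evaluation the normalized and algebraic parabolic inductions coincide, and algebraic (unnormalized) induction of an $E$-rational admissible representation introduces no power of $q^{1/2}$ and is again $E$-rational, both ${}^{a}\!\operatorname{Ind}_{P}^{G}(\sigma)$ and ${}^{a}\!\operatorname{Ind}_{P}^{G}(\tilde{\sigma}\otimes|\;|^{5})$ come with canonical $E$-structures. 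Because $\sigma$ is half-integrally unitary, essentially tempered, and lies to the right of the unitary axis with respect to $G$ (Section \ref{runit}), \cite[Corollary~2.4.4]{Raghurampadic} shows that every local $L$-factor entering the Langlands--Shahidi normalization factor $r(s,\sigma,w_0)$ is finite at $s=k$; as these factors are inverses of polynomials in $q^{-s}$ they are also non-zero there, so $r(k,\sigma,w_0)\in\mathbb{C}^{\times}$ and $T_{\mathrm{norm},\iota}(k,\sigma)=r(k,\sigma,w_0)^{-1}T_{\mathrm{st},\iota}(k,\sigma)$ is a well-defined, holomorphic, non-zero $G_v$-equivariant map.

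The rationality backbone I would use is the Whittaker normalization. Since $M\simeq GL_2(F_v)$ is quasi-split and $\sigma$ is generic, there is a Whittaker-normalized intertwining operator $\mathcal{T}^{\mathrm{Wh}}(s,\sigma)$, pinned down by compatibility with the standard Whittaker functional; following \cite{Raghurampadic} and the normalization-theoretic results of \cite{Waldspurger_2003} it is $\operatorname{Aut}(\mathbb{C})$-equivariant, i.e.\ $\gamma(\mathcal{T}^{\mathrm{Wh}}_{\iota})=\mathcal{T}^{\mathrm{Wh}}_{\gamma\circ\iota}$, and hence descends to an $E$-linear $G_v$-equivariant map $\mathcal{T}^{\mathrm{Wh}}_{E}$ between the algebraically induced $E$-modules. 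One has an identity $\mathcal{T}^{\mathrm{Wh}}(s,\sigma)=c(s,\sigma)\,T_{\mathrm{st}}(s,\sigma)$ in which the local-coefficient factor $c(s,\sigma)$ is an explicit ratio of local $L$- and $\varepsilon$-factors attached to the pieces $r_{j,v}$ of the adjoint action; comparing with the Langlands--Shahidi normalization gives
\[
T_{\mathrm{norm}}(s,\sigma)=\frac{c(s,\sigma)}{r(s,\sigma,w_0)}\;\mathcal{T}^{\mathrm{Wh}}(s,\sigma),
\]
so it remains only to show that the scalar $c(k,\sigma)/r(k,\sigma,w_0)$ lies in $E^{\times}$ and is carried to the corresponding complex number by every $\iota\colon E\to\mathbb{C}$.

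This is exactly where the critical and integral properties of $P$ are used. By the Proposition proved just above, $P_{\alpha}$ and $P_{\beta}$ are both critical and integral for $G$, so after the twist by $\chi_{\vartheta}^{\epsilon_{\vartheta}/2}$ each argument $jk$ becomes an integer $jk+h_{j}$ (with $h_{1}=\tfrac12$, $h_{2}\in\mathbb{Z}$, and additionally $h_{3}=\tfrac12$ for $P_{\alpha}$), which removes the parity obstruction of \cite[Hypothesis~4.2.1]{Raghurampadic}. At those integral arguments the local $L$-factors are inverses of polynomials in $q^{-(jk+h_{j})}$ whose coefficients are symmetric functions of the Satake/Langlands parameters of $\sigma_v$ and therefore lie in $E$, while the $\varepsilon$-factors --- evaluated at integers and for $\psi_v$ of the standard $E$-rational type --- are, modulo that same rational data, monomials in $q^{-(jk+h_{j})}$, hence also $E$-rational and $\operatorname{Gal}$-equivariant. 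It follows that $c(k,\sigma)/r(k,\sigma,w_0)\in E^{\times}$, and I would then \emph{define} $T_{\mathrm{norm},E}:=\bigl(c(k,\sigma)/r(k,\sigma,w_0)\bigr)\,\mathcal{T}^{\mathrm{Wh}}_{E}$ and $T_{\mathrm{st},E}:=r(k,\sigma,w_0)\,T_{\mathrm{norm},E}$. These are $E$-linear $G_v$-equivariant maps of the asserted shape, and for every $\iota$ the base change $T_{\mathrm{norm},E}\otimes_{E,\iota}\mathbb{C}$ equals $\bigl(c(k,\sigma)/r(k,\sigma,w_0)\bigr)^{\iota}\,\mathcal{T}^{\mathrm{Wh}}_{\iota}=T_{\mathrm{norm},\iota}$, which is the assertion.

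The main obstacle I anticipate is the control of the $\varepsilon$-factors in the comparison of normalizations: generically these are transcendental, and it is only because $P$ is simultaneously critical and integral that the relevant arguments fall on the integers where the $\varepsilon$-factors (and the $L$-factors) collapse to $E$-rational, $\operatorname{Gal}$-equivariant quantities --- so the weight of the argument rests on the integrality/parity verification already carried out in the preceding Proposition, together with the bookkeeping needed to transport the $\operatorname{Aut}(\mathbb{C})$-equivariance of the Whittaker-normalized operator through the algebraic induction in the $G_2$ case. Everything else (the existence and characterisation of $\mathcal{T}^{\mathrm{Wh}}$, finiteness of the $L$-values, and the $E$-structure on the induced modules) is either standard or quoted from \cite{Raghurampadic}.
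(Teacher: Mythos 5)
Your proposal is correct and takes essentially the same route as the paper: both verify the hypotheses of \cite{Raghurampadic} (half-integral unitarity, essential temperedness, lying to the right of the unitary axis, and the criticality/integrality of $P_{\alpha}$ and $P_{\beta}$ that removes the parity obstruction of Hypothesis 4.2.1), the paper additionally citing \cite[Proposition 3.1]{KSannals} for holomorphy and non-vanishing, and then deduce the existence of the $E$-linear operator with $T_{norm,E}\otimes_{E,\iota}\mathbb{C}=T_{norm,\iota}$ from the general Theorems 3.3.7 and 4.3.1 of \textit{loc.\ cit.} The only difference is that the paper invokes those theorems as a black box, whereas you re-sketch their internal mechanism (Whittaker normalization, comparison of the local coefficient with the Langlands--Shahidi normalizing factor, and $E$-rationality of the $L$- and $\varepsilon$-factors at the integral points), which is an unpacking of the citation rather than a genuinely different argument.
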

\begin{proof}
First, the normalized intertwining operator for the unitary representations $\sigma_v \otimes |\cdot|^{w/2}$ is holomorphic and non-vanishing for $\operatorname{Re}(s) > 0$, as established in \cite[Proposition 3.1]{KSannals}. In our case, this implies that $\sigma_v$ lies to the right of the unitary axis. Having verified all the hypotheses in \cite{Raghurampadic}, we can therefore invoke the proof for the general case provided in \textit{loc. cit.}, Theorems 3.3.7 and 4.3.1.
\end{proof}

\section{Intertwining Operators at archimedean Places}\label{sec:arch}
In Section \ref{sec:Nonint}, we studied the arithmetic of the intertwining operator at non-archimedean places. In this section, we will study the arithmetic of the intertwining operators and their contribution to the ratio of the local factors at archimedean places.
\subsection{Induced representations at the point of evaluation}
Let $\mu \in X^{*}_{00}(Res_{F/\mathbb{Q}}(T_2)\times \mathbb{C})$ be a strongly pure weight with purity weight $\mathbf{w}$. Fix an archimedean place $v\in S$, let $\sigma_v$ be an irreducible essentially tempered representation of $M_{\beta} \simeq GL_2(\mathbb{C})$ with highest weight $\mu^{v} = ((a,b),(a^*,b^*))$. From the calculation of the cuspidal parameter in Section \ref{sec:critical}, we obtain:
\begin{align*}
    \tilde{r}_{1}(\sigma_v)&\simeq (\tilde{\mathbb{J}}_{Ad^3(\mu^v)} \otimes |\det |^{\frac{\mathbf{w}}{2}})\otimes |\det |^{\frac{-\mathbf{w}}{2}} \simeq \tilde{r}_1(\sigma_v)^{t} \otimes |\det|^{-a_1(\mu^{v})},\\
    \tilde{r}_{2}(\sigma_v) &\simeq (\tilde{\mathbb{J}}_{\det(\mu^v)} \otimes |\det |^{\mathbf{w}})\otimes |\det|^{-\mathbf{w}} \simeq \tilde{r}_2(\sigma_v)^{t} \otimes |\det|^{-a_2(\mu^{v})},
\end{align*}
where $\tilde{r}_j(\sigma_v)^{u}$ is a unitary tempered irreducible representation of $GL_{d_j}$, for $j\in \{1,2\}$, $d_1=4$ and $d_2=1$. Therefore, the weight $\mu^{v}$  lies to the right of the unitary axis with respect to the ambient group $G$, if
\[
\begin{cases}
-\frac{5}{2}-a_{1}(\mu^{v}) &\geq 0\\
-5-a_{2}(\mu^{v}) &\geq 0
\end{cases} \iff \mathbf{w} \leq -5.
\]
Moreover, we have a decomposition $\sigma_{v} = \sigma_{v}^{u}\otimes |.|^{-\frac{\mathbf{w}}{2}}$, where $\sigma_{v}^{u}$ is a unitary cuspidal irreducible representation. Let $w_{0}:=w_{\gamma_s} =: w_{P_{\beta}}$, the standard intertwining operator is:
\[
    T_{st}: I^{G}_{P_{\beta}}(s,\sigma_v)\to I^{G}_{P_{\beta}}(-s,w_{0}(\sigma)_v).
\]
where, $w_{0}(\sigma_v) = \sigma_{v}\otimes \omega_{\sigma_v}^{-1} = \tilde{\sigma}_{v}$ as shown in \cite{SH89}. Therefore, the highest weight of $\tilde{\sigma}_{v}$ will be $w_0(\mu^v ) = \mu^v -\det(\mu^v )= ((-b,-a),(-b^*,-a^*))$. Evaluating at point of evaluation $k = -\frac{5}{2}$,
\begin{align*}
    I_{P_{\beta}}^{G}(-\frac{5}{2},\sigma_{v}) &= ^{a}\!\operatorname{Ind}_{P_{\beta}}^{G}(\sigma_{v}),\\
    I_{P_{\beta}}^{G}(\frac{5}{2},\tilde{\sigma}_{v}) &= ^{a}\!\operatorname{Ind}_{P_{\beta}}^{G}(\tilde{\sigma}_{v}(5)).
\end{align*}
where $\tilde{\sigma}(5)$ is the Tate twist. Moreover, we have :
\begin{align*}
        T_{st}|_{s=-\frac{5}{2}}& : ^{a}\!\operatorname{Ind}_{P_{\beta}}^{G}(\sigma_{v})\to ^{a}\!\operatorname{Ind}_{P_{\beta}}^{G}(\tilde{\sigma}_{v}(5)),\\
        T^{\vee}_{st}|_{s=\frac{5}{2}}& : ^{a}\!\operatorname{Ind}_{P_{\beta}}^{G}(\tilde{\sigma}_{v}(5))\to ^{a}\!\operatorname{Ind}_{P_{\beta}}^{G}(\sigma_{v}).
\end{align*}

Following Shahidi \cite{ShBook}, let $\psi_{F_v}$ be a nontrivial additive character of $F_v$. We can form a generic character $\psi_v:U\to \mathbb{C}$, then $\sigma_v$ is a $\psi_v$-generic representation of $M_\beta$. Let $\lambda_{\psi_v}(s,\sigma_v)$ be the standard Whittaker functional on $I_{\beta}(s,\sigma_v)$, which is well defined since the cuspidal representations of $GL_2$ are globally and hence locally generic everywhere. It is well known that $\lambda_{\psi}(s,\sigma_{v})$ extends to an entire function that is non-zero everywhere. By multiplicity one for the Whittaker model there is a complex number, called local coefficient denoted by $C_{\psi_v}(s,\sigma_v,w_{0})$,  which satisfies the functional equation:
\[
    C_{\psi_v}(s,\sigma_v,w_{0}) (\lambda_{\psi}(-s,w_{0}(\sigma_{v}))\circ T_{st}(s,\sigma_v))=\lambda_{\psi}(s,\sigma_{v}).
\]
The local coefficient in this case, by \cite{SH89}, is:
\[
    C_{\psi_v}(s,\sigma_v,w_{0}) \sim \dfrac{L(1-s,\tilde{\sigma}_v,Ad^3) L(1-2s,\omega_{\tilde{\sigma}_v})}{L(s,\sigma_v,Ad^3) L(2s,\omega_{\sigma_v})},
\] 
Having the main results in mind, assume $k_{\beta}$ and $k_{\beta}+1$ are critical. Now, we have:
\begin{prop}\label{prop:indirrbeta}
    With the setting introduced in this section, the intertwining operator $T_{st}|_{s=-\frac{5}{2}}$ is an isomorphism between the two irreducible induced representations $I_{\beta}(-\frac{5}{2},\sigma_v)$ and $I_{\beta}(\frac{5}{2},\tilde{\sigma}_{v})$.
\end{prop}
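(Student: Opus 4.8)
The plan is to prove irreducibility of $I_{\beta}(-\frac{5}{2},\sigma_v)$ and $I_{\beta}(\frac{5}{2},\tilde\sigma_v)$ first, and then deduce that the intertwining operator is an isomorphism from the fact that it is nonzero. Since $\sigma_v$ is an irreducible essentially tempered (generic) representation of $M_{\beta}\simeq GL_2(\mathbb{C})$ with explicit Langlands parameter computed in Section \ref{sec:critical}, the induced representation $I_{\beta}(s,\sigma_v)$ has a completely explicit $L$-parameter as well: it is the parabolic induction to $G(\mathbb{C})$ of the character/parameter attached to $\sigma_v\otimes|\ |^{s}$ composed with the embedding ${}^{L}M_{\beta}\hookrightarrow{}^{L}G$. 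So the first step is to write down this parameter at $s=-\frac{5}{2}$ explicitly in terms of the cuspidal parameters $\alpha^{v},\beta^{v}$ of $\mu$, and then invoke the standard irreducibility criterion for induced representations of a complex reductive group (e.g. via the Langlands classification for $G(\mathbb{C})$, or directly via Bruhat's / Zhelobenko's irreducibility criterion, or via the analysis of the poles of the standard intertwining operators $T_{st}$ and $T_{st}^{\vee}$): the induced module is irreducible if and only if no ``Langlands quotient collapse'' occurs, which is governed by the $L$-factors appearing in the rank-one constituents of the $c$-function.

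The cleanest route uses the local coefficient: by Shahidi's formula recalled just above the statement,
\[
C_{\psi_v}(s,\sigma_v,w_0)\sim \frac{L(1-s,\tilde\sigma_v,Ad^3)\,L(1-2s,\omega_{\tilde\sigma_v})}{L(s,\sigma_v,Ad^3)\,L(2s,\omega_{\sigma_v})}.
\]
The hypothesis that $k_\beta=-\frac{5}{2}$ and $k_\beta+1=-\frac{3}{2}$ are critical points of the product $L(s,Ad^3(\sigma))L(2s,\omega_\sigma)$ translates, via the archimedean $\Gamma$-factor computations of Section \ref{sec:criticalb}, into the statement that all four $L$-factors $L(-\frac{5}{2},\sigma_v,Ad^3)$, $L(\frac{7}{2},\tilde\sigma_v,Ad^3)$, $L(-5,\omega_{\sigma_v})$, $L(6,\omega_{\tilde\sigma_v})$ are finite and nonzero (criticality on both sides of the functional equation). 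Plugging this into the standard criterion — $I_\beta(s,\sigma_v)$ and $I_\beta(-s,w_0\sigma_v)$ are both irreducible and $T_{st}$ is an isomorphism precisely when the relevant $L$-functions are holomorphic and nonzero at the points governing the rank-one operators — gives irreducibility of both $I_\beta(-\frac52,\sigma_v)$ and $I_\beta(\frac52,\tilde\sigma_v)$, together with the fact that $T_{st}|_{s=-5/2}$ and $T_{st}^{\vee}|_{s=5/2}$ are both nonzero intertwining maps.

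Finally, once both modules are irreducible, any nonzero intertwining operator between them is automatically an isomorphism by Schur's lemma (a nonzero $G(\mathbb{C})$-map has image a nonzero submodule, hence everything, and kernel a proper submodule, hence zero). So it remains only to check that $T_{st}|_{s=-5/2}\neq 0$; this follows from the functional equation for the Whittaker functional displayed above: since $\lambda_{\psi}(s,\sigma_v)$ is entire and nowhere vanishing and the local coefficient $C_{\psi_v}(-\frac52,\sigma_v,w_0)$ is finite and nonzero (again by criticality on both sides), the identity $C_{\psi_v}(s,\sigma_v,w_0)(\lambda_{\psi}(-s,w_0\sigma_v)\circ T_{st})=\lambda_{\psi}(s,\sigma_v)$ forces $T_{st}|_{s=-5/2}$ to be nonzero. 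Alternatively one can cite \cite[Section 4.1]{RagTI} for the $GL_2$ rank-one reduction together with the reduction to rank-one operators that will be carried out later in this section.

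The step I expect to be the main obstacle is the explicit irreducibility check — i.e., verifying that the $L$-factor data coming from criticality is exactly what is needed to rule out all reducibility of the induced module at $s=\pm\frac52$. This requires matching Shahidi's normalization of the rank-one intertwining operators (and the precise list of constituents $r_1=Ad^3,\ r_2=\wedge^2$) against the holomorphy/non-vanishing of the archimedean $\Gamma$-factors computed in Section \ref{sec:criticalb}; bookkeeping of half-integer shifts and the Tate twist $\tilde\sigma_v(5)$ is where errors are most likely to creep in. Everything else is a formal consequence of Schur's lemma and the Whittaker functional being entire and non-vanishing.
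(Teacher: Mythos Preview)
Your proposal is correct and follows essentially the same route as the paper: irreducibility via the $L$-factor criterion coming from the local coefficient, then nonvanishing (and finiteness) of $T_{st}$ at $s=-\tfrac{5}{2}$ from the Whittaker functional equation together with criticality, and finally Schur's lemma. The paper makes the irreducibility step concrete by citing Casselman--Shahidi's criterion \cite[Proposition 5.3]{CassSha} rather than appealing to a general Bruhat/Zhelobenko-type statement, and it treats the two failure modes ($T_{st}$ vanishing vs.\ $T_{st}$ having a pole) separately, each being ruled out by a different pair of $L$-factors being pole-free at the critical point; your sketch bundles these together but the content is the same.
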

\begin{proof}
First, we prove that $I_{\beta}\left(-\frac{5}{2}, \sigma_v\right)$ and $I_{\beta}\left(\frac{5}{2}, \tilde{\sigma}_v\right)$ are irreducible. If $I\left(-\frac{5}{2}, \sigma_v\right)$ were reducible, then by Casselman-Shahidi's criterion \cite[Proposition 5.3]{CassSha} at $s = -\frac{5}{2}$, we would have
\[
    L\left(1 - s, \tilde{\sigma}_v, \operatorname{Ad}^3\right)^{-1} L\left(1 - 2s, \omega_{\tilde{\sigma}_v}\right)^{-1} = 0,
\]
which is a contradiction. The L-functions above do not have poles or zeros at $s = -\frac{5}{2}$, as we assume that $s = -\frac{5}{2}$ is critical for these L-functions.

Next, we show that the intertwining operator $T_{st}$ at $s = -\frac{5}{2}$, between irreducible induced representations, is finite and non-zero. If $T_{st}\big|_{s = -\frac{5}{2}}$ were zero, then the following product would have a pole:
\[
    L\left(1 - \left(-\frac{5}{2}\right), \tilde{\sigma}_v, \operatorname{Ad}^3\right) L\left(1 - 2\left(-\frac{5}{2}\right), \omega_{\tilde{\sigma}_v}\right),
\]
which contradicts the assumption that $-\frac{5}{2}$ is a critical point of these L-functions as $\lambda_{\psi}\left(-\frac{5}{2}, \sigma_v\right)$ is non-zero. If $T_{st}\big|_{s = -\frac{5}{2}}$ were a pole, then the following product would also have a pole:
\[
    L\left(-\frac{5}{2}, \sigma_v, \operatorname{Ad}^3\right) L\left(2\left(-\frac{5}{2}\right), \omega_{\sigma_v}\right),
\]
which again contradicts the assumption that $-\frac{5}{2}$ is a critical point of these L-functions. Here, $\lambda_{\psi}\left(-\frac{5}{2}, \sigma_v\right)$ must be non-zero.
\end{proof}

The case $P_{\alpha}$ can be easily proved using the same method. We will simply state the corresponding theorem:
\begin{prop}\label{prop:indirralpha}
    Let have assumptions as the previous assumption for the case $P_{\alpha}$. Then, the intertwining operator $T_{st}|_{s=-\frac{3}{2}}$ is an isomorphism between the two irreducible induced representations $I_{\alpha}(-\frac{3}{2},\sigma_v) = ^{a}\!\operatorname{Ind}_{P_{\alpha}}^{G}(\sigma_{v})$ and $I_{P_{\alpha}}^{G}(\frac{3}{2},\tilde{\sigma}_{v}) = ^{a}\!\operatorname{Ind}_{P_{\alpha}}^{G}(\tilde{\sigma}_{v}(3))$.
\end{prop}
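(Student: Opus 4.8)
The plan is to mirror the argument of Proposition \ref{prop:indirrbeta} verbatim, substituting the data for $P_\alpha$. The three ingredients are: (i) the Casselman--Shahidi reducibility criterion \cite[Proposition 5.3]{CassSha} applied at $s=-\tfrac32$; (ii) the explicit form of the local coefficient $C_{\psi_v}(s,\sigma_v,w_{\gamma_l})$ for $P_\alpha$ from \cite{SH89}, which by the computation in Section \ref{ch:LS} is
\[
C_{\psi_v}(s,\sigma_v,w_{\gamma_l}) \sim \frac{L(1-s,\tilde\sigma_v)\,L(1-2s,\omega_{\tilde\sigma_v})\,L(1-3s,\tilde\sigma_v\otimes\omega_{\tilde\sigma_v})}{L(s,\sigma_v)\,L(2s,\omega_{\sigma_v})\,L(3s,\sigma_v\otimes\omega_{\sigma_v})};
\]
and (iii) the hypothesis that $k_\alpha=-\tfrac32$ and $k_\alpha+1=-\tfrac12$ are critical points for the product $L(s,\sigma)L(2s,\omega_\sigma)L(3s,\sigma\otimes\omega_\sigma)$, together with the non-vanishing of the standard Whittaker functional $\lambda_{\psi_v}(s,\sigma_v)$ at every point, which holds since cuspidal representations of $GL_2$ are everywhere generic.

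First I would record, as in the $P_\beta$ case, that evaluation at $k_\alpha=-\tfrac32$ identifies $I_\alpha(-\tfrac32,\sigma_v)={}^{a}\!\operatorname{Ind}_{P_\alpha}^{G}(\sigma_v)$ and $I_\alpha(\tfrac32,\tilde\sigma_v)={}^{a}\!\operatorname{Ind}_{P_\alpha}^{G}(\tilde\sigma_v(3))$, using $w_0(\sigma_v)=\tilde\sigma_v$ (from \cite{SH89}) and the Tate-twist normalization of Section \ref{ch:LS}. Next I would prove irreducibility of both induced modules: were $I_\alpha(-\tfrac32,\sigma_v)$ reducible, the Casselman--Shahidi criterion would force one of the three inverse $L$-factors $L(1-s,\tilde\sigma_v)^{-1}$, $L(1-2s,\omega_{\tilde\sigma_v})^{-1}$, $L(1-3s,\tilde\sigma_v\otimes\omega_{\tilde\sigma_v})^{-1}$ to vanish at $s=-\tfrac32$, i.e. one of these $L$-factors would have a pole at a point where criticality forbids poles --- a contradiction; the same argument at $s=\tfrac32$ (equivalently, for the dual data) handles $I_\alpha(\tfrac32,\tilde\sigma_v)$. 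Then I would show $T_{st}|_{s=-3/2}$ is finite and nonzero via the functional equation $C_{\psi_v}(s,\sigma_v,w_{\gamma_l})\,(\lambda_{\psi_v}(-s,w_0\sigma_v)\circ T_{st}(s,\sigma_v))=\lambda_{\psi_v}(s,\sigma_v)$: vanishing of $T_{st}$ at $s=-\tfrac32$ would force a pole of the numerator product $L(1+\tfrac32,\tilde\sigma_v)L(1+3,\omega_{\tilde\sigma_v})L(1+\tfrac92,\tilde\sigma_v\otimes\omega_{\tilde\sigma_v})$, while a pole of $T_{st}$ would force a pole of the denominator product $L(-\tfrac32,\sigma_v)L(-3,\omega_{\sigma_v})L(-\tfrac92,\sigma_v\otimes\omega_{\sigma_v})$, both contradicting criticality (using that $\lambda_{\psi_v}(-\tfrac32,\sigma_v)\neq0$). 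A nonzero finite $G$-equivariant map between irreducibles is an isomorphism, completing the proof.

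I do not expect a genuine obstacle here, since the paper explicitly states ``the case $P_\alpha$ can be easily proved using the same method''; the only point requiring a little care is bookkeeping of which $L$-factors appear on which side of the functional equation for $P_\alpha$ (three factors rather than two, with shifts by $s$, $2s$, $3s$), and confirming that the criticality hypothesis of the combinatorial lemma (Lemma \ref{lem:combalpha}) indeed guarantees all six relevant $L$-factors --- the three in the numerator evaluated at $1-jk_\alpha$ and the three in the denominator at $jk_\alpha$ for $j\in\{1,2,3\}$ --- are simultaneously pole-free and zero-free at the point of evaluation. This is exactly the content of the equivalences $(1)\Leftrightarrow(2)\Leftrightarrow(3)$ there, so it may be worth inserting one sentence citing Lemma \ref{lem:combalpha} to justify the non-vanishing claims, parallel to the remark ``we assume that $s=-\tfrac52$ is critical for these $L$-functions'' in the proof of Proposition \ref{prop:indirrbeta}.
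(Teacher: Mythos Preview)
Your proposal is correct and follows exactly the method the paper intends: the paper's own ``proof'' is the single sentence ``The case $P_\alpha$ can be easily proved using the same method,'' and you have faithfully carried out that method, replacing the two $L$-factors of the $P_\beta$ local coefficient by the three $L$-factors $L(s,\sigma_v)$, $L(2s,\omega_{\sigma_v})$, $L(3s,\sigma_v\otimes\omega_{\sigma_v})$ appropriate to $P_\alpha$, and invoking criticality at $k_\alpha=-\tfrac32$ in place of $k_\beta=-\tfrac52$. Your closing remark about citing Lemma \ref{lem:combalpha} is a reasonable expository improvement but not a mathematical gap.
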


\subsection{Arithmetic of the archimedean intertwining operators} \label{sec:aaintop}
Fix an archimedean place $v$ of $F$. Since $F$ is totally imaginary,  we have $F_{v} \simeq \mathbb{C}$. Following Section \ref{sec:Preliminaries}, we can define $X^{*}(T) = \mathbb{Z}(2\alpha + \beta) \oplus \mathbb{Z}(\alpha + \beta)$ and identify $T(\mathbb{C})$ with $\mathbb{C}^{*}\times \mathbb{C}^{*}$ via the representative:
\begin{align*}
    t_0 : T &\to \mathbb{C}^{*}\times \mathbb{C}^{*},\\
    t &\mapsto ((2\alpha+\beta)(t), (\alpha+\beta)(t)). \numberthis
    \label{T}
\end{align*}

Furthermore, we introduced the reparametrizations $t_{\alpha}$ and $t_{\beta}$ of $T(\mathbb{C})$ within the Levi factors $M_{\alpha}$ and $M_{\beta}$, respectively. Recall that $t_{\alpha} = t_0$ as noted in Remark \ref{choice of character}. The relationship between these parametrizations can be described as follows:
 \begin{align*}
     \mathbb{C}^{*}\otimes \mathbb{C}^{*} \xrightarrow{t_{0}^{-1}} T(\mathbb{C}) \rightarrow &M_{\beta} \xrightarrow{t_{\beta}^{-1}} GL_2(\mathbb{C}) \hookleftarrow  \mathbb{C}^{*}\otimes \mathbb{C}^{*},\\
     t_{\beta}^{-1}\circ t_{0}^{-1}(a,b) &= (b, ab^{-1}),\\
     t_{0}\circ t_{\beta}(a,b) &= (ab, a).
 \end{align*} 
 Let $\chi$ and $\psi$ be characters of $T(\mathbb{C})$. There exist characters $\chi_1$, $\chi_2$, $\psi_1$, and $\psi_2$ of $\mathbb{C}^{*}$ such that:
 \begin{align*}
     t_0(\chi) &= \chi_1 \otimes \chi_2,  &t_0(\psi) &= \psi_2 \otimes \psi_1\psi_2^{-1},\\
     t_{\beta}^{-1}(\chi) &= \chi_1\chi_2 \otimes \chi_1, & t_{\beta}^{-1}(\psi) &= \psi_1\otimes \psi_2. \numberthis \label{char}
 \end{align*}
We will represent the characters of $\mathbb{C}^{*}\times \mathbb{C}^{*}$ originating from the reparametrization $t_{\beta}$ as $\psi$, and the one originating from $t_0 = t_{\alpha}$ as $\chi$. 

Let $\mu = ((a,b), (a^{*},b^{*}))$ be a strongly pure weight of $M_{\beta} \simeq GL_{2}(\mathbb{C})$. 
Following Section \ref{sec:cohrep},
\begin{equation}
    \sigma_v = \mathbb{J}_{\mu} = \operatorname{Ind}_{B_2(\mathbb{C})}^{GL_2(\mathbb{C})}(z^{\alpha_1}\bar{z}^{\beta_1}\otimes z^{\alpha_2}\bar{z}^{\beta_2}),
\end{equation}
where $\alpha_i$ and $\beta_i$ are the cuspidal parameters of $\mu$ where:
\begin{align*}
    \alpha_1 &= -b+\frac{1}{2}, &  \alpha_2 = -a-\frac{1}{2},\\
    \beta_1 &= -a^{*}-\frac{1}{2}, &  \beta_2 = -b^{*}+\frac{1}{2}.
\end{align*}

Let $t_{\beta}(\psi) = \psi_1 \otimes \psi_2$ denote a character of $T \subset M_{\beta}$:
\[
    \psi_i= z^{\alpha_i}\bar{z}^{\beta_i}, \qquad i = 1,2.
\]
By (\ref{char}) and the transitivity of the normalized induction, we have:
\begin{align*}
    \operatorname{Ind}_{P_{\beta}(\mathbb{C})}^{G_2(\mathbb{C})}(\sigma_v(-\frac{5}{2}))&= \operatorname{Ind}_{P_{\beta}(\mathbb{C})}^{G_2(\mathbb{C})}(\mathbb{J}_{\mu}(-\frac{5}{2})) =  \operatorname{Ind}_{B(\mathbb{C})}^{G_2(\mathbb{C})} (\psi_2(-\frac{5}{2})\otimes \psi_1\psi_2^{-1}),\\
    \operatorname{Ind}_{P_{\beta}(\mathbb{C})}^{G_2(\mathbb{C})}(\tilde{\sigma}_v(\frac{5}{2}))&= \operatorname{Ind}_{P_{\beta}(\mathbb{C})}^{G_2(\mathbb{C})}(\mathbb{J}_{w_{P}(\mu)}(\frac{5}{2})) = \operatorname{Ind}_{B(\mathbb{C})}^{G_2(\mathbb{C})} (\psi_1^{-1}(\frac{5}{2})\otimes \psi_1\psi_2^{-1}).
\end{align*}
Therefore, $T_{st}$ is also an intertwining operator between principal series representations, denoted by $A_{v}(w_{P})$.
\begin{equation}
\begin{tikzcd}[column sep=4em]
%% first row
\operatorname{Ind}_{P_{\beta}(\mathbb{C})}^{G_2(\mathbb{C})}(\sigma_v(-\frac{5}{2}))\arrow[rrr,"T_{st}"]\ar[equal]{dd}  &&& \operatorname{Ind}_{P_{\beta}(\mathbb{C})}^{G_2(\mathbb{C})}(\tilde{\sigma}_v(\frac{5}{2}))\ar[equal]{dd} \\
&&&\\
\operatorname{Ind}_{B(\mathbb{C})}^{G_2(\mathbb{C})} (\psi_2(-\frac{5}{2})\otimes \psi_1\psi_2^{-1}) \arrow[rrr,"A_{v}(w_P)"] &&& \operatorname{Ind}_{B(\mathbb{C})}^{G_2(\mathbb{C})} (\psi_1^{-1}(\frac{5}{2})\otimes \psi_1\psi_2^{-1})
\end{tikzcd}
\end{equation}

To simplify the notations, let $\chi = \chi_1 \otimes \chi_2$ be a character of $\mathbb{C}^{*}\times \mathbb{C}^{*} \simeq T$ we will use the following convention:
\begin{align*}
    I(\chi_1 \otimes \chi_2)  &:= \operatorname{Ind}_{B(\mathbb{C})}^{G_2(\mathbb{C})} (\chi_1\otimes \chi_2)\\
    &= \operatorname{Ind}_{P_{\beta}(\mathbb{C})}^{G_2(\mathbb{C})}\operatorname{Ind}_{B_2(\mathbb{C})}^{GL_2(\mathbb{C})}(\chi_1\chi_2\otimes \chi_1)\\
    &= \operatorname{Ind}_{P_{\alpha}(\mathbb{C})}^{G_2(\mathbb{C})}\operatorname{Ind}_{B_2(\mathbb{C})}^{GL_2(\mathbb{C})}(\chi_1\otimes \chi_2).
\end{align*}

Moreover, the action of the simple reflections in the Weyl group on the characters of the group are as follows:
\begin{align*}
     w_{\alpha}(t_{0}(\chi)) &= w_{\alpha}(\chi_1\otimes \chi_2) = \chi_2\otimes \chi_1, & w_{\beta}(t_{0}(\chi)) &= w_{\beta}(\chi_1\otimes \chi_2) = \chi_1\chi_2\otimes \chi_2^{-1},\\
     w_{\alpha}(t_{\beta}(\psi)) &= w_{\alpha}(\psi_1\otimes \psi_2) = \psi_1\otimes \psi_1\psi_2^{-1}, & w_{\beta}(t_{\beta}(\psi)) &= w_{\beta}(\psi_1\otimes \psi_2) = \psi_2\otimes \psi_1.
\end{align*}

Consider the minimal factorization $w_{P}=w_{\alpha}w_{\beta}w_{\alpha}w_{\beta}w_{\alpha}$ into simple reflections. We have the following ordered set characterizes the roots in $\mathfrak{u}_{\beta}$:
\begin{align*}
    \{\gamma>0, w_P \gamma<0\} &= \{\alpha,\;w_{\alpha}(\beta),\;w_{\alpha\beta}(\alpha),\;w_{\alpha\beta\alpha}(\beta),\;w_{\alpha\beta\alpha\beta}(\alpha)\}\\
    &= \{\alpha, \;3\alpha+\beta, \;2\alpha+\beta, \;3\alpha+2\beta, \;\alpha+\beta\}.
\end{align*}
For each simple root $\gamma$ one can define the homomorphism $\phi_{\gamma}:SL_2(\mathbb{C}) \to G_2(\mathbb{C})$ such that:
\[
\phi_{\gamma}\begin{pmatrix}
1 & u\\
0 & 1
\end{pmatrix} = e_{\gamma}(u),\quad \phi_{\gamma}\begin{pmatrix}
u & 0\\
0 & u^{-1}
\end{pmatrix} = h_{\gamma}(u), \quad \phi_{\gamma}\begin{pmatrix}
0 & 1\\
-1 & 0
\end{pmatrix} = w_{\gamma}. 
\]

Let $\bar{U}(w_P) = U \cap w_P \bar{U} w_P^{-1}$, where $\bar{U}$ is the opposite unipotent radical. Elements of $\bar{U}(w)$ can be expressed as $\prod_{\gamma>0, w_P\gamma<0} e_{-\gamma}(u_{\gamma})$ for $u_{\gamma} \in \mathbb{C}$. Using the ordered set above we have:
\[
    \bar{U}(w_P) = U_{-(\alpha+\beta)} . U_{-(3\alpha+2\beta)} . U_{-(2\alpha+\beta)} . U_{-(3\alpha+\beta)} . U_{-(\alpha)}
\]

We can  decompose the intertwining operator $A_{v}(w_P)$ to rank one operators \cite{ShBook}:
\begin{multline*}
    A_{v}(w_P,\chi) f(g) = \int_{U(w_P)} f(w_P^{-1}ng)dn\\
                         = A_{v}(w_{\alpha},w_{\beta\alpha\beta\alpha}\chi)\circ A_{v}(w_{\beta},w_{\alpha\beta\alpha}\chi) \circ A_{v}(w_{\alpha},w_{\beta\alpha}\chi)\circ A_{v}(w_{\beta},w_{\alpha}\chi) \circ A_{v}(w_{\alpha},\chi)f(g)\\
                         = A_{v}(w_{\alpha},\chi_1)\circ A_{v}(w_{\beta},\chi_2) \circ A_{v}(w_{\alpha},\chi_3)\circ A_{v}(w_{\beta},\chi_4) \circ A_{v}(w_{\alpha},\chi_5)f(g).    
\end{multline*}

The measure $du$ at each step will be a Lebesgue measure, and $dn$ is the product of these five Lebesgue measures. Each operator in the decomposition above is an intertwining operator on $GL_2(\mathbb{C})$ extended to an intertwining operator of $G_2(\mathbb{C})$. For short root $\gamma$, we can extend the map $\phi_{\gamma}$ defined as above to $\phi_{\gamma}: GL_2(\mathbb{C}) \simeq M_{\alpha}(\mathbb{C}) \hookrightarrow G_2(\mathbb{C})$. On the other hand for long root $\gamma$, the map $\phi_{\gamma}$ can be extended to $\phi_{\gamma}: GL_2(\mathbb{C}) \simeq M_{\beta}(\mathbb{C}) \hookrightarrow G_2(\mathbb{C})$. Considering the simple reflection we have to study the following cases separately:
\begin{align}
    A_{v}(w_{\alpha},\chi) &= \operatorname{Ind}_{P_{\alpha}(\mathbb{C})}^{G_2(\mathbb{C})} A_{v}(w_{\alpha},\chi_1 \otimes \chi_2), \label{Aalpha}\\
    A_{v}(w_{\beta},\chi) &= \operatorname{Ind}_{P_{\beta}(\mathbb{C})}^{G_2(\mathbb{C})} A_{v}(w_{\beta},\chi_1\chi_2 \otimes \chi_1).\label{Abeta}\
\end{align}

With abuse of notation, we also use $A_v$ for the intertwining operators for $GL_2$. It is worth noting that \ref{Aalpha} is an intertwining operator for $GL_2(\mathbb{C}) \simeq M_{\alpha}$ while \ref{Abeta} is an intertwining operator for $GL_2(\mathbb{C}) \simeq M_{\beta}$, that's why the character for \ref{Abeta} is changed. Moreover, for simple root $\alpha\in \Delta$ (or $\beta$) we can form the maximal compact subgroup of the Levi as $K_{\alpha} = M_{\alpha}(\mathbb{C}) \cap K_{\infty} \simeq SU(2)$ ( or $K_{\beta} = M_{\beta}(\mathbb{C}) \cap K_{\infty} \simeq SU(2)$). In each case for $\vartheta \in \Delta$, the maximal compact torus of $K_{\infty}$ is isomorphic to that of $K_{\vartheta}$. Furthermore, we have $K_{2} = B(\mathbb{C})\cap K_{\infty}$, and by the Iwasawa decomposition, $G_2(\mathbb{C}) = B(\mathbb{C})K_{\infty}$. For any character $\chi \simeq \chi_1\otimes \chi_2$ of $B(\mathbb{C})$, we can identify the principal series representation $\operatorname{Ind}_{B(\mathbb{C})}^{GL_2(\mathbb{C})}(\chi)$ as a $K_{\infty}$-module with the induced representation $\operatorname{Ind}_{K_{2}}^{K_{\infty}}(\chi)$. Again, for the simple root $\gamma\in \Delta$ the restriction $\operatorname{Ind}_{K_{2}}^{K_{\infty}}(\chi)\big|_{K_{\vartheta}}$ is a representation of the $K_{\vartheta}$-module of the representation $\operatorname{Ind}_{B(\mathbb{C})}^{G_2(\mathbb{C})}(\chi)\big|_{GL_2}$ where it reduces the calculation to the case $GL_2$ intertwining operators.

In the previous section, we showed that $I((\psi_2(-\frac{5}{2})\otimes \psi_1\psi_2^{-1}))$ and $I((\psi_1^{-1}(\frac{5}{2})\otimes \psi_1\psi_2^{-1}))$ are irreducible. By \cite{Speh1980ReducibilityOG}, each standard module appearing in the intermediate states is also irreducible. Following \cite{RagTI}, by evaluating the values of intertwining operators at the lowest $K_\alpha$-types (or $K_{\beta}$-types), we can interpret them as a scaling by a factor.  Let us denote $Ind_{B_2(\mathbb{C})}^{GL_2(\mathbb{C})} (\chi_1\otimes \chi_2)$ by $\chi_1 \times \chi_2$. For the case $w_{\alpha}$:
\begin{equation}
\begin{tikzcd}[column sep=4em]
%% first row
\operatorname{Ind}_{B(\mathbb{C})}^{G_2(\mathbb{C})} (\chi_1\otimes \chi_2)\arrow[rrr,"A_{v}(w_{\alpha})"]  &&& \operatorname{Ind}_{B(\mathbb{C})}^{G_2(\mathbb{C})} (\chi_2\otimes \chi_1) \\
&&&\\
\chi_1\times \chi_2 \arrow[uu,"\operatorname{Ind}_{P_{\alpha}(\mathbb{C})}^{G_2(\mathbb{C})}"] \arrow[rrr,"A_{v}(w_\alpha)"] &&& \chi_2 \times \chi_1\arrow[uu,"\operatorname{Ind}_{P_{\alpha}(\mathbb{C})}^{G_2(\mathbb{C})}"]
\end{tikzcd}
\end{equation}
We can compute this scaling factor up to a non-zero rational, same as \cite[Proposition 4.16.]{RagTI},
\begin{equation}
    \frac{L_{\infty}(0,\chi_1\chi_2^{-1})}{L_{\infty}(1,\chi_1\chi_2^{-1})}.
\end{equation}

For the case $w_{\beta}$, we have:
\begin{equation}    
\begin{tikzcd}[column sep=4em]
%% first row
\operatorname{Ind}_{B(\mathbb{C})}^{G_2(\mathbb{C})} (\chi_1\otimes \chi_2)\arrow[rrr,"A_{v}(w_{\beta})"]  &&& \operatorname{Ind}_{B(\mathbb{C})}^{G_2(\mathbb{C})} (\chi_1\chi_2\otimes \chi_2^{-1}) \\
&&&\\
\chi_1\chi_2\times \chi_1 \arrow[uu,"\operatorname{Ind}_{P_{\beta}(\mathbb{C})}^{G_2(\mathbb{C})}"] \arrow[rrr,"A_{v}(w_\beta)"] &&& \chi_1\times \chi_1\chi_2\arrow[uu,"\operatorname{Ind}_{P_{\beta}(\mathbb{C})}^{G_2(\mathbb{C})}"]
\end{tikzcd}
\end{equation}
Again, by appealing \cite{RagTI}, the scaling factor up to a non-zero rational is equal to:
\begin{equation}
    \frac{L_{\infty}(0,\chi_2)}{L_{\infty}(1,\chi_2)}.
\end{equation}

The following diagram demonstrates these intermediate intertwining operators in each step. The order of the operators follows the decomposition above, and to simplify the notation we dropped the character part in the notation and let $A_{v}(w_{\alpha},\chi_{i}) =: A_v(w_{\alpha})$ for $i=1,3,5$, and $A_{v}(w_{\beta}, \chi_{j}) =: A_v(w_{\beta})$ for $j=2,4$.
\begin{figure}[ht]
    \centering
    \[
\begin{tikzcd}[column sep=4em]
%% first row
I_{\beta}(-\frac{5}{2},\sigma_v) = \operatorname{Ind}_{P_{\beta}(\mathbb{C})}^{G_2(\mathbb{C})}(\sigma_v(-\frac{5}{2}))\ar[equal]{rrr}\arrow[dddddddddd,"T_{st}"]  &&& I(\psi_2(-\frac{5}{2})\otimes \psi_1\psi_2^{-1})\arrow[dd,"A_{v}(w_{\alpha})"]\\
&&&\\
&&& I(\psi_1\psi_2^{-1}\otimes \psi_2(-\frac{5}{2})) \arrow[dd,"A_{v}(w_{\beta})"]\\
&&&\\
&&& I(\psi_1(-\frac{5}{2}) \otimes \psi_2^{-1}(\frac{5}{2})) \arrow[dd,"A_{v}(w_{\alpha})"]\\
&&&\\
&&& I(\psi_2^{-1}(\frac{5}{2}) \otimes \psi_1(-\frac{5}{2})) \arrow[dd,"A_{v}(w_{\beta})"]\\
&&&\\
&&& I(\psi_1\psi_2^{-1} \times \psi_1^{-1}(\frac{5}{2})) \arrow[dd,"A_{v}(w_{\alpha})"]\\
&&&\\
I_{\beta}(\frac{5}{2},\tilde{\sigma}_v) = \operatorname{Ind}_{P_{\beta}(\mathbb{C})}^{G_2(\mathbb{C})}(\tilde{\sigma}_v(\frac{5}{2}))\ar[equal]{rrr} &&& I(\psi_1^{-1}(\frac{5}{2})\otimes \psi_1\psi_2^{-1})
\end{tikzcd}
\]\label{fig:ASIOB}
    \caption{archimedean Standard Intertwining Operator, case $P_{\beta}$.}
\end{figure}%G2 coroot Lattice

By irreducibility results from the previous section, the fact that $T_{st}$ is an isomorphism, we can interpret $T_{st}$ the scaling by a factor which can be computed using diagrams above:
\[
\frac{L_{\infty}(0,\psi_1^{-1}\psi_2^{2}(-\frac{5}{2}))}{L_{\infty}(1,\psi_1^{-1}\psi_2^{2}(-\frac{5}{2}))}\frac{L_{\infty}(0,\psi_2(-\frac{5}{2}))}{L_{\infty}(1,\psi_2(-\frac{5}{2}))}\frac{L_{\infty}(0,\psi_1\psi_2(2(-\frac{5}{2})))}{L_{\infty}(1,\psi_1\psi_2(2(-\frac{5}{2})))}\frac{L_{\infty}(0,\psi_1(-\frac{5}{2}))}{L_{\infty}(1,\psi_1(-\frac{5}{2}))}\frac{L_{\infty}(0,\psi_1^{2}\psi_2^{-1}(-\frac{5}{2}))}{L_{\infty}(1,\psi_1^{2}\psi_2^{-1}(-\frac{5}{2}))}.
\]
We can simplify it as:
\begin{equation}
    \frac{L_{\infty}(0,\operatorname{Ad}^{3}(\psi_1(-\frac{5}{2})\times\psi_2(-\frac{5}{2})))}{L_{\infty}(1,\operatorname{Ad}^{3}(\psi_1(-\frac{5}{2})\times\psi_2(-\frac{5}{2})))}\frac{L_{\infty}(0,\omega_{\psi_1(-\frac{5}{2})\times\psi_2(-\frac{5}{2})})}{L_{\infty}(1,\omega_{\psi_1(-\frac{5}{2})\times\psi_2(-\frac{5}{2})})}.
\end{equation}
or equivalently:
\begin{equation}\label{eq:archLb}
    \frac{L_{\infty}(-\frac{5}{2},\operatorname{Ad}^{3}(\psi_1\times\psi_2))}{L_{\infty}(1-\frac{5}{2},\operatorname{Ad}^{3}(\psi_1\times\psi_2))}\frac{L_{\infty}(2(-\frac{5}{2}),\omega_{\psi_1\times\psi_2})}{L_{\infty}(1+2(-\frac{5}{2}),\omega_{\psi_1\times\psi_2})}.
\end{equation}

We can carry the same argument for the case $P_{\alpha}$. In this case, we need to consider the minimal factorization $\omega_P = \omega_{\beta}\omega_{\alpha}\omega_{\beta}\omega_{\alpha}\omega_{\beta}$. The following diagram demonstrates the intermediate intertwining operators in each step for this case. In the same way as in the case $P_{\beta}$, we can form:
\begin{multline}
  \frac{L_{\infty}(0,\psi_2(-\frac{3}{2}))}{L_{\infty}(1,\psi_2(-\frac{3}{2}))}
\frac{L_{\infty}(0,\psi_1\psi_2^{2}(3(-\frac{3}{2})))}{L_{\infty}(1,\psi_1\psi_2^{2}(3(-\frac{3}{2})))}\\
\frac{L_{\infty}(0,\psi_1\psi_2(2(-\frac{3}{2})))}{L_{\infty}(1,\psi_1\psi_2(2(-\frac{3}{2})))}\frac{L_{\infty}(0,\psi_1^{2}\psi_2(3(-\frac{3}{2})))}{L_{\infty}(1,\psi_1^{2}\psi_2(3(-\frac{3}{2})))}\frac{L_{\infty}(0,\psi_1(-\frac{3}{2}))}{L_{\infty}(1,\psi_2(-\frac{3}{2}))}. 
\end{multline}

Simplifying it will give us our desired ratio of $L$-functions:
\begin{equation}\label{eq:archLa}
\frac{L_{\infty}(-\frac{3}{2}, \psi_1\times\psi_2)}{L_{\infty}(1-\frac{3}{2}, \psi_1\times\psi_2)}\frac{L_{\infty}(-2\frac{3}{2}, \omega_{\psi_1\times\psi_2})}{L_{\infty}(1-2\frac{3}{2},\omega_{ \psi_1\times\psi_2})}
\frac{L_{\infty}(-3\frac{3}{2}, (\psi_1\times\psi_2)\otimes \omega_{\psi_1\times\psi_2})}{L_{\infty}(1-3\frac{3}{2}, (\psi_1\times\psi_2)\otimes \omega_{\psi_1\times\psi_2})}.
\end{equation}

\subsection{Cohomological intertwining operators}
Let $P=P_{\beta}$. Fix $\mu \in X_{00}^{*}(Res_{F/\mathbb{Q}}(T_2) \times E)$ satisfying the conditions of the combinatorial lemma, then there is a balanced Kostant representative $w_{cl}\in W^{P_{\beta}}$ such that $\lambda := w_{cl}^{-1}.\mu$ is an integral dominant weight of $G$. Fix $\iota: E \to \mathbb{C}$ and a place $v \in S_{\infty}$, by the Künneth theorem, we can simplify our analysis of the relative cohomology group by concentrating on the local case at the chosen place $v$. Consequently, we will omit $\iota$ and the subscript $v$ from our notation and follow the notation introduced in the previous section.

Let $\sigma_f\in Coh_{!!}(Res_{F/\mathbb{Q}}(GL_2),\mu)$, by Delorme's lemma, for example \cite[Theorem III.3.3.]{BoWal}, at the bottom degree $q_b := 1 + 5 =6$, we have:
\begin{align}\label{delorme}
 H^{6}(\mathfrak{g}_2(\mathbb{C}),K_{\infty}; ^{a}\!\operatorname{Ind}_{P(\mathbb{C})}^{G_2(\mathbb{C})}(\mathbb{J}_{\mu})\otimes \mathcal{M}_{\lambda}) &\simeq H^{1}(\mathfrak{m}(\mathbb{C}),K_{\infty}^{M}; \mathbb{J}_{\mu}\otimes \mathcal{M}_{\mu}).
\end{align}
 We can once again apply Delorme's lemma for $\mathfrak{m} \simeq \mathfrak{gl}_2$, $K_{\infty}^{M}\simeq SU(2)$, $\mu = ((a,b),(a^{*},b^{*}))$ and $\mathbb{J}_{\mu} = \operatorname{Ind}_{B_2(\mathbb{C})} ^{GL_{2}(\mathbb{C})}(z^{-b+1/2}\bar{z}^{-a^{*}-1/2}\otimes z^{-a-1/2}\bar{z}^{-b^{*}-1/2})$. Building upon the calculations presented in \cite[Section 4.1.]{RagTI} for $GL_2$, we can express  Equation \ref{delorme}, as follows:
\begin{align}
 H^{0}(\mathfrak{gl_1}(\mathbb{C}), SU(1); z^{-b+1}\bar{z}^{-a^{*}} \otimes \mathcal{M}_{b-1,a^{*}})&\otimes H^{0}(\mathfrak{gl_1}(\mathbb{C}), SU(1); z^{-a-1}\bar{z}^{-b^{*}} \otimes \mathcal{M}_{a+1,b^{*}}).
\end{align} 
where $\mathcal{M}_{x,x^{*}}$ is the algebraic representation $z \mapsto z^x \bar{z}^{x^{*}}$ of $GL_1(\mathbb{C})$. 
For simplicity, denote:
\begin{align*}
    H^{0}_{(x,x^{*})} &:= H^{0}(\mathfrak{gl_1}(\mathbb{C}), SU(1); z^{-x}\bar{z}^{-x^{*}} \otimes \mathcal{M}_{x,x^{*}});\\
    \mathcal{J} &:= ^{a}\!\operatorname{Ind}_{P(\mathbb{C})}^{G_2(\mathbb{C})}(\mathbb{J}_{\mu}).
\end{align*}
Since the relative lie algebra cohomology group $H^6(\mathfrak{g}_2(\mathbb{C}),K_{\infty}; \mathcal{J} \otimes \mathcal{M}_{\lambda})$ is one-dimensional, we can fix a basis $[\mathcal{J}]$ so we have:
\[
H^6(\mathfrak{g}_2(\mathbb{C}),K_{\infty}; \mathcal{J} \otimes \mathcal{M}_{\lambda}) =\mathbb{C}[\mathcal{J}].
\]
We can present the basis as an element of the complex,
\[
    Hom_{K_{\infty}}(\wedge^{6}(\mathfrak{g}_2/\mathfrak{k}),\mathcal{J}\otimes M_{\lambda}) \simeq Hom_{K_{\infty}}(\mathbf{1},\wedge^{6}(\mathfrak{g}_2/\mathfrak{k})^{*}\otimes\mathcal{J}\otimes M_{\lambda}).
\]
Let $X_{i}^{*} = X_{i_1}^{*}\wedge \dots \wedge X_{i_{6}}^{*}$ be a basis of $\wedge^{6}(\mathfrak{g_2}/\mathfrak{k})^{*}$, $\{m_{\alpha}\}_{\alpha}$ be the basis of $\mathcal{M}_{\lambda}$ and $\phi_{i,\alpha} \in \mathcal{J}$.
\begin{align} 
    [\mathcal{J}] = \sum_{i,\alpha} X^{*}_i\otimes \phi_{i,\alpha} \otimes m_{\alpha}.
\end{align}
On the other hand, we can fix a basis $[\omega_{(x,x^{*})}]_0$ for $H^{0}_{(x,x^{*})}$ which is a rational class corresponding to the cohomology of the trivial representation of $\mathbb{C}^{*}$. Now, we can conclude our argument by forming the following isomorphism:
\begin{align*}
    \gamma_1: H^6(\mathfrak{g}_2(\mathbb{C}),K_{\infty}; \mathcal{J} \otimes \mathcal{M}_{\lambda}) &\simeq H^{0}_{(b-1,a^{*})}\otimes H^{0}_{(a+1,b^{*})}\\
    [\mathcal{J}]_0\qquad &\mapsto [\omega_{(b-1,a^{*})}]_0 \otimes [\omega_{(a+1,b^{*})}]_0, \numberthis \label{rationalarchinter}
\end{align*}
where $[\mathcal{J}]_0 := \gamma^{-1}([\omega_{(b-1,a^{*})}]_0 \otimes [\omega_{(a+1,b^{*})}]_0)$ is a scale of $[\mathcal{J}]$ with highest weight vector $f_0$ with the lowest $K$-type that takes rational values. 
In the same way, we can define a rational class in the relative lie group cohomology in the co-domain of the standard intertwining operator in cohomology groups. Let $\tilde{\mu} = w_{P}(\mu) = ((-b,-a),(-b^{*},-a^{*}))$ which is a strongly pure weight. Moreover, $\tilde{\mu}$ satisfies the combinatorial lemma condition. There exists a balanced Kostant representative $w_{cl}^{'}$, by Lemma \ref{lem:combbeta}, where $\lambda = w_{cl}^{'}.\tilde{\mu}$. Also, denote:
\[
    \tilde{\mathcal{J}} := ^{a}\!\operatorname{Ind}_{P(\mathbb{C})}^{G_2(\mathbb{C})}(\tilde{\mathbb{J}}_{\mu}).
\] 
Now applying Delorme's method and calculations as above, we have the following isomorphism:
\begin{align*}
    \gamma_{w_{P}}: H^6(\mathfrak{g}_2,K_{\infty}; \tilde{\mathcal{J}} \otimes \mathcal{M}_{\lambda}) &\simeq H^{0}_{(-a-1,-b^{*})}\otimes H^{0}_{(-b+1,-a^{*})}\\
    [\tilde{\mathcal{J}}]_0 \qquad &\mapsto [\omega_{(-a-1,-b^{*})}] \otimes [\omega_{(-b+1,-a^{*})}] \numberthis
\end{align*}
where we can present the basis element on the left hand-side as:
\[
    [\tilde{\mathcal{J}}] = \sum_{i,\alpha} X^{*}_i\otimes T_{st}(\phi_{i,\alpha}) \otimes m_{\alpha},
\]
again with the same calculations in \cite[Section 4.1.]{RagTI}, we can locate the $\mathbf{f}_{0} := \phi_{i,\alpha}$ for some $i$ and $\alpha$ which is the highest weight vector of the lowest $K$-type $\mathcal{J}$, and up to a non-zero $\mathbb{Q}^{\times}$ the highest weight vector $\tilde{\mathbf{f}}_{0} = T_{st}(\mathbf{f}_{0})$ of the lowest $K$-type $\tilde{\mathcal{J}}$ which showed up in the calculations of the  intertwining operators in previous section.

In the previous section, we calculated the cocycle decomposition of the standard intertwining operator. We will define $\mathcal{J}_{\omega}$ where $\omega \in W^P$, to be the irreducible principal series representation of the intermediate steps shown in Figure \ref{fig:ASIOB}. Furthermore, we can induce the intertwining operators $A_v(\alpha)$ and $A_v(\beta)$ between the cohomology groups which are basically the induction of $GL_2$ intertwining operators. Again, we can use Delorme's lemma to form isomorphisms between the one-dimensional cohomology groups, as follows:
\[
    \gamma_{w} : H^{q_b}(\mathfrak{g}_2,K_{\infty}, \mathcal{J}_{w}\otimes \mathcal{M}_{\lambda}) \simeq H^{0}_{(x,x^{*})} \otimes H^{0}_{(y,y^{*})},
\]
where $x$ and $y$ are integers calculated by restriction to $GL_2$ case. For any $w\in W^P$, we can fix a rational basis in right hand-side which is simply $[\omega_{(x,x^{*})}]\otimes [\omega_{(y,y^{*})}]$ and define a basis element $[\mathcal{J}_{w}]_0 = \gamma_{w}^{-1}([\omega_{(x,x^{*})}]\otimes [\omega_{(y,y^{*})}])$ for $H^{q_b}(\mathfrak{g}_2,K_{\infty}, \mathcal{J}_{w}\otimes \mathcal{M}_{\lambda})$. Now  using the calculations from the previous section, we will have:

\begin{prop}\label{archinterthm}
    For $v\in S_{\infty}$ and a strongly pure weight $\mu$ satisfying the conditions of the combinatorial lemma, the archimedean intertwining operator $T_{st}$ induces a linear isomorphism between the one-dimensional cohomology spaces $H^6(\mathfrak{g}_2(\mathbb{C}),K_{\infty}; \mathcal{J} \otimes \mathcal{M}_{\lambda})$ and $H^6(\mathfrak{g}_2,K_{\infty}; \tilde{\mathcal{J}} \otimes \mathcal{M}_{\lambda})$ such that:
    \[
        T^{q_b}_{st}([\mathcal{J}]_0) \approx_{\mathbb{Q}^{\times}} \frac{L_{\infty}(-\frac{5}{2},\operatorname{Ad}^{3}(\psi_1\times\psi_2))}{L_{\infty}(1-\frac{5}{2},\operatorname{Ad}^{3}(\psi_1\times\psi_2))}\frac{L_{\infty}(2(-\frac{5}{2}),\omega_{\psi_1\times\psi_2})}{L_{\infty}(1+2(-\frac{5}{2}),\omega_{\psi_1\times\psi_2})} [\tilde{\mathcal{J}}]_0.
    \]
\end{prop}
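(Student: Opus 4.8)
The plan is to assemble Proposition \ref{archinterthm} from three ingredients already laid out in this section: (i) the irreducibility of the induced modules at the point of evaluation together with the fact that $T_{st}|_{s=-5/2}$ is an isomorphism (Proposition \ref{prop:indirrbeta}); (ii) the cocycle (Delorme) decomposition of the archimedean intertwining operator into rank-one $GL_2(\mathbb{C})$-operators, for which the scaling factor on the lowest $K$-type has been computed up to $\mathbb{Q}^\times$ in the style of \cite[Section 4.1, Proposition 4.16]{RagTI}; and (iii) the identification of the relevant one-dimensional relative Lie algebra cohomology groups via Delorme's lemma, so that the geometric (cohomological) intertwining operator $T^{q_b}_{st}$ and the representation-theoretic operator $T_{st}$ agree on the chosen rational bases up to $\mathbb{Q}^\times$.

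First I would fix the balanced Kostant representatives $w_{cl}$ and $w'_{cl}$ supplied by the combinatorial lemma (Lemma \ref{lem:combbeta}), so that $\lambda = w_{cl}^{-1}.\mu = (w'_{cl})^{-1}.\tilde\mu$ is dominant integral for $G$; this guarantees that both $H^6(\mathfrak{g}_2(\mathbb{C}),K_\infty;\mathcal{J}\otimes\mathcal{M}_\lambda)$ and $H^6(\mathfrak{g}_2(\mathbb{C}),K_\infty;\tilde{\mathcal{J}}\otimes\mathcal{M}_\lambda)$ are nonzero, and in fact one-dimensional by the Delorme/Künneth reduction (\ref{delorme}) down to $H^0$ of $GL_1(\mathbb{C})$-cohomology. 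I would then pin down the rational bases $[\mathcal{J}]_0$ and $[\tilde{\mathcal{J}}]_0$ as the classes corresponding under $\gamma_1$ and $\gamma_{w_P}$ to the tensor products of the rational classes $[\omega_{(x,x^*)}]_0$; these are canonical up to $\mathbb{Q}^\times$ because the $H^0_{(x,x^*)}$ carry a rational structure coming from the trivial representation of $\mathbb{C}^\times$. The key point is that the highest weight vector $\mathbf{f}_0$ of the lowest $K$-type of $\mathcal{J}$, and its image $\tilde{\mathbf{f}}_0 = T_{st}(\mathbf{f}_0)$ — which is, up to $\mathbb{Q}^\times$, the highest weight vector of the lowest $K$-type of $\tilde{\mathcal{J}}$ — are exactly the vectors on which the scaling factor of the transcendental intertwining operator was computed in Section \ref{sec:aaintop}.

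Next I would run the cocycle decomposition $A_v(w_P,\chi) = A_v(w_\alpha)\circ A_v(w_\beta)\circ A_v(w_\alpha)\circ A_v(w_\beta)\circ A_v(w_\alpha)$ through the chain of intermediate modules $\mathcal{J}_\omega$ displayed in Figure \ref{fig:ASIOB}, applying at each step the $GL_2(\mathbb{C})$ computation that gives the rank-one factors $L_\infty(0,\chi_1\chi_2^{-1})/L_\infty(1,\chi_1\chi_2^{-1})$ (for $w_\alpha$) and $L_\infty(0,\chi_2)/L_\infty(1,\chi_2)$ (for $w_\beta$) up to $\mathbb{Q}^\times$, evaluated at the characters read off from (\ref{char}) at $s=-5/2$. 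Each intermediate standard module is irreducible by \cite{Speh1980ReducibilityOG}, so each rank-one operator is a genuine scalar on the one-dimensional cohomology $H^{q_b}(\mathfrak{g}_2,K_\infty,\mathcal{J}_\omega\otimes\mathcal{M}_\lambda)$ relative to the rational bases $[\mathcal{J}_\omega]_0$. Multiplying the five factors and simplifying the product of Hecke $L$-factors — exactly the simplification carried out in Section \ref{sec:aaintop} leading to (\ref{eq:archLb}) — collapses the product to $\frac{L_\infty(-\frac52,\operatorname{Ad}^3(\psi_1\times\psi_2))}{L_\infty(1-\frac52,\operatorname{Ad}^3(\psi_1\times\psi_2))}\cdot\frac{L_\infty(-5,\omega_{\psi_1\times\psi_2})}{L_\infty(-4,\omega_{\psi_1\times\psi_2})}$, which is the claimed value.

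The main obstacle I anticipate is bookkeeping rather than conceptual: making sure that inducing the $GL_2(\mathbb{C})$ rank-one operators from $M_\alpha$ (resp. $M_\beta$) up to $G_2(\mathbb{C})$ is compatible with the Delorme/Künneth identifications of the cohomology groups — i.e. that the diagram relating $\operatorname{Ind}_{K_2}^{K_\infty}(\chi)|_{K_\vartheta}$ to the $GL_2$ restriction really does reduce the $G_2$ cohomological scalar to the $GL_2$ one at each intermediate stage, with no extra transcendental factor sneaking in from the change of parametrization $t_\beta^{-1}\circ t_0^{-1}(a,b) = (b,ab^{-1})$. Once that compatibility is in place — it follows from the transitivity of normalized induction together with the fact that the maximal compact torus of $K_\infty$ equals that of each $K_\vartheta$ — the computation is a product of the rank-one factors, and matching it with the transcendental side is immediate because the same vector $\mathbf{f}_0$ controls both.
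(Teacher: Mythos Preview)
Your proposal is correct and follows essentially the same route as the paper: Delorme's lemma to pin down the one-dimensional cohomology and rational bases, cocycle decomposition of $T_{st}$ into five rank-one $GL_2(\mathbb{C})$ operators, the scaling computation from \cite[Proposition 4.16]{RagTI} at each step, and the product simplification (\ref{eq:archLb}). The compatibility concern you flag is exactly the one the paper addresses via the restriction $\operatorname{Ind}_{K_2}^{K_\infty}(\chi)|_{K_\vartheta}$ and the shared maximal compact torus.
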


Following the same line of proof, we can deduce the following results for the case $P_{\alpha}$:
\begin{prop}\label{archinterthma}
    For $v\in S_{\infty}$ and a strongly pure weight $\mu$ satisfying the conditions of the combinatorial lemma, the archimedean intertwining operator $T_{st}$ induces a linear isomorphism between the one-dimensional cohomology spaces $H^6(\mathfrak{g}_2(\mathbb{C}),K_{\infty}; \mathcal{J} \otimes \mathcal{M}_{\lambda})$ and $H^6(\mathfrak{g}_2,K_{\infty}; \tilde{\mathcal{J}} \otimes \mathcal{M}_{\lambda})$ such that:
    \[
        T^{q_b}_{st}([\mathcal{J}]_0) \approx_{\mathbb{Q}^{\times}} \frac{L_{\infty}(-\frac{3}{2}, \psi_1\times\psi_2)}{L_{\infty}(1-\frac{3}{2}, \psi_1\times\psi_2)}\frac{L_{\infty}(-2\frac{3}{2}, \omega_{\psi_1\times\psi_2})}{L_{\infty}(1-2\frac{3}{2},\omega_{ \psi_1\times\psi_2})}
\frac{L_{\infty}(-3\frac{3}{2}, (\psi_1\times\psi_2)\otimes \omega_{\psi_1\times\psi_2})}{L_{\infty}(1-3\frac{3}{2}, (\psi_1\times\psi_2)\otimes \omega_{\psi_1\times\psi_2})} [\tilde{\mathcal{J}}]_0.
    \]
\end{prop}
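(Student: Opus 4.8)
The statement for $P_\alpha$ is the exact analogue of Proposition \ref{archinterthm} for the other maximal parabolic, so the plan is to transport the entire argument of the $P_\beta$ case verbatim, keeping track only of the data that genuinely changes: the point of evaluation $k_\alpha = -\tfrac32$ (instead of $-\tfrac52$), the rank $m=3$ (instead of $m=2$), the decomposition $r = \rho_2 \oplus \wedge^2\rho_2 \oplus \rho_2\otimes\wedge^2\rho_2$ with the three factors $V_1, V_2, V_3$, and the minimal factorization $\omega_P = \omega_\beta\omega_\alpha\omega_\beta\omega_\alpha\omega_\beta$ of the relevant Weyl element with the associated ordered list of roots in $\mathfrak{u}_\alpha$. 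As in the $P_\beta$ case, the bottom degree is still $q_b = 1 + 5 = 6$ because $\dim_{\mathbb{R}}(S^{M_\alpha}) = 4\mathsf r - 1$ and $\dim_{\mathbb{R}}(U_{0,\alpha}) = 5$, so both Delorme reductions (from $\mathfrak g_2$-cohomology to $\mathfrak{gl}_2$-cohomology, then to a product of two copies of $\mathfrak{gl}_1$-cohomology, using \cite[Section 4.1]{RagTI}) go through with the same degrees.

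First I would record the reparametrization dictionary for $M_\alpha$: since $t_\alpha = t_0$ (Remark \ref{choice of character}), the principal series $I(\chi_1\otimes\chi_2) = \operatorname{Ind}_{B(\mathbb{C})}^{G_2(\mathbb{C})}(\chi_1\otimes\chi_2)$ is realized via $P_\alpha$ simply as $\operatorname{Ind}_{P_\alpha(\mathbb{C})}^{G_2(\mathbb{C})}\operatorname{Ind}_{B_2(\mathbb{C})}^{GL_2(\mathbb{C})}(\chi_1\otimes\chi_2)$, so the character bookkeeping under $w_\alpha$ and $w_\beta$ uses the same formulas already displayed in Section \ref{sec:aaintop}. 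Then, using Proposition \ref{prop:indirralpha} (irreducibility of the two induced modules at $s=-\tfrac32$ and that $T_{st}$ is an isomorphism there) together with \cite{Speh1980ReducibilityOG} for irreducibility of the intermediate standard modules, each of the five rank-one operators in the cocycle decomposition of $A_v(\omega_P)$ acts on the relevant one-dimensional lowest-$K_\vartheta$-type cohomology as multiplication by a ratio $L_\infty(0,\eta)/L_\infty(1,\eta)$ for an explicit character $\eta$ of $\mathbb{C}^\times$ — exactly as in \cite[Proposition 4.16]{RagTI}. Multiplying the five factors and collecting the characters (using that for $GL_1\times GL_1 \hookrightarrow GL_2$ one has $\psi_1\psi_2 = \omega$ and that the standard, its central-character twist, and $\rho_2\otimes\wedge^2\rho_2$ packages assemble correctly) gives the product displayed in \eqref{eq:archLa}, and hence the stated formula with $[\tilde{\mathcal J}]_0$ defined as before via $\gamma_{w_{cl}'}^{-1}$.

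The genuinely new bookkeeping — and the one place a sign or index slip could creep in — is matching the three-step pattern $\{\beta, \omega_\beta(\alpha), \omega_{\beta\alpha}(\beta), \omega_{\beta\alpha\beta}(\alpha), \omega_{\beta\alpha\beta\alpha}(\beta)\}$ of roots in $\mathfrak u_\alpha$ against the three simple factors $r_1 = \rho_2$, $r_2 = \wedge^2\rho_2$, $r_3 = \rho_2\otimes\wedge^2\rho_2$, so that the five scalar contributions regroup into precisely the three $L$-ratios $L_\infty(ks,\,\cdot\,)/L_\infty(1+ks,\,\cdot\,)$ for $s = 1, 2, 3$ evaluated at $k = -\tfrac32$; the intermediate diagram (the $P_\alpha$-analogue of Figure \ref{fig:ASIOB}, which the excerpt already alludes to) makes this transparent, and the critical-set computation of Section \ref{sec:criticala} guarantees none of the relevant $L$-factors has a pole or zero at the evaluation point, so the scalar is finite and nonzero. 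I expect this regrouping step to be the main obstacle — not because it is deep, but because it is the only part that is not a literal copy of \cite{RagTI} or of the $P_\beta$ case, and it must reproduce the exponents $2s$ and $3s$ correctly. Everything else (Künneth reduction to a single archimedean place, rationality of the lowest-$K$-type classes $[\omega_{(x,x^*)}]$, the identification $[\mathcal J]_0 \mapsto \prod(\cdots)[\tilde{\mathcal J}]_0$ up to $\mathbb{Q}^\times$) is identical to Proposition \ref{archinterthm}.
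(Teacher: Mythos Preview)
Your proposal is correct and is exactly the approach the paper takes: the paper simply writes ``Following the same line of proof, we can deduce the following results for the case $P_{\alpha}$'' and states the proposition, having already recorded in Section \ref{sec:aaintop} the five-factor product and its regrouping into \eqref{eq:archLa}. Your elaboration of which data changes (evaluation point, factorization $w_\beta w_\alpha w_\beta w_\alpha w_\beta$, the three $r_j$'s, Proposition \ref{prop:indirralpha} in place of \ref{prop:indirrbeta}) and your identification of the regrouping step as the only non-literal transcription are both accurate.
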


\section{Cohomology of Induced Representations and Manin-Drinfeld Principle}\label{sec:indbs}

\subsection{The Kostant representatives revisited}
Let $P = P_{\beta}$. Fix the following elements of the Weyl group:
\begin{align*}
    w_G &= w_{\alpha\beta\alpha\beta\alpha\beta} = w_{\beta\alpha\beta\alpha\beta\alpha}, &\hfill \textit{longest element of }W;\\
    w_{0} &:= w_{P} = w_{\alpha\beta\alpha\beta\alpha}, &\hfill \textit{longest element of }W^{P};\\
    w_{M} &= w_{\beta}, &\hfill \textit{longest element of }W_{M}.
\end{align*}
Now we can form two self-bijections of $W^{P_{\beta}}$:
\begin{align*}
    w\mapsto w':= w_0 w &: W^{P} \to W^{P},\\ 
    w\mapsto w^{\vee}:= w_{M} w w_{G}&: W^{P} \to W^{P}.
\end{align*}
It is straightforward to verify that these maps are bijections and that they coincide :
\begin{align*}
    w^{\vee}\; =\; w_{M} w w_{G}\;=\;w_{M} w_{G} w \;= \; w_{0}w = w'.
\end{align*}
where second equation follows from the fact that $w_{G}$ is in the center of $W$. For $w^{\tau}\in W_{P^{\tau}_{\beta}}$ the length $l(w'^{\tau}) = 5 - l(w^{\tau})$, which implies :
\[
   l(w^{\tau}) + l(w'^{\tau}) = dim(U_{P_{0,\beta}}).
\]
Moreover, :
\begin{align*}
    l(w'^{\tau})+l(w'^{\bar{\tau}}) &= 5-l(w^{\tau})+5-l(w^{\bar{\tau}})\\
    &= 10 - (l(w^{\tau})+l(w^{\bar{\tau}}))\\
    &= dim(U_{P_{0,\beta}}).
\end{align*}
Hence, if $w$ is balanced, then $w'$ is also balanced. So, we have: 
\begin{lemma}
We have the following:
\begin{enumerate}
    \item The map $w \mapsto w^{\vee}=w':= w_{0}w$ gives a bijection $W^{P_{\beta}}\to W^{P_{\beta}}$.
    \item For each embeddings $\tau:F\to E$, we have $l(w^{\tau})+l(w'^{\tau}) = dim(U_{P_{0}^{\tau}})$.
    \item $w$ is balanced if and only if  $w'$ is balanced.
\end{enumerate}
\end{lemma}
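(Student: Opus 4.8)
The plan is to verify the three claims essentially by bookkeeping with Weyl group lengths, exactly as the preamble to the lemma already suggests. All three parts concern the fixed parabolic $P_\beta$, with Levi Weyl group $W_M = \{1, w_\beta\}$, with $W^{P_\beta} = \{1, w_\alpha, w_{\alpha\beta}, w_{\alpha\beta\alpha}, w_{\alpha\beta\alpha\beta}, w_{\alpha\beta\alpha\beta\alpha}\}$ the six Kostant representatives, with longest element $w_0 = w_P = w_{\alpha\beta\alpha\beta\alpha}$ of $W^{P_\beta}$, and with $w_G$ the central longest element of $W_0$. (All of $w_G$, $w_0$, $w_M$ here refer to the component group $W_0 = D_6$; the statement with $\tau$ superscripts is the same thing applied to each copy $W_0^\tau$.)

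\emph{Part (1).} First I would observe that left multiplication by $w_0$ is a bijection of $W_0$, so the only content is that it preserves the subset $W^{P_\beta}$ of minimal-length coset representatives for $W_M\backslash W_0$. The quickest route: the text has already established the identity $w^\vee := w_M w w_G = w_0 w$ (using centrality of $w_G$ and $w_M w_G = w_0$, which holds since $w_M w_0 = w_G$ because $l(w_M)+l(w_0) = 1+5 = 6 = l(w_G)$ and $w_M w_0$ has length $6$). So it suffices to check that $w\mapsto w_M w w_G$ carries $W^{P_\beta}$ to itself. For this, recall $w\in W^{P_\beta}$ iff $w^{-1}$ sends the simple root $\beta$ of $M_\beta$ to a positive root; equivalently $w$ is the unique shortest element of $W_M w$. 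Conjugation/multiplication by $w_G$ sends positive roots to their negatives and fixes $\Delta_{M_\beta}$ as a set up to sign, and multiplying on the left by $w_M = w_\beta$ permutes within $W_M$; tracking which of $\{1,w_\beta\}$ appears restores minimality. Concretely I would just read it off Table~3 of the paper (or the earlier table of $W^{P_\beta}$), checking $1\leftrightarrow w_{\alpha\beta\alpha\beta\alpha}$, $w_\alpha\leftrightarrow w_{\alpha\beta\alpha\beta}$, $w_{\alpha\beta}\leftrightarrow w_{\alpha\beta\alpha}$ under $w\mapsto w_0 w$, which is a finite and mechanical verification.

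\emph{Part (2).} This is the length identity $l(w^\tau) + l(w'^\tau) = \dim U_{P_0^\tau} = 5$ for each $\tau$. Here I would invoke the standard fact that for $w$ a minimal coset representative in $(W_0)^{P}$, the longest element $w_0$ of $(W_0)^{P}$ satisfies $l(w_0 w) = l(w_0) - l(w)$ — this is because $w_0 w$ is again a minimal representative (by Part (1)) and $w \leq w_0$ in Bruhat order, with $w_0$ the unique maximal element; the length of $w_0 w$ as a coset representative is the codimension statement. Since $l(w_0) = l(w_P) = 5 = \dim U_{P_{0,\beta}}$ (the unipotent radical has dimension $5$, as recorded in Section~1.2), we get $l(w^\tau) + l(w'^\tau) = 5$ immediately. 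Alternatively, and perhaps more self-containedly, one can note $l(w^\tau) = \#\{\theta\in\Phi^+\setminus\Delta_{M}^+ : w^\tau\theta < 0\}$ for Kostant representatives, and $w'^\tau = w_0 w^\tau$ flips exactly the complementary set among the five roots of $\mathfrak{u}_\beta$; I would cross-check against Table~1 to be safe, since $G_2$ has short/long subtleties, but the count is forced.

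\emph{Part (3).} This is now a one-line consequence: $w=(w^\tau)_\tau$ balanced means $l(w^\eta)+l(w^{\bar\eta}) = \dim(U_{P_\beta}/F) = 5$ for every archimedean place, i.e.\ for every conjugate pair $(\eta,\bar\eta)$ of embeddings. Using Part (2) for $\tau = \eta$ and $\tau = \bar\eta$,
\[
l(w'^\eta) + l(w'^{\bar\eta}) = \big(5 - l(w^\eta)\big) + \big(5 - l(w^{\bar\eta})\big) = 10 - \big(l(w^\eta) + l(w^{\bar\eta})\big),
\]
which equals $5$ iff $l(w^\eta)+l(w^{\bar\eta}) = 5$. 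So $w$ is balanced iff $w'$ is balanced, and the map of Part (1) restricts to a self-bijection of the set of balanced representatives. The whole proof is really just the displayed computation already in the excerpt, organized into the three bullet points; I would present it in that order.

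The main obstacle, such as it is, is purely a matter of not tripping over the $G_2$ root-length conventions when justifying $l(w_0 w) = l(w_0) - l(w)$ at the level of coset representatives, and making sure the identification $w^\vee = w'$ is valid for \emph{each} $\tau$ uniformly (it is, because $w_G$ lies in the center of each $W_0^\tau$ and the construction is embedding-by-embedding). Everything else is a finite check against the tables already in the paper, so I would keep the write-up short and lean on those tables rather than re-deriving the Bruhat-order facts from scratch.
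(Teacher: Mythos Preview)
Your proposal is correct and follows essentially the same approach as the paper: the identity $w^\vee = w_M w w_G = w_0 w$ via centrality of $w_G$, the length relation $l(w'^\tau) = 5 - l(w^\tau)$, and then the displayed computation for Part~(3) are exactly what the paper records in the paragraph preceding the lemma. You supply a bit more justification for Parts~(1)--(2) than the paper (which simply asserts them as straightforward), but the content and order of the argument are the same.
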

Let $\mu \in X_{00}^{*}(Res_{F/\mathbb{Q}}(T_2) \times E)$ satisfying the conditions of the combinatorial lemma, and $w_{cl}\in W^{P_{\beta}}$ be as defined in previous section, with $\lambda := w_{cl}^{-1}.\mu$ is a dominant weight of $G$. Then:
    \begin{align*}
        w'.\lambda  &= (w_0 w).(w^{-1}.\mu)\\
                    &= (w_0 w).(w^{-1}(\mu+\rho)-\rho)\\
                    &= (w_0 w)(w^{-1}(\mu+\rho))-\rho\\
                    &= w_0.\mu\\
                    &= \tilde{\mu}-5\delta_{2}.
    \end{align*}
Moreover, $\lambda^{\vee} = -w_{G}\lambda = \lambda$ as $w_G$ acts as multiplication by $-1$. Consequently, 
\[
    \mathcal{M}_{\lambda}^{\vee} \simeq \mathcal{M}_{\lambda^\vee} \simeq \mathcal{M}_{\lambda},
\]  
and we have:
\begin{align*}
    w'.\lambda^{\vee} = w'.\lambda = \tilde{\mu}-5\delta_2  .  
\end{align*}
Note that, for any $w\in W^{P_{\beta}}$, we have $w'\neq w$ as they have different lengths. Also, $(w')'= w$ which gave us:
\[
    (w')'.\lambda^{\vee} = w.\lambda = \mu.
\]

\begin{remark}
    The proof above only uses the fact that the Levi factor of $P_{\beta}$ is isomorphic to $GL_2$, and the dimension of the unipotent subgroup is $5$.  All of these properties are also satisfied in case $P_{\alpha}$. Therefore, we can deduce the same results for $P_{\alpha}$ by changing the notation accordingly.  
\end{remark}

\subsection{Induced representations in the boundary cohomology}\label{IndBnd}
In this section,  we will capture the appearance of various induced modules in boundary cohomology in the bottom and top degrees in Propositions \ref{prop:inducedbeta} and \ref{prop:inducedalpha}. 

Let $P:=P_{\beta}$ and follow the notation of Section \ref{sec:comb}. Denote the bottom degree and top degree of the $H^{\bullet}(S^{M},\mathbb{J}_{\lambda}\otimes\mathcal{M}_{\lambda,\mathbb{C}})$ by $b_2^{\Box}$ and $t_2^{\Box}$, respectively. We have the following data:
\begin{align*}
    b_2^{\mathbb{C}} &= 2(2-1)/2 = 1,\\
    t_2^{\mathbb{C}} &= (2^2-1)-1 = 2, \\
    b_2^{F} &= \mathsf{r} b_2^{\mathbb{C}} = \mathsf{r},\\
    t_2^{F} &= dim_{\mathbb{R}}(S^{M})-b_2^{F} = 3\mathsf{r}-1.
\end{align*}

Using the decomposition of the cohomology of parabolic stratum $\partial_{P} S^G$ in Section \ref{boundcomp}:
\begin{align}
            H^{q}(\partial_P S^{G}, \mathcal{M}_{\lambda})=
    \bigoplus_{w\in W^P} {}^{a}\!\operatorname{Ind}_{P(\mathbb{A}_f)}^{ G(\mathbb{A}_f)}(H^{q-l(w)}(S^{M},H^{q}(\mathfrak{u}_P,\mathcal{M}_{w.\lambda})),
\end{align}
and considering the summand indexed by the balanced Kostant representative $w_{cl}\in W^{P}$ obtained from combinatorial lemma:  
\[
{}^{a}\!\operatorname{Ind}_{P(\mathbb{A}_f)}^{ G(\mathbb{A}_f)}(H^{q-l(w_{cl})}(S^{M},H^{q}(\mathfrak{u}_P,\mathcal{M}_{w_{cl}.\lambda})) \subset H^{q}(\partial_P S^{G}, \mathcal{M}_{\lambda}).
\]
First, let us review some facts about $w_{cl}$:
\begin{align*}
    \mu &= w_{cl}.\lambda, \\
    l(w_{cl}) &= \frac{1}{2}\operatorname{dim}(U_{P}) = 5\mathsf{r},\\
    \sigma_f &\in Coh_{!!}(M,\mu).
\end{align*}
By \cite[Proposition 2.5.]{RagTI}, non-vanishing of this summand is equivalent to:
\begin{align*}
    b_2^{F} \leq q-l(w_{cl}) \leq t_2^{F} &\iff
     b_2^{F}+\frac{1}{2}\operatorname{dim}(U_{P})\leq q \leq  t_2^{F}+\frac{1}{2}\operatorname{dim}(U_{P})\\
     &\iff  q_b : = 6\mathsf{r} \leq q \leq 8\mathsf{r}-1 =:q_t.
\end{align*}
\begin{prop}\label{prop:inducedbeta}
    Consider the notation from the previous section. Then,
    \begin{enumerate}
        \item The module $^{a}\!\operatorname{Ind}_{P(\mathbb{A}_f)}^{G(\mathbb{A}_f)}(\sigma_f)$ appears in $H^{q_b}(\partial_{P}S^{G},\mathcal{M}_{\lambda})$.
        \item $^{a}\!\operatorname{Ind}_{P(\mathbb{A}_f)}^{G(\mathbb{A}_f)}(\tilde{\sigma_f}(5))$ appears in $H^{q_b}(\partial_{P}S^{G},\mathcal{M}_{\lambda}).$
    \end{enumerate}  
    The contragredients of these modules appear in cohomology in the top-degree:
    \begin{enumerate}
        \item[3.] $\widetilde{^{a}\!\operatorname{Ind}_{P(\mathbb{A}_f)}^{G(\mathbb{A}_f)}(\sigma_f)}= ^{a}\!\!\operatorname{Ind}_{P(\mathbb{A}_f)}^{G(\mathbb{A}_f)}(\tilde{\sigma}_f(5))$ appears in $H^{q_t}(\partial_{P}S^{G},\mathcal{M}_{\lambda})$.
        \item[4.]  $\widetilde{^{a}\!\operatorname{Ind}_{P(\mathbb{A}_f)}^{G(\mathbb{A}_f)}(\tilde{\sigma}_f(5))} = ^{a}\!\!\operatorname{Ind}_{P(\mathbb{A}_f)}^{G(\mathbb{A}_f)}(\sigma_f)$ appears in $H^{q_t}(\partial_{P}S^{G},\mathcal{M}_{\lambda})$ . 
    \end{enumerate}
\end{prop}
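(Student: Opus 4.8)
The plan is to deduce the statement directly from the description of the boundary cohomology in Section~\ref{boundcomp} together with the combinatorial lemma (Lemma~\ref{lem:combbeta}) and the self-bijection $w\mapsto w'=w_0w$ of $W^{P_\beta}$ discussed in the previous subsection. The key identities I will use are: $w_{cl}.\lambda=\mu$, $l(w_{cl})=\tfrac12\dim U_P=5\mathsf r$, and for the companion representative $w'_{cl}:=w_0w_{cl}$ that $w'_{cl}.\lambda=\tilde\mu-5\delta_2$ together with $l(w'_{cl})=5\mathsf r$ (since $w'_{cl}$ is balanced, by Lemma~\ref{lem:combbeta}(3) and the length relation $l(w^\tau)+l(w'^\tau)=\dim U_{P_0^\tau}$). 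Granting these, both $w_{cl}$ and $w'_{cl}$ contribute in the same degree, and the non-vanishing range $q_b=6\mathsf r\le q\le q_t=8\mathsf r-1$ computed just above via \cite[Proposition~2.5]{RagTI} applies verbatim to both summands.

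First I would establish (1). Plug $w=w_{cl}$ into the Kostant-decomposed formula
\[
H^{q}(\partial_P S^{G},\mathcal{M}_{\lambda})=\bigoplus_{w\in W^P}{}^{a}\!\operatorname{Ind}_{P(\mathbb{A}_f)}^{G(\mathbb{A}_f)}\bigl(H^{q-l(w)}(S^{M},\mathcal{M}_{w.\lambda})\bigr),
\]
so that the $w_{cl}$-summand in degree $q=q_b$ is ${}^{a}\!\operatorname{Ind}_{P(\mathbb{A}_f)}^{G(\mathbb{A}_f)}\bigl(H^{b_2^F}(S^{M},\mathcal{M}_{\mu})\bigr)$. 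Since $\sigma_f\in Coh_{!!}(M,\mu)$, the strongly inner cohomology $H^{b_2^F}_{!!}(S^M,\mathcal{M}_\mu)$ contains $\sigma_f$ in its bottom degree (this is exactly the content of $Coh_{!!}$ and the identification with cuspidal cohomology, together with the fact that $b_2^F$ is the bottom degree where $\mathbb{J}_\mu$-cohomology is nonzero, cf.\ Proposition~\ref{archrep}). Hence ${}^{a}\!\operatorname{Ind}_{P(\mathbb{A}_f)}^{G(\mathbb{A}_f)}(\sigma_f)$ is a Hecke submodule of $H^{q_b}(\partial_P S^G,\mathcal{M}_\lambda)$, which is (1). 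For (2), repeat the argument with $w=w'_{cl}$: the corresponding summand in degree $q_b$ is ${}^{a}\!\operatorname{Ind}_{P(\mathbb{A}_f)}^{G(\mathbb{A}_f)}\bigl(H^{b_2^F}(S^M,\mathcal{M}_{w'_{cl}.\lambda})\bigr)$ with $w'_{cl}.\lambda=\tilde\mu-5\delta_2$; the relevant Hecke summand of $H^{b_2^F}_{!!}(S^M,\mathcal{M}_{\tilde\mu-5\delta_2})$ is $\widetilde{\sigma_f}\otimes|\ |^{5}=\tilde\sigma_f(5)$ by the Tate-twist isomorphism of Section~\ref{sec:tt} applied to the contragredient (note $\omega_\sigma$-twisting identifies the highest weight $w_0.\mu$ with $\tilde\mu-5\delta_2$ exactly as computed above). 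This gives (2).

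For (3) and (4), the plan is to invoke Poincar\'e duality on $\partial_P S^G$ (equivalently, the duality $H^{q}(\partial_P S^G,\mathcal{M}_\lambda)^\vee\cong H^{q_t+q_b-q}(\partial_P S^G,\mathcal{M}_\lambda^\vee)$ coming from $\mathcal{M}_\lambda^\vee\simeq\mathcal{M}_{\lambda^\vee}\simeq\mathcal{M}_\lambda$, using $\lambda^\vee=-w_G\lambda=\lambda$) and the elementary fact that algebraic induction commutes with taking contragredients, $\widetilde{{}^{a}\!\operatorname{Ind}_{P}^{G}(\tau)}={}^{a}\!\operatorname{Ind}_{P}^{G}(\tilde\tau\otimes\delta_P^{-1}\cdot\text{(shift)})$; the precise shift is recorded in the statement as $\sigma_f\mapsto\tilde\sigma_f(5)$, matching $\delta_{P_\beta}=|\det|^5$. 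Applying this duality to the submodules from (1) and (2) in degree $q_b$ produces the asserted submodules in degree $q_t$, and the bijection $(w'_{cl})'=w_{cl}$ together with $(w'_{cl})'.\lambda^\vee=\mu$ confirms that the two top-degree summands are indeed indexed consistently. The main obstacle I anticipate is purely bookkeeping: pinning down the exact Tate twist and the interplay between the contragredient on $GL_2$-cohomology, the central-character twist $w_0(\sigma)=\tilde\sigma$ from \cite{SH89}, and the duality normalization, so that the shift comes out to exactly $(5)$ and not some other power; everything else is a direct substitution into formulas already set up in the excerpt.
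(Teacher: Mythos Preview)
Your proof of (1) and (2) is exactly the paper's argument: plug $w_{cl}$ and $w'_{cl}$ into the Kostant decomposition of $H^{q_b}(\partial_P S^G,\mathcal M_\lambda)$ and read off the summands using $w_{cl}.\lambda=\mu$, $w'_{cl}.\lambda=\tilde\mu-5\delta_2$, and $l(w_{cl})=l(w'_{cl})=5\mathsf r$.

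For (3) and (4) you take a slightly different route. The paper does \emph{not} invoke Poincar\'e duality on $\partial_P S^G$; it simply reruns the Kostant argument at the top degree $q_t$. Since $q_t-l(w_{cl})=q_t-l(w'_{cl})=t_2^F$, the same two representatives $w'_{cl}$ and $(w'_{cl})'=w_{cl}$ pick out the summands ${}^{a}\!\operatorname{Ind}(\tilde\sigma_f(5))$ and ${}^{a}\!\operatorname{Ind}(\sigma_f)$ inside $H^{q_t}(\partial_P S^G,\mathcal M_\lambda)$ directly, now via the top-degree Levi cohomology $H^{t_2^F}_{!!}(S^M,\mathcal M_{w.\lambda})$. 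Your duality approach is valid and leads to the same conclusion, but it buys you an extra verification (matching the contragredient normalization with the Tate shift~$5$, which you flag as the anticipated obstacle), whereas the paper's direct substitution avoids this entirely because the identity $w'_{cl}.\lambda=\tilde\mu-5\delta_2$ already encodes the correct twist.
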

\begin{proof}
   We can locate $^{a}\!\operatorname{Ind}_{P(\mathbb{A}_f)}^{G(\mathbb{A}_f)}(\sigma_f)$ in the decomposition of $H^{q_b}(\partial_{P}S^{G},\mathcal{M}_{\lambda})$ by considering the balanced Kostant representative coming from the combinatorial lemma. The induced representation is assigned to $\sigma_f$ with the highest weight $\mu = w_{cl}.\lambda$ appears as:
   \[
    {}^{a}\!\operatorname{Ind}_{P(\mathbb{A}_f)}^{ G(\mathbb{A}_f)}(H^{\mathsf{r}}(S^{M},H^{q}(\mathfrak{u}_P,\mathcal{M}_{w_{cl}.\lambda}))\subset H^{q_b}(\partial_{P}S^{G},\mathcal{M}_{\lambda}).
   \]
   For part(2), the induced representation of $\tilde{\sigma}_f(5)$ with highest weight $\tilde{\mu}-5\delta_2 = w_{cl}'.\lambda$ give us:
   \[
        {}^{a}\!\operatorname{Ind}_{P(\mathbb{A}_f)}^{ G(\mathbb{A}_f)}(H^{\mathsf{r}}(S^{M},H^{q}(\mathfrak{u}_P,\mathcal{M}_{w_{cl}'.\lambda}))\subset H^{q_b}(\partial_{P}S^{G},\mathcal{M}_{\lambda}).
   \]
   To go from bottom-degree to top-degree, we need to look at the contravariants of these modules. Then, $w'_{cl}$ and $(w'_{cl})' = w_{cl}$ will take care of the cases (3) and (4), respectively.
\end{proof}

We can translate Proposition \ref{prop:inducedbeta} to the case $P_{\alpha}$, where we get the same numerology with the same line of proof.
\begin{prop}\label{prop:inducedalpha}
    Consider the notation from the previous section and $P=P_{\alpha}$. We have for cohomology bottom degree-degree:
    \begin{enumerate}
        \item The module $^{a}\!\operatorname{Ind}_{P(\mathbb{A}_f)}^{G(\mathbb{A}_f)}(\sigma_f)$ appears in $H^{q_b}(\partial_{P}S^{G},\mathcal{M}_{\lambda})$.
        \item $^{a}\!\operatorname{Ind}_{P(\mathbb{A}_f)}^{G(\mathbb{A}_f)}(\tilde{\sigma_f}(3))$ appears in $H^{q_b}(\partial_{P}S^{G},\mathcal{M}_{\lambda})$.
    \end{enumerate}  
    The contragredients of these modules appear in cohomology in the top-degree:
    \begin{enumerate}
        \item[3.] $\widetilde{^{a}\!\operatorname{Ind}_{P(\mathbb{A}_f)}^{G(\mathbb{A}_f)}(\sigma_f)}= ^{a}\!\!\operatorname{Ind}_{P(\mathbb{A}_f)}^{G(\mathbb{A}_f)}(\tilde{\sigma}_f(3))$ appears in $H^{q_t}(\partial_{P}S^{G},\mathcal{M}_{\lambda})$.
        \item[4.]  $\widetilde{^{a}\!\operatorname{Ind}_{P(\mathbb{A}_f)}^{G(\mathbb{A}_f)}(\tilde{\sigma}_f(3))} = ^{a}\!\!\operatorname{Ind}_{P(\mathbb{A}_f)}^{G(\mathbb{A}_f)}(\sigma_f)$ appears in $H^{q_t}(\partial_{P}S^{G},\mathcal{M}_{\lambda})$ .
    \end{enumerate}
\end{prop}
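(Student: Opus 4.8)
The plan is to mirror, verbatim up to notation, the argument already given for $P_\beta$ in Proposition~\ref{prop:inducedbeta}. The key inputs that make that proof work are precisely the three facts isolated in the remark following the $P_\beta$ case: (i) the Levi factor $M_\alpha\simeq \mathbf{GL_2}=Res_{F/\mathbb{Q}}(GL_2)$, so the cohomology of $S^{M_\alpha}$ with coefficients in the relevant $\mathbb{J}_\mu\otimes\mathcal{M}_{\lambda,\mathbb{C}}$ has the same numerology $b_2^{\mathbb{C}}=1$, $t_2^{\mathbb{C}}=2$, $b_2^F=\mathsf{r}$, $t_2^F=3\mathsf{r}-1$; (ii) $\dim U_{P_\alpha}/F=5$, so $\tfrac12\dim U_{P_\alpha}=5\mathsf r$, giving exactly the same bottom and top degrees $q_b=6\mathsf r$, $q_t=8\mathsf r-1$; and (iii) the self-bijection $w\mapsto w'=w_0w$ of $W^{P_\alpha}$, which by the remark at the end of Section~\ref{sec:indbs} also satisfies $l(w^\tau)+l(w'^\tau)=\dim U_{P_{0,\alpha}^\tau}$ and sends balanced representatives to balanced representatives. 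So the first step is simply to record that all the degree computations preceding Proposition~\ref{prop:inducedbeta} carry over for $P_\alpha$ unchanged.

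Next I would run the combinatorial lemma for $P_\alpha$ (Lemma~\ref{lem:combalpha}) to produce a balanced Kostant representative $w_{cl}\in W^{P_\alpha}$ with $\lambda:=w_{cl}^{-1}.\mu$ a dominant weight of $G$, and $l(w_{cl})=\tfrac12\dim U_{P_\alpha}=5\mathsf r$. Then, exactly as in the $P_\beta$ proof, the summand of $H^{q}(\partial_P S^G,\mathcal{M}_\lambda)$ indexed by $w_{cl}$ in the Kostant decomposition
\[
    H^{q}(\partial_P S^{G},\mathcal{M}_{\lambda})=\bigoplus_{w\in W^{P}}{}^{a}\!\operatorname{Ind}_{P(\mathbb{A}_f)}^{G(\mathbb{A}_f)}\bigl(H^{q-l(w)}(S^{M},H^{q}(\mathfrak u_P,\mathcal{M}_{w.\lambda}))\bigr)
\]
contributes ${}^{a}\!\operatorname{Ind}_{P(\mathbb{A}_f)}^{G(\mathbb{A}_f)}(\sigma_f)$ in degree $q_b$, using $\mu=w_{cl}.\lambda$ and $\sigma_f\in Coh_{!!}(M_\alpha,\mu)$; this gives statement (1). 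For statement (2), I would invoke the $P_\alpha$ analogue of the identity $w'_{cl}.\lambda=\tilde\mu-5\delta_2$ established in Section~\ref{sec:indbs} — here it becomes $w'_{cl}.\lambda=\tilde\mu-3\delta_2$ because $3$ is the exponent of the modulus character $\delta_\alpha(m)=|\det m|^3$ — so the summand indexed by the balanced representative $w'_{cl}$ contributes ${}^{a}\!\operatorname{Ind}_{P(\mathbb{A}_f)}^{G(\mathbb{A}_f)}(\tilde\sigma_f(3))$ in degree $q_b$. Finally, statements (3) and (4) follow by Poincaré duality on $\partial_P S^G$: passing to contragredients sends degree $q_b$ classes to degree $q_t$ classes, and the pair $(w_{cl},w'_{cl})$ together with $(w'_{cl})'=w_{cl}$ handles the two cases, using that contragredient commutes with algebraic induction and that $\widetilde{\tilde\sigma_f(3)}=\sigma_f$ (after the appropriate Tate twist bookkeeping, since $\mathcal{M}_\lambda^\vee\simeq\mathcal{M}_\lambda$ as $w_G$ acts by $-1$).

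The only genuinely new bookkeeping — and the step most likely to hide a subtlety — is tracking the Tate twist: one must check that for $P_\alpha$ the self-dual weight identity reads $w_0.\mu=\tilde\mu-3\delta_2$ rather than $\tilde\mu-5\delta_2$, and correspondingly that $w_0(\sigma_v)=\tilde\sigma_v$ with the twist by $|\det|^3$ coming from $\delta_\alpha$; this is exactly parallel to, but not identical with, the $P_\beta$ computation, and it is where a sign or exponent error would creep in. Everything else is a transcription of the $P_\beta$ argument, as the remark preceding this proposition already signals. I would therefore write the proof as: "The numerology of Section~\ref{IndBnd} is unchanged with $5$ replaced by $3$ throughout; applying Lemma~\ref{lem:combalpha} in place of Lemma~\ref{lem:combbeta} and repeating verbatim the proof of Proposition~\ref{prop:inducedbeta} yields the four claims."
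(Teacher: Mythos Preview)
Your proposal is correct and follows essentially the same route as the paper, which simply states that the proof is a transcription of the $P_\beta$ case with $5$ replaced by $3$. The one minor deviation is your appeal to Poincar\'e duality for parts (3) and (4): the paper instead locates these modules directly in top degree by observing that the same balanced representatives $w'_{cl}$ and $(w'_{cl})'=w_{cl}$ index summands with Levi cohomology in degree $t_2^F$ rather than $b_2^F$, so that $q_t=t_2^F+l(w_{cl})=8\mathsf r-1$; this is a slight presentational difference, not a different argument.
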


\subsection{Cohomology of induced representations}\label{sec:cohinbound}
Let $\sigma_f\in Coh_{!!}(M_{\beta},\mu)$ with strongly pure weight $\mu$ such that there is $w_{cl}\in W^{P_{\beta}}$ obtained from combinatorial lemma. We can define the dominant weight $\lambda = w_{cl}^{-1}.\mu$. As we stated in Section \ref{sec:Nonint}, for any unramified finite place $v$ there is a nontrivial intertwining operator 
\[
    T_{st,E}: ^{a}\!\operatorname{Ind}_{P_{\beta}(F_{v})}^{G(F_{v})}(\sigma_{f,v}) \to ^{a}\!\operatorname{Ind}_{P_{\beta}(F_{v})}^{G(F_{v})}(\widetilde{\sigma}_{f,v}(5)),
\]
which implies $^{a}\!\operatorname{Ind}_{P_{\beta}(\mathbb{A}_{f})}^{G(\mathbb{A}_{f})}(\sigma_{f})$ and $^{a}\!\operatorname{Ind}_{P_{\beta}(\mathbb{A}_{f})}^{G(\mathbb{A}_{f})}(\widetilde{\sigma}_{f}(5)) $ are equivalent almost everywhere. We want to show that this is the only nontrivial intertwining operator involving these two representations. The following proposition is the generalization of the proof presented in \cite[Proposition 2.3.1.]{mundy}
from group $G_2$ over a totally real number field to the case of a totally imaginary number field. 

\begin{prop}\label{9}
    Let $\sigma_{\alpha,f}$ and $\sigma_{\beta,f}$ be a unitary, tempered, cuspidal automorphic representations of $M_{\alpha}(\mathbb{A}_f)$ and $M_{\beta}(\mathbb{A}_f)$. Also, let $\eta$ be a quasi-character of $T(\mathbb{Q})\backslash T(\mathbb{A})$, and let $s_{\alpha} , s_{\beta} \in \mathbb{C}$. Consider irreducible sub-quotients $\Pi_{0}, \Pi_{\alpha}$ and $\Pi_{\beta}$ of $I_{B(\mathbb{A}_f)}^{G(\mathbb{A}_f)}(\eta)$, $I_{P_{\alpha}(\mathbb{A}_f)}^{G(\mathbb{A}_f)}(s_{\alpha},\sigma_{\alpha})$ and $I_{P_{\beta}(\mathbb{A}_f)}^{G(\mathbb{A}_f)}(s_{\beta}, \sigma_{\beta})$, respectively. Then, none of $\Pi_{0}$, $\Pi_{\alpha}$ and $\Pi_{\beta}$ are equivalent almost everywhere to each other.
\end{prop}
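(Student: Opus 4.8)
The plan is to reduce the statement to a purely local comparison of Satake parameters at unramified places, following the Langlands-Shahidi classification of the unramified dual, exactly as in \cite[Proposition 2.3.1]{mundy} but keeping track of the fact that $F$ is totally imaginary rather than totally real. Since being equivalent almost everywhere is controlled, by strong multiplicity one for $GL_n$ applied to the Levi factors, by the collection of local unramified components, it suffices to work at a single finite place $v$ where all data are unramified and $\sigma_{\alpha,v}, \sigma_{\beta,v}$ and $\eta_v$ are unramified principal series. So first I would fix such a $v$, identify $F_v$ with a $p$-adic field, and write each of $\Pi_{0,v}, \Pi_{\alpha,v}, \Pi_{\beta,v}$ as the unramified subquotient of a principal series $I_{B(F_v)}^{G(F_v)}(\chi)$ of $G_2(F_v)$, recording the corresponding Satake parameter as a semisimple conjugacy class in $\widehat{G}_2(\mathbb{C}) = G_2(\mathbb{C})$.

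\textbf{Key steps.} The heart of the argument is that the three families of induced representations have Satake parameters lying in three genuinely different ``shapes'' inside the maximal torus of $G_2(\mathbb{C})$, so that a parameter coming from $B$, from $P_\alpha$, or from $P_\beta$ cannot coincide unless it is forced into a very degenerate position which is excluded by the hypotheses (unitarity and temperedness of $\sigma_\alpha,\sigma_\beta$, or irreducibility of the relevant subquotient). Concretely I would: (i) compute, using the embeddings $R_7\circ t_\alpha = \mathrm{diag}(\tilde\rho_2,\mathrm{Ad}^2,\rho_2)$ and $R_7\circ t_\beta = \mathrm{diag}(\det^{-1},\tilde\rho_2,1,\rho_2,\det)$ recorded in Section~1.2, the image of the Satake parameter of $\Pi_{\alpha,v}$ and $\Pi_{\beta,v}$ under the $7$-dimensional representation, obtaining the multiset of seven eigenvalues in each case; (ii) observe that a parameter induced from $P_\beta$ forces the eigenvalue $1$ to occur and the remaining six to split as $\{t,t^{-1}\}$ tensored against the $GL_2$-parameter in a way dictated by $\mathrm{Sym}^3$-type data, whereas a parameter from $P_\alpha$ forces a different pattern (a $\rho_2\oplus\rho_2\otimes\det$ shape), and a generic parameter from $B$ has all seven eigenvalues essentially free; (iii) match these patterns and show that an equality of two of them would force the $GL_2$-parameter of $\sigma_{\alpha,v}$ or $\sigma_{\beta,v}$ to be non-tempered or non-unitary (e.g. to contain a Satake parameter of the form $\{q^{a},q^{-a}\}$ with $a\neq 0$), contradicting the standing hypothesis; the case of two parameters of the same parabolic type is handled by the injectivity of $t_\alpha$, resp.\ $t_\beta$, on unramified characters together with the Weyl group action. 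The totally imaginary hypothesis enters only in that $F_v\simeq\mathbb{C}$ is impossible for finite $v$, so the local fields at the finite places are ordinary non-archimedean fields and the $p$-adic arguments of \cite{mundy} apply verbatim; the change from totally real to totally imaginary does not affect the finite places, so the proof of \cite[Prop.\ 2.3.1, 2.3.3]{mundy} carries over after this observation.

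\textbf{Main obstacle.} I expect the main difficulty to be the bookkeeping in step (ii)--(iii): one must rule out \emph{all} coincidences among the three parameter shapes, including the degenerate sub-cases where a principal series of $G_2$ is reducible and the unramified subquotient is a proper constituent (a Langlands quotient or a further subquotient), since then the naive ``shape'' of the inducing character is partly lost and one must instead compare the actual Satake parameters of the irreducible subquotients. Handling this cleanly requires invoking the explicit reducibility points of unramified principal series of $G_2$ (equivalently, the structure of the $R$-group or of the relevant Plancherel/intertwining data), and then checking that even at these reducibility points the parameters of constituents coming from different parabolics remain distinct. A secondary, more routine, point is to make sure the global-to-local reduction is legitimate: that ``equivalent almost everywhere'' for these induced modules is detected by the unramified local components, which follows from the fact that the inducing data $\sigma_\alpha,\sigma_\beta,\eta$ are themselves (by strong multiplicity one on $GL_2$ and $GL_1$) determined by their unramified places, so that an almost-everywhere equivalence of the induced modules pins down the inducing data up to the Weyl group, at which point the local computation finishes the proof.
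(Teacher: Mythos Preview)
Your approach is genuinely different from the paper's, and it has a real gap.  The paper does \emph{not} argue via local Satake parameters.  Following \cite[Prop.~2.3.1]{mundy} (which is already an $L$-function argument, not a local one), it computes the degree-$7$ partial $L$-function $L^{S}(s,R_{7}(\Pi)\times\chi)$ for varying Hecke characters $\chi$.  For $\Pi_{\alpha}$ this factors as $L^{S}(s+s_{\alpha},\sigma_{\alpha}\otimes\chi)L^{S}(s-s_{\alpha},\tilde\sigma_{\alpha}\otimes\chi)L^{S}(s+s_{\alpha},\omega_{\alpha}\chi)L^{S}(s-s_{\alpha},\omega_{\alpha}^{-1}\chi)L^{S}(s,\chi)$, so the Hecke factors force a pole at $s=1+s_{\alpha}$ (simple if $s_{\alpha}\neq0$, double at $s=1$ if $s_{\alpha}=0$).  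For $\Pi_{\beta}$ it factors through the Gelbart--Jacquet adjoint $L$-function and has at most a simple pole at $s=1$.  The pole discrepancy separates $\Pi_{\alpha}$ from $\Pi_{\beta}$.  To separate either from $\Pi_{0}$ the paper passes to the degree-$14$ $L$-function $L^{S}(s,\Pi\times\sigma_{\beta},R_{7}\otimes\rho_{2})$: for $\Pi_{\beta}$ a Rankin--Selberg factor $L^{S}(s-s_{\beta},\sigma_{\beta}^{\vee}\times\sigma_{\beta})$ produces a pole, whereas for $\Pi_{0}$ the whole thing is a product of character twists of the cuspidal $L(s,\sigma_{\beta})$, hence entire.

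The gap in your plan is most visible in the comparison with $\Pi_{0}$.  At any unramified place $v$, the spherical constituent of $I_{B}^{G}(\eta_{v})$ can realise \emph{any} Satake class in $\widehat T/W$, because $\eta_{v}$ is an arbitrary unramified quasi-character; so there is no local ``shape'' separating Borel-induced from maximal-parabolic-induced.  The obstruction is purely global: cuspidality of $\sigma_{\beta}$ says $\sigma_{\beta}$ is not a global principal series, equivalently that $L(s,\sigma_{\beta}\times\chi)$ is entire for every $\chi$, and this has no local avatar.  Your proposed salvage --- pigeonhole over the twelve Weyl elements, then invoke strong multiplicity one on $GL_{2}$ and $GL_{1}$ --- does not close by itself: after pigeonholing you only get that for a \emph{positive-density} set of $v$ the Satake parameter of $\sigma_{\beta,v}$ is a fixed monomial in two global Hecke characters, and turning that into ``$\sigma_{\beta}$ is globally Eisenstein'' is exactly a global $L$-function statement (a pole of $L(s,\sigma_{\beta}\times\chi)$ on a density set), not something the local computation provides.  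Even for $\Pi_{\alpha}$ versus $\Pi_{\beta}$, your absolute-value-of-eigenvalues argument under $R_{7}$ only works when $\operatorname{Re}(s_{\alpha})$ and $\operatorname{Re}(s_{\beta})$ are not both zero; in the unitary case the two $R_{7}$-eigenvalue multisets can coincide at every place, and you are again forced into a global argument.
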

\begin{proof}
    We may assume $Re(s_{\alpha})$ and $Re(s_{\beta})$ are non-negative, which is not restrictive as we can consider the inverse intertwining map instead. Let $\Pi$ be an automorphic representation of $G_2(\mathbb{A})$ and $Re(s)\geq 0$. Also, let $R_{7}$ be the standard representation of $G_2$, and $\chi$ be a finite order character of $\mathbb{A}^{\times}$. Let $v\notin S$ and $\vartheta_{v} \in G_2(\mathbb{C})$ be the Satake parameter of $\Pi_{v}$, we can define the local L-factor at $v$:
    \[
        L_{v}(s,R_{7}(\Pi)\times \chi) = \det(1-\chi_{v}(p_v)q_{v}^{-s}R_{7}(\vartheta_{v}))^{-1},
    \]
    and the global L-function
    \[
        L^{S}(s,R_{7}(\Pi)\times \chi) = \prod_{v\notin S} L_{v}(s,R_{7}(\Pi)\times \chi).
    \]
    For large enough $S$,
    \begin{align*}
        L^{S}(s,R_{7}(\Pi_{\alpha}) \times \chi) =& L^{S}(s+s_{\alpha}, \sigma_{\alpha}\otimes \chi)L^{S}(s-s_{\alpha}, \tilde{\sigma}_{\alpha}\otimes \chi)\\& .L^{S}(s+s_{\alpha},\omega_{\alpha}\chi)L^{S}(s-s_{\alpha},\omega_{\alpha}^{-1}\chi)L^{S}(s,\chi), \numberthis \label{alpha}
    \end{align*}
   where $\omega_{\alpha}$ is the central character of $\sigma_{\alpha}$. Moreover,
    \[
         L^{S}(s,R_{7}(\Pi_{\beta}) \times \chi) = L^{S}(s+s_{\beta}, \sigma_{\beta}\otimes \chi)L^{S}(s-s_{\beta}, \tilde{\sigma}_{\beta}\otimes \chi)L^{S}(s,Ad^2(\sigma_{\beta})\times \chi).\numberthis \label{beta}
    \]

    In \eqref{alpha}, the possible poles come from the Hecke L-functions on the right hand side. First, if $\chi = 1$, then $L^{S}(s,\chi)$ has a pole at $s=1$. Also, if $\chi=\omega_\alpha$, $L^{S}(s-s_{\alpha},\omega_{\alpha}^{-1}\chi)$ has a pole at $s=1+s_{\alpha}$. Finally, the other L-functions are holomorphic for $Re(s)>1$, and all of the L-functions in \eqref{alpha} are non-zero for $Re(s)\geq 1$. Therefore, $L^{S}(s,R_{7}(\Pi_{\alpha}) \times \chi)$ has a simple pole at $1+s_{\alpha}$, if $s_{\alpha} \neq 0$ and has a double pole at $s = 1$ otherwise.
    
    In \eqref{beta}, since $\sigma_{\beta}$ is cuspidal and unitary, by Gelbart-Jacquet $L^{S}(s,Ad^{2}(\sigma_{\beta})\times \chi)$ has at most a simple pole at $s=1$. Moreover, the other two L-functions on the right hand side of \eqref{beta} are both holomorphic for $Re(s)>0$ and are non-zero for $s=1$. Therefore, $L^{S}(s, R_{7}(\Pi_{\beta})\times\chi)$ at most has a simple pole at $s=1$. 
    
    The discrepancy between the poles of $L^S(s,R_7(\Pi_\alpha)\times\chi)$ and $L^S(s,R_7(\Pi_\beta)\times\chi)$ implies that $\Pi_\alpha$ and $\Pi_\beta$ are not equivalent almost everywhere.

    To deal with representations induced from Borel subgroup, consider the degree 14 L-function $L^{S}(s,\Pi\times \sigma, R_{7}\otimes \rho_2)$, where $\sigma$ is a representation of $GL_2(\mathbb{A}_f)$ and $\rho_2$ is the standard representation of $GL_2$. We can form
    \begin{align*}
        L^{S}(s,\Pi_{\beta}\times \sigma_{\beta}, R_{7}\otimes \rho_2) =& L^{S}(s+s_{\beta},\sigma_{\beta}\times \sigma_{\beta}).L^{S}(s-s_{\beta},\sigma_{\beta}^{\vee} \times \sigma_{\beta})\\&.L^{S}(s,\sigma_{\beta}, Ad^3)L^{S}(s,\sigma_{\beta}).\numberthis \label{beta-borel}
    \end{align*}
    First, $L^{S}(s-s_{\beta},\sigma_{\beta}\times \tilde{\sigma}_{\beta})$ has a pole at $s = 1+s_{\beta}$ and all the other L-functions are non-vanishing at $s=1+s_{\beta}$. If $s_{\beta} = 0$, $L^{S}(s,\sigma_{\beta}, Ad^3)$ can have a simple pole at $s=1$ if $\sigma_{\beta}$ is monomial but it is still non-vanishing at $s=1$ \cite{KSannals}. Therefore,  $L^{S}(s,\Pi_{\beta}\times \sigma_{\beta}, R_{7}\otimes \rho_2)$ at least has one simple pole at $s=1+s_{\beta}$.
    
    However,  $L^{S}(s,\Pi_{0}\times \sigma_{\beta}, R_{7}\otimes \rho_2)$ is just the product of seven L-functions of various character twists of $\sigma_{\beta}$ which are all entire as $\sigma_{\beta}$ is cuspidal. This concludes that $\Pi_0$ and $\Pi_{\beta}$ are not equivalent almost everywhere.

    In the same way, by considering  $L^{S}(s,\Pi_{\alpha}\times \sigma_{\alpha}, R_{7}\otimes \rho_2)$ we can show that $\Pi_0$ is not equivalent almost everywhere to $\Pi_{\alpha}$.
\end{proof}

Let $\sigma_f$ and $\sigma'_f$ be tempered unitary representations of $M_{\beta}(\mathbb{A}_f)$, and $s,s'>0$. For any irreducible sub-quotients $\Pi$ and $\Pi'$ of $I_{P_{\beta}(\mathbb{A}_f)}^{G(\mathbb{A}_f)}(s,\sigma)$ and $I_{P_{\beta}(\mathbb{A}_f)}^{G(\mathbb{A}_f)}(s',\sigma')$. By \cite[Proposition 2.3.3.]{mundy}, if $\Pi$ is equivalent to $\Pi'$ almost everywhere then $s=s'$ and $\sigma_f \simeq \sigma'_f$. Using the same reasoning, we can derive similar results for $M_{\alpha}$ as well.

\subsection{Manin-Drinfeld principle}\label{sec:MD}
Let $P=P_{\beta}$, and $S$ be a finite set of places as usual. let $\mathcal{H}^{G,S} = \otimes_{p\notin S} \mathcal{H}^G$, same as \cite[Section 2.3.5.]{HR}. Assume $^{a}\!\operatorname{Ind}_{P(\mathbb{A}_f)}^{G(\mathbb{A}_f)}(\sigma_f)$ has nonzero $K_f$-fixed vectors with dimension $\mathsf{k}$, denoted by:
\[
    I_{b}^{S}(\sigma_f)_{w_{cl}} := ^{a}\!\operatorname{Ind}_{P(\mathbb{A}_f)}^{G(\mathbb{A}_f)}(H^{b_2^{F}}(\underline{S}^{M}, \mathcal{M}_{w_{cl}.\lambda}(\sigma_f))^{K_f} .
\]
Similarly,
\[
    I_{b}^{S}(\tilde{\sigma_f}(5))_{w'_{cl}} := ^{a}\!\operatorname{Ind}_{P(\mathbb{A}_f)}^{G(\mathbb{A}_f)}(H^{b_2^{F}}(\underline{S}^{M}, \mathcal{M}_{w'_{cl}.\lambda}(\tilde{\sigma_f}(5)))^{K_f}.
\]
We can define this $K_f$-fixed spaces for contragredient modules in top-degree:
\[
    I_{t}^{S}(\sigma_f)_{w'_{cl}} := ^{a}\!\operatorname{Ind}_{P(\mathbb{A}_f)}^{G(\mathbb{A}_f)}(H^{t_2^{F}}(\underline{S}^{M}, \mathcal{M}_{w'_{cl}.\lambda}(\sigma_f))^{K_f},
\]
and,
\[
    I_{t}^{S}((\tilde{\sigma_f}(5)))_{w_{cl}} := ^{a}\!\operatorname{Ind}_{P(\mathbb{A}_f)}^{G(\mathbb{A}_f)}(H^{t_2^{F}}(\underline{S}^{M}, \mathcal{M}_{w_{cl}.\lambda}(\tilde{\sigma_f}(5)))^{K_f}.
\]
Now, we proceed to state and prove a strong form of the Manin-Drinfield principle for our case:

\begin{theorem}[Mannin-Drinfeld Principle for the case $P_{\beta}$]\label{thm:mnnb}
Using the notation introduced above.
\begin{enumerate}
    \item The $\mathcal{H}^{G,S}$-modules $I_{b}^{S}(\sigma_f)_{w_{cl}}$, $I_{b}^{S}(\tilde{\sigma_f}(5))_{w'_{cl}}$, $I_{t}^{S}(\sigma_f)_{w'_{cl}}$, and $I_{t}^{S}((\tilde{\sigma_f}(5)))_{w_{cl}}$ are all finite dimensional $E$-vector spaces of the same dimension which is denoted by $\mathsf{k}$.
    
    \item The direct sum
    \[
        I_{b}^{S}(\sigma_f)_{w_{cl}}\oplus I_{b}^{S}(\tilde{\sigma_f}(5))_{w'_{cl}}
    \]
    is a $2\mathsf{k}$-dimensional $E$-vector space that is isotypic in $H^{q_b}(\partial \underline{S}^{G},\mathcal{M}_{\lambda})^{K_f}$. There is also a $\mathcal{H}^{G,S}$-equivariant projection:
    \[
        \mathfrak{R}^b_{\sigma_f}: H^{q_b}(\partial S^{G},\mathcal{M}_{\lambda}) \to I_{b}^{S}(\sigma_f)_{w_{cl}}\oplus I_{b}^{S}(\tilde{\sigma_f}(5))_{w'_{cl}}
    \]
    
    \item The sum
    \[
        I_{t}^{S}(\tilde{\sigma_f}(5))_{w'_{cl}}\oplus I_{t}^{S}(\sigma_f)_{w_{cl}}
    \]
    is a $2\mathsf{k}$-dimensional $E$-vector space that is isotypic in $H^{q_t}(\partial \underline{S}^{G},\mathcal{M}_{\lambda})^{K_f}$. There is also a $\mathcal{H}^{G,S}$-equivariant projection:
    \[
        \mathfrak{R}^t_{\sigma_f}: H^{q_t}(\partial S^{G},\mathcal{M}_{\lambda}) \to I_{t}^{S}(\sigma_f)_{w_{cl}}\oplus I_{t}^{S}(\tilde{\sigma_f}(5))_{w'_{cl}}
    \]
\end{enumerate}
\end{theorem}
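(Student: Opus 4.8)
The plan is to follow the template of \cite[Theorem 5.12]{HR}, with the multiplicity-one input replaced by the results of Section \ref{sec:cohinbound} (Proposition \ref{9} and the $M_\beta$-analogue of \cite[Proposition 2.3.3]{mundy}). First, for part (1), recall from Proposition \ref{prop:inducedbeta} that each of the four modules $I_b^S(\sigma_f)_{w_{cl}}$, $I_b^S(\tilde\sigma_f(5))_{w'_{cl}}$, $I_t^S(\sigma_f)_{w'_{cl}}$, $I_t^S(\tilde\sigma_f(5))_{w_{cl}}$ genuinely occurs as a Hecke summand of the boundary cohomology in the degree indicated (bottom degree $q_b$ or top degree $q_t$). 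Finite-dimensionality over $E$ is immediate since $K_f$ is fixed and the strongly inner cohomology $H^{b_2^F}_{!!}(\underline S^M,\mathcal M_{w_{cl}.\lambda}(\sigma_f))^{K_f}$ is finite-dimensional; I would \emph{define} $\mathsf k$ to be the common dimension of $\operatorname{Ind}$ of this space and then argue the other three have the same dimension. For the equality of dimensions: the algebraic induction functor $^a\!\operatorname{Ind}_{P(\mathbb A_f)}^{G(\mathbb A_f)}$ commutes with taking $K_f$-invariants up to a fixed finite multiplicity determined by $P(\mathbb A_f)\backslash G(\mathbb A_f)/K_f$, so it suffices to compare the four underlying Levi cohomology groups. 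By Poincaré duality for $S^M$ (a manifold of dimension $4\mathsf r - 1$), $H^{t_2^F}(\underline S^M,\mathcal M_{w'.\lambda}(\sigma_f))$ is dual to $H^{b_2^F}(\underline S^M,\mathcal M_{-w_0(w'.\lambda)}(\tilde\sigma_f))$, and the identities $w'.\lambda = \tilde\mu - 5\delta_2$, $(w')'.\lambda^\vee = \mu$ from Section \ref{sec:indbs} together with the self-duality $\mathcal M_\lambda^\vee \simeq \mathcal M_\lambda$ match up the four weights; combined with $\dim H^{b_2^F}(\underline S^M,\mathcal M_\mu(\sigma_f)) = \dim H^{b_2^F}(\underline S^M,\mathcal M_\mu(\tilde\sigma_f))$ (the two strongly inner summands have the same dimension, as $\tilde\sigma_f$ is cohomological of the dual weight), this forces all four to have dimension $\mathsf k$.

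For part (2): the key point is that the two bottom-degree summands $I_b^S(\sigma_f)_{w_{cl}}$ and $I_b^S(\tilde\sigma_f(5))_{w'_{cl}}$ are \emph{isotypic} as $\mathcal H^{G,S}$-modules, i.e. they share the same system of Hecke eigenvalues away from $S$. This is exactly the statement that $^a\!\operatorname{Ind}_{P(\mathbb A_f)}^{G(\mathbb A_f)}(\sigma_f)$ and $^a\!\operatorname{Ind}_{P(\mathbb A_f)}^{G(\mathbb A_f)}(\tilde\sigma_f(5))$ are equivalent almost everywhere, which follows from the existence of the nonzero unramified intertwining operator $T_{st,E}$ constructed in Proposition \ref{thm:nonarchint} (the Langlands–Shahidi constant term produces precisely this intertwining). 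Conversely, Proposition \ref{9} together with the $M_\beta$-refinement of \cite[Proposition 2.3.3]{mundy} guarantees that \emph{no other} summand of $H^{q_b}(\partial\underline S^G,\mathcal M_\lambda)^{K_f}$ — whether coming from the Borel, from $P_\alpha$, or from $P_\beta$ with a different cuspidal datum or twist — can be almost-everywhere equivalent to these two. Hence the full $\mathcal H^{G,S}$-isotypic component attached to this Hecke character system inside $H^{q_b}(\partial\underline S^G,\mathcal M_\lambda)^{K_f}$ is \emph{exactly} $I_b^S(\sigma_f)_{w_{cl}}\oplus I_b^S(\tilde\sigma_f(5))_{w'_{cl}}$, of dimension $2\mathsf k$. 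The projection $\mathfrak R^b_{\sigma_f}$ is then the canonical projection onto an isotypic component of a semisimple module under the commutative algebra $\mathcal H^{G,S}$; semisimplicity of $H^{q_b}(\partial\underline S^G,\mathcal M_\lambda)^{K_f}$ under $\mathcal H^{G,S}$ follows because $E$ is large enough and the boundary cohomology decomposes via Kostant's theorem and the isotypical decomposition of inner cohomology of the Levi factor as in Section \ref{sec:inner}.

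Part (3) is the Poincaré-dual mirror of part (2): applying the duality $H^{q_t}(\partial\underline S^G,\mathcal M_\lambda) \cong H^{q_b}(\partial\underline S^G,\mathcal M_\lambda^\vee)^\vee$ and using $\mathcal M_\lambda^\vee \simeq \mathcal M_\lambda$, the summands in top degree are the contragredients of those in bottom degree, which by Proposition \ref{prop:inducedbeta}(3),(4) are $^a\!\operatorname{Ind}(\tilde\sigma_f(5))$ and $^a\!\operatorname{Ind}(\sigma_f)$ attached to $w'_{cl}$ and $w_{cl}$ respectively; the same multiplicity-one argument localizes the isotypic component, and $\mathfrak R^t_{\sigma_f}$ is again the Hecke-isotypic projection. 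I expect the main obstacle to be the bookkeeping in part (2): verifying rigorously that the \emph{only} pair of summands realizing this particular Hecke eigensystem in $H^{q_b}(\partial\underline S^G,\mathcal M_\lambda)^{K_f}$ is the stated one. This requires running through every conjugacy class of proper parabolic ($B$, $P_\alpha$, $P_\beta$) and every Kostant representative contributing in degree $q_b$, and for each checking — via the $L$-function pole arguments of Proposition \ref{9} and the rigidity of the cuspidal datum — that it is distinguished from our two summands; the strong purity and combinatorial constraints of Section \ref{sec:comb} are what make this finite check tractable, since they pin down which $w_{cl}$ can occur.
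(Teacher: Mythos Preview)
Your proposal is correct and, for parts (2) and (3), follows exactly the paper's argument: the intertwining operator of Proposition~\ref{thm:nonarchint} forces the two summands to share Hecke eigenvalues away from $S$, while Proposition~\ref{9} and the $M_\beta$-rigidity statement from \cite{mundy} rule out any other contribution, yielding the isotypic splitting and the projection $\mathfrak{R}^b_{\sigma_f}$ (respectively $\mathfrak{R}^t_{\sigma_f}$).

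For part (1), however, your route differs from the paper's. You propose to compare the four Levi cohomology groups via Poincar\'e duality on $S^M$ together with the weight identities $w'_{cl}.\lambda = \tilde\mu - 5\delta_2$ and $(w'_{cl})'.\lambda = \mu$. The paper instead argues purely locally and representation-theoretically: for each place $v$ one shows
\[
\dim\bigl({}^a\operatorname{Ind}_{P_v}^{G_v}(\sigma_v)^{K_v}\bigr) \;=\; \dim\bigl({}^a\operatorname{Ind}_{P_v}^{G_v}(\tilde\sigma_v(5))^{K_v}\bigr)
\]
by combining the Iwahori decomposition $K_v\cap P_v = (K_v\cap M_v)(K_v\cap U_v)$ with Frobenius reciprocity and Mackey theory, reducing to the observation that $\sigma_v$ and $\tilde\sigma_v(5)$ have the same dimension of $(K_v\cap M_v)$-fixed vectors (contragredient and unramified character twists preserve this). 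The Iwasawa decomposition then gives the closed formula
\[
\dim\bigl({}^a\operatorname{Ind}_{P_v}^{G_v}(\tau)^{K_v}\bigr) = |P_v\backslash G_v/K_v|\cdot \dim\bigl(V_\tau^{K_v\cap P_v}\bigr).
\]
This is more elementary than your duality argument and avoids tracking the weight combinatorics; on the other hand, your approach makes the interplay between $w_{cl}$ and $w'_{cl}$ more transparent and connects naturally to how part (3) is deduced from part (2). Either method is valid.
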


\begin{proof}To prove the first part, for any place $v$ we need to show:
    \[
        dim(^{a}\operatorname{Ind}^{G_v}_{P_v}(\sigma_{v})^{K_v}) = dim(^{a}\operatorname{Ind}^{G_v}_{P_v}(\tilde{\sigma}_{v}(5))^{K_v}).
    \]
    Using Iwahori decomposition, $K_{v}\cap P_v = (K_v \cap M_{P,v}).(K\cap U_{P,v})$. Moreover, $\sigma_v$ and $\tilde{\sigma}_v(5)$ are representations of $M_{P,v}$ which we algebraically induced to $P_v$. Also  $\sigma_v \simeq \tilde{\sigma}_v(5)$ and we have:
    \begin{equation}
        \dim (V_{\sigma_v}^{K_v\cap P_v}) = \dim (V_{\sigma_v}^{K_v\cap M_{P,v}}) = \dim (V_{\tilde{\sigma}_v(5)}^{K_v\cap M_{P,v}}) = \dim (V_{\tilde{\sigma}_v(5)}^{K_v\cap P_v}).
    \end{equation}
    Now by using Frobenius reciprocity and Mackey Theory, for $\tau$ is either $\sigma_v$ or $\tilde{\sigma}_v(5)$ we have:
    \begin{equation}
        Hom_{G}(\operatorname{Ind}_{K_v}^{G_v}(\mathbf{1}), ^{a}\operatorname{Ind}_{P_v}^{G_v}(\tau)) = \bigoplus_{x\in P_v \backslash G_v/K_v} Hom_{K_v\cap x^{-1}P_{v} x} (\mathbf{1}, \tau^{x}),
    \end{equation}
    The notation $\operatorname{Ind}_{K_v}^{G}(\mathbf{1})$ stands for the compact induction of the trivial representation of $K_v$ to $G_v$. By conjugating the right hand side by $x$, we will have :
    \begin{multline*}
                Hom_{G}(\operatorname{Ind}_{K_v}^{G_v}(\mathbf{1}), ^{a}\operatorname{Ind}_{P_v}^{G_v}(\tau)) = \bigoplus_{x\in P_v \backslash G_v/K_v} Hom_{x K_{v} x^{-1} \cap P_v} (\mathbf{1}, \tau)\\ = \bigoplus_{x\in P_v \backslash G_v/K_v} V_{\tau}^{x K_{v} x^{-1} \cap P_v}.
    \end{multline*}

    By Iwasawa decomposition $G_v = P_v.G_v(\mathcal{O}_v)$, for any representative $x\in P_v \backslash G_v/K_v$ from $G(\mathcal{O}_v)$ we will have:
    \begin{equation}
        \dim(^{a}\operatorname{Ind}^{G_v}_{P_v}(\tau)^{K}) = |P_v \backslash G_v/K_v| \dim (V_{\tau}^{K_v\cap P_v}).
    \end{equation}

    Part one of the theorem implies that  $I_{b}^{S}(\sigma_f)_{w_{cl}}\oplus I_{b}^{S}(\tilde{\sigma_f}(5))_{w'_{cl}}$ is a $2\mathsf{k}$-dimensional $E$-vector space. Also, by Proposition \ref{prop:inducedbeta} and \ref{9} we can see that $I_{b}^{S}(\sigma_f)_{w_{cl}}\oplus I_{b}^{S}(\tilde{\sigma_f}(5))_{w'_{cl}}$ is isotypic in  $H^{q_b}(\partial \underline{S}^{G},\mathcal{M}_{\lambda})^{K_f}$, which implies part (2). The third part can be shown similarly to the second part using Proposition \ref{prop:inducedbeta}.
\end{proof}

\begin{remark}
    Again, all of the results above can be easily translated to the case $P_{\alpha}$. The statement of the Manin-Drinfeld principle for the case $P_{\alpha}$, is the same as the case $P_{\beta}$ changing the group notation to the one coming from $P_{\alpha}$ and changing the Tate twist from $5$ to $3$.
\end{remark}

\section{Eisenstein cohomology revisited}
\subsection{Poincare duality}
Following \cite[Section 6.1.]{HR}, we have the following Poincare duality pairing for sheaf cohomology on $S^{G}$:
 \[
    H^{\bullet}(S^{G},\mathcal{M}_{\lambda}) \times H_{c}^{\mathsf{d}-\bullet}(S^{G},\mathcal{M}_{\lambda^{\vee},E}) \to E.
 \]
 The dimension of the boundary is $\operatorname{dim}_{\mathbb{R}}(\partial S^{G}) = \mathsf{d}-1 = q_b+q_t$. We can form the Poincare duality for the boundary as well:
 \[
    H^{\bullet}(\partial S^{G},\mathcal{M}_{\lambda}) \times H^{\mathsf{d}-1-\bullet}(\partial S^{G},\mathcal{M}_{\lambda^{\vee},E}) \to E.
 \]
 Using the fundamental exact sequence \ref{les}, we have the following maps:
 \begin{align*}
     H^{\bullet}(S^{G},\mathcal{M}_{\lambda}) \xrightarrow[]{\mathfrak{r}^{*}} H^{\bullet}(\partial S^{G},\mathcal{M}_{\lambda}),\\
     H^{\mathsf{d}-1-\bullet}(\partial S^{G},\mathcal{M}_{\lambda}) \xrightarrow[]{\mathfrak{d}^{*}} H_{c}^{\mathsf{d}-\bullet}(S^{G},\mathcal{M}_{\lambda}) .
 \end{align*}
Therefore, for any $\xi \in  H^{\bullet}(S^{G},\mathcal{M}_{\lambda^{\vee},E})$ and $\varsigma\in H^{\mathsf{d}-1-\bullet}(\partial S^{G}_{K_f},\mathcal{M}_{\lambda^{\vee},E})$:
\[
    (\mathfrak{r}^{*}(\xi),\varsigma) = (\xi,\mathfrak{d}^{*}(\varsigma)).
\]
For any $\mathfrak{r}^{*}(\xi)\in H^{\bullet}_{Eis}(\partial S^{G}, \mathcal{M}_{\lambda, E})$ and any $\mathfrak{r}^{*}(\varsigma)\in H^{\mathsf{d}-1-\bullet}_{Eis}(\partial S^{G}, \mathcal{M}_{\lambda, E})$, using Poincare duality pairing we have:
\[
    (\mathfrak{r}^{*}(\xi),\mathfrak{r}^{*}(\varsigma))=(\xi,\mathfrak{d}^{*}\mathfrak{r}^{*}(\varsigma)) = (\xi,0)=0.
\]
By the non-degeneracy of the Poincare pairing $\xi \in Ker(\mathfrak{d}^{*})=Im(\mathfrak{r}^{*})$ and $\xi \in H^{\bullet}_{Eis}(\partial S^{G}, \mathcal{M}_{\lambda, E})$, we have:
\[
    H^q_{Eis}(\partial S^{G}, \mathcal{M}_{\lambda, E}) = H^{\mathsf{d}-1-q}_{Eis}(\partial S^{G}, \mathcal{M}_{\lambda, E})^{\perp}.
\]
So under the Poincare duality pairing, the Eisenstein cohomology is a maximal isotropic subspace of the boundary cohomology.

\subsection{Rank-one Eisensten cohomology}
Now we can project the bottom or top-degree cohomology of the limit of the symmetric space into an isotypic component that corresponds to them:
\begin{align*}
\begin{tikzcd}
    H^{q_b}(S^{G},\mathcal{M}_{\lambda}) \arrow[d,"\mathfrak{r}^{*}"]\\
    H^{q_b}(\partial_{P}S^{G},\mathcal{M}_{\lambda})\arrow[d,"\mathfrak{R}^b_{\sigma_f}"]\\ 
    I_{b}^{S}(\sigma_f)_{w_{cl}}\oplus I_{b}^{S}(\tilde{\sigma_f}(5))_{w'_{cl}}
\end{tikzcd} &,&
\begin{tikzcd}
    H^{q_t}(S^{G},\mathcal{M}_{\lambda}) \arrow[d,"\mathfrak{r}^{*}"]\\
    H^{q_t}(\partial_{P}S^{G},\mathcal{M}_{\lambda})\arrow[d,"\mathfrak{R}^t_{\sigma_f}"]\\ 
    I_{t}^{S}(\tilde{\sigma_f}(5))_{w'_{cl}}\oplus I_{t}^{S}(\sigma_f)_{w_{cl}}
\end{tikzcd} .
\end{align*}
Fix the following notation:
\begin{align}\label{frakj}
    \mathfrak{J}^{b}(\sigma_f) &:= \mathfrak{R}^b_{\sigma_f}(H^{q_b}_{Eis}(\partial S^{G}, \mathcal{M}_{\lambda, E})),\\
    \mathfrak{J}^{t}(\sigma_f)^{\vee} &:= \mathfrak{R}^t_{\sigma_f}(H^{q_t}_{Eis}(\partial S^{G}, \mathcal{M}_{\lambda, E})).
\end{align}
Therefore, we can state the main results on rank-one Eisenstein cohomology in our case:
\begin{theorem}\label{thm:eis}
    Let have the notation as Theorem \ref{thm:mnnb}, then:
    \begin{enumerate}
        \item $\mathfrak{J}^{b}(\sigma_f)$ is an $E$-subspace of $I_{b}^{S}(\sigma_f)_{w_{cl}}\oplus I_{b}^{S}(\tilde{\sigma_f}(5))_{w'_{cl}}$ of  dimension $\mathsf{k}$.
        \item $\mathfrak{J}^{t}(\sigma_f)^{\vee}$ is an $E$-subspace of $I_{t}^{S}(\tilde{\sigma_f}(5))_{w'_{cl}}\oplus I_{t}^{S}(\sigma_f)_{w_{cl}}$ of dimension $\mathsf{k}$.
    \end{enumerate}
\end{theorem}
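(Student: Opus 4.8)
The plan is to follow the strategy of \cite[Section 6]{HR} adapted to the $G_2$ setting, using the Manin–Drinfeld decomposition of Theorem \ref{thm:mnnb} together with the Poincar\'e duality pairing on the boundary. First I would observe that, by the long exact sequence \eqref{les}, the Eisenstein cohomology $H^{q_b}_{Eis}(\partial S^{G},\mathcal{M}_{\lambda,E})$ is the image of $\mathfrak{r}^{*}$ in degree $q_b$, so $\mathfrak{J}^{b}(\sigma_f)=\mathfrak{R}^{b}_{\sigma_f}(\operatorname{image}(\mathfrak{r}^{*}))$ is automatically an $E$-subspace of the $2\mathsf{k}$-dimensional isotypic space $I_{b}^{S}(\sigma_f)_{w_{cl}}\oplus I_{b}^{S}(\tilde{\sigma_f}(5))_{w'_{cl}}$; the only content is the dimension count. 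The key input is that the Eisenstein cohomology is a \emph{maximal isotropic} subspace of $H^{\bullet}(\partial S^{G},\mathcal{M}_{\lambda,E})$ under the Poincar\'e pairing, as established in the preceding subsection. Restricting this pairing to the $\sigma_f$-isotypic piece in degree $q_b$ paired against the (contragredient) $\sigma_f$-isotypic piece in degree $q_t=\mathsf{d}-1-q_b$, I get a non-degenerate pairing of the $2\mathsf{k}$-dimensional space $I_{b}^{S}(\sigma_f)_{w_{cl}}\oplus I_{b}^{S}(\tilde{\sigma_f}(5))_{w'_{cl}}$ with $I_{t}^{S}(\tilde{\sigma_f}(5))_{w'_{cl}}\oplus I_{t}^{S}(\sigma_f)_{w_{cl}}$, and $\mathfrak{J}^{b}(\sigma_f)$ and $\mathfrak{J}^{t}(\sigma_f)^{\vee}$ are mutual annihilators; isotropy forces $\dim \mathfrak{J}^{b}(\sigma_f)\le \mathsf{k}$.

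The reverse inequality $\dim\mathfrak{J}^{b}(\sigma_f)\ge \mathsf{k}$ is the substantive step. For this I would argue as in \cite{HR}: the composite $\mathfrak{R}^{b}_{\sigma_f}\circ\mathfrak{r}^{*}$ restricted to the $\sigma_f$-isotypic component of the inner-or-ambient cohomology detects enough classes. More precisely, the $\sigma_f$-isotypic subspace of $H^{q_b}(S^{G},\mathcal{M}_{\lambda})$ maps onto a subspace of $I_{b}^{S}(\sigma_f)_{w_{cl}}\oplus I_{b}^{S}(\tilde{\sigma_f}(5))_{w'_{cl}}$ which, by Theorem \ref{thm:mnnb}(2) and the fact that the inner cohomology in this degree and isotypic type vanishes (because the relevant Kostant weight $w_{cl}.\lambda$ only contributes to the \emph{boundary} in bottom degree, not to $H^{\bullet}_{!}$), must project isomorphically onto one of the two $\mathsf{k}$-dimensional summands—or more precisely, onto a $\mathsf{k}$-dimensional subspace transverse to the kernel. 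Concretely: a class in $H^{q_b}(S^{G})$ restricting to the $\sigma_f$-summand has trivial image in $H^{q_b}_{c}$ by the argument that $I_{b}^{S}(\sigma_f)_{w_{cl}}$ is not in the image of $\mathfrak{i}^{\bullet}$ (it is genuinely Eisenstein), hence $\mathfrak{J}^{b}(\sigma_f)$ surjects onto at least one $\mathsf{k}$-dimensional summand. Combining with the isotropy bound gives $\dim \mathfrak{J}^{b}(\sigma_f)=\mathsf{k}$, and the same argument verbatim (swapping the roles of $q_b$ and $q_t$ and of $w_{cl}$ and $w'_{cl}$) handles part (2).

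The main obstacle I anticipate is making the lower-bound step rigorous: one must show that no nonzero class of $\mathfrak{J}^{b}(\sigma_f)$ can come from interior cohomology, i.e.\ that the $\sigma_f$-isotypic part of $H^{q_b}_{!}(S^{G},\mathcal{M}_{\lambda})$ is zero in this degree, which requires knowing the range of degrees in which the ambient inner cohomology for $G=Res_{F/\mathbb{Q}}(G_2)$ is supported and checking that $q_b=6\mathsf{r}$ lies strictly below it (or that the relevant isotype does not occur there for temperedness/weight reasons). This is where the specific numerology of $G_2$—the dimension $14\mathsf{r}$ of $S^G$, the bottom degree $q_b=6\mathsf{r}$, and the balancedness of $w_{cl}$ guaranteeing the Kostant weight is ``far from the middle''—must be invoked, paralleling \cite[Section 6.2--6.3]{HR}. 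A secondary technical point is checking Hecke-equivariance and rationality (working over $E$, not $\mathbb{C}$) of the projections $\mathfrak{R}^{b}_{\sigma_f},\mathfrak{R}^{t}_{\sigma_f}$ and of the duality pairing, but this follows formally from Theorem \ref{thm:mnnb} and the $E$-rationality of Poincar\'e duality. I would not expect to need any new automorphic input beyond what is already assembled (the Manin–Drinfeld principle, multiplicity one from Proposition \ref{9}, and the structure of $H^{\bullet}(S^{M})$ for $GL_2$ over $F$).
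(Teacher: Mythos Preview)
Your upper bound $\dim_E \mathfrak{J}^{b}(\sigma_f)\le \mathsf{k}$ via the maximal-isotropy property of Eisenstein cohomology under the Poincar\'e pairing is correct and is exactly what the paper does (citing \cite[6.2.2.2]{HR}). The gap is in your lower bound.

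For $\dim_E \mathfrak{J}^{b}(\sigma_f)\ge \mathsf{k}$ the paper does \emph{not} argue cohomologically via vanishing of inner cohomology in degree $q_b$; it uses genuine automorphic input that you explicitly say you do not expect to need. After base-change via $\iota:E\to\mathbb{C}$, the paper constructs explicit classes in $\mathfrak{J}^{b}({}^{\iota}\sigma_f)$ by applying the Eisenstein summation $Eis_P$ and then the constant term $\mathcal{F}_P$: Langlands' constant term theorem gives $\mathcal{F}_P\circ Eis_P(s,f)=f+T_{st}(s)f$, so at $s=-\tfrac{5}{2}$ one obtains
\[
\mathfrak{J}^{b}({}^{\iota}\sigma_f)\ \supset\ \bigl\{\bigl(\xi,\,T_{st,\iota}(-\tfrac{5}{2},{}^{\iota}\sigma)\xi\bigr)\ :\ \xi\in I_{b}^{S}({}^{\iota}\sigma_f)_{w_{cl}}\bigr\},
\]
which has dimension $\mathsf{k}$ because the local intertwining operators are nonzero isomorphisms at the point of evaluation (Propositions~\ref{prop:indirrbeta}, \ref{archinterthm}, \ref{thm:nonarchint}). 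This step also requires the holomorphy of $Eis_P(s,f)$ at $s=-\tfrac{5}{2}$, which the paper checks separately using analytic information about the $L$-functions (the combinatorial lemma forces $\ell_2(\mu)\ge 1$, ruling out the pole). Your proposed substitute---that the $\sigma_f$-isotype does not occur in $H^{q_b}_{!}(S^G,\mathcal{M}_\lambda)$, hence $\mathfrak{J}^{b}(\sigma_f)$ ``surjects onto a $\mathsf{k}$-dimensional summand''---does not follow: even granting that vanishing, the long exact sequence only tells you the isotypic part of $H^{q_b}(\partial S^G)$ injects into $H^{q_b+1}_c$, not that any particular subspace lies in the image of $\mathfrak{r}^*$. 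Maximal isotropy alone gives $\dim\mathfrak{J}^{b}+\dim\mathfrak{J}^{t}=2\mathsf{k}$ but no reason for the split to be $\mathsf{k}+\mathsf{k}$ rather than, say, $0+2\mathsf{k}$; the Eisenstein construction is what breaks this symmetry. (This is also how \cite[Section~6.2]{HR} proceeds, so your reading of that reference is off.)
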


To prove this theorem, we first need to establish the cohomological interpretation of Langlands' constant theorem in our case. This will allow us to construct the cohomology classes in $\mathfrak{J}^b(\sigma_f)$ and $\mathfrak{J}^t(\sigma_f)^\vee$.

Fix an embedding $\iota:E \to \mathbb{C}$, and once again with the abuse of notation let $\sigma := {^{\iota}}\!\sigma$ be a cuspidal automorphic representation of $M(\mathbb{A}) := M_{\beta}(\mathbb{A})$  and $\sigma_f$ be its finite part. In the previous subsection, we showed that at the evaluation point $I^{G}_{P}(s,\sigma)$ equals to the algebraic induction $^{a}\!\operatorname{Ind}^{G(\mathbb{A})}_{P(\mathbb{A})}(\sigma)$ where its finite part appears in the boundary cohomology. We can repeat the same argument for $I^{G}_{P}(-s,\tilde{\sigma}(5))$ and $^{a}\!\operatorname{Ind}^{G(\mathbb{A})}_{P(\mathbb{A})}(\tilde{\sigma}(5))$ at evaluation point.

By  Langlands constant term theorem, we know that to find the poles of the Eisenstein series we should calculate its constant term along $P$. Then forming the constant term along $P$ for any $\phi \in \mathcal{C}^{\infty}(G(\mathbb{Q})\backslash G(\mathbb{A}))$:
\begin{equation}\label{def:constant}
    \mathcal{F}_{P}(\phi)(\underline{g}) := \int_{U_{P}(\mathbb{Q})\backslash U_{P}(\mathbb{A})} \phi(\underline{u}\underline{g})d\underline{u},
\end{equation}
where underline indicates an element of an adelic group, provides us with a map
\[\mathcal{F}_{P} : \mathcal{C}^{\infty}(G(\mathbb{Q})\backslash G(\mathbb{A})) \to \mathcal{C}^{\infty}(P(\mathbb{Q})\backslash G(\mathbb{A})).\] 

\begin{remark}
    Note that the global measure $d\underline{u}$ on $U(\mathbb{A})=U_{P_0}(\mathbb{A}_F)$ that appeared in the integral form of the intertwining operator and the constant term, is a product over the coordinates of $U_{P_0}$ of the additive measure $d\underline{x}$ on $\mathbb{A}_F$ where $d\underline{x} = \prod_v dx_v$ is a product of local additive measures $dx_v$ on $F_{v}$. As we mentioned for the intertwining case, for finite place $v$ the measure $dx_v$ is normalized by $ vol(\mathcal{O}_v) = 1$. For archimedean place $v$, the natural choice for $dx_v$  is nothing but $2|dx_1 dx_2|$ where $dx_i$ is the ordinary Lebesgue measure on $\mathbb{R}$, but we want to normalize it by $vol(U_{P_0}(F)\backslash U_{P_0}(\mathbb{A}_F))=1$. Then
    \[vol_{d\underline{u}}(U_{P_0}(F)\backslash U_{P_0}(\mathbb{A}_F)) = vol_{d\underline{x}}(F\backslash \mathbb{A}_F)^{dim(U_{P_0})} = vol_{d\underline{x}}(F\backslash \mathbb{A}_F)^{5} = |\delta_{F/\mathbb{Q}}|^{5/2},\]
    where $\delta_{F/\mathbb{Q}}$ is the absolute discriminant of $F$. Moreover, $|\delta_{F/\mathbb{Q}}|^{1/2}\in \mathbb{R}^{\times}/\mathbb{Q}^{\times}$ is independent of the enumeration and the choice of basis. We will define the global measure and by the abuse of notation, we set the measure $d\underline{u}$ as:
\[
     |\delta_{F/\mathbb{Q}}|^{-5/2}d\underline{u}.
\]
\end{remark}

\begin{theorem}[Langlands' Constatnt Term]
    For $f\in I^{G}_P (s,\sigma)$, we have:
    \[
        \mathcal{F}_{P}(Eis_{P}(s,f)) = f+T_{st}(s)f.
    \]
\end{theorem}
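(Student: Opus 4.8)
The statement is the classical computation of the constant term of an Eisenstein series attached to a maximal parabolic, specialized to the self-associate parabolics $P_\beta$ (and $P_\alpha$) of $G_2$ at the point of evaluation. The plan is to follow the standard unfolding argument, so I would first recall that by definition $\mathrm{Eis}_P(s,f)(\underline{g}) = \sum_{\gamma \in P(F)\backslash G(F)} f(\gamma \underline g)$ (for $\mathrm{Re}(s)\gg 0$, then by meromorphic continuation), and that the constant term along $P$ is $\mathcal{F}_P(\phi)(\underline g) = \int_{U_P(F)\backslash U_P(\mathbb{A})} \phi(\underline u\,\underline g)\,d\underline u$ with the normalized measure giving $U_P(F)\backslash U_P(\mathbb{A})$ volume one (so that the discriminant factor is absorbed, per the remark preceding the theorem). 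First I would substitute the sum into the integral and use the Bruhat decomposition $G(F) = \bigsqcup_{w \in W_M\backslash W / W_M} P(F) w P(F)$ to organize $P(F)\backslash G(F)$ into $P(F)$-orbits; since $P$ is maximal, there are exactly two relevant double cosets, the one containing the identity and the one containing the long Weyl element $w_0 = w_P$ (here $w_{\gamma_s}$ for $P_\beta$, $w_{\gamma_l}$ for $P_\alpha$), because these are the only $w \in W_M\backslash W/W_M$ for which $w^{-1}\alpha_P$ behaves appropriately — this is exactly the assertion quoted from Langlands that the constant term has only the trivial term and the $w_0$-term.

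The second step is to evaluate the contribution of each orbit. For the identity coset, the orbit is a single point and the $U_P$-integral over $U_P(F)\backslash U_P(\mathbb{A})$ of $f(\underline u\,\underline g)$ is just $f(\underline g)$ by left $U_P$-invariance of $f$ and the normalization of the measure; this yields the term $f$. For the big cell $P(F)w_0 P(F)$, I would parametrize $P(F)\backslash P(F)w_0 P(F)$ by $U_P^{w_0}(F)\backslash$-cosets — more precisely, writing $U_P^- = w_0^{-1} U_P w_0 \cap U_P$-type decomposition, the orbit is identified with $U_{\bar P}(F)$ where $\bar P$ is the opposite, and the sum over this orbit combined with the integral over $U_P(F)\backslash U_P(\mathbb{A})$ telescopes (the usual ``unfolding'': $\sum_{\gamma}\int_{\text{compact}} = \int_{\text{noncompact}}$) into $\int_{U_P^-(\mathbb{A})} f(w_0^{-1} n \underline g)\,dn$, which is precisely the definition of $T_{st}(s)f(\underline g)$ given earlier in Section~\ref{ch:LS}. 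At the point of evaluation the normalized and algebraic inductions coincide, so this is consistent with the target space. Combining the two contributions gives $\mathcal{F}_P(\mathrm{Eis}_P(s,f)) = f + T_{st}(s)f$.

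The main obstacle — and really the only subtle point beyond bookkeeping — is the convergence/meromorphic-continuation justification: the unfolding is literally valid only for $\mathrm{Re}(s)$ large, where both the Eisenstein series and the intertwining integral converge absolutely, and one must then invoke the meromorphic continuation of both sides (the Eisenstein series to all of $\mathbb{C}$ with finitely many simple poles in $\mathrm{Re}(s)\ge 0$, as recalled in Section~\ref{ch:LS}, and $T_{st}(s)$ likewise) to extend the identity, in particular to the point of evaluation $s = k_P$; one also needs that neither side has a pole there, which is guaranteed under the running hypothesis that $k_P$ and $k_P+1$ are critical, so the relevant $L$-factors are finite and nonzero. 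A secondary point requiring care is verifying that the double-coset representatives are exactly $\{1, w_0\}$ and that the ``extra'' Weyl elements contribute zero; this is the content of the cited result of Langlands \cite{Lang} for $G_2$ and should be invoked rather than reproved. The measure normalization with the $|\delta_{F/\mathbb{Q}}|^{5/2}$ factor must be tracked consistently so that the clean formula $f + T_{st}(s)f$ holds on the nose, but this is exactly what the preceding remark sets up. Finally, the identical argument applies verbatim to $P_\alpha$ with $5$ replaced by $3$ in the Tate twist and $w_{\gamma_s}$ replaced by $w_{\gamma_l}$.
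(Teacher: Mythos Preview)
The paper does not give a proof of this theorem; it is stated as a classical result of Langlands and invoked as such (the relevant citation to \cite{Lang} appears already in Section~\ref{ch:LS}, where the constant term formula is first recorded). Your sketch is the standard unfolding argument and is correct in outline: Bruhat decomposition of $P(F)\backslash G(F)$, the identity cell contributing $f$, the big cell unfolding to the intertwining integral $T_{st}(s)f$, and meromorphic continuation from the region of absolute convergence. One small clarification: the vanishing of contributions from Weyl elements other than $1$ and $w_0$ is not merely a combinatorial fact about double cosets but uses the \emph{cuspidality} of $\sigma$ --- the corresponding terms involve integrating a cusp form against a nontrivial unipotent subgroup of $M$, which kills them. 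You allude to this by deferring to Langlands, which is fine, but it is worth naming the mechanism.
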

For $v\notin S$, the vector $f_v = f_{v}^{0}$ is the normalized spherical vector, then by \cite{ShBook} and remark above, we have a cohomological constant term formula in the case $P=P_{\beta}$ as:
\begin{equation}\label{intertwinformula}
    \mathcal{F}_{P}\circ Eis_P(s,f) = |\delta_{F/\mathbb{Q}}|^{-5/2}\frac{L^{S}(s,Ad^{3}(\sigma))L^{S}(2s,\omega_{\sigma})}{L^{S}(1+s,Ad^{3}(\sigma))L^{S}(1+2s,\omega_{\sigma})} \bigotimes_{v\in S}T_{st,v}(s)(f_v)\otimes\bigotimes_{v\notin S}\tilde{f}^{0}_{v}.
\end{equation}

We need to study the holomorphy of the Eisenstein series at the point of the evaluation. Fix $\iota:E \to \mathbb{C}$. Let $\mu \in X_{00}^{*}(T_2\times E)$ with the purity weight $\mathbf{w}$ and $\sigma_f \in Coh_{!!}(M,\mu)$. The weight $\mu$ is said to be on the right side of the unitary axis if $\mathbf{w} \leq -5$, which implies that:
\[
\begin{cases}
-\frac{5}{2}-\frac{\mathbf{w}}{2} &\geq 0\\
-5-\mathbf{w} &\geq 0
\end{cases} \iff \begin{cases}
-\frac{5}{2}-a_{1}(\mu^{v}) &\geq 0,\\
-5-a_{2}(\mu^{v}) &\geq 0.
\end{cases}
\]

Moreover, $^{\iota}\sigma = ^{\iota}\sigma_{u}\otimes |\;|^{-\mathbf{w}/2}$ for a unitary cuspidal representation $^{\iota}\sigma^u$. Then
\[
    L^{S}(-\frac{5}{2},Ad^{3}(\sigma))L^{S}(-5,\omega_\sigma) = L^{S}(-\frac{5}{2}-a_1(\mu),Ad^{3}(\sigma^u))L^{S}(-5-a_2(\mu),\omega_{\sigma^u}).
\]
Now if $\mu$ is on the right of the unitary axis, we have:
\[
    L^{S}(1-\frac{5}{2},Ad^{3}(\sigma))L^{S}(1-5,\omega_\sigma) = L^{S}(1-\frac{5}{2}-a_1(\mu),Ad^{3}(\sigma^u))L^{S}(1-5-a_2(\mu),\omega_{\sigma^u}) \neq 0,
\]
since $1-\frac{5}{2}-a_1(\mu)\geq 1$ and $1-5-a_2(\mu)\geq 1$, \cite{KSannals}.

\begin{prop}
    Let the conditions in the combinatorial lemma hold, and suppose $\mu$ is on the right side of the unitary axis. Then $Eis_{P}(s,f)$ is holomorphic at $s=-\frac{5}{2}$.
\end{prop}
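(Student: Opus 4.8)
The plan is to use Langlands' constant term theorem together with the cohomological constant term formula \eqref{intertwinformula} to reduce holomorphy of the Eisenstein series at $s=-\tfrac52$ to the non-vanishing of certain $L$-values. First I would recall that, since $P_\beta$ is self-associate, the Eisenstein series $Eis_P(s,f)$ is holomorphic for $\operatorname{Re}(s)>0$ unless $\sigma$ is self-dual, and its possible poles in $\operatorname{Re}(s)\ge 0$ are detected by the poles of the constant term $\mathcal{F}_P(Eis_P(s,f)) = f + T_{st}(s)f$. The point of evaluation $s=-\tfrac52$ lies on the left of the unitary axis, so holomorphy there is equivalent to holomorphy of the whole meromorphic continuation near that point; by the functional equation for Eisenstein series (relating $s$ and $-s$ via the self-associateness of $P_\beta$), a pole at $s=-\tfrac52$ would correspond to a pole of the constant term, hence to a pole of the normalizing factor $r^S(s,\sigma,w_{\gamma_s})$ appearing in \eqref{intertwinformula}, since the local operators $T_{st,v}(s)$ at $v\in S$ are holomorphic there by the archimedean and non-archimedean analysis of Sections \ref{sec:Nonint} and \ref{sec:arch} (Propositions \ref{thm:nonarchint}, \ref{prop:indirrbeta}, \ref{archinterthm}).

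The heart of the argument is therefore to show that
\[
    \frac{L^{S}(s,Ad^{3}(\sigma))L^{S}(2s,\omega_{\sigma})}{L^{S}(1+s,Ad^{3}(\sigma))L^{S}(1+2s,\omega_{\sigma})}
\]
has no pole at $s=-\tfrac52$. Using the decomposition $\sigma = \sigma^u \otimes |\ |^{-\mathbf{w}/2}$ into a unitary cuspidal twist, one rewrites, as in the displayed computations preceding the statement,
\[
    L^{S}\!\left(-\tfrac52,Ad^{3}(\sigma)\right)L^{S}(-5,\omega_\sigma) = L^{S}\!\left(-\tfrac52-a_1(\mu),Ad^{3}(\sigma^u)\right)L^{S}(-5-a_2(\mu),\omega_{\sigma^u}),
\]
and similarly for the shifted arguments $1+s$, $1+2s$. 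A pole of the ratio at $s=-\tfrac52$ can only come from a pole of the numerator $L^{S}(-\tfrac52,Ad^{3}(\sigma))L^{S}(-5,\omega_\sigma)$ that is not cancelled, or from a zero of the denominator $L^{S}(1-\tfrac52,Ad^{3}(\sigma))L^{S}(1-5,\omega_\sigma)$. Since $\mu$ is on the right side of the unitary axis, $\mathbf{w}\le -5$, so $1-\tfrac52-a_1(\mu) = 1-\tfrac52-\tfrac{\mathbf{w}}{2}\ge 1$ and $1-5-a_2(\mu) = 1-5-\mathbf{w}\ge 1$; by \cite{KSannals} the adjoint-cube and Hecke $L$-functions of a unitary cuspidal representation are non-vanishing in $\operatorname{Re}(s)\ge 1$, so the denominator does not vanish — this is exactly the non-vanishing noted in the paragraph preceding the statement. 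It remains to control the numerator: the Hecke $L$-function $L^{S}(2s,\omega_{\sigma})$ has at most simple poles at $2s\in\{0,1\}$, i.e. $s\in\{0,\tfrac12\}$, neither of which is $-\tfrac52$, and $L^{S}(s,Ad^{3}(\sigma))$ is entire in $s$ (for $\sigma$ non-monomial, which we have reduced to) by \cite{KSannals}. Hence the numerator is holomorphic at $s=-\tfrac52$ and so is the ratio.

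Assembling these: near $s=-\tfrac52$ the normalized constant term $\mathcal{F}_P\circ Eis_P(s,f)$ is, up to the holomorphic and non-vanishing factor $|\delta_{F/\mathbb{Q}}|^{-5/2}\bigotimes_{v\in S}T_{st,v}(s)(f_v)\otimes\bigotimes_{v\notin S}\tilde f_v^0$, equal to the above ratio of $L$-functions, which we have shown is holomorphic there; combined with the functional equation and the fact that $s=-\tfrac52$ lies away from the region $\operatorname{Re}(s)\ge 0$ where the only potential poles of $Eis_P(s,f)$ for self-dual $\sigma$ occur (at $s=\tfrac12$ or $s=1$ per the case analysis in Section \ref{ch:LS}), we conclude $Eis_P(s,f)$ is holomorphic at $s=-\tfrac52$. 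I expect the main obstacle to be bookkeeping around the self-dual case and the functional equation: one must be careful that holomorphy of the constant term at $s=-\tfrac52$ genuinely implies holomorphy of $Eis_P(s,f)$ there, which requires invoking the principle that a pole of an Eisenstein series is visible in its constant term along every parabolic, together with the fact that $P_\beta$ is (up to association) the only relevant parabolic — so the argument reduces cleanly to the $L$-function computation, with the non-vanishing input from \cite{KSannals} doing the real work.
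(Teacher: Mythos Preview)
There is a genuine gap in your analysis of the numerator. You assert that $L^{S}(2s,\omega_{\sigma})$ has at most simple poles at $2s\in\{0,1\}$, but this is only true for a \emph{unitary} Hecke character. Here $\omega_{\sigma}=\omega_{\sigma^{u}}\cdot|\ |^{-\mathbf{w}}$, so $L^{S}(2s,\omega_{\sigma})=L^{S}(2s-\mathbf{w},\omega_{\sigma^{u}})$, and the possible poles (present exactly when $\omega_{\sigma^{u}}$ is trivial) sit at $2s\in\{\mathbf{w},\mathbf{w}+1\}$. Since $\mathbf{w}\le -5$, the value $2s=-5$, i.e.\ $s=-\tfrac{5}{2}$, lands in this set for $\mathbf{w}\in\{-6,-5\}$, so your argument does not exclude a pole there. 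The same oversight contaminates your closing appeal to the pole list ``$s=\tfrac12$ or $s=1$'' from Section~\ref{ch:LS}: that list is for the unitary datum $\sigma^{u}$, and under the shift $s\mapsto s-\tfrac{\mathbf{w}}{2}$ these become $s=\tfrac12+\tfrac{\mathbf{w}}{2}$ and $s=1+\tfrac{\mathbf{w}}{2}$, which can coincide with $-\tfrac{5}{2}$. So $s=-\tfrac52$ is not ``away'' from the danger zone at all.

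The missing input is precisely the hypothesis you recorded but never invoked: the combinatorial lemma forces $\ell_{2}(\mu)\ge 1$. Since $\ell_{2}(\mu)=\min_{\eta}|a^{\eta}+b^{\eta}-\mathbf{w}|$, this means the archimedean component of $\omega_{\sigma^{u}}$ is nontrivial at some infinite place, hence $\omega_{\sigma^{u}}$ is a nontrivial Hecke character and $L^{S}(s,\omega_{\sigma})$ is entire. This is exactly the paper's argument: a pole of the Eisenstein series at the relevant point would force $L^{S}(1,\omega_{\sigma^{u}})$ to have a pole, hence $\omega_{\sigma^{u}}$ trivial, hence $\ell_{2}(\mu)=0$, contradicting the combinatorial lemma. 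Once you insert this step, your numerator is genuinely holomorphic at $s=-\tfrac{5}{2}$ and the rest of your outline goes through.
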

\begin{proof}
The Eisenstein series may have a possible pole at $s=\frac{1}{2}$. In this case, $L^{S}(1,\omega_{\sigma^{u}})$ is a pole only when the central character is trivial. But this means that $\ell_2(\mu) = 0$ which is not possible as the combinatorial lemma forces $\ell_2(\mu)$ to be greater than or equal to 1.    
\end{proof}
\begin{proof}[Proof of Theorem \ref{thm:eis}]
The proof for the top degree follows the same path as that of the bottom degree, so we only prove part 1. First, assume that the highest weight $\mu$ is on the right side of the unitary axis.  Fix $\iota: E \to \mathbb{C}$, it is enough to prove:
\[
 dim_{\mathbb{C}}(\mathfrak{J}^{b}(^{\iota}\sigma_f)) = \mathsf{k}.
\]
We showed that there is $^{\iota}\sigma_{\infty}$, where $^{\iota}\sigma = ^{\iota}\sigma_{f}\times ^{\iota}\sigma_{\infty}$ is a cuspidal automorphic representation of $GL_2(\mathbb{A})$. Now by tensoring the induced modules appearing in part 2 of the Manin-Drinfeld theorem and their relation to relative lie algebra cohomology, we can form the cohomological counterpart of the intertwining operators by applying the functor $H^{b}(\mathfrak{g}(\mathbb{R}), \overline{K}_{\infty},-\otimes \mathcal{M}_{^{\iota}\lambda})$ as:
\[
    I_{b}^{S}(^{\iota}\sigma_f)_{w_{cl}} \xrightarrow[]{T_{st}(5/2,\;^{\iota}\sigma)^{\bullet}}  I_{b}^{S}(^{\iota}\tilde{\sigma}_f)_{w'_{cl}}.
\]
The arithmetic counterpart of this intertwining operator is called Eisenstein intertwining operator denoted by $T_{Eis}$ where:
\[
    T_{Eis} \otimes_{\iota, E} \mathbb{C} = T_{st,\iota}.
\]
Applying Langlands constant term, we can form the map $\mathcal{F}_{P}^{b}$ and also the Eisenstein series $Eis_{P}^{b}$ attached to this cohomological groups at the bottom degree. The Manin-Drinfeld theorem implies that the image of $\mathcal{F}_{P}^{b}\circ Eis_{P}^{b}$  contains in the $\mathfrak{J}^{b}(^{\iota}\sigma_f)$. So we can rewrite it as:
\[
    \mathfrak{J}^{b}(^{\iota}\sigma_f) \supset \{(\xi, \xi+T_{st,\iota}(-\frac{5}{2},\;^{\iota}\sigma))\; ; \; \xi\in I_{b}^{S}(^{\iota}\sigma_f)_{w_{cl}}\},
\]
or equivalently:
\[
    \mathfrak{J}^{b}(\sigma_f) \supset \{(\xi, \xi+T_{Eis}(-\frac{5}{2},\;\sigma))\; ; \; \xi\in I_{b}^{S}(\sigma_f)_{w_{cl}}\}.
\]
As we showed the intertwining operators at the point of evaluation are non-vanishing in all the places, and the Manin-Drinfeld theorem we will have:
\[
    dim_{E}(\mathfrak{J}^{b}(\sigma_f)) = dim_{\mathbb{C}}(\mathfrak{J}^{b}(^{\iota}\sigma_f)) \geq dim_{\mathbb{C}}(I_{b}^{S}(^{\iota}\sigma_f)_{w_{cl}}) = \mathsf{k} .
\]
If we find ourselves on the left side of the unitary axis, we can examine its contragredient counterpart located on the right side of the unitary axis. Applying the same argument yields equivalent results. Now, applying the same argument as in \cite[6.2.2.2]{HR} through the Poincaré pairing, we conclude the proof by demonstrating that:
\[
dim_{E}(\mathfrak{J}^{b}(\sigma_f)) \leq \mathsf{k}.
\]
\end{proof}

\subsection{Galois equivariancy}
In this section we generalize the Galois equivariancy results in \cite[Section 5.3.2.5.]{RagTI}. We prove for $P=P_{\beta}$ and the case $P_{\alpha}$ following the same line of proof. The field $F$ is $CM$-type, therefore strongly pure weights in $X_{00}^{*}(Res_{F/\mathbb{Q}}(T_2)\times E)$ are the base change of the strongly pure weights of $X_{00}^{*}(Res_{F_1/\mathbb{Q}}(T_2)\times E)$ \cite[Cor.3.14.]{RagTI}. Fix an embedding $\iota: E\to \mathbb{C}$. Let $\gamma \in Gal(\bar{\mathbb{Q}}/\mathbb{Q})$; it induces a map over the cohomology of unipotent subgroups:
\[
    \gamma_{*}: H^{q}(\mathfrak{u}_P,\mathcal{M}_{^{\iota}\lambda,\mathbb{C}})(^{\iota}w) \to H^{q}(\mathfrak{u}_P,\mathcal{M}_{^{\gamma\circ\iota}\lambda,\mathbb{C}})(^{\gamma\circ \iota}w),
\]
where $q$ is the length of the Weyl group's element $l(^{\iota}w) = l(^{\gamma\circ \iota}w)$. As we showed earlier, by Kostant results $H^{q}(\mathfrak{u}_P,\mathcal{M}_{^{\iota}\lambda,\mathbb{C}})(^{\iota}w) \simeq \mathcal{M}_{^{\iota}w.^{\iota}\lambda,\mathbb{C}}$. Now applying Schur's lemma, we can understand the action of the Galois element on the cohomology of the unipotent subgroup by studying its action on the highest weight vector for the irreducible representation $\mathcal{M}_{^{\iota}w.^{\iota}\lambda,\mathbb{C}}$. Fix an ordering for the embeddings of $F$ into  $E$:
\[
    Hom(F,E) = \{\tau_1,\tau_2,\dots,\tau_d\}.
\]
Moreover, fix an ordering for the positive roots of $\Delta(\mathfrak{u}_{P_0})$ as in Section \ref{sec:aaintop}, where $\phi_1 = \alpha$, $\phi_2 = 3\alpha+\beta$, $\phi_3 = 2\alpha+\beta$, $\phi_4 = 3\alpha+2\beta$, and $\phi_5 = \alpha+\beta$. For each $\phi \in \Delta(\mathfrak{u}_{P_0})$, fix a generator $e_{\phi}$ of $X_{\phi}$. Furthermore, define the set of generator $\{e_{\phi}^{*}\}$ for $\mathfrak{u}_{P_0}^{*}$ to be dual of $\{e_{\phi}\}$. For any Kostant representative $w_0$, we have $\Phi_{w_0} = \{\phi>0 \;: \; w_{0}^{-1}\phi<0 \} = \{\phi_1^{w_0},\dots,\phi_l^{w_0}\}$ where $l=l(w_0^{-1})=l(w_0)$. Now define:
\[
    e_{\Phi_{w_0}}^{*} := e_{\phi_{1}^{w_0}}^{*} \wedge \dots \wedge e_{\phi_{l}^{w_0}}^{*} \in \bigwedge^{q_0}(\mathfrak{u}_{P_0}^{*}), \qquad q_0 = l(w_0).
\]
We can define the basis element $e^{\tau}_{\phi}$ by mapping $X_{\phi} \to X^{\tau}_{\phi} = X_{\phi}\otimes_{F,\tau} E$ sending $e_{\phi}$ to $e_{\phi}\otimes 1$. Ordering for the embeddings will give us the ordering $w= \{w^{\tau_1},\dots,w^{\tau_d} \}$ and consequently the following basis:
\[
    e_{\Phi_{w}}^{*} := e_{\Phi_{w^{\tau_1}}}^{*}\wedge \dots \wedge e_{\Phi_{w^{\tau_d}}}^{*} \in  \bigwedge^{q}(\mathfrak{u}_{P}^{*}), \qquad q = l(w).
\]

Similarly for base change to $\mathbb{C}$, fixing $
\iota: E \to \mathbb{C}$:
\[
    e_{\Phi_{^{\iota}w}}^{*} := e_{\Phi_{w^{\iota\circ\tau_1}}}^{*}\wedge \dots \wedge e_{\Phi_{w^{\iota\circ\tau_d}}}^{*} \in  \bigwedge^{q}(\mathfrak{u}_{P}^{*}), \qquad q = l(w).
\]
Back to the irreducible representation $\mathcal{M}_{\lambda^{\tau},E}$ with highest weight $\lambda^{\tau}$. Fix a weight vector $s(\lambda^{\tau}) \in \mathcal{M}_{\lambda^{\tau},E}$, then $s(\lambda) = s(\lambda^{\tau_1})\otimes \dots \otimes s(\lambda^{\tau_d})$ is the highest weight vector for $\mathcal{M}_{\lambda}$. Now we can find the highest weight vector for $H^{q}(\mathfrak{u}_{P},\mathcal{M}_{^{\iota}\lambda,\mathbb{C}})(^{\iota}w)$ as:
\[
    \mathsf{h}(\lambda,w,\iota) := e_{\Phi_{^{\iota}w}}^{*}\otimes s(^{\iota}w\;^{\iota}\lambda).
\]
Following \cite{RagTI}, we define:

\begin{defn}\label{def:sign}
    Fix $\iota:E\to \mathbb{C}$. Let $\gamma \in Gal(\bar{\mathbb{Q}}/\mathbb{Q})$ then
\[
    e^{*}_{\Phi_{^{\gamma\circ\iota}w}} = \epsilon_{\iota,w}(\gamma) e^{*}_{\Phi_{^{\iota}w}},
\]

for a signature $\epsilon_{\iota,w}(\gamma)\in \{\pm1\}$.
\end{defn}

 Fix $\gamma \in Gal(\bar{\mathbb{Q}}/\mathbb{Q})$ and assume $F$ is a totally imaginary field in the \textbf{CM}-case. The Galois action on the induced modules in boundary cohomology and the Eisenstein operator between these modules intertwine up to a sign as defined above \ref{def:sign}. More specifically, we have:
\[\begin{tikzcd}
	{I_{b}^{S}(\sigma)_{w_{cl}} \otimes_{E,\iota} \bar{\mathbb{Q}}} &&&& {I_{b}^{S}(\tilde{\sigma}_f(5))_{w_{cl}} \otimes_{E,\iota} \bar{\mathbb{Q}}} \\
	\\
	{I_{b}^{S}(\sigma)_{w_{cl}} \otimes_{E,\gamma\circ\iota} \bar{\mathbb{Q}}} &&&& {I_{b}^{S}(\tilde{\sigma}_f(5))_{w_{cl}} \otimes_{E,\gamma\circ\iota} \bar{\mathbb{Q}}}
	\arrow["{T_{Eis}(\sigma)\otimes_{E,\iota}\bar{\mathbb{Q}}}", from=1-1, to=1-5]
	\arrow["{1\otimes \gamma}", from=1-1, to=3-1]
	\arrow["{1\otimes \gamma}", from=1-5, to=3-5]
	\arrow["{T_{Eis}(\sigma)\otimes_{E,\gamma\circ\iota} \bar{\mathbb{Q}}}", from=3-1, to=3-5]
\end{tikzcd}\]
The action of the Galois element on the induced module above as a Hecke-submodule of the boundary cohomology will act on their coefficient system $\mathcal{M}_{^{\iota}\lambda,\bar{\mathbb{Q}}}$, as well. As we discussed at the beginning of this section, Galois action introduces a signature as defined in \ref{def:sign}. Therefore, each vertical map in the diagram above introduces a signature and we have:
 \begin{equation}\label{gammaequiv}
     (1\otimes \gamma) \circ (T_{Eis}(\sigma) \otimes_{E,\iota} \bar{\mathbb{Q}}) = \epsilon_{\iota,w_{cl}}(\gamma).\epsilon_{\iota,w{'}_{cl}}. T_{Eis}(\sigma)\otimes_{E,\gamma\circ\iota} \bar{\mathbb{Q}}.
 \end{equation}

\section{The Main Theorem on the Critical Values of Certain $L$-functions}
Now, we can state our main theorems in greater detail, elucidating the relationship between the $L$-functions attached to $GL_2$ as Langlands-Shahidi $L$-functions associated with $G_2$:
\begin{theorem} \label{thm:mainbeta}
     Assume that $F$ contains a $CM$-subfield. Assume $E$ is a large enough finite Galois extension of $\mathbb{Q}$ containing a copy of $F$. Let $G=Res_{F/\mathbb{Q}}(G_2/F)$, and $P_{\beta} = M_{\beta}U_{\beta}$ be the maximal standard parabolic subgroup of $G$ where $\beta \in M_{\beta}$. Let $\mu \in X_{00}^{*}(T\times E)$ and $\sigma_f \in Coh_{!!}(M_{\beta},\mu)$. For an embedding $\iota: E \to \mathbb{C}$, suppose ${^{\iota}\sigma}$ is non-monomial and $m,\; m+1 \in Crit(L(s,Ad^{3}(^{\iota}\sigma))L(2s,\omega_{^{\iota}\sigma}))$ where $m \in \frac{1}{2}+\mathbb{Z}$. Then we have: 
     \begin{enumerate}
         \item If for some $\iota$, $L(m+1, Ad^{3}(^{\iota}\sigma))L(2m+1,\omega_{^{\iota}\sigma})=0$ then in fact $L(m+1, Ad^{3}(^{\iota}\sigma))=0$. Therefore, $m+1-a_1(\mu) = \frac{1}{2}$, and
         \[
         L(m+1,Ad^{3}(^{\iota}\sigma))= L(\frac{1}{2},Ad^{3}(^{\iota}\sigma^u))=0
         \]
         holds for every embedding $\iota$.

         \item Suppose $L(m+1,Ad^{3}(^{\iota}\sigma)) \neq 0$; then
         \[
             |\delta_{F/\mathbb{Q}}|^{5/2} \frac{L(m,Ad^3(^{\iota}\sigma))L(2m,\omega_{^{\iota}\sigma})}{L(m+1,Ad^3(^{\iota}\sigma))L(2m+1,\omega_{^{\iota}\sigma})}\in \iota(E) \subset \bar{\mathbb{Q}}.
            \]

        \item  For any $\gamma \in Gal(\bar{\mathbb{Q}}/\mathbb{Q})$:
   \begin{multline*}
       \gamma\left(|\delta_{F/\mathbb{Q}}|^{5/2} \frac{L(m,Ad^3(^{\iota}\sigma))L(2m,\omega_{^{\iota}\sigma})}{L(m+1,Ad^3(^{\iota}\sigma))L(2m+1,\omega_{^{\iota}\sigma})}\right) = \\ \epsilon(\gamma, ^{\iota}w)\epsilon(\gamma, ^{\iota}w')|\delta_{F/\mathbb{Q}}|^{5/2} \frac{L(m,Ad^3(^{\gamma\circ\iota}\sigma))L(2m,\omega_{^{\gamma\circ\iota}\sigma})}{L(m+1,Ad^3(^{\gamma\circ\iota}\sigma))L(2m+1,\omega_{^{\gamma\circ\iota}\sigma})},
   \end{multline*}
   where $w'\in W^{P}$ is the balanced Kostant representative from Combinatorial lemma, and $\epsilon(\gamma, ^{\iota}w)$ and $\epsilon(\gamma, ^{\iota}w')$ are the signs introduced in the previous section.
     \end{enumerate}
\end{theorem}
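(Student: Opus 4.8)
The plan is to reduce the theorem to the combinatorial lemma (Lemma \ref{lem:combbeta}), the Manin-Drinfeld principle (Theorem \ref{thm:mnnb}), the rank-one Eisenstein cohomology theorem (Theorem \ref{thm:eis}), and the archimedean/non-archimedean arithmetic of the intertwining operators (Propositions \ref{thm:nonarchint}, \ref{archinterthm}), stitched together by the cohomological constant term formula \eqref{intertwinformula}. Throughout, I first treat the point of evaluation $k_\beta = -5/2$ and then bootstrap to an arbitrary successive pair $m,m+1$ in the critical set via Tate twists, using the bound from Section \ref{sec:ttr} (Remark \ref{nomore}) which guarantees that the combinatorial lemma remains satisfied exactly at the points of the critical interval.

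First I would set up the arithmetic picture. Choosing $\mu$ (after a Tate twist dictated by $m$) so that it satisfies the combinatorial lemma, one gets a balanced Kostant representative $w_{cl}\in W^{P_\beta}$ with $\lambda = w_{cl}^{-1}.\mu$ dominant, and its partner $w'_{cl} = w_0 w_{cl}$. By Theorem \ref{thm:mnnb}, the sum $I_b^S(\sigma_f)_{w_{cl}}\oplus I_b^S(\tilde\sigma_f(5))_{w'_{cl}}$ is a $2\mathsf{k}$-dimensional isotypic piece of $H^{q_b}(\partial\underline S^G,\mathcal M_\lambda)^{K_f}$, and by Theorem \ref{thm:eis} the image $\mathfrak J^b(\sigma_f)$ of Eisenstein cohomology is a $\mathsf{k}$-dimensional $E$-subspace — ``a line in a two-dimensional plane'' in the language of the introduction. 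The key point is that the $E$-rational structure on the plane comes from the boundary cohomology (it is defined over $E$ because $\mathcal M_\lambda$ and the Kostant decomposition are), while the Eisenstein line is spanned, after base change via $\iota$, by the graph $\{(\xi,\xi+T_{st,\iota}(-5/2,{}^\iota\sigma)\xi)\}$ of the intertwining operator. Part (2) — the rationality statement — then follows exactly as in \cite[Section 6.2]{HR}: the slope of this line, measured against the two $E$-rational summands, must lie in $\iota(E)$, and by the cohomological constant term formula \eqref{intertwinformula} together with Propositions \ref{thm:nonarchint} and \ref{archinterthm} that slope equals $|\delta_{F/\mathbb Q}|^{5/2}$ times the ratio $L(m,Ad^3({}^\iota\sigma))L(2m,\omega_{{}^\iota\sigma})/\big(L(m+1,Ad^3({}^\iota\sigma))L(2m+1,\omega_{{}^\iota\sigma})\big)$, since the non-archimedean local operators contribute exactly the ratio of partial $L$-values and the archimedean ones contribute the ratio of the archimedean $L$-factors \eqref{eq:archLb}, while the remaining constant is absorbed into $E^\times$ (resp.\ $\mathbb Q^\times$). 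The holomorphy of the Eisenstein series at $s=-5/2$, needed so that $\mathcal F_P\circ Eis_P$ is actually defined, is exactly the proposition preceding the proof of Theorem \ref{thm:eis}, whose hypothesis $\ell_2(\mu)\geq 1$ is part of the combinatorial lemma.

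For part (1), I would argue that $L(m+1,Ad^3({}^\iota\sigma))L(2m+1,\omega_{{}^\iota\sigma})=0$ forces the intertwining operator at $s=-(m+1)$ (equivalently at the ``$+$'' side of the functional equation) to degenerate, which by the Manin-Drinfeld principle changes the dimension count: the argument is that if the relevant $L$-factor in the numerator of the cohomological constant term vanished, the Eisenstein map would fail to be injective on one of the summands, but Theorem \ref{thm:eis} pins the dimension at $\mathsf{k}$ independently of $\iota$; hence the vanishing locus is $\iota$-independent. Then one isolates which factor vanishes: $L(2m+1,\omega_{{}^\iota\sigma})=L(2m+1,\omega_{{}^\iota\sigma^u}-\text{shift})$ is a Hecke $L$-value at an integer point to the right of the center of its critical strip (because $m+1$ is critical and $m+1\ne$ the last critical point would put $2m+1$ strictly inside), so it is nonzero by the standard non-vanishing of abelian $L$-functions on the edge/interior used in \cite{KSannals}; therefore the vanishing must come from $L(m+1,Ad^3({}^\iota\sigma))$, and the only way a critical value of $Ad^3$ at an integer shift can be forced is at the central point, i.e.\ $m+1-a_1(\mu)=1/2$, giving the displayed identity. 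This is essentially the argument of \cite[Theorem 5.?]{HR}/\cite[Section 5.3]{RagTI} adapted to the product.

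Part (3) is the Galois equivariance, and it follows formally from \eqref{gammaequiv}: applying $\gamma\in \mathrm{Gal}(\bar{\mathbb Q}/\mathbb Q)$ to the rational class $[\mathcal J]_0$ spanning the Eisenstein line, the diagram in Section \ref{sec:aaintop} (Galois equivariancy) commutes up to the sign $\epsilon_{\iota,w_{cl}}(\gamma)\epsilon_{\iota,w'_{cl}}(\gamma)$ coming from the action on $e^*_{\Phi_{{}^\iota w}}$ (Definition \ref{def:sign}); since $\gamma$ carries the slope of the ${}^\iota$-line to the slope of the ${}^{\gamma\circ\iota}$-line, and the discriminant factor $|\delta_{F/\mathbb Q}|^{5/2}$ is $\gamma$-fixed (it lies in $\mathbb R^\times/\mathbb Q^\times$), we get the stated transformation law. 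I expect the main obstacle to be \textbf{part (1)}: controlling the vanishing and matching it across embeddings requires knowing that the dimension in Theorem \ref{thm:eis} is genuinely $\iota$-independent even when some local intertwining operator has a zero, and that no unexpected extra vanishing comes from the Hecke $L$-factor; pinning down precisely where in the critical interval $2m+1$ sits relative to the edge of the critical strip of $L(s,\omega_\sigma)$, and invoking the correct non-vanishing input, is the delicate bookkeeping step. Everything else is a faithful transcription of the Harder-Raghuram machinery, now with a \emph{product} of $L$-functions in place of a single one, which only affects the explicit form of \eqref{intertwinformula} and \eqref{eq:archLb} and not the structural argument.
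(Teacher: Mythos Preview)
Your overall architecture for parts (2) and (3) matches the paper's proof exactly: reduce to the point of evaluation $-5/2$ via Tate twists (Remark \ref{nomore}), invoke Theorem \ref{thm:eis} to get the $\mathsf k$-dimensional $E$-rational Eisenstein subspace inside the $2\mathsf k$-dimensional isotypic piece of Theorem \ref{thm:mnnb}, and read off the $L$-ratio as the slope via \eqref{intertwinformula} together with Propositions \ref{thm:nonarchint} and \ref{archinterthm}; part (3) then follows from \eqref{gammaequiv}. No differences there.

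For part (1), however, you take a different and more roundabout route than the paper, and your sketch contains some confusion. You argue via a cohomological dimension count: the $E$-rationality of $\mathfrak J^b(\sigma_f)$ forces the vanishing locus to be $\iota$-independent. But your phrasing slips --- you speak of the ``intertwining operator at $s=-(m+1)$'' and of the \emph{numerator} vanishing, when part (1) is about the \emph{denominator} $L(m+1,\dots)$; if that vanishes the $L$-ratio in \eqref{intertwinformula} would be infinite, not zero, so the relevant degeneration is of the projection to the \emph{first} summand rather than a failure of injectivity. This can be made to work (it is the Harder--Raghuram style argument), but it requires some care about what the Eisenstein subspace looks like at a pole.

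The paper instead disposes of part (1) by a short direct analytic argument, bypassing the cohomology entirely. After reducing to $m=-5/2$ and using the functional equation to place $\mu$ on the right of the unitary axis ($\mathbf w\leq -5$), one has $m+1-a_1(\mu)=1-\tfrac52-\tfrac{\mathbf w}{2}\geq 1$, so $L(m+1,Ad^3({}^\iota\sigma))=L(m+1-a_1(\mu),Ad^3({}^\iota\sigma^u))\neq 0$ by Jacquet--Shalika non-vanishing for $\mathrm{Re}(s)\geq 1$ on $GL_4$ (and likewise for the Hecke factor). Since the only half-integer in $(0,1)$ is $1/2$, the sole possible zero of $L(s,Ad^3({}^\iota\sigma^u))$ at a critical half-integer is at $s=1/2$, i.e.\ $m+1-a_1(\mu)=1/2$. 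The key point is that $a_1(\mu)=\mathbf w/2$ is invariant under $\gamma\in\mathrm{Gal}(\bar{\mathbb Q}/\mathbb Q)$ by strong purity, so this condition --- and hence the vanishing --- is $\iota$-independent. This is considerably simpler than the dimension argument you outline and avoids the delicate bookkeeping you flagged as the main obstacle.
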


\begin{proof}
As we discussed in Remark \ref{nomore}, it is enough to prove the theorem only at the point of evaluation $m=-\frac{5}{2}$. Moreover, we can assume that $^{\iota}\mu$ is on the right of the unitary axis which means $\mathbf{w}\leq -5$, as we can always use the functional equation for our $L$-functions \cite{ShBook} to be in the right side of the unitary axis. Then:
\begin{align*}
    1-\frac{5}{2}-\frac{\mathbf{w}}{2}\geq 1.
\end{align*}
Therefore, as $L(s,Ad^{3}(^{\iota}\sigma^u))$ only vanishes at $s=\frac{1}{2}$:
\begin{align*}
    L(1-\frac{5}{2},Ad^{3}(^{\iota}\sigma))= L(1-\frac{5}{2}-\frac{\mathbf{w}}{2},Ad^{3}(^{\iota}\sigma^u)) \neq 0.
\end{align*}
Given $\gamma \in Gal(\bar{\mathbb{Q}}/\mathbb{Q})$, the action of $\gamma$ preserves the property of being on the right of the unitary axis. It is the consequence of strong purity that $\mathbf{w}$ is invariant under the Galois action. Therefore, we proved the part (1) of the theorem. 

To prove part (2), as $\mu$ is on the right side of the unitary axis. Let $\mathfrak{J}^{b}(\sigma_f)$ be as defined in (\ref{frakj}), which is a $\mathsf{k}$-dimensional  $E$-subspace of the $2\mathsf{k}$-dimensional $E$-vector space $I_{b}^{S}(\sigma_f)_{w_{cl}}\oplus I_{b}^{S}(\tilde{\sigma}_f(5))_{w^{'}_{cl}}$ by Theorem \ref{thm:eis}. There is a $E$-linear intertwining operator $T_{Eis}: I_{b}^{S}(\sigma_f)_{w_{cl}} \to I_{b}^{S}(\tilde{\sigma}_f(5))_{w^{'}_{cl}}$ defined by the proof of \ref{thm:eis} such that:
\[
    \mathfrak{J}^{b}(\sigma_f) = \{(\xi, \xi+T_{Eis}(\xi) \; | \; \xi \in I_{b}^{S}(\sigma_f)_{w_{cl}} \}.
\]
Now going to the transcendental level by $\iota: E \to \mathbb{C}$, we get the map $T_{Eis}\otimes \mathbb{C}$. Now, we want to decompose this intertwining operator to the local factors. For non-archimedean places $v\notin S$, let $T_{st,\iota} = T_{st,E}\otimes_{E,\iota} \mathbb{C}$ be the arithmetic intertwining operator as in \ref{thm:nonarchint}. Appealing to Langlands generalization of Gindikin-Karpelevich's formula \cite{Lang}, define the local intertwining operator at the evaluation point as:
\[
    T_{loc,\iota} = \bigotimes_{v\notin S} (\frac{L(-\frac{5}{2},Ad^{3}(\sigma_v))L(-2\frac{5}{2},\omega_{\sigma_v})}{L(1-\frac{5}{2},Ad^{3}(\sigma_v))L(1-2\frac{5}{2},\omega_{\sigma_v})})^{-1}T_{st,\iota,v}.
\]
By transferring $T_{st,\iota,v}$ to arithmetic level $T_{st,E,v}$, we can define the arithmetic counterpart of the local intertwining operator at $v\notin S$ which is defined over $E$, and denote it by $T_{loc,E}$ we have:
\begin{equation}\label{notininter}
    T^{S}_{st}:= \bigotimes_{v\notin S} T_{st,v} =  \frac{L(-\frac{5}{2},Ad^{3}(\sigma))L(-2\frac{5}{2},\omega_{\sigma})}{L(1-\frac{5}{2},Ad^{3}(\sigma))L(1-2\frac{5}{2},\omega_{\sigma})} (T_{loc,E}\otimes_{E,\iota} \mathbb{C}).
\end{equation}
Theorem \ref{thm:nonarchint} covers non-archimedean places in $S$ as well. We can formally normalize these intertwining operators by multiplying them with the ratio of the $L$-factors obtained via the Langlands-Shahidi method, as defined by Shahidi in \cite{Shahidi1990APO}. Following the notation of \cite{HR}, we have:
\[
    T_{norm,\iota} = \bigotimes_{v\in S\backslash S_{\infty}} (\frac{L(-\frac{5}{2},Ad^{3}(\sigma_v))L(-2\frac{5}{2},\omega_{\sigma_v})}{L(1-\frac{5}{2},Ad^{3}(\sigma_v))L(1-2\frac{5}{2},\omega_{\sigma_v})})^{-1}T_{st,\iota,v}.
\]
Form the arithmetic counterpart of this normalized intertwining operator in the same way as the non-ramified case, denoted by $T_{norm,E}$, such that for $v\in S\backslash S_{\infty}$:
\begin{equation}
    T_{norm,\iota}:= \otimes_{v\in S\backslash S_{\infty}}\frac{L(-\frac{5}{2},Ad^{3}(\sigma))L(-2\frac{5}{2},\omega_{\sigma})}{L(1-\frac{5}{2},Ad^{3}(\sigma))L(1-2\frac{5}{2},\omega_{\sigma})} (T_{norm,E,v}\otimes_{E,\iota} \mathbb{C}).
\end{equation}

For $s\in S_{\infty}$ appealing to the Proposition \ref{archinterthm}, we have the arithmetic counterpart of the intertwining operators at archimedean places with normalizing factor match with the $L$-factors at infinity. Summarizing our argument above plus \ref{intertwinformula}, we can express the intertwining operator $T_{Eis}(\sigma)\otimes_{E,\iota} \mathbb{C}$ as follow:
\begin{equation}\label{arithinter}
    T_{Eis}(\sigma) \otimes \mathbb{C} = \delta_{F/\mathbb{Q}} ^{-5/2} \frac{L^{S}(s,Ad^{3}(\sigma))L^{S}(2s,\omega_{\sigma})}{L^{S}(1+s,Ad^{3}(\sigma))L^{S}(1+2s,\omega_{\sigma})} (T_{norm,E}\otimes T_{loc,E}) \otimes_{E,\iota} \mathbb{C}.
\end{equation}

Since the local intertwining operators \ref{arithinter} are defined over $E$, we deduce that 
\[
\delta_{F/\mathbb{Q}} ^{-5/2} \frac{L^{S}(s,Ad^{3}(\sigma))L^{S}(2s,\omega_{\sigma})}{L^{S}(1+s,Ad^{3}(\sigma))L^{S}(1+2s,\omega_{\sigma})} \in \iota(E).
\]
This concludes the proof of part (2).

To prove part (3), by applying \ref{gammaequiv} to the \ref{arithinter} we will get the reciprocity results in part (3) for the ratio of $L$-factors.
\end{proof}

Finally, we can state the main theorem for the case $P_{\alpha}$ as well. 

\begin{theorem} \label{thm:mainalpha}
     Let $M_{\alpha}$ be the Levi factor of the maximal standard parabolic subgroup $P_{\alpha}$. Consider a strongly pure weight $\mu \in X_{00}^{*}(T\times E)$ and let $\sigma_f \in Coh_{!!}(M_{\alpha},\mu)$. For an embedding $\iota: E \to \mathbb{C}$, suppose $m, m+1 \in Crit(L(s, \;^{\iota}\sigma)L(2s,\omega_{^{\iota}\sigma})L(3s, \;^{\iota}\sigma\otimes \omega_{^{\iota}\sigma}))$ where $m \in \frac{1}{2}+\mathbb{Z}$. 
     \begin{enumerate}
         \item If for some $\iota$, the product $L(m+1, \;^{\iota}\sigma)L(2m+1,\omega_{^{\iota}\sigma})L(3m+1, \;^{\iota}\sigma\otimes \omega_{^{\iota}\sigma})=0$, then in fact $L(m+1,\;^{\iota}\sigma)=0$. Then $m+1-a_1(\mu) = \frac{1}{2}$ and
         \[
              L(m+1,\;^{\iota}\sigma)= L(\frac{1}{2},\;^{\iota}\sigma^u) =0, \qquad \forall \iota:E\to \mathbb{C}.
         \]
         \item Suppose $L(m+1,\;^{\iota}\sigma)\neq0$; then:
         \[
             |\delta_{F/\mathbb{Q}}|^{5/2} \frac{L(m, \;^{\iota}\sigma)L(2m,\omega_{^{\iota}\sigma})L(3m, \;^{\iota}\sigma\otimes \omega_{^{\iota}\sigma})}{L(m+1, \;^{\iota}\sigma)L(2m+1,\omega_{^{\iota}\sigma})L(3m+1, \;^{\iota}\sigma\otimes \omega_{^{\iota}\sigma})}\in \iota(E) \subset \bar{\mathbb{Q}}.
            \]
        \item  For any $\gamma \in Gal(\bar{\mathbb{Q}}/\mathbb{Q})$:
   \begin{multline*}
       \gamma\left(|\delta_{F/\mathbb{Q}}|^{5/2} \frac{L(m, \;^{\iota}\sigma)L(2m,\omega_{^{\iota}\sigma})L(3m, \;^{\iota}\sigma\otimes \omega_{^{\iota}\sigma})}{L(m+1, \;^{\iota}\sigma)L(2m+1,\omega_{^{\iota}\sigma})L(3m+1, \;^{\iota}\sigma\otimes \omega_{^{\iota}\sigma})}\right) = \\ \epsilon(\gamma, ^{\iota}w)\epsilon(\gamma, ^{\iota}w')|\delta_{F/\mathbb{Q}}|^{5/2} \frac{L(m, \;^{\gamma\circ\iota}\sigma)L(2m,\omega_{^{\gamma\circ\iota}\sigma})L(3m, \;^{\gamma\circ\iota}\sigma\otimes \omega_{^{\gamma\circ\iota}\sigma})}{L(m+1, \;^{\gamma\circ\iota}\sigma)L(2m+1,\omega_{^{\gamma\circ\iota}\sigma})L(3m+1, \;^{\gamma\circ\iota}\sigma\otimes \omega_{^{\gamma\circ\iota}\sigma})}.
   \end{multline*}
   where $w'\in W^{P}$ is the balanced Kostant representative obtained by Combinatorial lemma, and $\epsilon(\gamma, \;^{\iota}w)$ and $\epsilon(\gamma, \;^{\iota}w')$ are the signs introduced in the previous section.
     \end{enumerate}
\end{theorem}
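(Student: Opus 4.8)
The plan is to transcribe the proof of Theorem~\ref{thm:mainbeta} to the parabolic $P_\alpha$, replacing the Langlands--Shahidi data of $P_\beta$ by that of $P_\alpha$: the point of evaluation is now $k_\alpha = -\tfrac32$, the rank-one decomposition is $r = \rho_2 \oplus \wedge^2\rho_2 \oplus \rho_2\otimes\wedge^2\rho_2$, the relevant Tate twist is $3$ (while $\dim U_{P_\alpha}=5$ still controls the discriminant factor), and the partial $L$-ratio appearing in the constant term is $r^S(s,\sigma,w_{\gamma_l})$. As in Remark~\ref{nomore} and the Tate-twist discussion of Section~\ref{sec:ttr}, it suffices to prove the three assertions at $m = -\tfrac32$, and by the functional equations of the three $L$-functions we may arrange that ${}^\iota\mu$ lies on the right of the unitary axis, which for $P_\alpha$ means $\mathbf{w} \le -3$ (the condition isolated in Section~\ref{sec:criticala}). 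Lemma~\ref{lem:combalpha} then furnishes a balanced Kostant representative $w_{cl}\in W^{P_\alpha}$ with $\lambda := w_{cl}^{-1}.\mu$ dominant for $G$, which is the input for Proposition~\ref{prop:inducedalpha} and the $P_\alpha$-version of the Manin--Drinfeld principle.

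For part~(1) I would argue as follows. With $\mathbf{w}\le -3$ one has $1 + k_\alpha - \tfrac{\mathbf{w}}{2} \ge 1$, so writing ${}^\iota\sigma = {}^\iota\sigma^u \otimes |\ |^{-\mathbf{w}/2}$ with ${}^\iota\sigma^u$ unitary cuspidal, the factors $L(m+1,{}^\iota\sigma)$, $L(2m+1,\omega_{{}^\iota\sigma})$, $L(3m+1,{}^\iota\sigma\otimes\omega_{{}^\iota\sigma})$ are, after the shift to the associated unitary representations, evaluated at points of real part $\ge 1$, hence are nonzero except possibly $L(m+1,{}^\iota\sigma)$ at the center $s=\tfrac12$ of the standard $L$-function. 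Thus a vanishing of the product at $m+1$ forces $L(m+1,{}^\iota\sigma) = 0$ and $m+1 - a_1(\mu) = \tfrac12$; since strong purity makes $\mathbf{w}$ (hence $a_1(\mu)$ and the property of lying on the right of the unitary axis) invariant under $\operatorname{Gal}(\bar{\mathbb{Q}}/\mathbb{Q})$, the same vanishing $L(\tfrac12,{}^{\gamma\circ\iota}\sigma^u) = 0$ holds for every embedding.

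For part~(2) I would invoke the $P_\alpha$-analogue of Theorem~\ref{thm:eis}, so that $\mathfrak{J}^b(\sigma_f) = \{(\xi,\ \xi + T_{Eis}(\xi))\ :\ \xi\in I_b^S(\sigma_f)_{w_{cl}}\}$ for an $E$-linear Eisenstein intertwining operator $T_{Eis}$ with $T_{Eis}\otimes_{E,\iota}\mathbb{C} = T_{st,\iota}$ at $s = k_\alpha$; holomorphy of the Eisenstein series at $k_\alpha$ again follows since the combinatorial lemma forces $\ell_2(\mu)\ge 1$, ruling out the only possible pole. Decomposing $T_{st,\iota}$ into local factors — Gindikin--Karpelevich outside $S$, Proposition~\ref{thm:nonarchint} at the ramified finite places, Proposition~\ref{archinterthma} at archimedean places whose normalizing factor is precisely the product of ratios of $L_\infty$-factors in \eqref{eq:archLa} — together with the cohomological constant-term formula (the $P_\alpha$-analogue of \eqref{intertwinformula}, carrying the $|\delta_{F/\mathbb{Q}}|^{-5/2}$ volume factor) expresses $T_{Eis}(\sigma)\otimes\mathbb{C}$ as $|\delta_{F/\mathbb{Q}}|^{-5/2}$ times the completed ratio of the three $L$-functions times an operator defined over $E$. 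Since $T_{Eis}$ itself is defined over $E$, the completed ratio lies in $\iota(E)$, giving part~(2); part~(3) then follows from the $P_\alpha$-version of the Galois-equivariance identity \eqref{gammaequiv} applied to this expression, which produces exactly the sign $\epsilon(\gamma,{}^{\iota}w)\epsilon(\gamma,{}^{\iota}w')$. Here part~(3) uses the CM hypothesis on $F$, under which strongly pure weights are base-changed from $F_1$, exactly as in the $P_\beta$ case.

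The main obstacle I anticipate is the archimedean input of Proposition~\ref{archinterthma}: one must run the cocycle decomposition $w_{P_\alpha} = w_\beta w_\alpha w_\beta w_\alpha w_\beta$ through Delorme's lemma and the $GL_2$ computations of \cite[Section~4.1]{RagTI} at each of the five rank-one steps, tracking the characters $\psi_1,\psi_2$ and the half-integral shifts at the intermediate principal series, and checking that the five ratios of $GL_1$-factors collapse to ratios of $L_\infty$-factors for the standard, Hecke, and twisted-standard $L$-functions. Establishing irreducibility of the intermediate modules (via Proposition~\ref{prop:indirralpha} and \cite{Speh1980ReducibilityOG}), so that each rank-one operator acts by a scalar, is the other place where care is needed; everything else is a faithful transcription of the $P_\beta$ argument with the third $L$-factor carried along.
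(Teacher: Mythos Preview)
Your proposal is correct and follows essentially the same route as the paper. The one place where the paper's argument differs slightly is in part~(1): rather than first reducing to $m=-\tfrac32$, $\mathbf{w}\le -3$ and then checking that all three unitary shifts land at real part $\ge 1$, the paper argues directly (before any reduction) that $L(3s,{}^{\iota}\sigma\otimes\omega_{{}^{\iota}\sigma})$ can never vanish at a half-integer $s$, because after unitary normalization its zeros lie in the interval $\tfrac{\mathbf{w}}{2}<s<\tfrac{\mathbf{w}}{2}+\tfrac13$ of length $\tfrac13$, which contains no half-integers. This isolates the standard factor $L(s,{}^{\iota}\sigma)$ as the only possible source of vanishing at \emph{any} critical half-integer and immediately gives $m+1-a_1(\mu)=\tfrac12$; only then does the paper reduce to the point of evaluation and invoke the proof of Theorem~\ref{thm:mainbeta}. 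Your uniform post-reduction argument reaches the same conclusion, but the phrase ``except possibly at the center $\tfrac12$'' is slightly awkward: once you have reduced to $\mathbf{w}\le -3$ nothing vanishes at $m+1=-\tfrac12$ at all, so the identification $m+1-a_1(\mu)=\tfrac12$ really belongs to the pre-reduction data (it is what survives the Tate twist). Parts~(2) and~(3) are handled identically in both.
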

\begin{proof}
    We need to make sure that the denominator does not vanish in the critical half-integer $m$. By \cite{JaSh76}, we know that the Jaquet-Langlands $L$-functions are non-vanishing for $Re(s)\geq 1$. Therefore, by the functional equation, zeros only occur in the interval $(0,1)$. Thus, the $L$-function
\[
    L(3s,\;^{\iota}\sigma\otimes \omega_{^{\iota}\sigma}) = L(3s-\frac{3\mathbf{w}}{2}, \;^{\iota}\sigma_{u} \otimes \omega_{^{\iota}\sigma^u}),
\]
only vanishes possibly for 
\[
0<3s-\frac{3\mathbf{w}}{2}<1 \iff \frac{\mathbf{w}}{2}< s < \frac{\mathbf{w}}{2}+\frac{1}{3}.
\]
Since the interval above does not contain any half-integers, this $L$-function does not contribute to the vanishing of the denominator at critical points. The $L$-function 
\[
L(s,\;^{\iota}\sigma) = L(s-\frac{\mathbf{w}}{2}, \;^{\iota}\sigma_{u}),
\]
has a possible zero at the half-integer $s = \frac{\mathbf{w}+1}{2}$. We now follow the proof of Theorem \ref{thm:mainbeta}. It is sufficient to consider the point of evaluation $m=-\frac{3}{2}$, and assume that $^{\iota}\mu$ is on the right of unitary axis; i.e. $\mathbf{w} \leq -3$. Therefore,
\[
    1-\frac{3}{2}-\frac{\mathbf{w}}{2} \geq 1,
\]
where the $L$-function in hand is non-vanishing. The rest of the proof will follow the same steps as the proof of Theorem \ref{thm:mainbeta}.
\end{proof}

\bibliography{main}{}
\bibliographystyle{plain}
\end{document}